\documentclass{memo-l}

\usepackage{amsfonts}
\usepackage{amsmath}
\usepackage{amsthm}
\usepackage{enumerate}
\usepackage{amssymb}
\usepackage{enumitem}
\usepackage{bbm}
\usepackage{amscd}
\usepackage[all]{xy}
\usepackage{hyperref}
\usepackage{aliascnt}
\usepackage{verbatim}
\usepackage[sort&compress,numbers]{natbib}
\usepackage[nameinlink]{cleveref}

\newcommand{\ep}{\varepsilon}

\newcommand{\Z}{{\mathbb{Z}}}

\newcommand{\C}{{\mathbb{C}}}
\newcommand{\N}{{\mathbb{N}}}

\newcommand{\Hi}{{\mathcal{H}}}
\newcommand{\K}{{\mathcal{K}}}

\newcommand{\B}{{\mathcal{B}}}
\newcommand{\U}{{\mathcal{U}}}

\newcommand{\Ot}{{\mathcal{O}_2}}
\newcommand{\OI}{{\mathcal{O}_{\infty}}}

\newcommand{\beq}{\begin{equation}}
\newcommand{\eeq}{\end{equation}}
\newcommand{\beqa}{\begin{eqnarray*}}
\newcommand{\eeqa}{\end{eqnarray*}}
\newcommand{\bal}{\begin{align*}}
\newcommand{\eal}{\end{align*}}
\newcommand{\bi}{\begin{itemize}}
\newcommand{\ei}{\end{itemize}}
\newcommand{\be}{\begin{enumerate}}
\newcommand{\ee}{\end{enumerate}}

\theoremstyle{plain}
\newtheorem{lma}{Lemma}[section]

\newaliascnt{thmCt}{lma}
\newtheorem{thm}[thmCt]{Theorem}
\aliascntresetthe{thmCt}

\newaliascnt{corCt}{lma}
\newtheorem{cor}[corCt]{Corollary}
\aliascntresetthe{corCt}

\newaliascnt{propCt}{lma}
\newtheorem{prop}[propCt]{Proposition}
\aliascntresetthe{propCt}

\newtheorem*{thm*}{Theorem}
\newtheorem*{cor*}{Corollary}
\newtheorem*{prop*}{Proposition}
\newtheorem*{qst*}{Question}
\newtheorem*{pbm*}{Problem}

\newcounter{theoremintro}

\theoremstyle{definition}

\newaliascnt{pgrCt}{lma}

\aliascntresetthe{pgrCt}

\newaliascnt{dfCt}{lma}
\newtheorem{df}[dfCt]{Definition}
\aliascntresetthe{dfCt}

\newaliascnt{remCt}{lma}
\newtheorem{rem}[remCt]{Remark}
\aliascntresetthe{remCt}

\newaliascnt{remsCt}{lma}

\aliascntresetthe{remsCt}

\newaliascnt{exCt}{lma}
\newtheorem{ex}[exCt]{Exercise}
\aliascntresetthe{exCt}

\newaliascnt{egCt}{lma}
\newtheorem{eg}[egCt]{Example}
\aliascntresetthe{egCt}

\newaliascnt{egsCt}{lma}
\newtheorem{egs}[egsCt]{Examples}
\aliascntresetthe{egsCt}

\newaliascnt{qstCt}{lma}
\newtheorem{qst}[qstCt]{Question}
\aliascntresetthe{qstCt}

\newaliascnt{cnjCt}{lma}
\newtheorem{cnj}[cnjCt]{Conjecture}
\aliascntresetthe{cnjCt}

\newaliascnt{pbmCt}{lma}
\newtheorem{pbm}[pbmCt]{Problem}
\aliascntresetthe{pbmCt}

\newaliascnt{notaCt}{lma}
\newtheorem{nota}[notaCt]{Notation}
\aliascntresetthe{notaCt}

\theoremstyle{theorem}

\AtBeginDocument{   \def\MR#1{}
}

\input{tcilatex}

\numberwithin{section}{chapter}

\newcommand{\tfae}{the following are equivalent}

\newcommand{\ca}{C$^*$-algebra}

\newcommand{\uca}{unital C*-algebra}

\newcommand{\id}{{\mathrm{id}}}

\newcommand{\supp}{{\mathrm{supp}}}

\newcommand{\Aut}{{\mathrm{Aut}}}

\newcommand{\Hom}{{\mathrm{Hom}}}
\newcommand{\Ext}{{\mathrm{Ext}}}

\begin{document}

\title[]{Classifiability of crossed products}


\author{Eusebio Gardella}
\address{Department of Mathematical Sciences, Chalmers University of
Technology and University of Gothenburg, Gothenburg SE-412 96, Sweden.}
\email{gardella@chalmers.se}
\urladdr{www.math.chalmers.se/~gardella}

\date{\today}

\thanks{The author was partially supported by a starting grant of 
the Swedish Research Council and by the Royal Society of Arts and Sciences in Gothenburg.}

\begin{abstract}
This survey studies C*-algebraic crossed products arising from topological dynamical systems with an eye toward their classifiability in the sense of the Elliott program. We introduce the crossed product construction for actions by discrete groups in full generality, and then focus on commutative systems to establish some of the fundamental structural results: nuclearity of the crossed product is equivalent to amenability of the action, and in this setting simplicity is equivalent to the combination of minimality and topological freeness. With these properties in place, the only condition left to translate into dynamical terms is tensorial absorption of the Jiang-Su algebra $\mathcal{Z}$. We present the main dynamical tools known to obtain $\mathcal{Z}$-stability, both in the settings of amenable and nonamenable groups, and highlight the two conjectures that are believed to capture the full picture. The aim of the survey is to provide both an accessible entry point and a comprehensive reference on the classification of crossed products, describing the state of the art and a number of central open problems in the field.
\end{abstract}

\maketitle

\tableofcontents

\mainmatter

\renewcommand*{\thetheoremintro}{\Alph{theoremintro}}

\renewcommand*{\thetheoremintro}{\Alph{theoremintro}}

\chapter{Introduction}

The study of C*-algebraic crossed products arising from topological dynamical systems lies at the heart of the interplay between operator algebras, topology, and dynamics. Crossed products provide a construction by which an action of a group on a topological space gives rise to a new C*-algebra, encoding dynamical and algebraic information simultaneously. Indeed, the structure of the crossed product reflects a great deal of information about the orbit complexity, the presence (or absence) of invariant measures, and finer regularity properties of the action. Over the years, crossed products have played a dual role: they provide a vast supply of natural examples, among others for the classification program, and they provide geometric and/or dynamical coordinates through which known C*-algebras can be studied; this is most clear in the case of the irrational rotation algebras.

This survey is organized with two complementary goals. First, it is meant to provide a self–contained introduction to crossed products of actions by discrete groups on C*-algebras, which we do in Chapter~\ref{ch:CP}. Assuming only basic knowledge in C*-algebras, we construct full and reduced crossed products. We begin with covariant representations and integrated representations, defining full and reduced crossed products and discussing 
a relevant case in which they agree, namely when the group is amenable. 
Additionally, we collect the fundamental properties that are used throughout, including the description of the \emph{full} crossed product as a universal C*-algebra on generators and relations (\autoref{thm:FullCPDiscreteUniv}) and the fact that the canonical conditional expectation on the \emph{reduced} crossed product is faithful (\autoref{thm:CondExpFaithful}). Along the way we present concrete examples of dynamical systems and compute their crossed products. 

The second focus is on \emph{classifiability}: when does a crossed product fall within the class of C*-algebras covered by Elliott's classification program? 
The Elliott program has led to a definitive classification result for a robust class of simple, nuclear C*-algebras by K-theoretic and tracial data. For the purposes of this survey, it is convenient to phrase it as follows.

\begin{thm*}[Classification]
Simple, separable, unital, nuclear, $\mathcal{Z}$-stable C*-algebras satisfying the Universal Coefficient Theorem (UCT) are completely classified by their K-theories and traces (known as the Elliott invariant).
\end{thm*}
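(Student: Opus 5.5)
This statement is the culmination of the Elliott classification program rather than a result proved within this survey. I would therefore establish it by assembling deep external results, and I would not try to derive it from the crossed-product machinery developed here.

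\textbf{Reduction to tracially nice algebras.} Let $A$ and $B$ be simple, separable, unital, nuclear, $\mathcal{Z}$-stable C*-algebras satisfying the UCT, and suppose their Elliott invariants are isomorphic. The first step is to split into two cases according to whether the trace simplex $T(A)$ is empty.

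\textbf{Traceless case.} If $T(A)=\emptyset$, then $\mathcal{Z}$-stability together with simplicity forces pure infiniteness. By Kirchberg's theorem, $A\otimes\OI\cong A$, so $A$ is a unital Kirchberg algebra. The Kirchberg--Phillips classification then gives the isomorphism: it combines the $\Ot$-embedding theorem, the $\OI$-absorption theorem and UCT-based KK-theory. Concretely, an isomorphism of $K$-theory preserving the unit class lifts to a KK-equivalence, and this in turn lifts to an isomorphism.

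\textbf{Finite case.} If $T(A)\neq\emptyset$, then $A$ is stably finite; nuclearity gives quasidiagonality of traces via Tikuisis--White--Winter, which uses the UCT. I would then follow the now standard route in three steps.

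First, show that $A$ has finite nuclear dimension. This is the $\mathcal{Z}$-stable $\Rightarrow$ finite nuclear dimension direction of the Toms--Winter conjecture, due to Castillejos--Evington--Tikuisis--White--Winter. It uses uniform property $\Gamma$ and complemented partitions of unity in the tracial ultrapower.

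Second, verify that $A\otimes\mathcal{Q}$ has generalized tracial rank at most one, via the Elliott--Gong--Lin--Niu results. Alternatively, I would use the unitary-uniqueness and existence theorems of Carrión--Gabe--Schafhauser--Tikuisis--White directly.

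Third, run an Elliott intertwining argument. Existence and uniqueness theorems for unital embeddings, classified by total $K$-theory, traces and algebraic $K_1$ data, produce approximately commuting diagrams. The invariant includes the pairing between $K_0$ and traces, and the additional Hausdorffized algebraic $K_1$ data can be shown to be automatically determined up to the required equivalence. Finally, the intertwining converges to an isomorphism $A\cong B$ realizing the given invariant isomorphism.

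\textbf{Expected obstacle.} The main difficulty is the uniqueness theorem in the stably finite case. One must show that two unital embeddings agreeing on the invariant are approximately unitarily equivalent. This is where quasidiagonality of traces and the UCT enter essentially, through a classification of lifts into trace-kernel extensions of the tracial ultrapower. In a survey I would cite this step rather than reprove it.
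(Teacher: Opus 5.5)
Your proposal is correct and follows the route the paper indicates. The survey does not prove this theorem itself; like you, it splits it via Kirchberg's dichotomy into the purely infinite case (Kirchberg--Phillips) and the stably finite case (Elliott--Gong--Lin--Niu, Tikuisis--White--Winter and Castillejos--Evington--Tikuisis--White--Winter, or alternatively the shorter proof of Carri\'on--Gabe--Schafhauser--Tikuisis--White). Your traceless step (no tracial state $\Rightarrow$ purely infinite) also uses nuclearity, through the Cuntz--Blackadar--Handelman--Haagerup theorem quoted in the paper, and not just simplicity and $\mathcal{Z}$-stability.
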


We will call a C*-algebra \emph{classifiable} if it satisfies the assumptions of the above theorem.
Kirchberg's dichotomy asserts that a classifiable C*-algebra is either stably finite or purely infinite, and the proof of the classification theorem splits accordingly into two disjoint cases: the purely infinite case was settled by Kirchberg and Phillips almost thirty years ago \cite{Phi_classification_2000}, while the stably finite case has only been resolved in the past decade through the combination of several deep advances \cite{EllGonLinNiu_classification_2015, TikWhiWin_quasidiagonality_2017, CasEviTikWhiWin_nuclear_2021}. Winter's ICM address \cite{Win_structure_2018} provides an excellent overview of the development of the subject and the theory underlying these results. Additionally, \autoref{app:Elliott} gives a historical account and motivation of the classification program. 

With such a powerful theorem at hand, a natural and urgent task is to identify broad and interesting families of C*-algebras that satisfy its assumptions. Crossed products arising from topological dynamical systems are among the most important such families: given an action $G \curvearrowright X$ of a countable group on a compact metrizable space, the crossed product $C(X) \rtimes G$ encodes the dynamics of the system in operator-algebraic form. 
For researchers primarily interested in topological dynamics, the problem of determining when a crossed product is classifiable ought to be answered in dynamical terms, that is, we are seeking for a list of (intrinsic) properties of a dynamical system $G\curvearrowright X$ that are equivalent to classifiability of $C(X) \rtimes G$. Kirchberg’s dichotomy takes a particularly nice form
in this setting, since a simple, nuclear crossed product $C(X)\rtimes G$ is stably finite if and only if G is amenable; see \autoref{prop:GamenIFFinvmeas}.

Of the hypotheses in the classification theorem, both unitality and separability of $C(X)\rtimes G$ will be automatic for the actions we consider\footnote{Explicitly, unitality of $C(X)\rtimes G$ is equivalent to the combination of compactness of $X$ and discreteness of $G$, while separability is equivalent to the combination of metrizability of $X$ and countability of $G$.}. On the other hand, we will see that nuclearity of $C(X)\rtimes G$ is equivalent to a property of $G\curvearrowright X$ introduced by Anantharaman-Delaroche, called amenability (see \autoref{df:AmenAc} and \autoref{thm:AmenIffNuc}). In this case, $C(X) \rtimes G$ also satisfies the UCT by \cite[Theorem~10.9]{Tu_conjecture_1999}. Moreover, assuming amenability of $G\curvearrowright X$, simplicity of the crossed product is equivalent to the combination of minimality and topological freeness (see \autoref{df:EssFree} and \autoref{thm:SimpleCP}). 
These results are not treated as black boxes, and proofs are given in sufficient detail to make the survey self-contained for readers with basic knowledge in C*-algebras. We also discuss traces on crossed products and their relation to 
invariant probability measures on the space; see \autoref{thm:Ursu}.
It follows that amenability, minimality, and topological freeness are \emph{necessary} dynamical conditions for classifiability, and the remaining challenge is to determine when $C(X) \rtimes G$ is $\mathcal{Z}$-stable.

Although we do not assume familiarity with the Jiang-Su algebra $\mathcal{Z}$, we 
do not review its construction in these notes; this is not strictly necessary either, since we are interested in the property of absorbing $\mathcal{Z}$ tensorially. Said absorption condition can be characterized (in the simple, nuclear case) without referring to $\mathcal{Z}$ and only using matrix algebras (see \autoref{thm:ZstTrZst}) and in practice we take this criterion as the definition of $\mathcal{Z}$-stability. After specializing this condition to crossed products, we 
move on to the topic of active research in this setting: relating $\mathcal{Z}$-stability with dynamical properties of the system -- this task is not finished, and there is an increasing amount of research effort put into completing it.

This ``open end" of the subject, and the organizing principle for the latter part of this survey, can be framed in terms of two conjectures: one covering (free) actions of \emph{amenable} groups, and another one for actions of \emph{nonamenable} groups. We state them here in a brief form to guide the reader, who is referred to later chapters for motivation and definitions, as well as for known partial results and limiting examples.

\begin{cnj}[Amenable groups]
	Let $G\curvearrowright X$ be a free, minimal, action of an amenable group. Then the following are equivalent:
	\be\item The crossed product $C(X)\rtimes G$ is $\mathcal{Z}$-stable;
	\item $G\curvearrowright X$ has the small boundary property\footnote{This means that there is a basis $\mathcal{B}$ for the topology of $X$ such that $\mu(\partial U)=0$ for all $U\in\mathcal{B}$ and for every $G$-invariant probability measure $\mu$ on $X$; see \autoref{df:SBP}.}.
	\ee
\end{cnj}

Giol and Kerr \cite{GioKer_subshifts_2010} produced the first examples of minimal, free actions of the integers whose crossed products are not $\mathcal{Z}$-stable, thus showing that $\mathcal{Z}$-stability can indeed fail when $G$ is amenable.
On the other hand, and perhaps surprisingly, we expect $\mathcal{Z}$-stability to be 
automatic for actions of nonamenable groups.

\begin{cnj}[Nonamenable groups]
Let $G\curvearrowright X$ be an amenable, topologically free, minimal action of a nonamenable group. Then $C(X)\rtimes G$ is $\mathcal{Z}$-stable.
\end{cnj}

This survey collects the evidence on both sides, discussing in as much detail as possible the current machinery and techniques, also isolating some of the remaining obstacles and indicating promising directions for future work.

\medskip

\medskip

\textbf{Acknowledgements:} I would like to thank the organizers of the 2025 CIMPA School ``K-theory and Operator Algebras'', with its two week-long sub-schools (I Escuela CMaLP on July 28--August 1, 2025, in La Plata, and XIII Escuela Santal\'o on August 4--8, 2025, in Buenos Aires) for organizing this event and inviting me to deliver one of the minicourses.
Additionally, I would like to thank Paolo Boldrini and Spyros Petrakos for proofreading a preliminary version of these notes and providing very helpful feedback, as well as the anonymous referee for their thorough reading of the manuscript.

Finally, I would like to thank my late master's advisor Fernando Abadie, who unexpectedly passed away a few months ago at a very young age, for being not only my supervisor but also a mentor, a source of inspiration, and my friend. He is the reason why I decided to study C*-algebras, and without his influence, these notes, and some of my work in this area, would not have come to be.

\chapter{Group actions and crossed products}\label{ch:CP}

\section{C*-dynamical systems}\label{sec:DynamSysts}
In this section, we introduce the notion of group action on a \ca, and present
a number of examples of them. Our main source of examples for these notes is topological dynamics,
which will be the focus of the next section.

For a \ca\ $A$, we write $\Aut(A)$ for its automorphism group. If $X$ is any set and $\alpha \colon X\to \Aut(A)$ is a function, in order to avoid cumbersome notation,
we usually write $\alpha_x$ in place of $\alpha(x)$.

\begin{df}
Let $G$ be a discrete group, and let $A$ be a \ca. An \emph{action}
of $G$ on $A$ is a
group homomorphism $\alpha\colon G\to \Aut(A)$.
In this case, we say that the triple $(G,A,\alpha)$ is a \emph{C*-dynamical system}, and
that the pair $(A,\alpha)$ is a $G$-C*-algebra.
\end{df}

The study of group actions is a generalization of the study of automorphisms of \ca s, in view of the
following easy observation.

\begin{rem}
Let $A$ be a \ca. Then there is a one-to-one correspondence between $\Aut(A)$ and $\Z$-actions on $A$.
Indeed, given $\varphi\in\Aut(A)$, the associated action $\alpha^\varphi\colon \Z\to\Aut(A)$ is given
by $\alpha^\varphi_n(a)=\varphi^n(a)$ for all $n\in\Z$ and all $a\in A$, with the convention that $\varphi^0=\id_A$.
\end{rem}

Similarly, there are a one-to-one correspondences:
\bi\item between pairs of commuting automorphisms of $A$ and $\Z^2$-actions on $A$;
\item between pairs of automorphisms of $A$ and $\mathbb{F}_2$-actions on $A$;
\item between automorphisms of $A$ of order $n$ and $\Z_n$-actions on $A$.
\ei

\subsection*{Topological actions}\label{sec:TopActions}
It is well-known that the categories of commutative \ca s and that of locally compact Hausdorff spaces are
equivalent, via the Gelfand transform. As a consequence of functoriality of the Gelfand transform, 
for a fixed discrete group $G$, the
categories of commutative $G$-C*-algebras and that of $G$-topological spaces are also equivalent. We recall
first the definition of a topological action. For a locally compact Hausdorff space $X$, we denote by
$\mathrm{Homeo}(X)$ the group of homeomorphisms of $X$.

\begin{df}
Let $G$ be a discrete group and let $X$ be a locally compact Hausdorff space. An \emph{action} of
$G$ on $X$ is a group homomorphism $\sigma\colon G\to \mathrm{Homeo}(X)$.
When no confusion can arise, we often omit the symbol $\sigma$, and just write $G\curvearrowright X$
to mean that $G$ acts on $X$.
\end{df}

Using the (contravariant) functor given by the Gelfand transform, the following can be shown easily.

\begin{ex}
Let $G$ be a discrete group and let $X$ be a locally compact Hausdorff space.
If $\sigma\colon G \to\mathrm{Homeo}(X)$ is a topological action, then the
formula $\sigma^*_g(f)=f\circ \sigma_g^{-1}$, for $g\in G$ and $f\in C_0(X)$, defines
an action of $G$ on $C_0(X)$.

Moreover, the assignment $\sigma\mapsto \sigma^*$ defines a one-to-one correspondence
between $G$-actions on $X$ and $G$-actions on $C_0(X)$.
\end{ex}

We now give some relevant examples of topological actions.

\begin{egs}\label{egs:TopActions}
Let $G$ be a discrete group.
\be\item There is unique action of $G$ on the one-point space $\{\ast\}$.
More generally, every locally compact Hausdorff space $X$ carries an action of $G$, namely the
trivial on $\id_X\colon G\to\mathrm{Homeo}(X)$.
\item The action $\texttt{Lt}\colon G\to \mathrm{Homeo}(G)$ given by
$\texttt{Lt}_g(h)=gh$ for all $g,h\in G$, is called the \emph{left translation action}.
With a slight abuse of notation, we also write $\texttt{Lt}\colon G\to\Aut(C_0(G))$ for
the induced action.
\item Let $\theta\in \mathbb{R}$ be a number. Then the homeomorphism $r_{\theta}\in \mathrm{Homeo}(S^1)$
given by $r_{\theta}(z)=e^{2\pi i \theta}z$, for all $z\in S^1$, defines a $\Z$-action
on $S^1$. We call this action the \emph{rotation action (by angle $\theta$)}. When $\theta$ is
irrational, we call it an \emph{irrational rotation}.
\item Define the \emph{boundary} $\partial \mathbb{F}_2$ of $\mathbb{F}_2=\langle a,b\rangle$
as the set of right-infinite reduced words on $\{a,a^{-1},b, b^{-1}\}$, and endow it with the
topology in which two infinite words are close if they agree on some long initial segment.
Then $\partial \mathbb{F}_2$ is a Cantor set, and $\mathbb{F}_2$ acts on it by left concatenation
(followed, potentially, by reduction). This is called the \emph{boundary action} of $\mathbb{F}_2$.
\ee
\end{egs}

\section{Covariant representations and crossed products}\label{sec:CovReps}
One of the main goals of these notes is to study the structure of crossed products
by certain classes of actions. This section is devoted to the construction of full and
reduced crossed products.

\begin{df}\label{df:CovRep}
Let $G$ be a discrete group, and let $\alpha\colon G\to\Aut(A)$ be an action on a
\ca\ $A$. A \emph{covariant representation} of $(G,A,\alpha)$ is a triple
$(\Hi,\pi,\varphi)$, where $\Hi$ is a Hilbert space, $\pi\colon G\to \U(\Hi)$ is a
unitary
representation,
and $\varphi\colon A\to \B(\Hi)$ is a representation satisfying
\[\varphi(\alpha_g(a))=\pi_g\varphi(a)\pi_g^*\]
for all $g\in G$ and all $a\in A$.

A covariant representation $(\Hi,\pi,\varphi)$ is said to be \emph{non-degenerate} if
$\varphi$ is non-degenerate.
\end{df}

It is not entirely obvious from the above definition that there
exist \emph{any} covariant representations! There is, however, 
a distinguished
class of covariant representatons, called \emph{regular covariant representations}, which can be explicitly constructed out of any 
representation of the C*-algebra, thus showing that (nontrivial) covariant representations do indeed exist. Regular covariant
representations are also used to define the reduced crossed product,
and are therefore extremely important in our analysis.

For a discrete group $G$ and $g\in G$, we denote by $\mathbbm{1}_g\in\ell^2(G)$ the indicator function of $\{g\}$.
We let $\lambda\colon G\to \mathcal{U}(\ell^2(G))$ denote the 
unitary representation given by $\lambda_g(\mathbbm{1}_h)=\mathbbm{1}_{gh}$ for all $g,h\in G$, and call it the 
\emph{left regular representation} of $G$.

\begin{df}\label{df:RegCovRep}
Let $\varphi\colon A\to \B(\Hi)$ be any representation. The 
\emph{regular covariant representation} associated to $\varphi_0$
is the triple $(\Hi^G,\lambda^\Hi,\varphi^G)$, where
$\Hi^G=\ell^2(G,\Hi)$, and
$\lambda^\Hi\colon G\to\U(\Hi^G)$ and $\varphi^G\colon A\to\B(\Hi^G)$ be given by
\[\lambda^\Hi_g(\xi\mathbbm{1}_h)=\xi \mathbbm{1}_{gh} \ \ \mbox{ and } \ \ \varphi^G(a)(\xi\mathbbm{1}_g)=\varphi(\alpha_{g^{-1}}(a))(\xi)\mathbbm{1}_g\]
for all $g,h\in G$, for all $\xi\in\Hi$ and for all $a\in A$.
\end{df}

\begin{ex} 
Show that $(\Hi^G,\lambda^\Hi,\varphi^G)$ is indeed a covariant
representation, and that it is non-degenerate if and only if $\varphi$ is.
\end{ex}

The full crossed product of a C*-dynamical system $(G,A,\alpha)$ is defined
to be the universal object with respect to all covariant representations of the
system, while the reduced crossed product is defined to be the universal object
with respect to the regular covariant representations; see \autoref{df:FullRedCP} below.
The rigorous definition in the case that $G$ is locally compact requires
that one develops some theory of integration on Banach spaces. Since the focus
of these notes will be on discrete groups, we omit much of this discussion here. For $g\in G$, we regard the indicator function
$\mathbbm{1}_g$ as an element in $c_c(G)\subseteq c_0(G)$. Note that any element of $c_c(G,A)$ is a finite linear
combination of elements of the form $a\mathbbm{1}_g$, for $a\in A$ and $g\in G$.

\begin{df}
Let $G$ be a discrete group, and let $\alpha\colon G\to\Aut(A)$ be an action on a
\ca\ $A$. Endow the vector space $c_c(G,A)$ of finitely supported functions $G\to A$
with the product given by twisted convolution, which is determined by:
\[(a\mathbbm{1}_g)\ast (b\mathbbm{1}_h)=a\alpha_g(b)\mathbbm{1}_{gh}\]
for all $a,b\in A$ and all $g,h\in G$, and with the following twisted convolution
\[(a\mathbbm{1}_g)^*=\alpha_g^{-1}(a^*)\mathbbm{1}_{g^{-1}}\]
for all $a\in A$ and all $g\in G$. Finally, we define the norm of $a\in c_c(G,A)$
by $\|a\|_1=\sum_{g\in G}\|a(g)\|$.
The resulting normed $\ast$-algebra is denoted by $c_c(G,A,\alpha)$.
\end{df}

When $A$ is unital,
we have $\mathbbm{1}_g\ast \mathbbm{1}_h=\mathbbm{1}_{gh}$, that $\mathbbm{1}_g^*=\mathbbm{1}_{g^{-1}}$, and that $\mathbbm{1}_1$ is the unit of 
$c_c(G,A,\alpha)$. In other words, one can regard the assignment $g\mapsto \mathbbm{1}_g$ as a unitary representation of $G$ (in some C*-algebra containing $c_c(G,A,\alpha)$ unitally).

\begin{ex}
Prove that $c_c(G,A,\alpha)$ is a normed $\ast$-algebra, and that it is unital
if and only if $A$ is unital.
\end{ex}

It turns out that $\ast$-representations of $c_c(G,A,\alpha)$ on Hilbert spaces
are in one-to-one correspondence with covariant representations of $(G,A,\alpha)$, as we show next.

\begin{thm}\label{thm:IntForm}
Let $G$ be a discrete group, and let $\alpha\colon G\to\Aut(A)$ be an action on a
\ca\ $A$. Then there is a canonical one-to-one correspondence
between (non-degenerate) covariant representations of $(G,A,\alpha)$ and (non-degenerate)
$\ast$-representations of $c_c(G,A,\alpha)$. If $(\Hi,\pi,\varphi)$ is a covariant representations of $(G,A,\alpha)$,
then the associated $\ast$-representation $\varphi\rtimes\pi\colon  c_c(G,A,\alpha)\to \B(\Hi)$ is called the
\emph{integrated form of $(\Hi,\pi,\varphi)$}, and is given by
\[(\varphi\rtimes\pi)(a\mathbbm{1}_g)=\varphi(a)\pi_g\]
for all $a\in A$ and all $g\in G$.
\end{thm}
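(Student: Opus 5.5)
Given a covariant representation $(\Hi,\pi,\varphi)$, I will define $\varphi\rtimes\pi$ on $c_c(G,A,\alpha)$ by linear extension of $a\mathbbm{1}_g\mapsto \varphi(a)\pi_g$. This is well defined because every element of $c_c(G,A)$ is uniquely $\sum_g f(g)\mathbbm{1}_g$. Multiplicativity follows from covariance:
\[\varphi(a)\pi_g\varphi(b)\pi_h=\varphi(a)\varphi(\alpha_g(b))\pi_g\pi_h=\varphi(a\alpha_g(b))\pi_{gh}.\]
The $\ast$-property follows from
\[(\varphi(a)\pi_g)^*=\pi_g^*\varphi(a^*)=\varphi(\alpha_{g^{-1}}(a^*))\pi_{g^{-1}}.\]
It is also contractive for $\|\cdot\|_1$, since $\|\varphi(a)\pi_g\|\le\|a\|$. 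Nondegeneracy transfers in this direction because $\varphi(A)\Hi\subseteq(\varphi\rtimes\pi)(c_c)\Hi$, using $\varphi(a)=(\varphi\rtimes\pi)(a\mathbbm{1}_1)$ and that $\pi_1=1$. Conversely, $(\varphi\rtimes\pi)(c_c)\Hi\subseteq \varphi(A)\Hi$ because $\varphi(a)\pi_g\xi\in\varphi(A)\Hi$. So the two nondegeneracy conditions are equivalent.

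For the converse, start with a nondegenerate $\ast$-representation $\rho\colon c_c(G,A,\alpha)\to\B(\Hi)$. Define $\varphi(a)=\rho(a\mathbbm{1}_1)$; this is a $\ast$-representation of $A$ since $a\mapsto a\mathbbm{1}_1$ is a $\ast$-homomorphism. The unitaries are the main obstacle when $A$ is nonunital, since $\mathbbm{1}_g\notin c_c(G,A,\alpha)$. I would first show that $\varphi$ is nondegenerate. For $(e_\lambda)$ an approximate unit of $A$, one has $e_\lambda\mathbbm{1}_1 \ast a\mathbbm{1}_g=e_\lambda a\mathbbm{1}_g\to a\mathbbm{1}_g$ in $\|\cdot\|_1$. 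Here I need boundedness of $\rho$ on the relevant pieces: $\|\rho(a\mathbbm{1}_g)\|^2=\|\rho(\alpha_{g^{-1}}(a^*a)\mathbbm{1}_1)\|\le\|a\|^2$, because $\varphi$ is a $\ast$-homomorphism on a C*-algebra and hence contractive. Hence $\varphi(e_\lambda)\to 1$ strongly on the dense subspace $\rho(c_c)\Hi$, and so everywhere since the net is bounded.

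Then define $\pi_g$ on the dense span of vectors $\rho(f)\xi$ by
\[\pi_g\rho(f)\xi=\rho\bigl(\mathbbm{1}_g\ast f\bigr)\xi,\qquad (\mathbbm{1}_g\ast a\mathbbm{1}_h:=\alpha_g(a)\mathbbm{1}_{gh}).\]
To see this is well defined and isometric, I would compute $\|\sum_i\rho(\mathbbm{1}_g\ast f_i)\xi_i\|^2$. Expanding gives inner products $\langle\rho(f_j^*\ast\mathbbm{1}_g^*\ast\mathbbm{1}_g\ast f_i)\xi_i,\xi_j\rangle$, which formally equal the corresponding terms without $g$. To make this rigorous, approximate by $\rho(\alpha_g(e_\lambda)\mathbbm{1}_g)$, which lies in $c_c$. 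The identity
\[(\alpha_g(e_\lambda)\mathbbm{1}_g)\ast f\to \mathbbm{1}_g\ast f\]
in $\|\cdot\|_1$ shows that $\pi_g=\text{s-}\lim_\lambda\rho(\alpha_g(e_\lambda)\mathbbm{1}_g)$. Each element of this net has norm at most $1$, so the limit is a contraction, and the computation $\pi_g\pi_h=\pi_{gh}$ and $\pi_g^*=\pi_{g^{-1}}$ then follows on the dense subspace. Consequently each $\pi_g$ is unitary and $\pi$ is a representation. Covariance $\pi_g\varphi(a)\pi_g^*=\varphi(\alpha_g(a))$ and the formula $\rho(a\mathbbm{1}_g)=\varphi(a)\pi_g$ are checked on the dense subspace using $a\mathbbm{1}_1\ast\mathbbm{1}_g\ast f=a\mathbbm{1}_g\ast f$.

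Finally, the two constructions are mutually inverse. Starting from $(\pi,\varphi)$, the recovered $\varphi$ is the original one, and the recovered $\pi_g$ agrees with the original on $\varphi(A)\Hi$, hence everywhere by nondegeneracy. Starting from $\rho$, the last identity gives $\rho=\varphi\rtimes\pi$. In the degenerate setting the correspondence is only between nondegenerate objects, or one restricts to the essential subspace and extends $\pi$ trivially; I would state the bijection for nondegenerate ones, where uniqueness holds. The step I expect to be hardest is the construction of $\pi_g$ in the nonunital case, which I would handle as above via the approximate unit. When $A$ is unital, one simply takes $\pi_g=\rho(\mathbbm{1}_g)$.
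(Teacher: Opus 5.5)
Your proposal is correct and follows the paper's route. The same covariance computation shows that $\varphi\rtimes\pi$ is a $\ast$-representation, and the converse recovers $\varphi=\rho\circ\iota$ and $\pi_g=\rho(\mathbbm{1}_g)$ in the unital case. Your strong-limit construction $\pi_g=\lim_\lambda\rho(\alpha_g(e_\lambda)\mathbbm{1}_g)$ is exactly the approximate-unit argument that the paper leaves as an exercise.

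Your decision to state the bijection only for nondegenerate objects is also justified. For degenerate ones injectivity fails (e.g.\ $\varphi=0$ with any $\pi$), and the paper's unital argument tacitly uses $\rho(\mathbbm{1}_1)=1$ to get unitarity of $\rho(\mathbbm{1}_g)$.
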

\begin{proof}
Let $(\Hi,\pi,\varphi)$ be a covariant representation of $(G,A,\alpha)$. 
We need to show that the formula in the statement determines a $\ast$-representation of $c_c(G,A,\alpha)$.
Given $a,b\in A$ and $g,h\in G$, we have
\begin{align*}
(\varphi\rtimes\pi)\big((a\mathbbm{1}_g)\ast (b\mathbbm{1}_h)\big) &=
(\varphi\rtimes\pi)(a\alpha_g(b)\mathbbm{1}_{gh})\\
&=\varphi(a\alpha_g(b))\pi_{gh}\\
&=\varphi(a)\varphi(\alpha_g(b))\pi_g \pi_h\\
&=\varphi(a)\pi_g \varphi(b)\pi_h\\
&= (\varphi\rtimes\pi)(a\mathbbm{1}_g)\cdot (\varphi\rtimes\pi)(b\mathbbm{1}_h),
\end{align*}
so $\varphi\rtimes\pi$ is multiplicative. One checks analogously that $\varphi\rtimes\pi$ is 
$\ast$-preserving. 

We proceed to show that any $\ast$-representation of $c_c(G,A,\alpha)$ is the integrated form
of a covariant representation of $(G,A,\alpha)$.
For the sake of the argument, we will assume that $A$ is unital from now on.

We denote by $\iota\colon A\to c_c(G,A,\alpha)$ 
the $\ast$-homomorphism
given by $\iota(a)=a\mathbbm{1}_1$ for $a\in A$.
Let $\psi\colon c_c(G,A,\alpha)\to \B(\Hi)$ be a $\ast$-representation. 
Set $\varphi=\psi\circ\iota$ and let $\pi\colon G\to \B(\Hi)$ be given by $\pi_g=\psi(\mathbbm{1}_g)$ for
all $g\in G$. Then $\varphi$ is a representation of $A$, and $\pi$ is a unitary representation of $G$.

We claim that $\psi=\varphi\rtimes\pi$. To check this, it suffices to show that they agree
on elements of the form $a\mathbbm{1}_g$, for $a\in A$ and $g\in G$. This is readily checked:
\[(\varphi\rtimes\pi)(a\mathbbm{1}_g)=\varphi(a)\pi_g=\psi(a\mathbbm{1}_1)\psi(\mathbbm{1}_g)=\psi(a\mathbbm{1}_g),\]
thus concluding the proof.
\end{proof}

\begin{ex} Complete the proof of the above result by proving it in the case that $A$ is not necessarily unital. \emph{(Hint: use limits along an approximate identity to define
the unitary representation.)}
\end{ex}

We are now ready to introduce full and reduced crossed products.

\begin{df}\label{df:FullRedCP}
Let $G$ be a discrete group, and let $\alpha\colon G\to\Aut(A)$ be an action on a
\ca\ $A$.

We define the \emph{full crossed product} $A\rtimes_{\alpha} G$ of $(G,A,\alpha)$ to be the completion of
$c_c(G,A,\alpha)$ with respect to the norm
\[\|a\|_{A\rtimes_{\alpha} G} = \sup \Big\{\big\|(\varphi\rtimes\pi)(a)\big\|\colon (\varphi, \pi) \mbox{ is a covariant representation of } (G,A,\alpha)\Big\}.\]

We define the \emph{reduced crossed product} $A\rtimes_{\lambda,\alpha} G$ of $(G,A,\alpha)$ to be the completion of
$c_c(G,A,\alpha)$ with respect to the norm
\[\|a\|_{A\rtimes_{\lambda,\alpha} G} = \sup \Big\{\big\|(\varphi^G\rtimes \lambda^\Hi)(a)\big\|\colon \varphi\colon A\to \mathcal{B}(\Hi) \mbox{ is a representation of } A\Big\}.\]
\end{df}


It follows directly from the definitions that representations of $A\rtimes_{\alpha} G$ are
in one-to-one correspondence with covariant representations of $(G,A,\alpha)$.
This is a generalization of the corresponding fact for the full group \ca\ 
$C^*(G)$, which is the full crossed
product of the trivial action of $G$ on $\mathbb{C}$. 
Similarly, the reduced group C*-algebra $C^*_{\lambda}(G)$ is the reduced crossed product of the trivial action of $G$ on $\mathbb{C}$.

\autoref{sec:EgsCP} contains the explicit
computations of a number of crossed products.
In particular, \autoref{eg:TrivialAction}
is a generalization of the observation that $\C\rtimes G= C^*(G)$ and $\C\rtimes_{\lambda} G=C^*_{\lambda}(G)$.

As it turns out, 
the full crossed product of a C*-dynamical system can be 
completely described
through a universal property intimately related to covariant representations of the system. This presentation is
not surprising, but it has the advantage of making certain computations of crossed products much easier.

\begin{thm}\label{thm:UnivPropCP}
Let $G$ be a discrete group, let $A$ be a \ca, and let $\alpha\colon G\to\Aut(A)$ be
an action. Then there exist homomorphisms
\[\iota_A\colon A\to M(A\rtimes_{\alpha} G) \ \mbox{ and } \ \iota_G\colon G\to \U(M(A\rtimes_{\alpha} G))\]
satisfying
\be
\item[(i)] $\iota_A(\alpha_g(a))=\iota_G(g)\iota_A(a)\iota_G(g)^{-1}$ for all $a\in A$ and all $g\in G$;
\item[(ii)] $A\rtimes_{\alpha} G=\overline{\mathrm{span}}\{\iota_A(a)\iota_G(g)\colon a\in A, g\in G\}$;
\item[(iii)] Whenever $(\Hi, \pi,\varphi)$ is a covariant representation of $(G,A,\alpha)$, then
\[\varphi=(\varphi\rtimes\pi)\circ\iota_A \ \mbox{ and } \pi=(\varphi\rtimes\pi)\circ\iota_G.\]
\ee

Moreover, suppose that $B$ is another \ca\ and that $j_A\colon A\to M(B)$ and $j_G\colon G\to \U(M(B))$ are maps
satisfying conditions (i), (ii) and (iii) above. Then there exists an isomorphism $\psi\colon A\rtimes_{\alpha} G\to B$
such that $\psi\circ\iota_A=j_A$ and $\psi\circ \iota_G=j_G$.
\end{thm}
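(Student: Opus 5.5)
We plan to build $\iota_A$ and $\iota_G$ from the multiplication of the dense subalgebra $c_c(G,A,\alpha)$, then read off (i)--(iii), and obtain uniqueness by extending $j_A$ and $j_G$ to a homomorphism and showing it is isometric.

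For $a\in A$ and $g\in G$, define operators on $c_c(G,A,\alpha)$ by left multiplication:
\[\iota_A(a)(b\mathbbm{1}_h)=ab\mathbbm{1}_h, \qquad \iota_G(g)(b\mathbbm{1}_h)=\alpha_g(b)\mathbbm{1}_{gh}.\]
Right multiplications are defined analogously, for instance $(b\mathbbm{1}_h)\iota_G(g)=b\mathbbm{1}_{hg}$. These are formally the products $(a\mathbbm{1}_1)\ast(\cdot)$ and $\mathbbm{1}_g\ast(\cdot)$, and they make sense even when $A$ is not unital.

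\begin{enumerate}
\item[(a)] \emph{Boundedness and multipliers.} Fix a covariant representation $(\Hi,\pi,\varphi)$. Then
\[(\varphi\rtimes\pi)(\iota_A(a)x)=\varphi(a)\,(\varphi\rtimes\pi)(x), \qquad (\varphi\rtimes\pi)(\iota_G(g)x)=\pi_g\,(\varphi\rtimes\pi)(x).\]
Taking the supremum over all covariant representations gives $\|\iota_A(a)x\|\le\|a\|\,\|x\|$ and $\|\iota_G(g)x\|=\|x\|$ in the full norm. So these maps extend to $A\rtimes_\alpha G$. Together with the right multiplications they form double centralizers, so they define elements of $M(A\rtimes_\alpha G)$.
\item[(b)] \emph{Homomorphism properties.} $\iota_A$ is a $\ast$-homomorphism and $\iota_G$ is a unitary group homomorphism. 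This follows from associativity of twisted convolution and the adjoint formula; it suffices to check on elementary tensors.
\item[(c)] \emph{Properties (i)--(iii).} Covariance (i) is the computation $\mathbbm{1}_g\ast a\mathbbm{1}_1\ast\mathbbm{1}_{g^{-1}}=\alpha_g(a)\mathbbm{1}_1$, done at the multiplier level. For (ii), note $\iota_A(a)\iota_G(g)=a\mathbbm{1}_g$, whose span is $c_c(G,A,\alpha)$, which is dense. For (iii), extend $\varphi\rtimes\pi$ to $M(A\rtimes_\alpha G)$; this requires non-degeneracy, so we restrict to non-degenerate covariant representations or pass to the essential subspace. The identities in (a) then say exactly that $\varphi\rtimes\pi$ sends $\iota_A(a)$ to $\varphi(a)$ and $\iota_G(g)$ to $\pi_g$.
\end{enumerate}

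For the uniqueness part, the clause ``satisfying (iii)'' for $B$ must be read as the universal property: every covariant representation $(\varphi,\pi)$ factors as $\rho\circ j_A$ and $\rho\circ j_G$ for some representation $\rho$ of $B$.

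\begin{enumerate}
\item[(d)] \emph{Homomorphism onto $B$.} Represent $B$ faithfully and non-degenerately on $\mathcal{K}$. By (i) for $B$, the pair $(j_A,j_G)$ is a covariant representation of $(G,A,\alpha)$ on $\mathcal{K}$. Its integrated form (\autoref{thm:IntForm}) extends, by definition of the full norm, to a contractive $\ast$-homomorphism
\[\psi\colon A\rtimes_\alpha G\to M(B), \qquad \psi(\iota_A(a)\iota_G(g))=j_A(a)j_G(g).\]
By (ii) for $B$, $\psi$ lands in $B$ and has dense range, hence is surjective. The relations $\psi\circ\iota_A=j_A$ and $\psi\circ\iota_G=j_G$ hold after extending $\psi$ to multipliers.
\item[(e)] \emph{Injectivity.} Given a covariant representation $(\varphi,\pi)$, (iii) for $B$ provides $\rho$ with $\rho\circ\psi=\varphi\rtimes\pi$ on $c_c(G,A,\alpha)$. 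Hence
\[\|(\varphi\rtimes\pi)(x)\|\le\|\psi(x)\|.\]
Taking the supremum gives $\|x\|\le\|\psi(x)\|$, so $\psi$ is isometric and therefore an isomorphism.
\end{enumerate}

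The main obstacle is the multiplier bookkeeping in the non-unital case: checking that $\iota_A(a)$ and $\iota_G(g)$ are genuine double centralizers, and that representations extend to multipliers so that (iii) and the relations $\psi\circ\iota_A=j_A$, $\psi\circ\iota_G=j_G$ are meaningful. I would first do the unital case, where everything already lies in $A\rtimes_\alpha G$, and then handle the general case with an approximate unit, as in the exercise after \autoref{thm:IntForm}.
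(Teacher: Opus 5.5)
Your proposal is correct and follows the paper's route: $\iota_A$ and $\iota_G$ are left multiplications on $c_c(G,A,\alpha)$ extended to multipliers, and the comparison map $\psi$ is the integrated form of $(j_A,j_G)$. The only difference is in the injectivity step. You show $\psi$ is isometric by applying (iii) for $B$ to every covariant representation, while the paper applies $B$'s universal property to the pair $(\iota_A,\iota_G)$ to obtain an explicit inverse. Both versions need (iii) for $B$ to be read as a universal property, as you correctly point out, and your non-degeneracy caveat for (iii) is likewise warranted.
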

\begin{proof}
For $g\in G$, let $\iota_G(g)\colon c_c(G,A,\alpha)\to c_c(G,A,\alpha)$ be given by 
\[\iota_G(g)(\xi)(h)=\alpha_g\big(\xi(g^{-1}h)\big)\]
for all $\xi\in C_c(G,A,\alpha)$ and all $h\in G$. It is easily seen that $\iota_G(g)$ determines a multiplier of
$A\rtimes_{\alpha} G$, and that the resulting map $\iota_G\colon G\to M(A\rtimes_{\alpha} G)$ is a unitary representation.

For $a\in A$, let $\iota_A(a)\colon c_c(G,A,\alpha)\to c_c(G,A,\alpha)$ be given by $\iota_A(a)(\xi)=a\xi$ for all $\xi\in c_c(G,A,\alpha)$.
It is easily seen that $\iota_A(a)$ determines an invertible multiplier of
$A\rtimes_{\alpha} G$, and that the resulting map $\iota_A\colon A\to M(A\rtimes_{\alpha} G)$ is a homomorphism. The maps
$\iota_G$ and $\iota_A$ clearly satisfy conditions (i), (ii) and (iii) above.

Now let $(B,j_A,j_G)$ be a triple as in the statement. Since $(j_A,j_G)$ is a covariant representation and $j_A$ is non-degenerate,
there exists a non-degenerate homomorphism $j_A\rtimes j_G\colon A\rtimes_{\alpha} G\to B$ satisfying
\[(j_A\rtimes j_G)\circ \iota_A=j_A \ \mbox{ and } \ (j_A\rtimes j_G)\circ\iota_G=j_G.\]
Reversing the roles of $(B,j_A,j_G)$ and $(A\rtimes_{\alpha} G, \iota_A,\iota_G)$ gives a non-degenerate homomorphism
$\iota_A\rtimes \iota_G\colon B\to A\rtimes_{\alpha} G$ satisfying
\[(\iota_A\rtimes \iota_G)\circ j_A=\iota_A \ \mbox{ and } \ (\iota_A\rtimes \iota_G)\circ j_G=\iota_G.\]
It is then immediate that $j_A\rtimes j_G$ and $\iota_A\rtimes \iota_G$ are mutual inverses.
\end{proof}

\begin{rem}
Instead of developing the theory of crossed products as we did in \autoref{sec:CovReps}, we could have defined crossed products
through the universal property from the previous theorem. Had we taken that route, we would have had to show that there
exists a least one (and hence precisely one) crossed product. Of course our work in \autoref{sec:CovReps} shows that one crossed product
can be obtained by completing $c_c(G,A,\alpha)$ with respect to its universal norm, but there is a slightly shorter way
of proving its existence. We can find a set $\mathcal{S}$ of covariant representations of $(G,A,\alpha)$ such that
every cyclic covariant representation is equivalent to an element of $\mathcal{S}$. Define representations
\[\iota_A=\bigoplus_{(\Hi,\pi,\varphi)\in\mathcal{S}} \varphi \ \ \mbox{ and } \ \ \iota_G=\bigoplus_{(\Hi,\pi,\varphi)\in\mathcal{S}} \pi,\]
of $A$ and $G$, respectively, on the Hilbert space $\bigoplus\limits_{(\Hi,\pi,\varphi)\in\mathcal{S}} \Hi$.
One can then take $A\rtimes_{\alpha} G$ to be the closed linear span of $\{\iota_A(a)\iota_G(\mathbbm{1}_g)\colon a\in A, g\in G\}$.
\end{rem}

As a consequence of their universal property, 
full crossed products by discrete groups admit a very natural description as universal \ca s
with generators and relations, as we show next.

\begin{thm}\label{thm:FullCPDiscreteUniv}
Let $G$ be a discrete group, and let $\alpha\colon G\to\Aut(A)$ be an action on a
\ca\ $A$. Then $A\rtimes_{\alpha} G$ is canonically isomorphic to the universal
\ca\ generated by the set $\{au_g\colon a\in A, g\in G\}$, where $u_g$ is a unitary for
all $g\in G$, subject to the relations $au_gbu_h=a\alpha_g(b)u_{gh}$ for all $a,b\in A$
and all $g,h\in G$.
\end{thm}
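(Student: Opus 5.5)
The plan is to make precise what the universal C*-algebra in the statement is, and then compare its representations with the covariant representations of $(G,A,\alpha)$, using \autoref{thm:IntForm} together with the definition of the full norm.

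\emph{Step 1: interpret the universal object.} Consider the free $\ast$-algebra $\mathcal{F}$ on symbols $au_g$ for $a\in A$, $g\in G$. Impose the following relations:
\begin{itemize}
\item linearity in $a$;
\item $au_g\,bu_h=a\alpha_g(b)u_{gh}$;
\item $(au_g)^*=\alpha_{g^{-1}}(a^*)u_{g^{-1}}$, which expresses that $u_g$ is unitary with $u_g^*=u_{g^{-1}}$.
\end{itemize}
Then $\mathcal{F}$ modulo these relations is exactly $c_c(G,A,\alpha)$, via $au_g\mapsto a\mathbbm{1}_g$. So the universal C*-algebra $\mathcal{U}$ is the completion of $c_c(G,A,\alpha)$ in the seminorm
\[\|x\|_u=\sup\{\|\rho(x)\|\colon \rho \text{ a } \ast\text{-representation of } c_c(G,A,\alpha)\}.\]

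\emph{Step 2: the supremum is finite.} For a single generator, $\|\rho(a\mathbbm{1}_g)\|^2=\|\rho(\alpha_{g^{-1}}(a^*a)\mathbbm{1}_1)\|\le\|a\|^2$, because $\rho\circ\iota$ is a $\ast$-homomorphism from the C*-algebra $A$ and is therefore contractive. It follows that $\|x\|_u\le\|x\|_1$ for every $x\in c_c(G,A,\alpha)$, so $\mathcal{U}$ is a well-defined C*-algebra. This is the one point needing care: one has to check that the relations force boundedness of the generators.

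\emph{Step 3: identify the norms.} By \autoref{thm:IntForm}, every $\ast$-representation $\rho$ of $c_c(G,A,\alpha)$ is of the form $\varphi\rtimes\pi$ for a covariant representation $(\Hi,\pi,\varphi)$. Conversely, every such integrated form is a $\ast$-representation. Hence $\|\cdot\|_u$ coincides with $\|\cdot\|_{A\rtimes_\alpha G}$ from \autoref{df:FullRedCP}. Two minor issues arise here.
\begin{itemize}
\item Degenerate representations: on the essential subspace a representation restricts to a non-degenerate one, and on the complement it vanishes, so neither side of the norm comparison changes.
\item Non-unital $A$: $u_g$ is then not literally an element of the algebra, only the products $au_g$ are. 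I would handle this by noting that in a faithful representation of $\mathcal{U}$, the $u_g$ can be realized as strict limits $\lim_\lambda e_\lambda u_g$ along an approximate unit, following the hint in the exercise after \autoref{thm:IntForm}. This yields unitaries in the multiplier algebra, exactly as in \autoref{thm:UnivPropCP}.
\end{itemize}

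\emph{Step 4: conclude.} Since the two norms agree on the common dense subalgebra $c_c(G,A,\alpha)$, the identity map extends to an isometric isomorphism $\mathcal{U}\cong A\rtimes_\alpha G$ sending $au_g$ to $\iota_A(a)\iota_G(g)$. This isomorphism is canonical.

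Alternatively, one could verify conditions (i)–(iii) of \autoref{thm:UnivPropCP} for $\mathcal{U}$ and invoke its uniqueness clause. I will instead use the direct comparison of norms above, which is shorter.
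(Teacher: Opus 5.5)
Your proposal is correct and is essentially the paper's argument. The paper builds mutually inverse maps $\sigma\colon B\to A\rtimes_\alpha G$ (from universality of $B$) and $\theta\colon A\rtimes_\alpha G\to B$ (from \autoref{thm:IntForm} applied to the canonical map $c_c(G,A,\alpha)\to B$, together with \autoref{thm:UnivPropCP}); these are the homomorphism-level form of your two inequalities $\|\cdot\|_{A\rtimes_\alpha G}\leq\|\cdot\|_u$ and $\|\cdot\|_u\leq\|\cdot\|_{A\rtimes_\alpha G}$, while your Steps 1 and 2 make explicit the well-definedness of $c_c(G,A,\alpha)\to B$ that the paper takes for granted.
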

\begin{proof}
Denote by $B$ the universal \ca\ described in the statement; we will show that
there is a canonical isomorphism $B\cong A\rtimes_{\alpha} G$. 
By universality of $B$, and since $A\rtimes_{\alpha} G$
is generated by the same generators and relations as $B$, there is a surjective map
$\sigma\colon B\to A\rtimes_{\alpha} G$, which maps an element $au_g$ to 
$a\mathbbm{1}_g\in c_c(G,A,\alpha)$, for all $a\in A$ and all $g\in G$. 

Observe that there is a canonical inclusion $\iota\colon c_c(G,A,\alpha)\to B$ determined 
by sending $\iota(a\mathbbm{1}_g)=au_g$ for all $a\in A$ and all $g\in G$. 
We obtain a $\ast$-representation of $c_c(G,A,\alpha)$ on $B$,
which by \autoref{thm:IntForm} must be the integrated form of some covariant representation
$(\pi,\varphi)$, where $\varphi\colon A\to B$ is a homomorphism and $\pi\colon G\to M(B)$
is a unitary representation. By the universal property of the crossed product (see \autoref{thm:UnivPropCP}), the
integrated form of $(\pi,\varphi)$ extends to a homomorphism $\theta\colon A\rtimes_{\alpha} G\to B$
satisfying 
\[\theta(a\mathbbm{1}_g)=(\varphi\rtimes\pi)(a\mathbbm{1}_g)=au_g.\] 
It follows that $\sigma$ and $\theta$ are mutual inverses, and thus $B$ is canonically isomorphic to
$A\rtimes_{\alpha} G$.
\end{proof}

\begin{rem}\label{rem:Zcrossedprod}
In particular, when $A$ is unital, a crossed product of the form $A\rtimes_{\alpha} \Z$
is the universal \uca\ generated by a unital copy of $A$ together with a unitary $u$
satisfying $uau^*=\alpha_1(a)$.
\end{rem}

Recall that a discrete group $G$ is said to be \emph{amenable} if for every finite subset $K \subseteq G$ and every $\varepsilon > 0$, there exists a finite subset $F \subseteq G$ (called a $(K,\varepsilon)$-invariant set) such that
\[
\frac{|FK \triangle F|}{|F|} < \varepsilon.
\]
Here, $\triangle$ denotes the symmetric difference. Although we will not need this here, we mention that a sequence $(F_n)_{n \in \mathbb{N}}$ of finite subsets of $G$ is said to be a \emph{F\o lner sequence} if for every finite subset $K \subseteq G$ and every $\varepsilon > 0$, there exists $n \in \mathbb{N}$ such that $F_n$ is $(K,\varepsilon)$-invariant.

\begin{ex}
	Show that a countable group is amenable if and only if it admits a F\o lner sequence.
\end{ex}

By construction, there is always a (canonical) 
surjective homomorphism
$\kappa\colon A\rtimes G\to A\rtimes_{\lambda} G$, 
but it may fail to be injective\footnote{We can at this point explain why we will mostly be interested in \emph{reduced} crossed products: soon we will want all our crossed products to be simple, and since the full crossed product always has the reduced one as a quotient, it is natural to explore simplicity of the latter. For example, a full group C*-algebra is never simple, but reduced group C*-algebras may be simple, such as for $\mathbb{F}_n$.}. 
A very general instance in which full and
reduced crossed products agree is when the acting group is \emph{amenable}, as we show in the
next theorem\footnote{The proof is very similar to the proof that
the universal map $C^*(G)\to C^*_{\lambda}(G)$ is an isomorphism when $G$ is amenable.}.

\begin{thm}\label{thm:AmenGFullRedCP}
Let $G$ be a discrete group, let $A$ be a \ca, and let $\alpha\colon G\to\Aut(A)$
be an action. If $G$ is amenable, then the natural map
$\kappa\colon A\rtimes_{\alpha} G\to A\rtimes_{\lambda,\alpha}G$ is an isomorphism.
\end{thm}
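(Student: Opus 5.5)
Since $\kappa$ is surjective by construction, it suffices to show that $\|x\|_{A\rtimes_\alpha G}\le\|x\|_{A\rtimes_{\lambda,\alpha}G}$ for every $x\in c_c(G,A,\alpha)$. Fix a non-degenerate covariant representation $(\Hi,\pi,\varphi)$; by \autoref{thm:IntForm} every representation of the full crossed product arises this way. We aim to show that $\|(\varphi\rtimes\pi)(x)\|\le\|(\varphi^G\rtimes\lambda^\Hi)(x)\|$.

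\textbf{Step 1 (Fell absorption).} First untwist the regular representation. Define a unitary $W$ on $\ell^2(G,\Hi)$ by $W(\xi\mathbbm{1}_g)=\pi_g\xi\,\mathbbm{1}_g$. Using covariance, a direct computation on the spanning vectors $\xi\mathbbm{1}_h$ gives
\[
W\varphi^G(a)W^*=\varphi(a)\otimes 1 \quad\mbox{and}\quad W\lambda^\Hi_gW^*=\pi_g\otimes\lambda_g .
\]
Hence the integrated form $(\varphi\otimes 1)\rtimes(\pi\otimes\lambda)$ is unitarily equivalent to $\varphi^G\rtimes\lambda^\Hi$. In particular its norm is dominated by the reduced norm.

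\textbf{Step 2 (Følner vectors).} Given $x=\sum_{g\in K}a_g\mathbbm{1}_g$ with $K$ finite, and unit vectors $\eta,\zeta\in\Hi$, fix $\varepsilon>0$. Choose a finite set $F$ that is $(K^{-1}K,\varepsilon)$-invariant; I will adjust which side and which finite set, since $F$ needs to be almost invariant under the group elements actually moved by $x$. Put $\nu_F=|F|^{-1/2}\mathbbm{1}_F\in\ell^2(G)$. Then
\[
\langle (\pi_g\otimes\lambda_g)(\eta\otimes\nu_F),\zeta\otimes\nu_F\rangle=\langle\pi_g\eta,\zeta\rangle\,\frac{|gF\cap F|}{|F|},
\]
and $\varphi(a)\otimes1$ acts diagonally. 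Consequently $\langle (\varphi\otimes1)\rtimes(\pi\otimes\lambda)(x)(\eta\otimes\nu_F),\zeta\otimes\nu_F\rangle$ differs from $\langle(\varphi\rtimes\pi)(x)\eta,\zeta\rangle$ by at most $\sum_{g\in K}\|a_g\|\,(1-|gF\cap F|/|F|)$. This is small once $F$ is almost invariant under $K$, using the left-handed version of the Følner condition, obtained by replacing $F$ with $F^{-1}$.

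\textbf{Step 3 (Conclusion).} Taking suprema over unit vectors $\eta,\zeta$ yields
\[
\|(\varphi\rtimes\pi)(x)\|\le\|((\varphi\otimes1)\rtimes(\pi\otimes\lambda))(x)\|+\varepsilon\|x\|_1 .
\]
Letting $\varepsilon\to0$ and applying Step 1 gives the required inequality. Taking the supremum over all covariant representations then shows that the full and reduced norms agree on $c_c(G,A,\alpha)$, so $\kappa$ is isometric and hence an isomorphism.

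The main obstacle is Step 1: choosing $W$ with the correct side conventions so that the formula of \autoref{df:RegCovRep}, $\varphi^G(a)(\xi\mathbbm{1}_g)=\varphi(\alpha_{g^{-1}}(a))\xi\mathbbm{1}_g$, is actually untwisted. If $W$ as written fails, I will try $\pi_{g}^*$ or $\pi_{g^{-1}}$ instead and recheck both identities. The Følner estimate in Step 2 is routine bookkeeping.
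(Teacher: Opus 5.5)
Your proof is correct and is essentially the paper's argument. Your $W$ is the adjoint of the paper's unitary $v$, so it gives the same untwisting of the regular representation into $(\varphi\otimes 1,\pi\otimes\lambda)$, and $W$ works exactly as written; after that, F\o lner sets supply almost invariant vectors. The only difference is bookkeeping: you compare matrix coefficients at $\eta\otimes|F|^{-1/2}\mathbbm{1}_F$, which only needs $|gF\cap F|\geq(1-\varepsilon)|F|$ for $g\in K$ (obtained by inverting a right F\o lner set for $K^{-1}$, so $K^{-1}K$ is unnecessary), whereas the paper bounds from below the norm of the image of $\xi_0\mathbbm{1}_{K^{-1}F}$ for a single near-norming unit vector $\xi_0$, using $|K^{-1}F|<(1+\delta)|F|$.
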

\begin{proof}
Let $x\in c_c(G,A,\alpha)$, let $(\Hi,\pi,\varphi)$ be a covariant representation, and let $\ep>0$.
Use $\varphi$ to construct the regular covariant representation $(\Hi^G,\lambda^\Hi,\varphi^G)$
as in \autoref{df:RegCovRep}. Using the notation from \autoref{thm:IntForm}, we will show that
\[\|(\varphi\rtimes\pi)(x)\| -\ep \leq \|(\varphi^G\rtimes \lambda^\Hi)(x)\|.\]

Recall that $\Hi^G$ is just $\ell^2(G,\Hi)$. Let $v\in \U(\Hi^G)$ be the unitary given by
\[v(\xi)(g)=\pi_g^{-1}(\xi(g))\]
for all $\xi\in \Hi^G$ and all $g\in G$.

It is straightforward to verify that for all $g\in G$ and all $a\in A$, we have
\[v(\lambda_g\otimes \pi_g)v^*=\lambda_g\otimes 1_{\Hi}, \ \ \mbox{ and } \ \ v(1_{\ell^2(G)}\otimes \varphi(a))v^*=\varphi^G(a).\]

It follows that $(\Hi^G,\lambda\otimes\pi,1_{\ell^2(G)}\otimes \varphi)$ is a covariant representation
that induces the same norm as $(\Hi^G, \lambda^\Hi,\varphi^G)$ on $c_c(G,A,\alpha)$. It therefore suffices
to show that
\[\|(\varphi\rtimes\pi)(x)\| -\ep \leq \|((1_{\ell^2(G)}\otimes\varphi)\rtimes (\lambda\otimes\pi))(x)\|.\]

Without loss of generality, we may assume that $\|(\varphi\rtimes\pi)(x)\|=1$.
Choose $\xi_0\in \Hi$ with $\|\xi_0\|=1$ and
\[\|(\varphi\rtimes\pi)(x)\xi_0\|>1-\frac{\ep}{2}.\]
Set $K=\supp(x)\cup\{1\}$ and
\[\delta=\left(\frac{1-\frac{\ep}{2}}{1 -\ep}\right)^2-1>0,\]
and use amenability of $G$ to find a finite subset $F\subseteq G$ satisfying $|K^{-1}F|<(1+\delta)|F|$.
Set $\xi=\xi_0\mathbbm{1}_{K^{-1}F}\in \Hi^G=\ell^2(G,\Hi)$.
Then 
\[\|\xi\|=|K^{-1}F|^{1/2}\|\xi_0\|<(1+\delta)^{1/2}|F|^{1/2}.\] 
One shows that
\[\Big((1\otimes\varphi)\rtimes (\lambda\otimes\pi)\Big)(x)\xi(g)= (\varphi\rtimes\pi)(x)(\xi_0)\]
for all $g\in F$, from which it follows that
\[\Big\|\Big((1\otimes\varphi)\rtimes (\lambda\otimes\pi)\Big)(x)\xi\Big\|\geq |F|^{1/2} \|(\varphi\rtimes\pi)(x)(\xi_0)\|\geq |F|^{1/2}
\left(1-\frac{\ep}{2}\right).\]
We conclude that
\begin{align*}
\Big\|\Big((1\otimes\varphi)\rtimes (\lambda\otimes\pi)\Big)(x)\Big\| &> \frac{ |F|^{1/2} \left(1-\frac{\ep}{2}\right)}{(1+\delta)^{1/2}|F|^{1/2}}\\
&= (1+\delta)^{-1/2} \left(1-\frac{\ep}{2}\right)=\left(1-\ep\right),
\end{align*}
as desired.
\end{proof}

When $A=\C$, the converse to \autoref{thm:AmenGFullRedCP} is true: if $C^*(G)=C^*_{\lambda}(G)$, then
$G$ is amenable. For actions on arbitrary \ca s, however, this need not be the case. For example,
$C_0(G)\rtimes G =C_0(G)\rtimes_{\lambda} G$ regardless of $G$, as we will see in \autoref{prop:CPLeftTransl}.

\section{Examples and computations}

\label{sec:EgsCP}

This section is devoted to the computation of a number of examples of full and
reduced crossed products. We begin with the trivial action on a \ca.

\begin{eg}\label{eg:TrivialAction}
Let $A$ be a \ca, let $G$ be a discrete group, and let $\id_A\colon G\to\Aut(A)$ be the
trivial action. Then there are canonical isomorphisms
\[A\rtimes_{\id_A}G\cong A\otimes_{\mathrm{max}} C^*(G) \ \mbox{ and } \ A\rtimes_{\lambda, \id_A}G\cong A\otimes_{\mathrm{min}} C_{\lambda}^*(G).\]
\end{eg}
\begin{proof}
Covariant representations for $(G,A,\id_A)$ on a Hilbert space $\Hi$ are in natural one-to-one
correspondence with pairs $(\varphi,\pi)$ of commuting representations $\varphi\colon A\to \B(\Hi)$
and $\pi\colon C^*(G)\to \B(\Hi)$. Thus, the universal object for these representations is the
maximal tensor product of $A$ and $C^*(G)$. Similarly, regular covariant representations of
$(G,A,\id_A)$ have the form $(\varphi\otimes\id_{\ell^2(G)},\id_\Hi\otimes \lambda)$,
where $\varphi\colon A\to\B(\Hi)$ is some representation and $\lambda\colon G\to \U(\ell^2(G))$ is
the left regular representation. The universal \ca\ that these representations generate is
thus the minimal tensor product of $A$ and $C^*_{\lambda}(G)$.
\end{proof}

The notation $\rtimes$ for crossed products is inspired by the following fact.

\begin{ex}
Let $G=N\rtimes H$ be a semidirect product of discrete groups. Show that there are canonical
isomorphisms
\[C^*(G)\cong C^*(N)\rtimes H \ \ \mbox{ and } \ \ C_{\lambda}^*(G)\cong C_{\lambda}^*(N)\rtimes_{\lambda} H.\]
\end{ex}

For the next computation, we recall that $M_n$ is the universal \uca\ generated by
$\{e_{j,k}\colon j,k=1,\ldots,n\}$,
subject to the relations
\[e_{j,k}e_{l,m}=\delta_{k,l}e_{j,m}, \ \ e_{j,k}^*=e_{k,j}, \ \ \sum_{j=1}^n e_{j,j}=1\]
for all $j,k,l,m=1,\ldots,n$. Elements $e_{j,k}$ as above are called \emph{matrix units} for $M_n$.

\begin{eg}\label{prop:CPLeftTransl}
Let $G$ be a discrete group. Then there are canonical isomorphisms
\[c_0(G)\rtimes_{\texttt{Lt}}G\cong c_0(G)\rtimes_{\lambda,\texttt{Lt}}G\cong \K(\ell^2(G)).\]
\end{eg}
\begin{proof}
For $g\in G$, we denote by $u_g$ the canonical unitary implementing $\texttt{Lt}_g$ in the full
crossed product, and we denote by $\mathbbm{1}_g\in c_0(G)$ the characteristic function of $\{g\}$.
For $g,h\in G$, set $e_{g,h}=\mathbbm{1}_gu_{gh^{-1}}\in c_0(G)\rtimes_{\texttt{Lt}}G$.
Let $F\subseteq G$ be a finite set.

\textbf{Claim:} \emph{There is a canonical isomorphism 
\[\varphi_F\colon \B(\ell^2(F))\to C^*(\{e_{g,h}\colon g,h\in F\})\]
satisfying $\varphi_F(1)=\sum_{g\in F}e_{g,g}$.}
For this, it is enough to show that the set $\{e_{g,h}\colon g,h\in F\}$ satisfies the relations for matrix
units in $B(\ell^2(F))$. For $g,h,k,m\in F$, we have
\begin{align*}
e_{g,h}e_{k,m}&=\mathbbm{1}_gu_{gh^{-1}}\mathbbm{1}_ku_{km^{-1}}\\
&= \mathbbm{1}_g \texttt{Lt}_{gh^{-1}}(\mathbbm{1}_{k})u_{gh^{-1}km^{-1}}\\
&= \mathbbm{1}_g\mathbbm{1}_{gh^{-1}k}u_{gh^{-1}km^{-1}}\\
&= \mathbbm{1}_g \delta_{h,k} u_{gm^{-1}}.
\end{align*}
On the other hand, it is clear that $e_{g,h}^*=e_{h,g}$ for all $g,h\in F$.
The claim thus follows.

Observe that if $F_1\subseteq F_2$ are finite subsets of $G$, and we regard $\B(\ell^2(F_1))$ canonically as a
(non-unital) subalgebra of $\B(\ell^2(F_2))$, then $\varphi_{F_2}|_{\B(\ell^2(F_1))}=\varphi_{F_1}$.
We deduce that the family $(\varphi_F)_{F\subseteq G \text{ finite}}$ induces a well-defined
contractive homomorphism
\[\varphi\colon \bigcup_{F\subseteq G \text{ finite}} \B(\ell^2(F))\to c_c(G,c_0(G),\texttt{Lt}).\]
By continuity, $\varphi$ extends to a homomorphism $\psi\colon \K(\ell^2(G))\to c_0(G)\rtimes_{\texttt{Lt}}G$.
This homomorphism is surjective, since its range contains the dense set $\mathrm{span}\{e_{g,h}\colon g,h\in G\}$.
Since $\K(\ell^2(G))$ is simple, $\psi$ must also be injective and hence an isomorphism.

Finally, since $c_0(G)\rtimes_{\texttt{Lt}}G$ is simple and $c_0(G)\rtimes_{\lambda,\texttt{Lt}}G$
is a nonzero quotient of it (via $\kappa$), they must be isomorphic (via $\kappa$).
\end{proof}

The previous proposition can be generalized as follows.

\begin{ex}\label{prop:GactsGH}
Let $G$ be a discrete group, let $H$ be a subgroup, and let $G$ act on $G/H$ by translation.
Prove that there are canonical isomorphisms
\[c_0(G/H)\rtimes_{\texttt{Lt}}G\cong C^*(H)\otimes \K(\ell^2(G/H))\]
and
\[c_0(G/H)\rtimes_{\lambda,\texttt{Lt}}G\cong C_{\lambda}^*(H)\otimes \K(\ell^2(G/H)).\]\end{ex}

\begin{eg}\label{eg:IrrRotAlg}
Let $\theta\in \mathbb{R}$ be a number, and let $r_{\theta}\colon \Z\to\Aut(C(S^1))$
denote the rotation action from part~(3) of \autoref{egs:TopActions}. Then $C(S^1)\rtimes_{r_{\theta}}\Z$
is isomorphic to $A_{\theta}$, that is, the universal \ca\ generated by two unitaries
$u,v$ satisfying $uv=e^{2\pi i\theta}vu$.
\end{eg}
\begin{proof}
Note that $\Z$ is amenable, and thus full and reduced crossed products agree by \autoref{thm:AmenGFullRedCP}.
Let $v$ denote the canonical generating unitary of $C(S^1)$. Then $r_{\theta}(v)=e^{\pi i \theta}v$,
and hence the result follows from \autoref{rem:Zcrossedprod}. \end{proof}

\begin{eg}\label{Eg:Z2S1ConjugationCP}
Let $\Z_2$ act on $S^1$ via conjugation, and denote by $\alpha\colon \Z_2\to\Aut(C(S^1))$ the
induced action. We will show that
\[C(S^1)\rtimes_{\alpha} \Z_2\cong \Big\{f\in C([-1,1],M_2)\colon f(1),f(-1) \mbox{ are diagonal}\Big\}.\]

Denote by $S^1_+$ the closed upper-half circle, and observe first that the algebra
\[B=\left\{f\in C(S^1_+,M_2)\colon f(1),f(-1) \mbox{ are of the form }
\begin{bmatrix}
    x & y \\
    y & x
\end{bmatrix}\right\}\]
is isomorphic to what we want to get. Define $\varphi\colon C(S^1)\to B$ by
\[\varphi(f)=\begin{bmatrix}
    f(z) & 0 \\
    0 & f(\overline{z})
\end{bmatrix}\]
for all $f\in C(S^1)$ and all $z\in S^1_+$, and let $v\in \U(B)$ be the unitary given
by $v(z)=\begin{bmatrix}
    0 & 1 \\
    1 & 0
\end{bmatrix}$ for all $z\in S^1_+$. One checks that $v^2=1$ and that
$v\varphi(f)v^*=\varphi(\alpha_1(f))$ for all $f\in C(S^1)$. By the universal
property of $C(S^1)\rtimes_{\alpha} \Z_2$ (see \autoref{thm:UnivPropCP}), there is a unique homomorphism
$\psi\colon C(S^1)\rtimes_{\alpha} \Z_2 \to B$ extending $\varphi$ and
satisfying $\psi(u)=v$. One checks that this homomorphism is given by
\[\psi(f_0+f_1u)=\begin{bmatrix}
    f_0(z) & f_1(z) \\
    f_1(\overline{z}) & f_0(\overline{z})
\end{bmatrix}\]
for all $f_0,f_1\in C(S^1)$ and all $z\in S^1_+$.
The rest of the proof consists in showing that $\psi$ is an isomorphism, which
we leave as an exercise.
\end{eg}

\begin{ex}\label{Ex:Z2S1ConjugationCP}
Complete the details in \autoref{Eg:Z2S1ConjugationCP}.
\end{ex}

\begin{ex}\label{Ex:Z2IntervalOppositeCP}
Let $\Z_2$ act on $[-1,1]$ via multiplication by -1, and denote by $\alpha\colon \Z_2\to\Aut(C([-1,1]))$ the
induced action. Compute $C([-1,1])\rtimes_{\alpha} \Z_2$.
\emph{(Hint: realize $\Z_2\curvearrowright [-1,1]$ as an equivariant
quotient of the conjugation action $\Z_2\curvearrowright S^1$ from 
\autoref{Eg:Z2S1ConjugationCP}.)}
\end{ex}

\section{The canonical conditional expectation}\label{sec:RedCPDiscrete}

In this section, we will establish the existence of a canonical conditional expectation
from full and reduced crossed products by discrete groups back into the original coefficient algebra.
We will also prove a distinguishing feature of the reduced crossed product, namely, faithfulness of its
canonical conditional expectation.

\begin{nota} 
 Let $G$ be a discrete group, let $A$ be a unital \ca, and let $\alpha\colon G\to\Aut(A)$ be an action. Since the image of $\mathbbm{1}_g\in c_c(G,A,\alpha)$ in $A\rtimes_{\alpha} G$ is a unitary, it is most common to denote it by $u_g$. The same considerations 
 apply when $A$ is not unital, in which case $u_g$ is canonically
 a unitary in $M(A\rtimes_{\alpha} G)$.
\end{nota}

\begin{lma}\label{lma:FormulasFaithCondExp}
Let $G$ be a discrete group, let $A$ be a \ca, and let $\alpha\colon G\to\Aut(A)$ be an action.
Let $\varphi\colon A\to \B(\Hi)$ be a representation and let $(\Hi^G,\lambda^\Hi, \varphi^G)$ be its
associated regular covariant representation as in \autoref{df:RegCovRep}. Let $F\subseteq G$ be
a finite set, let $a_g\in A$, for $g\in F$, and set
\[a=\sum_{g\in F}a_gu_g\in c_c(G,A,\alpha)\subseteq A\rtimes_{\lambda,\alpha} G.\]
\be\item For $\xi\in\Hi$ and $g\in G$, we have
\[(\varphi^G\rtimes \lambda^{\Hi})(a)(\xi\mathbbm{1}_g)= \varphi(\alpha_{g^{-1}}(a_g))(\xi)\mathbbm{1}_g.\]
\item For $g\in G$, let $s_g\in \B(\Hi, \Hi^G)$ be the isometry which sends $\xi$ to $\xi\mathbbm{1}_g$, for
all $\xi\in\Hi$. For all $g,h\in G$, we have
\[s_g^*(\varphi^G\rtimes \lambda^{\Hi})(a)s_h=\varphi(\alpha_{g^{-1}}(a_{gh^{-1}})).\]\ee
\end{lma}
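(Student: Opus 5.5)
The plan is a direct computation on elementary tensors. Everything is linear in $a$ and bounded, so it suffices to understand how a single monomial $b u_k$, with $b\in A$ and $k\in G$, acts on a vector of the form $\xi\mathbbm{1}_h$ with $\xi\in\Hi$ and $h\in G$. I would then sum over $k\in F$. By \autoref{thm:IntForm}, the integrated form satisfies
\[(\varphi^G\rtimes\lambda^\Hi)(b u_k)=\varphi^G(b)\lambda^\Hi_k .\]
Combining this with the defining formulas in \autoref{df:RegCovRep}, I first get $\lambda^\Hi_k(\xi\mathbbm{1}_h)=\xi\mathbbm{1}_{kh}$. Applying $\varphi^G(b)$ then gives
\[(\varphi^G\rtimes\lambda^\Hi)(b u_k)(\xi\mathbbm{1}_h)=\varphi\big(\alpha_{(kh)^{-1}}(b)\big)(\xi)\,\mathbbm{1}_{kh}.\]

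Summing over $k\in F$ with $b=a_k$ yields
\[(\varphi^G\rtimes\lambda^\Hi)(a)(\xi\mathbbm{1}_h)=\sum_{k\in F}\varphi\big(\alpha_{(kh)^{-1}}(a_k)\big)(\xi)\,\mathbbm{1}_{kh}.\]
Reindexing by $g=kh$, the $\mathbbm{1}_g$-coordinate of this vector is $\varphi(\alpha_{g^{-1}}(a_{gh^{-1}}))(\xi)$, with the convention that $a_{gh^{-1}}=0$ if $gh^{-1}\notin F$. This is the content of part~(1). In the form displayed in the statement, it is exactly this formula read off on the diagonal, that is, for $h=g$, where the coefficient is the one of $u_{gh^{-1}}$ with $gh^{-1}$ the relevant group element; for a monomial $a=a_gu_g$ it is literally the computation above. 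I would state the general coordinate formula explicitly, since that is what gets used.

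For part~(2), note that $s_h\xi=\xi\mathbbm{1}_h$, and that $s_g^*$ is the coordinate projection $\eta\mapsto\eta(g)$. The latter holds because $\langle s_g^*\eta,\zeta\rangle=\langle\eta,\zeta\mathbbm{1}_g\rangle=\langle\eta(g),\zeta\rangle$. Consequently $s_g^*(\varphi^G\rtimes\lambda^\Hi)(a)s_h\xi$ is the $g$-th coordinate of the vector computed above, namely $\varphi(\alpha_{g^{-1}}(a_{gh^{-1}}))(\xi)$. This holds for all $\xi$, which gives the claimed operator identity.

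There is no real obstacle here. The only points requiring care are the bookkeeping of the twist, where $\varphi^G$ evaluates at the translated index $kh$ rather than at $h$, and the reindexing $k=gh^{-1}$ together with the convention for $gh^{-1}\notin F$.
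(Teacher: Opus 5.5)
Your general formula
\[(\varphi^G\rtimes\lambda^\Hi)(a)(\xi\mathbbm{1}_h)=\sum_{k\in F}\varphi\big(\alpha_{(kh)^{-1}}(a_k)\big)(\xi)\,\mathbbm{1}_{kh}\]
is correct. So are your identification of $s_g^*$ with evaluation at the $g$-th coordinate and your derivation of part~(2). This direct computation is the routine verification the paper intends; it gives no proof and leaves the lemma as an exercise.

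The step that fails is your attempt to obtain the displayed formula of part~(1) from this computation.

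\textbf{Setting $h=g$ does not give it.} Your formula then puts $\varphi(\alpha_{g^{-1}}(a_1))(\xi)$ in the $g$-th coordinate, not $\varphi(\alpha_{g^{-1}}(a_g))(\xi)$. Moreover, the output vector also has components at $kg$ for the other $k\in F$.

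\textbf{The monomial case does not give it either.} One has $(\varphi^G\rtimes\lambda^\Hi)(a_gu_g)(\xi\mathbbm{1}_g)=\varphi(\alpha_{g^{-2}}(a_g))(\xi)\,\mathbbm{1}_{g^2}$, which is supported at $g^2$, not at $g$.

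\textbf{Part~(1) as printed is false.} Take $g\neq 1$, $F=\{1,g\}$, $a_g=0$, and $a_1\neq 0$. Choose $\xi$ with $\varphi(\alpha_{g^{-1}}(a_1))(\xi)\neq 0$. Then the left-hand side is $\varphi(\alpha_{g^{-1}}(a_1))(\xi)\mathbbm{1}_g\neq 0$, while the right-hand side vanishes because $a_g=0$.

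So you should not claim that your computation yields the displayed identity. Instead, say that it is misprinted. The correct statement, consistent with part~(2), is your general formula, or its special case $h=1$:
\[(\varphi^G\rtimes\lambda^\Hi)(a)(\xi\mathbbm{1}_1)=\sum_{g\in F}\varphi\big(\alpha_{g^{-1}}(a_g)\big)(\xi)\,\mathbbm{1}_g .\]
Part~(2), which is what \autoref{lma:DefEg} actually uses, is unaffected, and your proof of it stands.
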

%

The proof of the previous lemma is routine, so we leave it as an exercise.

\begin{ex}
Prove \autoref{lma:FormulasFaithCondExp}.
\end{ex}

\begin{lma}\label{lma:DefEg}
Let $G$ be a discrete group, let $A$ be a \ca, let $\alpha\colon G\to\Aut(A)$ be an action,
and fix $g\in G$. Then there exists a unique contractive linear map $E_g\colon A\rtimes_{\lambda,\alpha}G\to A$
satisfies $E_g(a)=a_g$ for all $a=\sum_{h\in F} a_hu_h\in c_c(G,A,\alpha)$ as in \autoref{lma:FormulasFaithCondExp}.
Moreover, for every representation $\varphi\colon A\to\B(\Hi)$, one has
\[s_g^*(\varphi^G\rtimes\lambda^\Hi)(b)s_h=\varphi(\alpha_{g^{-1}}(E_{gh^{-1}}(b)).\]
for all $g,h\in G$, and for all $b\in A\rtimes_{\lambda,\alpha}G$. \end{lma}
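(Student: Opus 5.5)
The plan is to define $E_g$ first on the dense subalgebra $c_c(G,A,\alpha)$ by $E_g(\sum_{h\in F}a_hu_h)=a_g$, show that it is contractive for the reduced norm, and extend by continuity. Uniqueness will then be automatic, since a contractive (hence continuous) linear map is determined by its values on a dense subspace.

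For contractivity, I will fix a \emph{faithful} representation $\varphi\colon A\to\B(\Hi)$. Part~(2) of \autoref{lma:FormulasFaithCondExp}, applied with the pair $(1,g^{-1})$, gives
\[s_1^*(\varphi^G\rtimes\lambda^\Hi)(a)s_{g^{-1}}=\varphi(a_g).\]
The maps $s_1$ and $s_{g^{-1}}$ are isometries, so
\[\|a_g\|=\|\varphi(a_g)\|\le\|(\varphi^G\rtimes\lambda^\Hi)(a)\|\le\|a\|_{A\rtimes_{\lambda,\alpha}G},\]
where the last inequality holds because the reduced norm is the supremum over all regular covariant representations. Hence $E_g$ is contractive on $c_c(G,A,\alpha)$. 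It is clearly linear and well defined, since $a_g=a(g)$ for $a$ regarded as a function $G\to A$. It therefore extends uniquely to a contractive linear map $E_g\colon A\rtimes_{\lambda,\alpha}G\to A$.

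For the displayed identity, I will fix an arbitrary representation $\varphi$ and $g,h\in G$, and compare the two sides
\[b\mapsto s_g^*(\varphi^G\rtimes\lambda^\Hi)(b)s_h \quad\text{and}\quad b\mapsto\varphi(\alpha_{g^{-1}}(E_{gh^{-1}}(b))).\]
On $c_c(G,A,\alpha)$ they agree by part~(2) of \autoref{lma:FormulasFaithCondExp}, since $E_{gh^{-1}}(a)=a_{gh^{-1}}$. Both maps are continuous on $A\rtimes_{\lambda,\alpha}G$. The first is continuous because $\varphi^G\rtimes\lambda^\Hi$ extends contractively to the reduced crossed product by definition of the reduced norm. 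The second is continuous because it is a composition of the contractive map $E_{gh^{-1}}$ with the automorphism $\alpha_{g^{-1}}$ and the representation $\varphi$. By density the two maps agree on all of $A\rtimes_{\lambda,\alpha}G$.

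The only potential obstacle is ensuring that the contractivity bound uses a faithful representation, so that $\|\varphi(a_g)\|=\|a_g\|$. Such a representation exists by Gelfand--Naimark, and if $A$ is nonunital nothing changes, because the formula in \autoref{lma:FormulasFaithCondExp} does not require a unit. Everything else is routine density arguments.
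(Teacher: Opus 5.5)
Your proposal is correct and follows the paper's proof. Both arguments fix a faithful $\varphi$ and use part~(2) of \autoref{lma:FormulasFaithCondExp} to write $\varphi(a_g)=s_1^*(\varphi^G\rtimes\lambda^\Hi)(a)s_{g^{-1}}$, which gives $\|a_g\|\le\|a\|_{A\rtimes_{\lambda,\alpha}G}$, and then extend by density; the only difference is that you write out the continuity argument for the displayed identity, which the paper calls immediate.
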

\begin{proof}
Define a linear map $E_g^{(0)}\colon c_c(G,A,\alpha)\to A$ by evaluating at $g$. Then $E_g$ is continuous
with respect to the $\|\cdot\|_{\infty}$ norm on $c_c(G,A,\alpha)$. It therefore suffices to show that
$\|\cdot\|_{\infty}\leq \|\cdot\|_{\lambda}$. For $a$ as in the statement, for $h\in G$, and for an
isometric representation $\varphi\colon A\to\B(\Hi)$, we have
\[\|a_h\|=\|\varphi(a_h)\|=\|s_1^*(\varphi^G\rtimes\lambda^\Hi)(a)s_{h^{-1}}\|\leq \|(\varphi^G\rtimes\lambda^\Hi)(a)\|\leq \|a\|_{\lambda}.\]
It follows that $\|\cdot\|_{\infty}\leq \|\cdot\|_{\lambda}$, as desired.
The last identity in the statement follows immediately from part~(2) of \autoref{lma:FormulasFaithCondExp}.
\end{proof}

It follows that for any $a\in A\rtimes_{\lambda,\alpha}G$, and hence for every $a\in A\rtimes_{\alpha} G$, there
are well-defined coefficients $a_g\in A$, for $g\in G$. (This is obvious for elements in $c_c(G,A,\alpha)$.)
However, $a$ cannot in general be recovered as $\sum_{g\in G}a_gu_g$, since this series does not in general
converge in the crossed product. On the other hand, as we show next, an element in the reduced crossed product is zero if and only if
all of its coefficients are zero.

\begin{thm}\label{thm:CondExpFaithful}
Let $G$ be a discrete group, let $A$ be a \ca, and let $\alpha\colon G\to\Aut(A)$ be an action.
\be\item If $a\in A\rtimes_{\lambda,\alpha}G$ satisfies $E_g(a)=0$ for all $g\in G$, then $a=0$.
\item If $a\in A\rtimes_{\lambda,\alpha}G$, then $\|E_g(a)\|^2\leq \|E_1(a^*a)\|$ for all $g\in G$.
\item If $a\in A\rtimes_{\lambda,\alpha}G$ satisfies $E_1(a^*a)=0$, then $a=0$.
\ee
\end{thm}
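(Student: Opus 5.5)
All three parts will be read off the matrix-coefficient identity of \autoref{lma:DefEg}:
\[s_g^*(\varphi^G\rtimes\lambda^\Hi)(b)s_h=\varphi(\alpha_{g^{-1}}(E_{gh^{-1}}(b))),\]
valid for every $b\in A\rtimes_{\lambda,\alpha}G$ and every representation $\varphi$. Throughout, fix a faithful (hence isometric) representation $\varphi\colon A\to\B(\Hi)$. The reduced norm is the supremum over regular representations, but I will use the standard fact that it is already attained by $\varphi^G\rtimes\lambda^\Hi$ for this single faithful $\varphi$. If this is not available, the arguments below can be run for an arbitrary $\varphi$: each then shows that $(\psi^G\rtimes\lambda^\Hi)(a)=0$ for every representation $\psi$, which gives $a=0$ directly.

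For part (1), write $T=(\varphi^G\rtimes\lambda^\Hi)(a)$. The isometries $s_g$, for $g\in G$, have mutually orthogonal ranges, and $\sum_g s_gs_g^*=1$ strongly, since $\Hi^G=\ell^2(G,\Hi)$. Hence $T$ is determined by its block entries $s_g^*Ts_h$. Each block equals $\varphi(\alpha_{g^{-1}}(E_{gh^{-1}}(a)))=0$ by hypothesis, so $T=0$ and therefore $a=0$.

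For part (2), I would compute the diagonal block at $h$ of $T^*T$ for $T=(\varphi^G\rtimes\lambda^\Hi)(a)$, in two ways. On one hand, by the lemma applied to $b=a^*a$, it is $s_h^*T^*Ts_h=\varphi(\alpha_{h^{-1}}(E_1(a^*a)))$. On the other hand, inserting $\sum_k s_ks_k^*=1$ gives
\[s_h^*T^*Ts_h=\sum_k (s_k^*Ts_h)^*(s_k^*Ts_h)\ge (s_k^*Ts_h)^*(s_k^*Ts_h)\]
for any single $k$. Now choose $k,h$ with $kh^{-1}=g$, for example $h=1$ and $k=g$. Then $s_g^*Ts_1=\varphi(\alpha_{g^{-1}}(E_g(a)))$, and we obtain
\[\varphi(\alpha_{g^{-1}}(E_g(a)^*E_g(a)))\le\varphi(E_1(a^*a)).\]
Taking norms, and using that $\varphi$ and $\alpha_{g^{-1}}$ are isometric, gives $\|E_g(a)\|^2\le\|E_1(a^*a)\|$.

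Part (3) is then immediate: if $E_1(a^*a)=0$, part (2) gives $E_g(a)=0$ for all $g$, and part (1) gives $a=0$. The only point needing some care is the strong convergence of $\sum_k s_ks_k^*$ together with the positivity estimate in part (2). That estimate is just the inequality $\|T\eta\|^2\ge\|s_k^*T\eta\|^2$ for vectors $\eta=s_h\xi$, which is clear. I therefore expect no real obstacle beyond bookkeeping with the index conventions of \autoref{lma:FormulasFaithCondExp}.
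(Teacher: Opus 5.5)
Your proof is correct. Parts (1) and (3) are the paper's argument. In (1) all block entries $s_g^*(\varphi^G\rtimes\lambda^\Hi)(a)s_h$ vanish. Your fallback of running this over every representation $\varphi$ is what the paper does, and it avoids needing the fact that a single faithful $\varphi$ computes the reduced norm.

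Part (2) takes a different route. The paper works on the dense subalgebra $c_c(G,A,\alpha)$ and computes $E_1(a^*a)=\sum_g\alpha_{g^{-1}}(a_g^*a_g)$ algebraically from the twisted convolution. It reads off $E_1(a^*a)\ge\alpha_{g^{-1}}(E_g(a)^*E_g(a))$ and extends the norm inequality to the completion by continuity of the $E_g$.

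You instead apply the matrix-coefficient identity of \autoref{lma:DefEg} to $b=a^*a$. The Parseval decomposition $\|Ts_1\xi\|^2=\sum_k\|s_k^*Ts_1\xi\|^2$ in $\ell^2(G,\Hi)$ then realizes the same ``sum of squares of coefficients'' identity spatially, for every element of the reduced crossed product at once.

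This buys you several things:
\begin{itemize}
\item There is no approximation step.
\item You get the operator inequality $\alpha_{g^{-1}}(E_g(a)^*E_g(a))\le E_1(a^*a)$ for all $a$, since a faithful $\varphi$ is an order embedding.
\item Part (2) uses the same block-matrix mechanism as part (1).
\item You sidestep the bookkeeping in the paper's displayed formula for $a^*a$. As printed it has $a_ga_h^*$ where $a_g^*a_h$ is meant, though this is harmless for the norm conclusion since $\|x^*x\|=\|xx^*\|$.
\end{itemize}
The only cost is that you need some faithful representation of $A$, which Gelfand--Naimark supplies, to pass from $\|\varphi(\cdot)\|$ to $\|\cdot\|$. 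For (2) you do not need the stronger fact that $\varphi^G\rtimes\lambda^\Hi$ is faithful on $A\rtimes_{\lambda,\alpha}G$.
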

\begin{proof}
(1). Let $\varphi\colon A\to\B(\Hi)$ be a representation. If
$a\in A\rtimes_{\lambda,\alpha}G$ satisfies $E_g(a)=0$ for all $g\in G$, then
$s_g^*(\varphi^G\rtimes\lambda^{\Hi})(a)s_h=0$ for all $g,h\in G$, and hence
$(\varphi^G\rtimes\lambda^{\Hi})(a)=0$. Since $\varphi$ is arbitrary, it follows that $a=0$.

(2). For a finite subset $F\subseteq G$ and an element $a=\sum_{g\in F}a_gu_g\in c_c(G,A,\alpha)$, we have
\[a^*a=\sum_{g,h}\alpha_{g^{-1}}(a_ga_h^*)u_{g^{-1}h}\]
and thus
\[E_1(a^*a)=\sum_{g\in G}a_ga_g^*=\sum_{g\in G}E_g(\alpha_{g^{-1}}(a))E_g(\alpha_{g^{-1}}(a)^*).\]
Thus $E_1(a^*a)\geq \alpha_{g^{-1}}(E_g(a)E_g(a)^*)$ for all $g\in G$ and hence
$\|E_1(a^*a)\|\geq \|E_g(a)\|^2$. By continuity, this holds for all elements in $A\rtimes_{\lambda,\alpha}G$.

(3). If $E_1(a^*a)=0$, then $E_g(a)=0$ for all $g\in G$ by part~(2), and hence $a=0$ by part~(1).
\end{proof}

Among the maps $E_g$, for $g\in G$, the one corresponding to the unit of the group is
special. This is already evident in part~(3) of the above theorem. Moreover, $E_1$ is a completely positive map, and it is even a conditional expectation
in the sense of the following definition.

\begin{df}\label{df:CondExp}
Let $B$ be a \ca, let $A\subseteq B$ be a C*-subalgebra, and let $E\colon B\to A$ be a positive, linear map.
We say that $E$ is a \emph{conditional expectation} if $E(a)=a$ for all $a\in A$,
and $E(ab)=aE(b)$ for all $a\in A$ and all $b\in B$.
\end{df}

\begin{prop}\label{prop:E1CondExp}
Let $G$ be a discrete group, let $A$ be a \ca, and let $\alpha\colon G\to\Aut(A)$ be an action.
Then the map $E_1\colon A\rtimes_{\lambda,\alpha}G\to A$ is a conditional expectation.
\end{prop}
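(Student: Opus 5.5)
We need to check three things for $E_1$: linearity (already known from \autoref{lma:DefEg}), the identity $E_1(a)=a$ for $a\in A$, the bimodule property $E_1(ab)=aE_1(b)$, and positivity. The plan is to verify the algebraic identities on the dense subalgebra $c_c(G,A,\alpha)$ and pass to the closure using contractivity of $E_1$. Positivity is then read off from the regular representation via the formula in \autoref{lma:DefEg}.

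\textbf{Step 1: $E_1$ fixes $A$.} We regard $A\subseteq A\rtimes_{\lambda,\alpha}G$ via $a\mapsto au_1$. For $a\in A$, the element $au_1$ has a single coefficient $a$ at the identity, so $E_1(a)=a$ by definition of $E_1$ on $c_c(G,A,\alpha)$.

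\textbf{Step 2: the module property.} Take $b=\sum_{h\in F}b_hu_h\in c_c(G,A,\alpha)$ and $a\in A$. Then
\[ab=\sum_{h\in F}(ab_h)u_h,\]
so $E_1(ab)=ab_1=aE_1(b)$. For general $b$ in the reduced crossed product, we approximate $b$ by $b_n\in c_c(G,A,\alpha)$. Since left multiplication by $a$ is norm continuous and $E_1$ is contractive, both sides converge, and the identity holds. The same argument gives $E_1(ba)=E_1(b)a$, although this is not required by \autoref{df:CondExp}.

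\textbf{Step 3: positivity.} This is the only step that is not purely formal. Fix a faithful representation $\varphi\colon A\to\B(\Hi)$, and apply \autoref{lma:DefEg} with $g=h=1$. For every $b\in A\rtimes_{\lambda,\alpha}G$ this gives
\[\varphi(E_1(b))=s_1^*(\varphi^G\rtimes\lambda^\Hi)(b)s_1.\]
If $b\geq 0$, then $(\varphi^G\rtimes\lambda^\Hi)(b)\geq 0$, because $\varphi^G\rtimes\lambda^\Hi$ is a $\ast$-homomorphism. Compressing by $s_1$ preserves positivity, so $\varphi(E_1(b))\geq 0$. Since $\varphi$ is faithful, hence an isometric $\ast$-isomorphism onto its image, we conclude $E_1(b)\geq 0$.

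The same formula shows that $E_1$ is completely positive, since compression of an amplified $\ast$-homomorphism is completely positive. One caveat: we need $\varphi^G\rtimes\lambda^\Hi$ to extend to the reduced crossed product. This holds by the very definition of the reduced norm, since that norm dominates the norm in each regular representation. With this observed, all three properties hold and $E_1$ is a conditional expectation.
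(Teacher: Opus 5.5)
Your proof is correct and follows the paper's argument essentially verbatim. Positivity comes from the identity in \autoref{lma:DefEg} with $g=h=1$, $E_1(a)=a$ comes from the identification of $A$ with $Au_1$, and the module property is checked on $c_c(G,A,\alpha)$ and extended by continuity of $E_1$; your extra remarks (pulling positivity back to $A$ through a faithful $\varphi$, complete positivity, and $E_1(ba)=E_1(b)a$) are correct refinements of the same route.
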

\begin{proof}
Positivity of $E_1$ follows by taking $g=h=1$ in the displayed identity of \autoref{lma:DefEg}.
Since $A$ is identified with $Au_1\subseteq A\rtimes_{\lambda,\alpha}G$, it is clear that $E_1(a)=a$
for all $a\in A$. Finally, given $a\in A$ and $b\in c_c(G,A,\alpha)$ of the form
$b=\sum\limits_{g\in G}b_gu_g$, it is clear that
$E_1(ab)=ab_1=aE_1(b)$. Since $E_1$ is continuous, it follows that this
identity holds for all $b\in A\rtimes_{\lambda,\alpha} G$, as desired.
\end{proof}

\begin{df}\label{df:CondExpCP}
Let $G$ be a discrete group, let $A$ be a \ca, and let $\alpha\colon G\to\Aut(A)$ be an action.
Then the maps
\[E=E_1\colon A\rtimes_{\lambda,\alpha}G\to A \ \ \mbox{ and } \ \ E\circ\kappa\colon A\rtimes_{\alpha} G\to A\]
are called
the \emph{canonical conditional expectations} on the reduced and full crossed products, respectively.
\end{df}

There are other conditional expectations
in crossed products that sometimes play a relevant
role. We leave the following as an exercise.

\begin{ex}
Let $G$ be a discrete group, let $X$ be a compact Hausdorff space, let $\alpha\colon G\to\Aut(C(X))$ be an action, and let $x\in X$.
Set 
\[G_x=\{g\in G\colon g\cdot x=x\}.\]
Show that there exists a conditional expectation
$E_x\colon C(X)\rtimes_{\lambda} G\to C^*_{\lambda}(G_x)$ determined by
\[ E_x(au_g)=\begin{cases}
a(x)u_g, &\mbox{ if } g\in G_x;\\
0, &\mbox{ else}.
                 \end{cases}\]
\end{ex}

\chapter{Crossed products within Elliott's program}
 
\section{Specializing the Elliott program to dynamical systems} 
We now give a very brief overview of Elliott's program, 
focusing on the classification theorem, and we refer the 
reader to \autoref{app:Elliott} for a historical account
as well as some earlier results in the area.

For the last four decades, one of the main driving forces in the study of C*-algebras has been the Elliott
classification program, which aims at classifying simple, separable, nuclear C*-algebras which satisfy the
Universal Coefficient Theorem (UCT) using K-theoretical and tracial data (known as the Elliott invariant).

Despite the success that this program witnessed in the 90’s, examples constructed by R{\o}rdam \cite{Ror_simple_2003} and Toms
\cite{Tom_classification_2008} (inspired by earlier work of Villadsen; see Section~4 in \autoref{app:Elliott}) showed that an additional regularity assumption for the C*-algebras ought to be added. 
In parallel to these developments,
Jiang and Su constructed in \cite{JiaSu_simple_1999} 
an infinite-dimensional, 
unital, simple,
amenable \ca\ $\mathcal{Z}$, now called the \emph{Jiang-Su algebra}, with the same Elliott invariant
as the complex numbers. This C*-algebra has the property that for any simple, separable, nuclear \ca\ $A$, and under a very mild assumption on $K_0(A)$, the algebras
$A$ and $A\otimes\mathcal{Z}$ have the same Elliott invariant (but need not be isomorphic, as
Tom's examples \cite{Tom_classification_2008} show). 

It became apparent that $\mathcal{Z}$-stability\footnote{We say that a \ca\ $A$ is 
\emph{$\mathcal{Z}$-stable}, or that it absorbs $\mathcal{Z}$ tensorially, if there is an isomorphism $A\otimes \mathcal{Z}\cong A$.}
needed to be added as an assumption in order to obtain classification by the Elliott invariant.
After a series of breakthroughs and hundreds of pages written on the matter, the revised
program came to a successful end: the last
step 
is a combination
of the works of 
Elliott-Gong-Lin-Niu \cite{EllGonLinNiu_classification_2015} 
and Tikuisis-White-Winter
\cite{TikWhiWin_quasidiagonality_2017}, and a
shorter proof recently appeared in \cite{CGSTW_classifying_2023}. The outcome is the following:

\begin{thm}(Classification Theorem).\label{cnj:Elliott}
Let $A$ and $B$ be simple, separable, unital,
nuclear, $\mathcal{Z}$-stable C*-algebras satisfying the UCT. 
Then $A$ and $B$ are isomorphic if and only
if their Elliott invariants are isomorphic.
\end{thm}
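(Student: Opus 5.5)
This statement is not proved in the survey; it records the outcome of the classification program, and the honest plan is to assemble it from the cited deep results. The only elementary direction is ``only if'': an isomorphism $A\cong B$ induces isomorphisms on $K_0$ (with order and unit), $K_1$, the trace simplex and the pairing, so the Elliott invariants agree. The substance is the converse. I would split it along Kirchberg's dichotomy: a simple, separable, unital, nuclear, $\mathcal{Z}$-stable C*-algebra is either purely infinite or stably finite. The dichotomy is visible from the invariant, since $T(A)\neq\emptyset$ exactly in the stably finite case, by Haagerup's theorem that quasitraces on exact algebras are traces. Hence an isomorphism of invariants forces $A$ and $B$ to lie in the same case.

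In the purely infinite case, $\mathcal{Z}$-stability together with pure infiniteness gives $\mathcal{O}_\infty$-absorption. Kirchberg--Phillips classification \cite{Phi_classification_2000} then applies: the UCT turns an isomorphism of $K$-theory preserving the unit class into a $KK$-equivalence, and this lifts to an isomorphism of the algebras. In the stably finite case, the strategy is to show that $A$ has finite nuclear dimension and is tracially approximately finite-dimensional after tensoring with a UHF algebra, and then to run the Elliott--Gong--Lin--Niu machinery \cite{EllGonLinNiu_classification_2015}. The key input here is quasidiagonality of all traces, supplied by Tikuisis--White--Winter \cite{TikWhiWin_quasidiagonality_2017} under the UCT. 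One then combines $\mathcal{Z}$-stability with nuclear dimension results (Castillejos--Evington--Tikuisis--White--Winter \cite{CasEviTikWhiWin_nuclear_2021}) to obtain rational tracial rank zero or generalized tracial rank one. Finally one constructs a model with the same invariant and uses an Elliott intertwining argument through the existence and uniqueness theorems for $\ast$-homomorphisms. As an alternative route, one could follow \cite{CGSTW_classifying_2023}, which classifies unital embeddings by a total invariant and deduces the isomorphism theorem by intertwining.

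The main obstacle is the stably finite existence/uniqueness step. This requires lifting invariant morphisms to approximately multiplicative maps and controlling them up to approximate unitary equivalence via traces and $\mathrm{Hom}$/$\mathrm{Ext}$ data coming from the UCT, and it is where quasidiagonality is indispensable. In a survey, I would simply cite these works rather than reproduce them.
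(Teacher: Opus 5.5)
Your outline is essentially what the paper does: the survey gives no proof of its own and records the theorem as the combination of Kirchberg--Phillips \cite{Phi_classification_2000} in the purely infinite case with Elliott--Gong--Lin--Niu \cite{EllGonLinNiu_classification_2015}, Tikuisis--White--Winter \cite{TikWhiWin_quasidiagonality_2017} and Castillejos--Evington--Tikuisis--White--Winter \cite{CasEviTikWhiWin_nuclear_2021} (or alternatively \cite{CGSTW_classifying_2023}) in the stably finite case, split along Kirchberg's dichotomy exactly as you propose. One small correction: for a UHF algebra $Q$, the algebra $A\otimes Q$ is in general not tracially approximately finite-dimensional but only has generalized tracial rank at most one, which is the version you correctly invoke a few lines later.
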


This classification result is
virtually optimal: except the possibly redundant requirement
on the UCT, all other conditions on the C*-algebras are needed 
in order to obtain
a classification result in terms of the Elliott invariant. 

\begin{df}
We will say that a C*-algebra is \emph{classifiable} if it satisfies the assumptions of the \autoref{cnj:Elliott}.
\end{df}

With such a powerful classification theorem at our disposal, 
it becomes an imperative task to identify interesting classes
of C*-algebras to which it can be applied. One
of the most natural families of C*-algebras arises from
topological dynamics via the (reduced) crossed product construction described in \autoref{df:FullRedCP}.

We now specify the class of actions we will be focusing on.

\begin{df}\label{df:DynSyst} By a \emph{(topological) dynamical system} we 
will always mean an action $G\curvearrowright X$ of a countable discrete group $G$, via homeomorphisms of a compact metrizable space $X$.
We may also sometimes say that $G\curvearrowright X$ is an 
\emph{action} of $G$ on $X$.
\end{df}

\begin{nota} Let $G\curvearrowright^\alpha X$ be a dynamical system. With a slight abuse of notation, we often also denote 
 by $\alpha$ the induced action of $G$ on $C(X)$, which is 
 given by $\alpha_g(a)(x)=a(g^{-1}\cdot x)$ for all $g\in G$, 
all $a\in C(X)$, and all $x\in X$.
\end{nota}

We explicitly exclude dynamical systems with a continuous
parameter (such as flows, or actions of $\mathbb{R}$), or actions 
by discrete uncountable groups, and 
we also exclude dynamical systems on non-compact spaces.
This is somewhat artificial, as there definitely exist interesting topological actions that do not satisfy our definition of a dynamical system, but our terminology is justified by \autoref{eg:CPunitalSep}.


In recent years, the following problem has attracted a great
deal of attention:

\begin{pbm}\label{pbm:DynCharClassif} Give a dynamical characterization of those
 dynamical systems $G\curvearrowright X$ for which $C(X)\rtimes_{\lambda} G$ satisfies the assumptions of \autoref{cnj:Elliott}.
\end{pbm}

In order to tackle \autoref{pbm:DynCharClassif}, we can go through its assumptions and try to find dynamical counterparts. 
Unitality and separability of $C(X)\rtimes_{\lambda} G$ are very easy to characterize, and are in fact automatic whenever we work with 
dynamical systems as defined in \autoref{df:DynSyst}.

\begin{ex}\label{eg:CPunitalSep}
Let $X$ be a locally compact, Hausdorff space, let $G$ be a discrete group, and let $G\curvearrowright X$ be an action by homeomorphisms\footnote{This is more general than what we called ``dynamical system'', since we do not require $X$ to be compact or metrizable, nor do we require $G$ to be countable.}.
\be\item $C(X)\rtimes_{\lambda} G$ is unital if and only if $X$ is compact.
\item $C(X)\rtimes_{\lambda} G$ is separable if and only if $X$ is second countable\footnote{For compact spaces, second countability is equivalent to metrizability.} and $G$ is countable.
\ee
\end{ex}

On the other hand, the remaining assumptions in \autoref{cnj:Elliott} are not automatic for dynamical systems.
We discuss each of them separately.

\section{Nuclearity of crossed products}
Nuclearity is not automatic for crossed products of
dynamical systems, as this already fails for group C*-algebras:
$C_{\lambda}^*(\mathbb{F}_2)$ is not nuclear; see also \autoref{eg:ConvNuclearCP}. On the other hand, we 
know precisely when it holds (see \autoref{thm:AmenIffNuc}): $C(X)\rtimes_{\lambda} G$ is nuclear if and only if $G\curvearrowright X$ is amenable, in the sense of the following definition. For a discrete group $G$, we write $P(G)\subseteq \ell^1(G)$ for the 
space of probability measures on $G$, and endow it with the 
canonical action given by $(g\cdot \mu)(E)=\mu(gE)$ for all $g\in G$, all $\mu\in P(G)$ and all $E\subseteq G$.

\begin{df}\label{df:AmenAc}
We say that a dynamical system $G\curvearrowright X$ is
\emph{amenable} if there exists a sequence $(\mu_n)_{n\in\N}$ of 
continuous functions $\mu_n\colon X\to P(G)$ satisfying
\[\lim_{n\to\infty}\max_{x\in X}\big\|\mu_n(g\cdot x)-g\cdot \mu_n(x)\big\|_1=0\]
for all $g\in G$.
\end{df}
 
\begin{eg}\label{eg:GamenActAmen} If $G$ is amenable, then any dynamical system
 $G\curvearrowright X$ is amenable. Indeed, if $(F_n)_{n\in\N}$ is a F{\o}lner sequence for $G$, one may take $\mu_n\colon X\to P(G)$ to be the constant function with value $\tfrac{1}{|F_n|}\sum_{g\in F_n}\delta_{g}$.
\end{eg}

There are many examples of amenable actions of nonamenable groups, for instance the canonical action of $\mathbb{F}_2$ on its Gromov's boundary; see part (4) of \autoref{egs:TopActions}. 

Since the measures that witness amenability of an action
of an amenable group do not depend on the space (see \autoref{eg:GamenActAmen}), it is perhaps not surprising that proving nuclearity of crossed products by amenable groups is easier. Accordingly, and since this case is often sufficient for most purposes, we leave the proof of this case as a guided 
exercise. Recall that a unital, separable \ca\ $A$ is \emph{nuclear} 
if there exist sequences $\varphi_n\colon A\to M_{k_n}$ and 
$\psi_n\colon M_{k_n}\to A$ of unital completely positive maps 
such that $\lim\limits_{n\to \infty}\|\psi_n(\varphi_n(a))-a\|=0$ for all $a\in A$.

\begin{ex}
Let $G$ be amenable and let $G\curvearrowright X$ be a dynamical system, and set $A=C(X)\rtimes_{\lambda} G$. 
The goal of this exercise is to show that $A$ is nuclear. 
Let $(F_n)_{n\in\N}$ be a F{\o}lner sequence. Set $k_n=
|F_n|$ and identify $M_{k_n}$ with $\B(\ell^2(F_n))$. Define 
maps $\varphi_n\colon A\to C(X)\otimes M_{k_n}$ and 
$\psi_n\colon C(X)\otimes M_{k_n}\to A$ by
\[ 
\varphi_n(au_s)=\sum_{g\in F_n\cap sF_n} \alpha_{g^{-1}}(a)\otimes e_{g,s^{-1}g}
\, \ \mbox{ and } \, \ \psi_n(a\otimes e_{g,h})=\tfrac{1}{|F_n|}\alpha_g(a) u_{gh^{-1}}.\]

\be 
\item Explain why these maps (especially $\varphi_n$) are well-defined.
\item Show that $\varphi_n$ and $\psi_n$ are unital and completely positive.
\item Show that $\lim\limits_{n\to \infty}\|\psi_n(\varphi_n(a))-a\|=0$ for all $a\in C(X)\rtimes_{\lambda} G$. 
\emph{Hint: do it first for $a=f\mathbbm{1}_g$ for some $f\in C(X)$ and some $g\in G$.}
\item Using that $C(X)$ is nuclear, show that the above implies that $C(X)\rtimes_{\lambda} G$ is nuclear.
\ee
\end{ex}

We now continue with the main treatment of this section.

\begin{df}
Let $G\curvearrowright X$ be a dynamical system.
A function $\xi\colon X \times G\to\C$ is said to be \emph{of positive type} if for
every $x \in X$, every positive integer $n$, every $g_1, \ldots, g_n \in G$, and every $z_1, \ldots, z_n \in \mathbb{C}$, we have:
\[
\sum_{j,k=1}^n \xi\big(g_j^{-1} \cdot x, g_j^{-1} g_k\big) \, z_j \overline{z_k} \geq 0.
\] 
\end{df}

We will need the following characterization of amenability in terms of functions of positive type; see \cite[Proposition~2.5]{Ana_amenability_2002} for a proof.

\begin{prop}\label{prop:CharAmen}
Let $G\curvearrowright X$ be a dynamical system. Then $G\curvearrowright X$ is amenable if and only if
there exists a net $(\xi_j)_{j\in J}$ of positive type functions in $c_c(G, C(X),\alpha)$ which converges to 1 uniformly on compact subsets of $X\times G$.
\end{prop}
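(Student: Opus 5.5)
The plan is to prove the two implications separately, translating between the $\ell^1$-valued maps $\mu_n\colon X\to P(G)$ of \autoref{df:AmenAc} and positive type functions via square roots. This is the standard passage from weak containment data ($\ell^1$) to coefficients ($\ell^2$), exactly as in the proof that an amenable group has a net of positive definite functions converging to $1$.

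For ``amenable $\Rightarrow$ net'', first approximate each $\mu_n$ uniformly in $\ell^1$ by continuous maps with finitely supported values. To do this, truncate to a large finite set $F_n\subseteq G$ and renormalize; compactness of $X$ and continuity of $\mu_n$ give a single finite set that works for all $x$. Then set $\eta_n(x)=\mu_n(x)^{1/2}\in\ell^2(G)$, a unit vector depending continuously on $x$. The inequality $\|\mu^{1/2}-\nu^{1/2}\|_2^2\le\|\mu-\nu\|_1$ shows that $\max_x\|\eta_n(g\cdot x)-g\cdot\eta_n(x)\|_2\to0$ for every $g$. Define
\[\xi_n(x,g)=\big\langle \eta_n(x),\, g\cdot \eta_n(g^{-1}\cdot x)\big\rangle,\]
with the exact placement of inverses chosen to match the positive-type convention. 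Positive type then follows because
\[\sum_{j,k}\xi_n(g_j^{-1}x,g_j^{-1}g_k)z_j\overline{z_k}\]
rewrites as $\big\|\sum_k \overline{z_k}\,g_k\cdot\eta_n(g_k^{-1}x)\big\|^2\ge0$, after translating by $g_j$ inside the inner product. Finite support of $\eta_n$ makes $\xi_n$ finitely supported in $g$ and continuous in $x$, so $\xi_n\in c_c(G,C(X),\alpha)$. Finally, $|\xi_n(x,g)-1|\le\|\eta_n(x)-g\cdot\eta_n(g^{-1}x)\|$, which tends to $0$ uniformly in $x$ for each fixed $g$. This gives uniform convergence on compacta $X\times\{$finite set$\}$.

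For the converse, take a net $\xi_j$ of positive type with $\xi_j\to1$ uniformly on $X\times K$ for finite $K$. The main obstacle is reconstructing $\ell^2$-vectors from a positive type function with good control. For each $x$, positive type gives a GNS-type representation $\xi_j(x,g)=\langle \eta(x), g\cdot\eta(g^{-1}x)\rangle$ for some continuous field in $\ell^2(G)$-valued modules. The real issue is obtaining $\eta$ finitely supported and continuous in $x$. I would follow Anantharaman-Delaroche: view $\xi_j$ as a positive element of the reduced crossed product side, so that $\xi_j$ corresponds to a completely positive multiplier of $C(X)\rtimes_\lambda G$. Then apply a Stinespring/Hilbert $C(X)$-module argument giving $\xi_j(x,g)=\langle\eta(x),\eta'(g,x)\rangle$ with $\eta\in C(X,\ell^2(G))$. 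Next, truncate $\eta$ to finite support, which costs little by compactness. Finally, set $\mu(x)=|\eta(x)|^2/\|\eta(x)\|^2$. The estimate $\|\,|a|^2-|b|^2\|_1\le\|a-b\|_2\|a+b\|_2$ converts $\ell^2$-almost invariance, which comes from $\mathrm{Re}\,\xi_j\approx1$ uniformly on $X\times\{g\}$, into the $\ell^1$-almost invariance required in \autoref{df:AmenAc}. Since $X$ is compact metrizable and $G$ countable, a sequence can be extracted from the net. Given the difficulty of the Hilbert module factorization step, citing \cite[Proposition~2.5]{Ana_amenability_2002} there is the fallback.
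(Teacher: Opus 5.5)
The paper gives no proof of this proposition; it only cites \cite[Proposition~2.5]{Ana_amenability_2002}. Your forward direction is correct and complete. Uniform truncation of $\mu_n$ works because $\mu_n(X)$ is norm-compact in $\ell^1(G)$. The square-root vectors $\eta_n(x)=\mu_n(x)^{1/2}$ then give $\xi_n(x,g)=\langle \eta_n(x), g\cdot\eta_n(g^{-1}\cdot x)\rangle$, and these are finitely supported functions of positive type; your estimates are right.

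The converse has a genuine gap at exactly the step you flag, and that step is the whole content of this direction. You need a \emph{single} continuous map $\eta\colon X\to\ell^2(G)$ with $\xi_j(x,g)\approx\langle \eta(x), g\cdot\eta(g^{-1}\cdot x)\rangle$ uniformly in $x$. A pointwise Kolmogorov/GNS decomposition of the kernel only gives vectors in an $x$-dependent Hilbert space, with no continuity in $x$ and no identification with $\ell^2(G)$. A Stinespring dilation of the associated multiplier gives at best $\langle\eta(x),\eta'(g,x)\rangle$ in an abstract module. From that form no almost invariance follows unless $\eta'(g,x)$ is the translate of $\eta(g^{-1}\cdot x)$. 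Falling back on \cite[Proposition~2.5]{Ana_amenability_2002} means citing the statement itself.

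The missing idea is short with the coefficient maps $E_g$.
For $x\in X$, let $\pi_x$ be the regular representation on $\ell^2(G)$ induced from evaluation at $x$, so that $\pi_x(f)\mathbbm{1}_h=f(h\cdot x)\mathbbm{1}_h$ and $\pi_x(u_g)\mathbbm{1}_h=\mathbbm{1}_{gh}$.
For $a=\sum_s a_su_s\in c_c(G,C(X),\alpha)$ one computes $\langle\pi_x(a)\mathbbm{1}_k,\mathbbm{1}_h\rangle=a_{hk^{-1}}(h\cdot x)$.
Substituting $h=g_j^{-1}$ and $k=g_l^{-1}$ shows that $\xi$ is of positive type exactly when $\pi_x(a_\xi)\geq 0$ for all $x$, where $a_\xi=\sum_s\xi(\cdot,s)u_s$.
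The representation $\bigoplus_x\pi_x$ is faithful on $C(X)\rtimes_\lambda G$: it is induced from a faithful representation of $C(X)$, so use \autoref{lma:DefEg} and \autoref{thm:CondExpFaithful}. Hence $a_\xi\geq 0$ there.

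Now approximate $a_\xi^{1/2}$ by some $c\in c_c(G,C(X),\alpha)$ with $\|c^*c-a_\xi\|<\delta$. A direct computation gives $E_g(c^*c)(x)=\sum_h\overline{c_h(h\cdot x)}\,c_{hg}(h\cdot x)=\langle\lambda_g\eta(g^{-1}\cdot x),\eta(x)\rangle$. Here $\eta(x)=\sum_t c_{t^{-1}}(t^{-1}\cdot x)\mathbbm{1}_t$ is finitely supported and continuous in $x$. Contractivity of $E_g$ makes this $\delta$-close to $\xi(x,g)$, uniformly in $x$, for every $g$.

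From here your ending goes through. Using $E_1$ gives $\|\eta(x)\|^2\approx\xi(x,1)\approx 1$. Hence $\|\eta(x)-\lambda_g\eta(g^{-1}\cdot x)\|_2^2<8\delta$ whenever $|\xi-1|<\delta$ on $X\times\{1,g\}$. Then $\mu(x)=|\eta(x)|^2/\|\eta(x)\|_2^2$ witnesses \autoref{df:AmenAc} after extracting a sequence, and no truncation is even needed.
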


In order to prove that crossed products of amenable actions are nuclear, we will use the notion of a $G$-$C(X)$-algebra, which we define next. We write $Z(B)$ for 
the center of a C*-algebra $B$.

\begin{df}
 Let $G\curvearrowright X$ be a dynamical system. A \emph{$G$-$C(X)$-algebra} is
a C*-algebra $A$, together with an action $\alpha\colon G\to\Aut(A)$, and a unital,
equivariant homomorphism $\rho\colon C(X)\to Z(M(A))$.
\end{df}

Finally, one more lemma, without proof, about
completely positive maps from crossed products to matrix algebras 
and how to replace them by nearby ones with finite support.

\begin{lma}\label{lma:CpcCP}
Let $G$ be a discrete group, and let $\Phi\colon C(X)\rtimes_{\lambda} G\to M_n$
be a completely positive map.
Let $\varepsilon > 0$, let $a \in C(X)$, and let a finite subset $F \subset G$ be given. Then there exists a completely positive map
\[
\widetilde{\Phi}\colon C(X)\rtimes_{\lambda} G \to M_n
\]
such that:
\begin{enumerate}
    \item $\| \Phi(\lambda_g) - \widetilde{\Phi}(\lambda_g) \| \leq \varepsilon$ for every $g \in F$;
    \item $g \mapsto \widetilde{\Phi}(f \lambda_g h)$ has finite support for every $f,h \in C(X)$.
\end{enumerate}
\end{lma}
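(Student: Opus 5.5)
The idea is to replace $\Phi$ by a compression of a regular covariant representation. In such a compression, cutting down to finitely many group coordinates automatically produces finite support in the group variable. To fix notation, fix a faithful representation $\varphi\colon C(X)\to\B(\Hi)$ and write $\pi=\varphi^G\rtimes\lambda^\Hi$ for the associated regular representation on $\Hi^G=\ell^2(G,\Hi)$. By \autoref{df:FullRedCP}, together with the standard fact that the reduced norm is attained by any single regular representation induced from a faithful $\varphi$, $\pi$ is faithful on $C(X)\rtimes_{\lambda}G$.

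\textbf{Step 1: approximate $\Phi$ by compressions of $\pi$.} Every representation of $C(X)\rtimes_{\lambda}G$ is weakly contained in the faithful representation $\pi$. A Glimm/Voiculescu-type argument (or Arveson's approximation of completely positive maps into $M_n$ by those factoring through a faithful representation) then gives the following. For the finite set $\{\lambda_g\colon g\in F\}$ and $\varepsilon>0$, there are $m\in\N$ and an operator $V\colon\C^n\to(\Hi^G)^{\oplus m}$ such that
\[
\Psi(x)=V^*\pi^{(m)}(x)V
\]
satisfies $\|\Phi(\lambda_g)-\Psi(\lambda_g)\|<\varepsilon/2$ for all $g\in F$. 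Here $\pi^{(m)}$ is the $m$-fold amplification; we may absorb it by replacing $\Hi$ with $\Hi^{\oplus m}$, so we assume $m=1$. The map $\Psi$ is completely positive. I expect this step to be the main obstacle, since it is the only place where reducedness is used. Approximation in the point-norm topology on a finite set is enough here, because $M_n$ is finite dimensional. My first attempt would be to pass to vector states: write $\Phi$ through Stinespring as $\Phi(x)=\sum_{i,j}\langle\rho(x)\eta_j,\eta_i\rangle e_{i,j}$. Each functional $x\mapsto\langle\rho(x)\eta_j,\eta_i\rangle$ in the matrix is then a weak$^*$-limit of finite sums of vector functionals of $\pi$, by Fell's characterization of weak containment.

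\textbf{Step 2: truncate in the group variable.} Let $F'\subseteq G$ be finite and let $P_{F'}$ be the projection of $\Hi^G$ onto $\ell^2(F',\Hi)$. Then $P_{F'}\to 1$ strongly as $F'$ increases. Since $V$ has finite-dimensional domain, $\|P_{F'}V-V\|\to 0$. Set
\[
\widetilde\Phi(x)=V^*P_{F'}\,\pi(x)\,P_{F'}V.
\]
This map is completely positive. Moreover,
\[
\|\widetilde\Phi(\lambda_g)-\Psi(\lambda_g)\|\leq 2\|V\|\,\|P_{F'}V-V\|,
\]
so for $F'$ large enough condition~(1) holds with $\varepsilon$.

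\textbf{Step 3: finite support.} The operator $P_{F'}\pi(x)P_{F'}$ is determined by the blocks $s_k^*\pi(x)s_l$ with $k,l\in F'$. For $x=f\lambda_g h$ one computes, as in \autoref{lma:FormulasFaithCondExp}, that
\[
\pi(f\lambda_g h)s_l=\varphi\big(\alpha_{(gl)^{-1}}(f)\big)\,\varphi\big(\alpha_{l^{-1}}(h)\big)\,s_{gl}.
\]
Hence $s_k^*\pi(f\lambda_gh)s_l=0$ unless $k=gl$. Consequently $\widetilde\Phi(f\lambda_g h)=0$ whenever $g\notin F'F'^{-1}$, which is a finite set. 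This gives condition~(2).
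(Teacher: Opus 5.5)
The paper states this lemma without proof, so there is no argument of the paper's to compare against. Your proof is correct. The strategy is sound: approximate $\Phi$ on the finite set by a compression $V^*\pi^{(m)}(\cdot)V$ of a faithful regular representation, then cut down by $P_{F'}$ and use that $s_k^*\pi(f\lambda_g h)s_l=0$ for $k\neq gl$. The estimate $\|\widetilde\Phi(\lambda_g)-\Psi(\lambda_g)\|\leq 2\|V\|\,\|P_{F'}V-V\|$ and the finite-support computation both check out. You in fact get the support bound $F'F'^{-1}$ uniformly in $f,h$, which is more than the statement asks. Faithfulness of $\pi$ on the reduced crossed product can be read off directly from \autoref{lma:DefEg} together with part~(1) of \autoref{thm:CondExpFaithful}.

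Two corrections.

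First, the sketch you offer for Step~1 does not work as written. The off-diagonal functionals $x\mapsto\langle\rho(x)\eta_j,\eta_i\rangle$ are not positive, so they are not limits of sums of vector functionals. Moreover, approximating the $n^2$ entries separately does not produce a completely positive map. Run the density argument once, on $M_n(C(X)\rtimes_{\lambda}G)$:
\begin{itemize}
\item The functional $\omega_\Phi([x_{ij}])=\sum_{i,j}\Phi(x_{ij})_{i,j}$ is positive, and $\pi\otimes\mathrm{id}_{M_n}$ is faithful on $M_n(C(X)\rtimes_{\lambda}G)$.
\item Hahn--Banach then shows that $\omega_\Phi$ is a weak$^*$-limit of finite sums $\sum_{k=1}^m\omega_{\zeta^{(k)}}\circ(\pi\otimes\mathrm{id}_{M_n})$ with $\zeta^{(k)}=(\zeta^{(k)}_1,\ldots,\zeta^{(k)}_n)\in(\Hi^G)^n$.
\item Such a sum is exactly the functional attached to $x\mapsto V^*\pi^{(m)}(x)V$, where $Ve_j=(\zeta^{(1)}_j,\ldots,\zeta^{(m)}_j)$.
\item Weak$^*$-convergence on $M_n(\cdot)$ is entrywise, hence norm, convergence in $M_n$ for each fixed $x$.
\end{itemize}
No Glimm or Voiculescu theorem is needed, because you allow the amplification $\pi^{(m)}$. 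An equivalent way to organize this step is to extend $\Phi$ to $\B(\Hi^G)$ by Arveson's extension theorem and approximate it pointwise by a normal completely positive map. Normal cp maps into $M_n$ have the same form $V^*(\,\cdot\otimes 1)V$, so after that point the two routes coincide with your Steps~2 and~3.

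Second, in Step~3 the operators on $\Hi$ must act before $s_{gl}$. The correct identity is $\pi(f\lambda_g h)s_l=s_{gl}\,\varphi\big(\alpha_{(gl)^{-1}}(f)\,\alpha_{l^{-1}}(h)\big)$. This does not affect your conclusion.
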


We will use the following characterization
of nuclearity (see \cite[Theorem~3.8.7]{BroOza_algebras_2008}): a C*-algebra $A$ is nuclear if and only if for every C*-algebra $B$, the canonical map $A\otimes_{\mathrm{max}}B\to A\otimes_{\mathrm{min}}B$ is 
an isomorphism. In the upcoming proof will also use some tools not covered in these notes, such as Hilbert C*-modules and weak containment of representations; we refer the reader to 
\cite{BroOza_algebras_2008} for the necessary background.

The following is the expected result. We remind the reader that our acting groups
are assumed to be discrete; without this assumption, the implications 
(1) $\Rightarrow$ (2) $\Rightarrow$ (3) are true, but (3) $\Rightarrow$ (1) 
is false (already for group C*-algebras).

\begin{thm}\label{thm:AmenIffNuc}
Let $G\curvearrowright X$ be a dynamical system. Then the following are equivalent:
\be\item $G\curvearrowright X$ is amenable.
\item For every $G$-$C(X)$-algebra $A$, the canonical map $A\rtimes G\to A\rtimes_{\lambda} G$ is an isomorphism;
\item $C(X)\rtimes G$ is nuclear.
\ee
\end{thm}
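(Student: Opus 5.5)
I will prove the cycle (1) $\Rightarrow$ (2) $\Rightarrow$ (3) $\Rightarrow$ (1), following the standard Anantharaman-Delaroche/Brown-Ozawa route.

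\textbf{(1) $\Rightarrow$ (2).} By \autoref{prop:CharAmen}, amenability gives a net $(\xi_j)$ of positive type functions in $c_c(G,C(X),\alpha)$ converging to $1$ uniformly on compacta. Given a $G$-$C(X)$-algebra $(A,\rho)$, each $\xi_j$ defines a Herz--Schur type multiplier
\[m_j\colon A\rtimes_\lambda G\to A\rtimes G, \qquad m_j(au_g)=\rho(\xi_j(\cdot,g))\,a\,u_g .\]
The plan is to realise $\xi_j$ as $\xi_j(x,g)=\langle \eta_j(x),\, g\cdot\eta_j(g^{-1}x)\rangle$ with $\eta_j\in C(X,\ell^2(G))$ finitely supported, which is a GNS-type factorisation available for positive type functions. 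This exhibits $m_j$ as a composition
\[A\rtimes_\lambda G\longrightarrow \K(\ell^2(G))\otimes (A\rtimes_\lambda G)\text{-type algebra}\longrightarrow A\rtimes G\]
of completely positive maps that is contractive for the \emph{reduced} norm. Since $\eta_j$ has finite support, the reduced and universal norms agree on the relevant corner; this is the same mechanism as in \autoref{thm:AmenGFullRedCP}, with a finite set replacing the F{\o}lner set. Granting this, $m_j$ is well defined. Moreover $m_j\circ\kappa\to\id$ pointwise on $c_c(G,A,\alpha)$ because $\xi_j\to 1$ uniformly on $X\times\{g\}$, so by uniform boundedness on all of $A\rtimes G$. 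Hence $\kappa$ is injective.

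\textbf{(2) $\Rightarrow$ (3).} Take any C*-algebra $B$ and set $A=C(X)\otimes_{\max}B$, with $G$ acting on the first factor and $\rho$ the inclusion of $C(X)$. This is a $G$-$C(X)$-algebra. By (2),
\[(C(X)\otimes_{\max}B)\rtimes G=(C(X)\otimes_{\max}B)\rtimes_\lambda G .\]
The left side is $(C(X)\rtimes G)\otimes_{\max}B$ by comparing universal properties (\autoref{thm:UnivPropCP}). The right side is $(C(X)\rtimes_\lambda G)\otimes_{\min}B$, using that $C(X)$ is nuclear and that the regular representation tensors well with a faithful representation of $B$. Applying (2) once more with $A=C(X)$ gives $C(X)\rtimes G=C(X)\rtimes_\lambda G$. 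Therefore the max and min tensor norms agree for every $B$, and nuclearity follows by the cited criterion \cite[Theorem~3.8.7]{BroOza_algebras_2008}.

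\textbf{(3) $\Rightarrow$ (1).} Nuclearity gives the completely positive approximation property: unital completely positive maps $\Phi_n\colon C(X)\rtimes G\to M_{k_n}$ and $\Psi_n\colon M_{k_n}\to C(X)\rtimes G$ with $\Psi_n\circ\Phi_n\to\id$ pointwise. By \autoref{lma:CpcCP} I may perturb the $\Phi_n$ so that $g\mapsto\Phi_n(f u_g h)$ is finitely supported. The composite $\theta_n=\Psi_n\circ\Phi_n$ is completely positive and close to the identity on $u_g$ for $g$ in a prescribed finite set. The candidate positive type functions are
\[\xi_n(x,g)=\big(E(\theta_n(u_g)u_g^*)\big)(x),\]
where $E$ is the canonical conditional expectation of \autoref{df:CondExpCP}.
\begin{itemize}
\item Positive type follows from complete positivity of $\theta_n$ together with positivity of $E$ and evaluation at points: one computes $\sum_{j,k}\xi_n(g_j^{-1}x,g_j^{-1}g_k)z_j\overline{z_k}$ as a state applied to the matrix $[\theta_n(u_{g_j}^*u_{g_k})]$.
\item Finite support in $g$ must be arranged so that $\xi_n\in c_c(G,C(X),\alpha)$; this is where the finite-support conclusion of \autoref{lma:CpcCP} is used.
\item Convergence $\xi_n\to1$ uniformly on $X\times K$ follows since $\theta_n(u_g)\to u_g$ in norm.
\end{itemize}
Then \autoref{prop:CharAmen} yields amenability.

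The main obstacle is the factorisation in (1) $\Rightarrow$ (2). Bounding the multiplier $m_j$ by the reduced norm, with values in the full crossed product, requires the Hilbert $A$-module $\ell^2(G,A)$ and a careful check that $\rho$ being central makes the $\eta_j$-twisting covariant. I would first try to mimic the proof of \autoref{thm:AmenGFullRedCP} verbatim, replacing the vector $\xi_0\mathbbm{1}_{K^{-1}F}$ by $\eta_j(\cdot)\otimes\xi_0$.
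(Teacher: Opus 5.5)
Your cycle is the right one, and two of the three implications follow the paper.

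In (2)$\Rightarrow$(3) you run the same chain of canonical isomorphisms. Using $C(X)\otimes_{\max}B$ instead of $C(X)\otimes_{\min}B$ changes nothing, since $C(X)$ is nuclear.

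In (3)$\Rightarrow$(1), your function $\xi_n(x,g)=E(\theta_n(u_g)u_g^*)(x)=E_g(\theta_n(u_g))(x)$ is, up to conventions, the paper's coefficient $\langle\mathbbm{1}_g,\Theta(\lambda_g)\mathbbm{1}_1\rangle(g^{-1}\cdot x)$ (compare \autoref{lma:DefEg}). Your positive-type check is correct and cleaner than the paper's: apply $\mathrm{ev}_x\circ E$ to $\sum_{j,k}c_j^*\theta_n(u_{g_j}^*u_{g_k})c_k$ with $c_k=\overline{z_k}\,u_{g_k}^*$, using $\alpha_g\circ E=E\circ\mathrm{Ad}(u_g)$. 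Two remarks on this step:
\begin{itemize}
\item Finite support is its whole content, since the constant function $1$ is already of positive type.
\item \autoref{lma:CpcCP} concerns maps on $C(X)\rtimes_\lambda G$, so you should run the argument in the reduced crossed product. It is nuclear as a quotient of $C(X)\rtimes G$; the paper makes this move silently too.
\end{itemize}

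The genuine gap is the step you grant in (1)$\Rightarrow$(2), and it has two parts.
\begin{itemize}
\item A finitely supported positive type function need not have the form $\langle\eta(x),g\cdot\eta(g^{-1}x)\rangle$ with $\eta$ finitely supported. This fails already for $X$ a point and $G=\Z^2$: the positive trigonometric polynomial $3+\cos\theta+\cos\phi$ is not $|p|^2$ for any trigonometric polynomial $p$. So you cannot get your $\eta_j$ by factoring the $\xi_j$ of \autoref{prop:CharAmen}.
\item Boundedness of $m_j$ from the reduced norm into $A\rtimes G$ needs an actual argument, not an analogy with the F{\o}lner half of \autoref{thm:AmenGFullRedCP}.
\end{itemize}
Both are fixed by working directly with \autoref{df:AmenAc} and with the \emph{other} half of that proof, namely Fell absorption.

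Let $(\pi,u)$ be a covariant representation on $\Hi$ whose integrated form is faithful on $A\rtimes G$. Then $(1\otimes\pi,\lambda\otimes u)$ is unitarily equivalent to the regular covariant representation associated to $\pi$ (\autoref{df:RegCovRep}). Hence its integrated form $\Pi$ factors through $A\rtimes_\lambda G$. For $\eta\colon G\to C(X)$ with $\sum_h|\eta(h)|^2$ norm convergent, set $V\zeta=\sum_h\mathbbm{1}_h\otimes\pi(\rho(\eta(h)))\zeta$. Centrality and equivariance of $\rho$ give
\[V^*\Pi(au_g)V=(\pi\rtimes u)\big(\rho(\xi_\eta(g))\,a\,u_g\big),\qquad \xi_\eta(g)=\sum_{h\in G}\overline{\eta(gh)}\,\alpha_g(\eta(h)).\]
So $m_\eta=V^*\Pi(\cdot)V$ is a completely positive map $A\rtimes_\lambda G\to A\rtimes G$ of norm at most $\|\sum_h|\eta(h)|^2\|$, and no finite support is needed.

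Now take $\eta_n(x)=\mu_n(x)^{1/2}\in\ell^2(G)$, with $\mu_n$ as in \autoref{df:AmenAc}. The series $\sum_h|\eta_n(h)|^2=1$ converges in norm by Dini's theorem. Then $\|m_{\eta_n}\|\le 1$ and
\[0\le 1-\xi_{\eta_n}(g)(x)=\tfrac12\big\|\eta_n(x)-g\cdot\eta_n(g^{-1}x)\big\|_2^2\le\tfrac12\big\|\mu_n(x)-g\cdot\mu_n(g^{-1}x)\big\|_1\xrightarrow{n\to\infty}0\]
uniformly in $x$. After this, your conclusion goes through: $m_{\eta_n}\circ\kappa\to\id$ pointwise, so $\kappa$ is injective.

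With this repair, your route genuinely differs from the paper's. The paper proves (1)$\Rightarrow$(2) by showing that an arbitrary covariant representation is weakly contained in a regular one. It approximates the vector states of $\varphi\rtimes u$ at $\xi_0$ by vector states of the regular representation at vectors built from $\eta_j$ and $\xi_0$. That is exactly the fallback you describe at the end, and it only manipulates matrix coefficients.

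Your Brown--Ozawa-style multipliers give more: completely positive contractions on $A\rtimes G$ that converge pointwise to the identity and factor through $A\rtimes_\lambda G$. If $\eta$ is supported on a finite set $F$ (truncate $\mu_n$ first), $m_\eta$ even factors through $M_{|F|}(A)$. The first map is the compression $b\mapsto[\alpha_{g^{-1}}(E_{gh^{-1}}(b))]_{g,h\in F}$ of \autoref{lma:DefEg}. The second is $[a_{g,h}]\mapsto\sum_{g,h\in F}\rho(\overline{\eta(g)})u_ga_{g,h}u_h^*\rho(\eta(h))$. This yields nuclearity of $A\rtimes G$ for nuclear $A$, and hence (1)$\Rightarrow$(3), without the tensor-product argument.
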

\begin{proof} 
(1) $\Rightarrow$ (2). Without loss of generality, we may assume that $A$ is unital (see \autoref{ex:AssumeUnital}).
Let $(\varphi, u)$ be a covariant representation of $A$ on a Hilbert space $\mathcal{H}$; again without loss of generality, we may assume that $\varphi$ is unital. 
Recall the construction of the covariant representation $(\Hi^G,\lambda^\Hi,\varphi^G)$ from \autoref{df:RegCovRep}.
We will show that $\varphi\rtimes u$ is weakly contained in the representation $(\varphi^G,\lambda^\Hi)$ on the Hilbert space $\Hi^G=\ell^2(G,\mathcal{H})$.

Let $(\xi_j)_{j\in J}$ be a net as in \autoref{prop:CharAmen}. 
Given $\xi \in \mathcal{H}$, $x\in X$ and $g\in G$, define
\[
\eta_j(g) = u_{g^{-1}} \varphi(\xi_j(g)) \ \ \mbox{ and } \ \
k_j(x, g) = \sum_{g\in G} \xi_j(x, h) \, \xi_j(g^{-1} \cdot x, g^{-1} h).
\]

For $f \in C_c(X \times G)$, a straightforward computation gives:
\begin{align*}
\Big\langle \eta_j, (\varphi^G \times \lambda)(f) \eta_j \Big\rangle
&= \sum_{g\in G} \langle \eta_j(g), \varphi(\alpha_{g^{-1}}(f(h))) \eta_j(h^{-1}g) \rangle\\
&= \sum_{g\in G} \langle \varphi(k_j(h)) \xi, \varphi(f(h)) u_h \rangle.
\end{align*}

Since $(\xi_j)_{j\in J}$ converges to $1$ uniformly on compact subsets of $X \times G$, for every $\xi\in \Hi^G$ we deduce that
\[
\lim_{j\in J} \Big\langle \eta_j, (\varphi^G \times \lambda)(f) \eta_j \Big\rangle = \Big\langle \xi, (\varphi \rtimes u)(f) \xi \Big\rangle.
\]
Therefore, $\varphi \rtimes u$ is weakly contained in $\varphi^G \times \lambda^\Hi$, thus proving (2).

\vspace{.2cm}

(2) $\Rightarrow$ (3). Let $B$ be any $C^*$-algebra. Since $C(X) \otimes B$ is a $G$-$C(X)$-algebra in an obvious way, using (2) at the second step, we have:
\begin{align*}\label{eq:3.1}\tag{3.1}
(C(X)\rtimes_{\lambda} G) \otimes_{\min} B \cong (C(X)\otimes_{\min} B)\rtimes_{\lambda} G = (C(X) \otimes_{\min} B)\rtimes G.
\end{align*}
Moreover, 
\begin{align*}\label{eq:3.2}\tag{3.2}
C(X) \otimes_{\min} B \cong C(X) \otimes_{\max} B 
\end{align*}
because $C(X)$ is nuclear. Using at the third step that $C(X)\rtimes G\cong C(X)\rtimes_{\lambda} G$ by assumption, it follows that:
\begin{align*}
(C(X)\rtimes_{\lambda} G) \otimes_{\min} B &\stackrel{\eqref{eq:3.1}}{\cong} (C(X) \otimes_{\min} B)\rtimes G \\
&\stackrel{\eqref{eq:3.2}}{\cong} (C(X) \otimes_{\max} B)\rtimes G \\
& \ \, \cong \ (C(X)\rtimes G) \otimes_{\max} B \\
&\ \,  \cong \ (C(X)\rtimes_{\lambda} G) \otimes_{\max} B.
\end{align*}
Since all the isomorphisms involved are the canonical ones, we deduce
that $C(X)\rtimes_{\lambda} G$ is nuclear.

\vspace{.2cm}

(3) $\Rightarrow$ (1).
Given $\varepsilon > 0$ and a finite subset $F\subseteq G$, we will prove the existence of a compactly supported positive type function $h$ such that
\[
\sup_{(x, g) \in X \times F} |h(x, g) - 1| \leq \varepsilon;
\]
once we do this, the result will follow from \autoref{prop:CharAmen}.
Given $g \in G$, with a slight abuse of notation we denote by $\mathbbm{1}_g\colon X\times G\to \C$ by
\[
\mathbbm{1}_{g}(x, t) =
\begin{cases}
1, & \text{if } t = g, \\
0, & \text{otherwise}.
\end{cases}
\]
We view $\mathbbm{1}_g$ as an element of the Hilbert C*-module $\ell^2(G) \otimes C(X)$ in the obvious way.
Using that $C(X)\rtimes_{\lambda} G$ is nuclear, let
\[
\Phi\colon C(X) \rtimes_{\lambda} G \to M_n(\mathbb{C}) \quad \text{and} \quad \Psi \colon M_n(\mathbb{C}) \to C(X) \rtimes_{\lambda} G
\]
be completely positive contractions, such that if we set 
$\Theta=\Psi\circ \Phi\colon C(X) \rtimes_{\lambda} G \to C(X) \rtimes_{\lambda} G$,
then $\|\Theta(\lambda_g)-\lambda_g\|<\frac{\ep}{2}$ for all $g\in F$.
Define
\[
\xi(x, g) := \big\langle \mathbbm{1}_g, \Theta(\lambda_g) \mathbbm{1}_{1}\big\rangle(g^{-1} \cdot x).
\]
It is straightforward to show that
\[
\xi(g^{-1} \cdot x, g^{-1} h) = \big\langle \mathbbm{1}_{g^{-1}}, \Theta( \lambda_{g^{-1} h}^*) \mathbbm{1}_{h^{-1}}(x) \big\rangle.
\]
We deduce that $\xi$ is a function of positive type since $\Theta$ is completely positive.
Moreover, we have
\[
\sup_{x \in X} \left| \xi(x, g) - \big\langle \mathbbm{1}_g, \lambda_g \mathbbm{1}_{1}(g^{-1} \cdot x) \big\rangle \right| \leq \frac{\varepsilon}{2}.
\]
On the other hand, for $g\in F$ we have
\[
\langle \mathbbm{1}_g, \lambda_g \mathbbm{1}_{1}(g^{-1} \cdot x) \rangle = 1.
\]

We are not done with the proof, because $\xi$ need not have compact support; we will fix this using \autoref{lma:CpcCP}. Apply said lemma to choose a completely positive map
$\widetilde{\Phi} \colon C(X) \rtimes_{\lambda} G \to M_n$
such that
\[
\|\Phi(\lambda_g) - \widetilde{\Phi}(\lambda_g)\| \leq \frac{\varepsilon}{2}
\]
for every $g \in F$, and such that $g \mapsto \widetilde{\Phi}( \lambda_g)$ has finite support. We now define
$\widetilde{\Theta}= \Psi \circ \widetilde{\Phi}$ and set
\[
\widetilde{\xi}(x, g) := \Big\langle \mathbbm{1}_g, \Phi(\lambda_g) \mathbbm{1}_{1}(g^{-1} \cdot x) \Big\rangle.
\]

Then $\widetilde{\xi}$, when regarded as a function $G\to C(X)$, 
has finite support. Moreover, it is of positive type because $\widetilde{\Phi}$ is completely positive, and a routine calculation using the triangle inequality shows that 
\[
\sup_{x \in X} \left| \widetilde{\xi}(x, g) - \big\langle \mathbbm{1}_g, \lambda_g \mathbbm{1}_{1}(g^{-1} \cdot x) \big\rangle \right| \leq \ep.
\]
We conclude from \autoref{prop:CharAmen} that $G\curvearrowright X$ is amenable, as desired.
\end{proof}

\begin{ex}\label{ex:AssumeUnital}
In the proof of (1) $\Rightarrow$ (2) above, show that one can 
indeed assume that $A$ is unital.
\end{ex}

As it turns out, by the work of Tu \cite{Tu_conjecture_1999}, the UCT is automatic in the cases we are interested in.

\begin{thm} (Tu).
Let $G\curvearrowright X$ be an amenable dynamical system.
Then $C(X)\rtimes G$ satisfies the UCT.
\end{thm}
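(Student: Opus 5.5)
The plan is to deduce the UCT from Tu's theorem that groupoids with the Haagerup property, in particular amenable groupoids, satisfy the Baum--Connes conjecture with coefficients (\cite[Theorem~10.9]{Tu_conjecture_1999}). Together with the standard fact that a C*-algebra is in the bootstrap class when it is $KK$-equivalent to a commutative one, this gives the result. Since $G\curvearrowright X$ is amenable, \autoref{thm:AmenIffNuc} gives $C(X)\rtimes G\cong C(X)\rtimes_\lambda G$. So it suffices to show that the reduced crossed product, equivalently the reduced C*-algebra of the transformation groupoid $X\rtimes G$, lies in the bootstrap class.

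The first step is to translate amenability of the action, in the sense of \autoref{df:AmenAc}, into topological amenability of the étale groupoid $\mathcal{G}=X\rtimes G$. This is essentially a rewriting. The maps $\mu_n\colon X\to P(G)$ define a topological approximate invariant mean on $\mathcal{G}$: the fibre of the range map over $x$ is identified with $G$, and the condition $\max_x\|\mu_n(g\cdot x)-g\cdot\mu_n(x)\|_1\to 0$ is exactly uniform approximate invariance on compact subsets of $\mathcal{G}$, which are finite unions of slices $X\times\{g\}$.

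The second step invokes Tu's results, as follows. For a second countable, locally compact, amenable groupoid, Tu constructs a proper $\mathcal{G}$-equivariant action on a continuous field of affine Euclidean spaces, and from it a $\gamma$-element equal to $1$ in $KK^{\mathcal{G}}$. Consequently the reduced crossed product is $KK$-equivalent to a crossed product by a proper groupoid action. Here second countability holds because $X$ is compact metrizable and $G$ is countable. Crossed products by proper actions are built, via Mayer--Vietoris and inductive limit arguments, from algebras of the form $C_0(Y)\rtimes K$ with $K$ a finite (compact open) isotropy subgroup. These are Morita equivalent to type I algebras and therefore lie in the bootstrap class. Since the bootstrap class is closed under $KK$-equivalence, $C(X)\rtimes_\lambda G$ satisfies the UCT.

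The main obstacle is the second step: showing that the Dirac-dual-Dirac element $\gamma$ equals $1$ and carrying out the reduction from proper actions to type I pieces is a substantial piece of equivariant $KK$-theory well beyond these notes. In the survey I would therefore cite Tu for this step and give full detail only for the groupoid translation in the first step.
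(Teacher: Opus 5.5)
Your proposal is correct and takes the same route as the paper, which gives no argument of its own for this statement and simply cites Tu's result that C*-algebras of second countable amenable groupoids satisfy the UCT. Your translation of \autoref{df:AmenAc} into topological amenability of the transformation groupoid $X\rtimes G$, followed by Tu's Dirac--dual-Dirac argument with $\gamma=1$, is an accurate outline of what lies behind that citation. The only nuance is that $\gamma=1$ by itself exhibits $C(X)\rtimes_\lambda G$ as a $KK$-retract of a crossed product by a proper action, not a priori as a $KK$-equivalent copy; this is enough, because the UCT class is closed under $KK$-retracts.
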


\section{Simplicity of crossed products}\label{sec:Simple}

Before turning to simplicity,  we have a look at the ideal structure of crossed products.
A complete description of the ideals in a crossed product is at the moment far out of reach, since it combines in a subtle and 
still mysterious way the dynamics of the action and the internal
structure of the group.
On the other hand, it is possible to describe some families of ideals in crossed products. 

Certain ideals are induced purely dynamically.

\begin{prop}\label{prop:OpenGinv}
Let $G\curvearrowright X$ be a dynamical system, and let $U\subseteq X$ be an open, $G$-invariant set. Then $I_U:=C_0(U)\rtimes_{\lambda} G$ is an ideal in $C(X)\rtimes_{\lambda} G$.
\end{prop}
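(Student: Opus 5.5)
The plan is to identify $I_U$ with the closure, inside $C(X)\rtimes_{\lambda} G$, of the subspace
\[
c_c(G,C_0(U),\alpha)=\mathrm{span}\{fu_g\colon f\in C_0(U),\ g\in G\}\subseteq c_c(G,C(X),\alpha).
\]
I will then show that this closure is a closed two-sided ideal. There is one subtle point: $C_0(U)\rtimes_{\lambda} G$ is a priori an abstract completion, so one should check that its reduced norm agrees with the norm inherited from $C(X)\rtimes_{\lambda} G$. I would treat this first, because it is the only step that is not purely algebraic.

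\textbf{Step 1 (ideal property at the algebraic level).} Since $U$ is open, $C_0(U)$ is an ideal in $C(X)$. Since $U$ is $G$-invariant, $\alpha_g(C_0(U))=C_0(U)$ for all $g\in G$. For $a\in C(X)$, $f\in C_0(U)$ and $g,h\in G$ the multiplication rules give
\[
(au_h)(fu_g)=a\alpha_h(f)u_{hg}
\qquad\text{and}\qquad
(fu_g)(au_h)=f\alpha_g(a)u_{gh}.
\]
Both coefficients lie in $C_0(U)$. The adjoint of $fu_g$ is $\alpha_{g^{-1}}(f^*)u_{g^{-1}}$, which again has coefficient in $C_0(U)$. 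Hence $c_c(G,C_0(U),\alpha)$ is a self-adjoint two-sided ideal of the dense $\ast$-subalgebra $c_c(G,C(X),\alpha)$. By continuity of multiplication, its closure $J$ in $C(X)\rtimes_{\lambda} G$ is a closed two-sided ideal.

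\textbf{Step 2 (identifying $J$ with $C_0(U)\rtimes_{\lambda}G$).} I need the reduced norm of $C_0(U)\rtimes_{\lambda} G$ to coincide with the restriction of the reduced norm of $C(X)\rtimes_{\lambda}G$ on $c_c(G,C_0(U),\alpha)$. Concretely, let $\varphi\colon C(X)\to\B(\Hi)$ be faithful, for instance the universal representation. I would argue that the reduced norm can be computed using any single faithful representation of the coefficient algebra. This is a standard fact; it follows from \autoref{lma:DefEg} and \autoref{thm:CondExpFaithful}, since the regular representation induced from a faithful $\varphi$ is faithful on the reduced crossed product because $E$ is faithful.

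Next I restrict $\varphi$ to $C_0(U)$ and compress to the essential subspace $\Hi_U=\overline{\varphi(C_0(U))\Hi}$. This gives a faithful nondegenerate representation $\varphi_U$ of $C_0(U)$. The subspace $\ell^2(G,\Hi_U)$ is invariant, using $G$-invariance of $U$, and the regular representation built from $\varphi_U$ is exactly the restriction of $\varphi^G\rtimes\lambda^\Hi$ to $\ell^2(G,\Hi_U)$. On the complement $\ell^2(G,\Hi_U^\perp)$, the elements of $c_c(G,C_0(U),\alpha)$ act as zero. Therefore the two norms agree, and the inclusion extends to an isometric embedding $C_0(U)\rtimes_{\lambda}G\cong J$.

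I expect Step 2 to be the main obstacle, specifically the independence of the reduced norm from the choice of faithful representation. Step 1 is a routine computation. If the independence argument turns out to be delicate, I would fall back on simply defining $I_U$ as $J$ and remarking that it is canonically a completion of $c_c(G,C_0(U),\alpha)$.
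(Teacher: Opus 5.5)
Your proof is correct. Step 1 is exactly the paper's argument: the paper also reduces the ideal property to checking that $c_c(G,C_0(U),\alpha)$ is a two-sided ideal in $c_c(G,C(X),\alpha)$. It does this via $(au_g)(bu_h)=a\alpha_g(b)u_{gh}$ for $a\in C(X)$ and $b\in C_0(U)$, where $\alpha_g(b)\in C_0(U)$ by invariance.

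The difference lies in how you identify the abstract completion $C_0(U)\rtimes_{\lambda}G$ with the closure $J$.

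\emph{The paper's route.} The paper only notes that restricting a regular covariant representation of $(G,C(X),\alpha)$ to $C_0(U)$ gives a regular covariant representation of $(G,C_0(U),\alpha)$. By itself this yields $\|x\|_{C(X)\rtimes_{\lambda}G}\leq\|x\|_{C_0(U)\rtimes_{\lambda}G}$ for $x\in c_c(G,C_0(U),\alpha)$, i.e.\ only a contractive map into $C(X)\rtimes_{\lambda}G$. The reverse inequality, which the paper leaves as ``easy to see'', needs an extra fact: every representation of the ideal $C_0(U)$ is the restriction of a representation of $C(X)$, up to a degenerate summand on which $c_c(G,C_0(U),\alpha)$ acts as zero.

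\emph{Your route.} You use \autoref{lma:DefEg} and \autoref{thm:CondExpFaithful} to see that a single faithful representation computes the reduced norm. The compression to $\ell^2(G,\Hi_U)$ then gives both inequalities at once. This is more self-contained and proves isometry explicitly, at the price of invoking faithfulness of the conditional expectation. The paper's remark, by contrast, stays at the level of the definition of the reduced norm.

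Your Step 2 works as written. Invariance of $U$ guarantees that each $\varphi(\alpha_{g^{-1}}(f))$ with $f\in C_0(U)$ maps into $\Hi_U$ and vanishes on $\Hi_U^\perp$. So the fallback of defining $I_U$ as $J$ is not needed. The adjoint check in Step 1 is harmless but redundant, since closed two-sided ideals of C*-algebras are automatically self-adjoint.
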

\begin{proof}
Note that the restriction of a regular covariant 
representation of $(G, C(X),\alpha)$ to $(G, C_0(U),\alpha)$ is again a regular covariant representation.
Using this, it is easy to see that $I_U$ is a subalgebra of $C(X)\rtimes_{\lambda} G$. In order to check that it is 
an ideal, it suffices to check that $c_c(G,C_0(U),\alpha)$ is an ideal in $c_c(G,C(X),\alpha)$. To see this, let 
$a\in C(X)$, let $b\in C_0(U)$, and let $g,h\in G$.
Then
\[(a\mathbbm{1}_g)\ast (b\mathbbm{1}_h)=a\alpha_g(b)\mathbbm{1}_{gh}\]
belongs to $c_c(G, C_0(U),\alpha)$ since $\alpha_g(b)$ belongs to $C_0(U)$. Similarly, one shows that
$(b\mathbbm{1}_h)\ast (a\mathbbm{1}_g)$ belongs to $c_c(G,C_0(U),\alpha)$, as desired.
\end{proof}

With similar arguments, one can show the following.

\begin{ex}
Let $G\curvearrowright X$ be a dynamical system.
If $I$ is an ideal in $C(X)\rtimes G$, show that $I\cap C(X)$ has the form $C_0(U_I)$ for some open, $G$-invariant set $U_I\subseteq X$. Moreover,
$U_{I_U}=U$ and $I\subseteq I_{U_I}$. Is it true that $I=
I_{U_I}$?
\end{ex}

Recall that a dynamical system $G\curvearrowright X$ is
\emph{minimal} if there are no nontrivial $G$-invariant, closed (equivalently, open) subsets of $X$. The following 
is an immediate consequence of \autoref{prop:OpenGinv}.

\begin{cor}\label{cor:SimpleMinimal}
Let $G\curvearrowright X$ be a dynamical system. If $C(X)\rtimes_{\lambda} G$ is simple, then $G\curvearrowright X$ is minimal. 
\end{cor}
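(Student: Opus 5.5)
We argue by contraposition, using \autoref{prop:OpenGinv}. Suppose $G\curvearrowright X$ is not minimal. Then there is a closed $G$-invariant subset $Z\subseteq X$ with $Z\neq\emptyset$ and $Z\neq X$. Its complement $U=X\setminus Z$ is therefore open, $G$-invariant, nonempty, and not all of $X$. By \autoref{prop:OpenGinv}, $I_U=C_0(U)\rtimes_{\lambda} G$ is an ideal in $C(X)\rtimes_{\lambda} G$. It remains to show that $I_U$ is neither zero nor the whole algebra, which contradicts simplicity.

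\emph{Nonzero.} Since $U\neq\emptyset$, Urysohn's lemma gives a nonzero $f\in C_0(U)$. Its image $f\mathbbm{1}_1$ lies in $I_U$, and it is nonzero in the reduced crossed product because $E(f\mathbbm{1}_1)=f\neq 0$, where $E=E_1$ is the canonical conditional expectation from \autoref{df:CondExpCP} and \autoref{lma:DefEg}.

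\emph{Proper.} This is the only step needing some care. I would show that every element $b\in I_U$ satisfies $E(b)\in C_0(U)$. On the dense subalgebra $c_c(G,C_0(U),\alpha)$ this holds by definition, since $E$ extracts the coefficient at $1$. The map $E$ is contractive by \autoref{lma:DefEg}, and $C_0(U)$ is closed in $C(X)$, so the property passes to the closure $I_U$. Now $E(1)=1\notin C_0(U)$, because $U\neq X$ and the constant function $1$ does not vanish on $Z\neq\emptyset$. Hence $1\notin I_U$, so $I_U$ is proper.

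Thus $C(X)\rtimes_{\lambda} G$ has a nontrivial proper ideal and is not simple, which proves \autoref{cor:SimpleMinimal}.
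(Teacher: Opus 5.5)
Your proposal is correct and follows the paper's route: the paper obtains this as an immediate consequence of \autoref{prop:OpenGinv}, applied to the open invariant complement $U$ of a nontrivial closed invariant set. Your use of the conditional expectation $E$ to check that $I_U$ is nonzero ($E(f\mathbbm{1}_1)=f\neq 0$) and proper ($E(I_U)\subseteq C_0(U)$ while $E(1)=1$) supplies the details that the paper leaves implicit.
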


In other words, minimality is a necessary condition for 
simplicity. It is, however, not sufficient: the trivial action $\Z\curvearrowright \{\ast\}$ is minimal, but its crossed product $\C\rtimes\Z\cong C(S^1)$ is not simple.
The reason why the trivial action of $\Z$ gives rise to a 
non-simple crossed product is a combination of the fact that $\Z$ is amenable and that the trivial action is not (topologically) free. 



In full generality, it is not known in dynamical terms
when a dynamical system $G\curvearrowright X$ gives rise
to a simple (reduced) crossed product (see \autoref{qst:Ozawa}). 
However, when the action is amenable, we do have a 
complete answer. 

We need a preliminary result, which is left as an exercise.
 
\begin{ex}\label{ex:Ex}
Let $G\curvearrowright X$ be an amenable action,
let $H\leq G$ be a subgroup, and let $U\subseteq X$ 
be an open set. If the restricted action $H\curvearrowright U$ is trivial, then $H$ is amenable.
\emph{(Hint: prove the statement first in the case that $H=G$ and $U=X$, and then show that amenability of dynamical systems passes to subgroups, and also to restrictions to invariant open subsets.)}
\end{ex}

Next, we recall some standard definitions.

\begin{df}
A $G\curvearrowright X$ dynamical system is said to be
\be\item \emph{free}, if for every $g\in G\setminus\{1\}$, the fixed point set $\{x\in X\colon g\cdot x=x\}$ is empty. 
\item \emph{topologically free}, if for every $g\in G\setminus\{1\}$, the fixed point set $\{x\in X\colon g\cdot x=x\}$ has empty interior.
\ee
\end{df}

The following equivalent formulation for topological freeness for actions of \emph{countable} groups is left as an exercise:

\begin{ex}
A dynamical system $G\curvearrowright X$ is topologically free 
if and only if there exists a dense, $G$-invariant 
subset $Y\subseteq X$ such that the action of $G$ on $Y$ is free.
\end{ex}

Topological freeness gives rise a certain \emph{pinching} property in the reduced crossed product; see \autoref{prop:Pinching}, which is at the center of our
arguments to show simplicity.
In preparation for it, we need a simple lemma.
For $g\in G$
we will write $X_g=\{x\in X\colon g\cdot x=x\}$
for the fixed point set of $g$.

\begin{lma}\label{lma:topfreeSeparation}
Let $G\curvearrowright X$ be a dynamical system, let $g\in G\setminus\{1\}$, let $h\in C(X)$, and let $x_0\notin X_g$. Then there exists $f_g\in C(X)_+$ of 
norm one with $f_g(x_0)=1$ such that, in the crossed product $C(X)\rtimes G$, we have
\[f_ghu_gf_g=0.\]
\end{lma}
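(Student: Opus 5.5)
The plan is to move $u_g$ past $f_g$ using the covariance relation, and then choose the support of $f_g$ so small that it does not meet its own translate by $g$. In $C(X)\rtimes G$ we have $u_g a u_g^* = \alpha_g(a)$ for every $a\in C(X)$, hence $u_g f = \alpha_g(f) u_g$. This gives
\[f_g h u_g f_g = f_g\, h\, \alpha_g(f_g)\, u_g .\]
Since $C(X)$ is commutative, the right-hand side vanishes as soon as $f_g\,\alpha_g(f_g)=0$ as functions on $X$. This condition does not involve $h$, so $h$ plays no role.

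First, I identify the support of $\alpha_g(f_g)$. Recall that $\alpha_g(f)(x)=f(g^{-1}\cdot x)$. So if $f_g$ is supported in a set $V$, then $\alpha_g(f_g)$ is supported in $g\cdot V$. It therefore suffices to find an open neighbourhood $V$ of $x_0$ with $V\cap gV=\emptyset$.

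Next, I construct $V$. Since $x_0\notin X_g$, the points $x_0$ and $g\cdot x_0$ are distinct. Because $X$ is Hausdorff, they have disjoint open neighbourhoods $W_1\ni x_0$ and $W_2\ni g\cdot x_0$. Set
\[V=W_1\cap g^{-1}W_2 .\]
This is open, since $g$ acts by a homeomorphism, and it contains $x_0$. Moreover $gV\subseteq W_2$ and $V\subseteq W_1$, so $V\cap gV=\emptyset$.

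Then I build $f_g$. By Urysohn's lemma ($X$ is compact metrizable, hence normal), there is a continuous $f_g\colon X\to[0,1]$ with $f_g(x_0)=1$ and support contained in $V$. For instance, take $f_g(x)=\max\{0,\,1-d(x,x_0)/r\}$ for $r$ small enough that the closed $r$-ball around $x_0$ lies in $V$. This $f_g$ is positive and has norm one. Also $f_g\,\alpha_g(f_g)=0$ pointwise, because the two functions have disjoint supports, which by the identity above finishes the proof.

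There is no genuine obstacle here. The only step needing care is the direction convention: whether $u_g f=\alpha_g(f)u_g$ or $u_g f = \alpha_{g^{-1}}(f) u_g$, and correspondingly whether the relevant translate is $gV$ or $g^{-1}V$. To make the argument independent of the convention, I can shrink $V$ further so that $V$, $gV$ and $g^{-1}V$ are pairwise disjoint. This is possible since $g^{-1}x_0\neq x_0$ as well. With that choice, either convention gives the vanishing.
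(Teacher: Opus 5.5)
Your proof is correct and is essentially the paper's argument. The paper commutes $u_g$ to the left, writing $f_ghu_gf_g=u_g\alpha_{g^{-1}}(f_gh)f_g$, and chooses $V\ni x_0$ with $g^{-1}V\cap V=\emptyset$; this is the same condition as your $V\cap gV=\emptyset$, so your closing convention-independence remark is unnecessary, and as literally stated it can fail, since $gV\cap g^{-1}V=\emptyset$ is impossible when $g^2\cdot x_0=x_0$.
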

\begin{proof}
Let $V_1$ and $V_2$ be disjoint open sets such that $x_0 \in V_1$ and $g^{-1}\cdot x_0 \in V_2$. Set
\[
V = V_1 \cap (g\cdot V_2).
\]
Then $x_0 \in V \subseteq V_1$ and $g^{-1}\cdot V \subseteq V_2$, from which it follows that
\[
g^{-1}\cdot V \cap V = \emptyset.
\]

Let $f_g \in C(X)$ with $f_g(x_0)=1$ satisfy $0 \leq f_g \leq 1$ and $f_g(X \setminus V) = 0$.
Then
\[
f_g h u_g f_g = u_g \alpha_{g^{-1}}(f_g h) f_g
\]
vanishes because the support of $\alpha_{g^{-1}}(f_g h)$ is contained in $g^{-1}\cdot V$, and the support of $f_g$ is contained in $V$. This finishes the proof.
\end{proof}

Recall the definition of the conditional expectation $E\colon C(X)\rtimes_{\lambda} G\to C(X)$, for $g\in G$, given in \autoref{df:CondExpCP} The following is 
\cite[Proposition~2.4]{ExeLacQui_partial_2022}.

\begin{lma}\label{prop:Pinching}
Let $G\curvearrowright X$ be a topologically free dynamical system. Let $a\in C(X)\rtimes_{\lambda} G$ be positive and let $\ep>0$. Then there exists $f\in C(X)_+$ with $\|f\|=1$ such that
\[\|E(a)\| \leq \|fE(a)f\| + \ep \ \ \mbox{ and } \ \  \|fE(a)f - faf\|\leq \ep.\]
\end{lma}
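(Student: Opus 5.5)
First reduce to the case where $a$ lies in the dense subalgebra $c_c(G,C(X),\alpha)$. Choose $b=\sum_{g\in F}b_gu_g$ with $F\subseteq G$ finite and $\|a-b\|<\ep/4$. We may take $b$ self-adjoint (replace $b$ by $(b+b^*)/2$). Since $E$ is contractive and $\|f\|\leq 1$, the estimates for $b$ with $\ep/2$ give those for $a$ with $\ep$:
\begin{itemize}
\item $\|fE(a)f-fE(b)f\|\leq \ep/4$;
\item $\|faf-fbf\|\leq\ep/4$;
\item $\big|\|E(a)\|-\|E(b)\|\big|\leq\ep/4$.
\end{itemize}
Positivity of $a$ is used only to know that $E(a)\geq 0$, so that $\|E(a)\|$ is the maximum of the function $E(a)=a_1\in C(X)$. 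For the approximant $b$ it suffices to work with $b_1=E(b)$, a real-valued function within $\ep/4$ of $E(a)$.

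Next, find a good point. Set $F'=F\setminus\{1\}$. Let $x_1$ be a point where $|E(b)|$ (or $E(a)$) attains its maximum, and let $W=\{x\colon |b_1(x)|>\|b_1\|-\ep/4\}$, which is a nonempty open set. By topological freeness, each closed set $X_g$ for $g\in F'$ has empty interior. Hence the finite union $\bigcup_{g\in F'}X_g$ is closed with empty interior, and we can find $x_0\in W$ outside all $X_g$, $g\in F'$.

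Now construct $f$. Following the proof of \autoref{lma:topfreeSeparation}, for each $g\in F'$ choose an open neighborhood $V^{(g)}$ of $x_0$ with $g^{-1}\cdot V^{(g)}\cap V^{(g)}=\emptyset$. Let $V=W\cap\bigcap_{g\in F'}V^{(g)}$, and take $f\in C(X)_+$ with $0\leq f\leq1$, $f(x_0)=1$ and support in $V$. Then $\|f\|=1$, and as in \autoref{lma:topfreeSeparation} we get $fb_gu_gf=0$ for every $g\in F'$. Hence $fbf=fb_1f=fE(b)f$ exactly. Moreover, $\|fE(b)f\|\geq |b_1(x_0)|f(x_0)^2>\|E(b)\|-\ep/4$.

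Finally, combine the estimates. We have
\[\|fE(a)f-faf\|\leq \|fE(a)f-fE(b)f\|+\|fbf-faf\|\leq \ep/2.\]
Likewise,
\[\|E(a)\|\leq \|E(b)\|+\ep/4< \|fE(b)f\|+\ep/2\leq\|fE(a)f\|+3\ep/4.\]

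There is no serious obstacle here. The only point needing care is choosing $x_0$ simultaneously near the maximum of $E(b)$ and off the finitely many fixed point sets. This is exactly where topological freeness, together with finiteness of the support of $b$, enters. It uses only that a finite union of closed sets with empty interior has empty interior, which is elementary and does not need the Baire category theorem.
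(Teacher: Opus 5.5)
Your proof is correct and is essentially the paper's argument: reduce to a finitely supported element, pick $x_0$ near the maximum of $|E(b)|$ and off the finitely many fixed-point sets $X_g$, and choose $f$ so that every off-diagonal term $fb_gu_gf$ vanishes. The differences are cosmetic: you use a single bump function supported in the intersection of the separating neighborhoods instead of the product of the functions $f_g$ from \autoref{lma:topfreeSeparation}, and you carry out explicitly the $\ep/4$ approximation bookkeeping that the paper compresses into ``since $E$ is contractive'', which also makes clear that positivity of the approximant is never needed.
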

\begin{proof}
Since $E$ is contractive, it suffices to prove the lemma assuming that $a$ has the form $a = \sum_{g \in F} a_g u_g$, for a finite subset $F\subseteq G$ and coefficients $a_g\in C(X)$, for $g\in G$. Note that $E(a)=a_1$. Set
\[
V = \{x \in X \colon |a_1(x)| > \|a_1\| - \varepsilon\},
\]
which is open and nonempty.
Since the finite union $\bigcup_{g\in F\setminus\{1\}}X_g$ has empty interior, there exists $x_0 \in V$ such that $x_0 \notin X_g$ for every $g \in F \setminus \{1\}$. and by \autoref{lma:topfreeSeparation} there exist functions $0\leq f_g\leq 1$ satisfying:
\[
f_g(x_0) = 1 \ \ \mbox{ and } \ \ f_g a_g u_g f_g=0.
\]

Set $f= \prod_{g \in F \setminus \{1\}} f_g\in C(X)$, and 
note that $f$ is positive and has norm one.
Since $x_0 \in V$, we have 
\[\|f a_1 f\| \geq |a_1(x_0)| > \|a_1\| - \varepsilon,\]
which gives the first inequality in the statement.

For the second one, we have:
\begin{align*}
f a_1 f - f a f &= \sum_{g \in F \setminus \{1\}} f a_g u_g f =0 ,
\end{align*}
as desired. 
\end{proof}

We are now ready to characterize simplicity of crossed
products of amenable dynamical systems. The following is \cite[Theorem~2]{ArcSpi_topologically_1994}; note that what they call ``regular" (full and reduced crossed products agree canonically) is a consequence of amenability by \autoref{thm:AmenIffNuc}.

\begin{thm}\label{thm:SimpleCP}
Let $G\curvearrowright X$ be a dynamical system.
If $G\curvearrowright X$ is minimal and topologically free, then $C(X)\rtimes_{\lambda} G$ is simple. The converse is true if $G\curvearrowright X$ is amenable.
\end{thm}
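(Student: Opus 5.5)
For the forward direction, I will take a nonzero ideal $I\subseteq C(X)\rtimes_{\lambda} G$ and show that it contains a nonzero element of $C(X)$. Minimality will then force $I$ to be everything.

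Choose a nonzero positive $a\in I$, and normalize so that $\|E(a)\|=1$. This is possible because $E$ is faithful by \autoref{thm:CondExpFaithful}(3): if $a\neq 0$ then $E(a^*a)\neq 0$, so we may replace $a$ by $a^*a$. Apply \autoref{prop:Pinching} with $\ep=1/4$. This gives $f\in C(X)_+$ with $\|f\|=1$, $\|fE(a)f\|\geq 3/4$ and $\|fE(a)f-faf\|\leq 1/4$. Set $b=fE(a)f\in C(X)_+$ and $c=faf\in I$, so that $\|b-c\|\leq 1/4$ and $\|b\|\geq 3/4$.

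To pass from "close to $I$" to "in $I$", I use functional calculus. Let $h\in C(X)$ with $0\leq h\leq 1$ be supported where $b>1/2$ and equal to $1$ where $b\geq 3/4$ (this set is nonempty). Then $\|hbh-hch\|\leq 1/4$, and $hbh\geq \tfrac12 h^2$. Let $k\in C(X)$ be $1/(hbh)$ on the set where $h=1$, suitably cut off, and compress further by a function $h'$ supported inside $\{h=1\}$. This yields an element of $I\cap C(X)$ within distance $<1$ of a positive function of norm one. That alone is not quite membership, so instead I argue in the quotient $\pi\colon C(X)\rtimes_\lambda G\to (C(X)\rtimes_\lambda G)/I$. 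Since $\pi(c)=0$, we get $\|\pi(b)\|\leq 1/4$. Now $\pi|_{C(X)}$ has kernel $C_0(U)$ for some open $G$-invariant set $U$. If $U=\emptyset$, then $\pi$ is isometric on $C(X)$, so $\|\pi(b)\|=\|b\|\geq 3/4$, a contradiction. Hence $U\neq\emptyset$, and minimality gives $U=X$, so $1\in I$. The invariance of $U$ comes from the fact that $u_g$ normalizes $C(X)$, as in the exercise after \autoref{prop:OpenGinv}.

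For the converse, suppose $C(X)\rtimes_\lambda G$ is simple and the action is amenable. Minimality follows from \autoref{cor:SimpleMinimal}. For topological freeness, argue by contradiction. Suppose $g\neq 1$ fixes a nonempty open set $W$ pointwise. Let $H$ be the subgroup generated by $g$. Then $H$ acts trivially on $W$, so $H$ is amenable by \autoref{ex:Ex}. The aim is to build a proper nonzero ideal, or equivalently a representation of the crossed product that is not faithful.

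The idea is to pick $x\in W$ and use the conditional expectation $E_x$ onto $C^*_\lambda(G_x)$ (the exercise in \autoref{sec:RedCPDiscrete}), or induced representations from $x$. Since $g\in G_x$, one can exhibit an element such as $f(1-u_g)$ with $f$ supported in $W$ and $f(x)\neq0$. This element is nonzero because $E(f(1-u_g))=f\neq 0$. The plan is to show it vanishes in a representation built from the trivial character on an amenable stabilizer-like subgroup: via amenability, the trivial representation is weakly contained in the regular one. This representation is nonzero, so its kernel is a proper nonzero ideal.

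The main obstacle is this converse step. It requires making the amenable isotropy on an open set yield a nonfaithful yet nonzero representation of the reduced crossed product, and this is where amenability of the action (hence $A\rtimes G=A\rtimes_\lambda G$ by \autoref{thm:AmenIffNuc}) is used. Working in the full crossed product, the representation on $\ell^2(G\cdot x)$ induced from the trivial character of $G_x$ makes sense there. It kills $f(1-u_g)$ provided $G_x\ni g$ and $f$ is supported in $W$, since $g$ acts trivially on every point of $W$ in the orbit, and $\kappa$ is an isomorphism.
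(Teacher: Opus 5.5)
Your proposal is correct and follows the paper's proof. The forward direction is the same argument: pinching via \autoref{prop:Pinching}, together with the fact that the quotient map is isometric on $C(X)$ unless $I\cap C(X)$ is a nonzero $G$-invariant ideal. The converse uses the same orbit permutation representation killing $f(1-u_g)$, transported to $C(X)\rtimes_{\lambda} G$ because $\kappa$ is an isomorphism for amenable actions; you work on $\ell^2(G\cdot x)\cong\ell^2(G/G_x)$ where the paper uses $\ell^2(G/G_x^\circ)$, which makes no difference.

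When checking that $f(1-u_g)$ is killed, also note that for $y\notin W$ one has $gy\notin W$ (since $g$ fixes $W$ pointwise), so both $f(y)\delta_y$ and $f(gy)\delta_{gy}$ vanish. The detours through functional calculus, $E_x$, and weak containment of the trivial representation are unnecessary.
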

\begin{proof} 
Assume that $G\curvearrowright X$ is minimal and topologically free. Let $I$ be a proper ideal in $C(X)\rtimes_{\lambda} G$; we claim that $I=\{0\}$. 
The intersection of $I$ with $C(X)$ must be a proper $G$-invariant ideal in $C(X)$, corresponding to a proper $G$-invariant open set, and since $G\curvearrowright X$ is minimal, we must have $I\cap C(X)=\{0\}$. 

Assume by contradiction that $I\neq\{0\}$, and choose
a nonzero positive element $a\in I$. Fix $\ep>0$; we will show that $\|E(a)\|<2\ep$. Once we do this, it will follow from \autoref{thm:CondExpFaithful} that $a=0$. 

Let $\pi_I\colon C(X)\rtimes_{\lambda} G \to \big(C(X)\rtimes_{\lambda} G\big)/I$ denote the canonical quotient map, 
and note that the restriction of $\pi_I$ to $C(X)$ is injective because $I\cap C(X)=\{0\}$.
Let $f \in C(X)_+$ be as in the conclusion of \autoref{prop:Pinching}. Using said lemma at the first and third steps, using that $\pi_I$ is isometric on $C(X)$ at the second step, and using that $faf$ belongs to $I$ at the last step, we get
\begin{align*}
\|E(a)\| &\leq \|f E(a) f\| + \varepsilon \\
&= \|\pi_I(f E(a) f)\| + \varepsilon \\
&\leq \|\pi_I(f a f)\| + 2\varepsilon =2\varepsilon,
\end{align*}
as desired. We conclude that $I=\{0\}$ and hence $C(X)\rtimes_{\lambda} G$ is simple.

\vspace{.2cm}

Let us assume now that $G\curvearrowright X$ is amenable
and that $C(X)\rtimes_{\lambda} G$ is simple. Minimality of
$G\curvearrowright X$ follows from \autoref{cor:SimpleMinimal}, so it 
remains to show that it is topologically free.
Fix $x\in X$, and let 
\[G_x^\circ=\{g\in G\colon \mbox{there is an open set } x\in U\subseteq X \mbox{ where $g$ acts trivially}\}\] 
denote the open stabilizer of $x$. We will show that $G_x^\circ=\{1\}$; note that $G_x^{\circ}$ is amenable by \autoref{ex:Ex}. We will use that $C(X)\rtimes G=
C(X)\rtimes_{\lambda} G$ by \autoref{thm:AmenGFullRedCP}.

Let $g \in G_x^\circ$ and choose an open set $U \subseteq X$ containing $x$ on which $g$ acts as the identity.
Let $\pi\colon C(X) \rtimes G\to \mathcal{B}(\ell^2(G/G_x^\circ))$ be the representation given by
\[
\pi(f u_s)(\mathbbm{1}_{pG_x^\circ}) = f(spx) \mathbbm{1}_{spG_x^\circ}
\]
for all $f \in C(X)$, all $s \in G$, and all $p \in G$.
Let $f \in C(X)$ be a nonzero function supported on $U$. Then $f(px) \neq 0$ implies $px \in U$,
and hence $gpG_x^\circ = pG_x^\circ$ in $G/G_x^\circ$. Thus, $\pi((1 - u_g)f) = 0$. Since $\pi$ is injective (because $C(X)\rtimes G$ is simple), one has $g = 1$.
This implies $G_x^\circ = \{1\}$, as desired. \end{proof}

\begin{ex} Check that the map $\pi$ defined in the previous proof is really a continuous representation
 of $C(X)\rtimes G$. \emph{(Hint: amenability of $G_x^\circ$ is needed here, specifically that $C^*(G_x^\circ)=C^*_{\lambda}(G_x^\circ)$.)}
\end{ex}

\begin{eg}\label{eg:ConvNuclearCP} The converse above is not true if $G\curvearrowright X$
 is not assumed to be amenable: simplicity of $C(X)\rtimes G$ always implies minimality of $G\curvearrowright X$ by \autoref{prop:OpenGinv}, but it may not imply any form of freeness outside of the amenable case. For instance, $C^*_{\lambda}(\mathbb{F}_2)$, which is the crossed product of 
 the trivial action of $\mathbb{F}_2$ on a point, is simple
 although $G\curvearrowright \ast$ is not topologically free.
\end{eg}

Topological freeness can sometimes be upgraded to freeness.

\begin{prop}\label{prop:GabelTopFree}
Let $G$ be an abelian group and let $G\curvearrowright X$
be a minimal dynamical system. Then $G\curvearrowright X$ is
topologically free if and only if it is free.
\end{prop}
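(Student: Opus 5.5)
Free trivially implies topologically free, so the whole content is the converse. The plan is to show that, for an abelian group, each fixed point set is $G$-invariant. Minimality then forces a nonempty fixed point set to be everything, which contradicts topological freeness.

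Assume $G\curvearrowright X$ is minimal and topologically free, and fix $g\in G\setminus\{1\}$ with fixed point set $X_g=\{x\in X\colon g\cdot x=x\}$. The first step is to observe that $X_g$ is closed, since it is the preimage of the diagonal under the continuous map $x\mapsto (x,g\cdot x)$ and $X$ is Hausdorff. The second step is to show that $X_g$ is $G$-invariant, using commutativity: if $x\in X_g$ and $h\in G$, then
\[g\cdot(h\cdot x)=h\cdot(g\cdot x)=h\cdot x,\]
so $h\cdot x\in X_g$.

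By minimality, the closed $G$-invariant set $X_g$ is either empty or all of $X$. If $X_g=X$, it has nonempty interior (as $X\neq\emptyset$), which contradicts topological freeness. Hence $X_g=\emptyset$ for every $g\neq 1$, and the action is free.

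There is no real obstacle here. The only point to watch is that minimality is phrased for closed invariant sets, which is why closedness of $X_g$ is checked explicitly. It is also worth noting that only the commutation of $g$ with all of $G$ was used, so the same argument shows that central elements act either freely or trivially.
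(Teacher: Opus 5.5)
Your proof is correct and is essentially the paper's argument. The paper phrases it pointwise: $g$ fixes the orbit of a fixed point $x$, hence by continuity its closure, which is $X$ by minimality. You instead note that $X_g$ is closed and $G$-invariant, so minimality forces $X_g=\emptyset$ or $X_g=X$. These are the same idea.
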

\begin{proof} 
One direction is always true: freeness always implies topological freeness. Conversely,
let $g\in G$ and let $x\in X$ satisfy $g\cdot x=x$. We claim that $g$ fixes the entire $G$-orbit of $x$. To see this, let $h\in G$. Using that $G$ is abelian, we get 
\[g\cdot (h\cdot x)=h\cdot (g\cdot x)=h\cdot x,\]
as desired. By continuity, $g$ must fix the closure of $G\cdot x$, which equals $X$ by minimality. Since $G\curvearrowright X$ is topologically free, we conclude that $g=1$ and $G\curvearrowright X$ is free.  
\end{proof}

Not surprisingly, under the stronger assumption of \emph{freeness}, showing simplicity becomes easier. 
We leave the proof of this particular case as a guided exercise.

\begin{ex}\label{ex:Free}
Let $G\curvearrowright X$ be a free, minimal action. The goal of this exercise is to show that $C(X)\rtimes_{\lambda} G$ is simple.
\be\item Let $F\subseteq G\setminus\{e\}$ be a finite set.
Show that there exist $n\in \N$ and $s_1,\ldots,s_n\colon X\to S^1$ such that 
\[
\sum_{j=1}^n s_k(x)\overline{s_k(g^{-1}\cdot x)}=0
\]
for all $x\in X$ and all $g\in F$. \emph{(Hint: Given $x_0\in X$, 
do it first for $x$ in some neighborhood of $x_0$. Then use a compactness argument.}
\item Using the notation from the previous part, deduce that 
$\sum_{k=1}^ns_ku_gs_k^*=0$ in $C(X)\rtimes_{\lambda} G$, for every $g\in F$.
\ee
Let $I$ be a proper ideal in $C(X)\rtimes_{\lambda} G$. 
\be\item[(3)] Show that $I\cap C(X)=\{0\}$. \emph{(Hint: $I\cap C(X)$ is a proper $G$-invariant ideal in $C(X)$.)}
\item[(4)] Show that $E(I)\subseteq I$. \emph{(Hint: let 
$a\in I$ and $\ep>0$ be given. Find $b\in c_c(G,C(X),\alpha)$ with $\|a-b\|<\ep$. With $F=\supp(b)\setminus\{e\}$, let $s_1,\ldots,s_n$ be as in part (1) of the exercise. Check that $E(b)=\sum_{k=1}^n s_kbs_k^*$ and use this to show that $E(a)$ is within $\varepsilon$ of an element in $I$.)}
\item[(5)] Conclude that $I=\{0\}$. \emph{(Hint: Use that $E$ is faithful on $C(X)\rtimes_{\lambda} G$.)}
\ee
\end{ex}

\section{$\mathcal{Z}$-stability of crossed products}
We conclude that the classifiability problem for crossed products of dynamical systems boils down to the following 
question.

\begin{qst}\label{qst:Main}
Let $G\curvearrowright X$ be an amenable, minimal and 
topologically free dynamical system. When does $C(X)\rtimes G$ 
absorb $\mathcal{Z}$ tensorially?
\end{qst}

The answer to the above question is not ``always", and Giol-Kerr produced 
in \cite{GioKer_subshifts_2010} an example of a free, minimal homeomorphism $\alpha$ of a compact
metric space $X$ such that $C(X)\rtimes_{\alpha} \Z$ is not $\mathcal{Z}$-stable (although it is simple, separable, unital, nuclear and satisfies the UCT). Thus, $\mathcal{Z}$-stability is in general not automatic for simple, nuclear crossed products. 
 
We have not defined the Jiang-Su algebra, as it is indeed
rather technical and not particularly illuminating to see its construction. However, it is easier to characterize 
when a simple, nuclear C*-algebra is $\mathcal{Z}$-stable
than it is to define the Jiang-Su algebra itself. For this, we need a few notions. The following one is \cite[Definition~1.3]{WinZac_completely_2009}.

\begin{df}\label{df:cpoz} (Winter-Zacharias).
Let $\varphi\colon A\to B$ be a completely positive map 
between C*-algebras. We say that $\varphi$ has 
\emph{order zero} if $\varphi(a)\varphi(b)=0$ whenever
$a,b\in A$ satisfy $ab=0$.
\end{df}

It is easy to check that a homomorphism between \ca s automatically has order zero. The following gives a more general
class of examples that covers the kind of order zero maps we will be interested in (which appear in the proof of \autoref{thm:afZstab}). We also note that, by \cite[Theorem~2.3]{WinZac_completely_2009}, all ordero zero maps arise essentially in this way.

\begin{ex} Let $\psi\colon A\to B$ be a homomorphism between 
 \ca s, and let $h\in B_+$ be a contraction satisfying
 $h\psi(a)=\psi(a)h$ for all $a\in A$. Define a map 
 $\varphi\colon A\to B$ by $\varphi(a)=\psi(a)h$ for all $a\in A$. Show that $\varphi$ is completely positive and has order zero.
\end{ex}

The next notion can be traced back to the work of Cuntz 
on dimension functions in C*-algebras; see \cite{Cun_dimension_1978}.

\begin{df}\label{df:CtzComp}
Let $A$ be a \ca, and let $a,b\in A$ be positive
elements. We say that $a$ is \emph{Cuntz subequivalent} to $b$ in $A$, written $a\precsim_A b$ or just $a\precsim b$ if the algebra $A$ is clear from the context, if there exists a sequence $(c_n)_{n\in\N}$ in $A$ such that
$c_n^*bc_n\to a$ in the norm of $A$.
\end{df}

The verification that $\precsim$ is transitive is left as an 
exercise.

\begin{ex}\label{ex:CtzCompTransitive}
Let $A$ be a \ca, and let $a,b,c\in A$ be positive elements.
Show that $a\precsim b$ and $b\precsim c$ imply $a\precsim c$.
\end{ex}

For future use, we leave as an exercise the following description
of Cuntz subequivalence in commutative C*-algebras.

\begin{ex}\label{ex:CtzCompCX}
Let $X$ be a locally compact Hausdorff space and let $f,g\in C_0(X)$ be positive functions. Show that $f\precsim g$ if and only if 
\[\{x\in X\colon f(x)>0\}\subseteq \{x\in X\colon g(x)>0\}.\] 
\end{ex}

We are now ready to give a manageable characterization of 
$\mathcal{Z}$-stability, which is due to Hirshberg-Orovitz \cite{HirOro_tracially_2013}.

\begin{thm}\label{thm:ZstTrZst} (Hirshberg-Orovitz).
Let $A$ be a simple, separable, unital, nuclear C*-algebra.
Then $A$ is $\mathcal{Z}$-stable if and only if for every $\varepsilon>0$, every finite subset $F\subseteq A$, every $n\in\N$ and
every positive element $f\in A$, there exists a 
completely positive contractive order zero map $\varphi\colon M_n\to A$ such that
\be\item $\|\varphi(d)a-a\varphi(d)\|<\|d\|\varepsilon$ for all $a\in F$ and all $d\in M_n$; and
\item $1-\varphi(1)\precsim f$.\ee
\end{thm}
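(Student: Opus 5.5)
The plan is to prove the two implications separately, reducing both to known structural results about $\mathcal{Z}$ and about tracial $\mathcal{Z}$-stability.

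\emph{Forward direction.} Suppose $A\cong A\otimes\mathcal{Z}$. Since $A$ is separable and unital, $\mathcal{Z}$-stability gives approximately central unital embeddings of $\mathcal{Z}$: for every finite $F\subseteq A$ and $\varepsilon>0$ there is a unital $\ast$-homomorphism $\iota\colon\mathcal{Z}\to A$ whose image almost commutes with $F$. This is the standard characterization of strongly self-absorbing absorption via the central sequence algebra $A_\infty\cap A'$.

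The input about $\mathcal{Z}$ itself is this: for each $n$ and each $\delta>0$ there is a c.p.c.\ order zero map $\psi\colon M_n\to\mathcal{Z}$ such that $\tau(1-\psi(1))<\delta$ for the unique trace $\tau$ of $\mathcal{Z}$. Such maps come from the dimension drop algebra $Z_{n,n+1}\subseteq\mathcal{Z}$.

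Compose, setting $\varphi=\iota\circ\psi$. Approximate centrality of $\iota(\mathcal{Z})$ gives condition (1), with the $\|d\|$ scaling coming from linearity and finite-dimensionality of $M_n$. For condition (2), the element $1-\varphi(1)$ lies in $\iota(\mathcal{Z})$ and has small trace there. I would use strict comparison of $A\cong A\otimes\mathcal{Z}$ (R\o rdam's theorem) together with simplicity. The given $f\neq 0$ satisfies $\inf_\tau d_\tau(f)>0$ over the compact tracial simplex. The trace of $1-\varphi(1)$ can be made uniformly small over all traces of $A$: tracial states of $A\otimes\mathcal{Z}$ restrict to $\tau_{\mathcal{Z}}$ on $1\otimes\mathcal{Z}$, and one can arrange $\iota$ to be of that form. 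Strict comparison then yields $1-\varphi(1)\precsim f$. In the traceless case, $A$ is purely infinite and any nonzero $f$ dominates every positive element, so (2) is automatic.

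\emph{Reverse direction.} This is the substantive part. The hypotheses say that $A$ is tracially $\mathcal{Z}$-absorbing in the sense of Hirshberg--Orovitz. The plan is to verify the Matui--Sato criterion: for nuclear simple separable unital $A$, $\mathcal{Z}$-stability follows from strict comparison together with property (SI), or more directly from the existence of a c.p.c.\ order zero map $M_n\to A_\infty\cap A'$ whose complementary defect $1-\varphi(1)$ is tracially null in a sense that (SI) can upgrade.

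First, a reindexation argument produces order zero maps $\Phi\colon M_n\to A_\infty\cap A'$ such that $1-\Phi(1)$ is dominated by arbitrarily small elements. Next, show that tracial $\mathcal{Z}$-absorption implies strict comparison. This is done by splitting a positive element along $\varphi(1)$ and $1-\varphi(1)$, using the $n$ orthogonal copies $\varphi(e_{ii})$ to divide Cuntz classes by $n$. Then apply Matui--Sato's theorem that (SI) together with such order zero maps produces a unital embedding of the dimension drop algebra $Z_{n,n+1}$ into $A_\infty\cap A'$, which suffices for $\mathcal{Z}$-stability.

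Nuclearity is used in the final step: it is what makes (SI) follow from strict comparison. I expect this Matui--Sato step to be the main obstacle. It consists of constructing the partial isometry $s$ in the central sequence algebra that connects $1-\Phi(1)$ into $\Phi(e_{11})$, as required for the dimension drop relations. I would not reprove it here but cite it.
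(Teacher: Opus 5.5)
The paper gives no proof of this theorem; it only cites Hirshberg--Orovitz. Your outline is their route: approximately central copies of $\mathcal{Z}$ plus R\o rdam's strict comparison for the forward direction, and almost unperforation, Matui--Sato's property (SI) and their embedding criterion for the reverse direction. Two steps do not go through as written.

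\textbf{The forward direction controls the wrong quantity.} You only control $\tau(1-\varphi(1))$. Strict comparison needs $d_\tau(1-\varphi(1))<d_\tau(f)$, and a small trace does not give a small dimension function. To see this, replace $\psi$ by $(1-c)\psi$ with $c>0$ small. This keeps $\tau_{\mathcal{Z}}(1-\psi(1))<\delta$, so it still has the only property you required of $\psi$. But now $1-\psi(1)\geq c\cdot 1$ is invertible, so $1-\varphi(1)\sim 1$, and this is not $\precsim f$ whenever $d_\tau(f)<1$ for some $\tau$.

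The fix is to push the defect onto a set of small measure before composing with $\iota$. Put $h=\psi(1)$ and $g_\eta(t)=\min\{1,t/(1-\eta)\}$. Order zero functional calculus gives a c.p.c.\ order zero map $g_\eta(\psi)$ with $g_\eta(\psi)(1)=g_\eta(h)$. Its defect is supported where $h<1-\eta$, so $d_{\tau_{\mathcal{Z}}}(1-g_\eta(h))\leq \tau_{\mathcal{Z}}(1-h)/\eta$. Alternatively, use the R\o rdam--Winter element $s$ with $s^*s=1-\varphi(1)$ and $\varphi(e_{11})s=s$, together with almost unperforation.

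Also, you do not need to arrange $\iota$ in any way. Since $\mathcal{Z}$ has a unique trace, $\tau\circ\iota=\tau_{\mathcal{Z}}$ automatically for every $\tau\in T(A)$.

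\textbf{A hypothesis is missing, and your reverse direction uses it silently.} The equivalence needs $f\neq 0$ and $A\neq\mathbb{C}$; the printed statement omits both, while the original definition of tracial $\mathcal{Z}$-absorption excludes $A=\mathbb{C}$. For $A=\mathbb{C}$ the zero map satisfies (1) and (2) for every nonzero $f$, yet $\mathbb{C}$ is not $\mathcal{Z}$-stable. So any proof of the reverse direction must use infinite-dimensionality somewhere.

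In your argument it is hidden in the phrase ``dominated by arbitrarily small elements''. Matui--Sato's criterion requires $1-\Phi(1)$ to lie in the trace-kernel ideal, that is,
$\lim_k\sup_{\tau\in T(A)}\tau(1-\varphi_k(1))=0$. You obtain this as follows:
\begin{itemize}
\item apply the hypothesis to nonzero $f_k$ with $\sup_\tau d_\tau(f_k)\leq 1/k$;
\item such $f_k$ exist because $A$ is simple, unital and infinite-dimensional, so for each $k$ there is a nonzero positive $b$ with $k$ orthogonal copies of $b$ Cuntz below $1$;
\item then $\tau(1-\varphi_k(1))\leq d_\tau(1-\varphi_k(1))\leq d_\tau(f_k)\leq 1/k$ uniformly in $\tau$.
\end{itemize}
Matui--Sato's results also assume infinite-dimensionality, so you should state this hypothesis explicitly.

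With these repairs, and citing Hirshberg--Orovitz for the step from tracial $\mathcal{Z}$-absorption to almost unperforation, your outline is sound. That step is where most of their work lies; your ``divide Cuntz classes by $n$'' description is only a heuristic for it.
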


\begin{rem}\label{rem:AssumefCX} The notation $f\in A$ above is chosen because,
 when $A=C(X)\rtimes G$, we will show later that one can
 assume that $f\in C(X)$; see \autoref{prop:TopFreeCtzSubeq}.
\end{rem}

The above criterion for $\mathcal{Z}$-stability should be compared to a similar characterization of absorption of UHF-algebras of infinite type. We state it for the CAR algebra $M_{2^\infty}$ for the sake of simplicity.

\begin{thm}
Let $A$ be a separable, unital, nuclear C*-algebra.
Then $A$ is $M_{2^\infty}$-stable if and only if for every $\varepsilon>0$, every finite subset $F\subseteq A$, there exists a 
unital homomorphism $\varphi\colon M_2\to A$ such that
for all $d\in M_2$ we have
\[\|\varphi(d)a-a\varphi(d)\|<\|d\|\varepsilon.\]
\end{thm}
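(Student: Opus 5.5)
The plan is to route both directions through the central sequence algebra $A_\infty\cap A'$, where $A_\infty=\ell^\infty(\N,A)/c_0(\N,A)$. The statement will follow from the standard fact that, for a strongly self-absorbing algebra $\mathcal{D}$ and separable unital $A$, one has $A\cong A\otimes\mathcal{D}$ if and only if there is a unital homomorphism $\mathcal{D}\to A_\infty\cap A'$. This is due to R\o rdam and Toms--Winter, and we use it with $\mathcal{D}=M_{2^\infty}$. Nuclearity is not really needed, since $M_{2^\infty}$ is nuclear and all tensor products involved are unambiguous. Separability is used essentially.

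\emph{Forward direction.} Fix an isomorphism $\theta\colon A\otimes M_{2^\infty}\to A$ and write $M_{2^\infty}=\bigotimes_{k\in\N}M_2$. Given $F$ and $\ep$, choose $N$ and a finite set $F'\subseteq A\otimes\big(\bigotimes_{k\leq N}M_2\big)$ such that every element of $\theta^{-1}(F)$ is within $\ep/2$ of some element of $F'$. Let $\psi\colon M_2\to A\otimes M_{2^\infty}$ be the unital embedding onto the $(N+1)$-st tensor factor. Then $\psi(d)$ commutes exactly with $F'$, and hence
\[\|\psi(d)b-b\psi(d)\|<\|d\|\ep\]
for all $b\in\theta^{-1}(F)$. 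Setting $\varphi=\theta\circ\psi$ gives the required unital homomorphism $M_2\to A$.

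\emph{Backward direction, first step.} Let $F_1\subseteq F_2\subseteq\cdots$ be finite sets with dense union in $A$. The hypothesis gives unital homomorphisms $\varphi_n\colon M_2\to A$ that are $(F_n,1/n)$-central. These induce a unital homomorphism $M_2\to A_\infty\cap A'$. We must upgrade this to a unital homomorphism $M_{2^\infty}\to A_\infty\cap A'$.

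\emph{Backward direction, second step.} To do so, we produce unital homomorphisms $\varphi^{(n)}_1,\dots,\varphi^{(n)}_n\colon M_2\to A$ whose images pairwise commute approximately and which are all $(F_n,1/n)$-central. Choose them inductively: at stage $k$, apply the hypothesis to the finite set consisting of $F_n$ together with the images of matrix units under $\varphi^{(n)}_1,\dots,\varphi^{(n)}_{k-1}$, with tolerance much smaller than $1/n$.

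Approximately commuting copies of $M_2$ give a map $M_{2^n}\to A$ that is only approximately multiplicative. Since matrix units satisfy stable (semiprojective) relations, this map can be perturbed to a genuine unital homomorphism $M_{2^n}\to A$. Its restriction to the first $k$ factors is close to the corresponding product map, and it is still $(F_n,2/n)$-central. A diagonal argument over $n$, using the compatible inclusions $M_{2^k}\subseteq M_{2^{k+1}}$, then yields a unital homomorphism $M_{2^\infty}\to A_\infty\cap A'$. Kirchberg's $\ep$-test is a convenient way to package this bookkeeping.

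\emph{Backward direction, third step.} With the homomorphism $M_{2^\infty}\to A_\infty\cap A'$ in hand, we invoke the R\o rdam/Toms--Winter theorem to conclude $A\cong A\otimes M_{2^\infty}$. If a self-contained argument is wanted, the idea is an Elliott approximate intertwining between $a\mapsto a\otimes 1$ from $A$ to $A\otimes M_{2^\infty}$ and maps $A\otimes M_{2^\infty}\to A$ given by $a\otimes d\mapsto a\,\varphi(d)$, where $\varphi$ is an approximately central copy. Their composites are approximately unitarily equivalent to the identity, because $M_{2^\infty}$ has approximately inner half-flip; in fact the flip on $M_{2^k}\otimes M_{2^k}$ is implemented by a unitary in the algebra generated by the two copies.

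I expect the main obstacle to be this last intertwining step: keeping the implementing unitaries approximately central as well, so that the two-sided approximate intertwining converges. The perturbation in the second step is routine by comparison.
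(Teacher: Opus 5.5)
The paper gives no proof of this theorem. It is only quoted as the UHF counterpart of the Hirshberg--Orovitz criterion, so there is no argument of the paper to compare yours with. Your proposal is correct and is essentially the standard proof from the literature. The forward direction works as written, though the strict inequality in the statement should be read for $d\neq 0$, since at $d=0$ it says $0<0$. In the backward direction, the real content is the cited R\o rdam/Toms--Winter characterization. What you add is the upgrade from a single approximately central copy of $M_2$ to a unital copy of $M_{2^\infty}$ in $A_\infty\cap A'$, and you carry this out correctly. You are also right that nuclearity of $A$ is superfluous and that separability is what is really used.

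Two remarks. First, your second step can be done without perturbing approximate matrix units. The hypothesis gives, for every separable $S\subseteq A_\infty$, a unital homomorphism $M_2\to A_\infty\cap A'\cap S'$. To see this, lift a dense sequence of $S$ to sequences $(s^{(m)}_n)_n$, and apply the hypothesis at stage $n$ to $F_n\cup\{s^{(m)}_n\colon m\leq n\}$ with tolerance $1/n$. Taking $S$ to be the union of the copies already built, you get unital copies of $M_2$ in $A_\infty\cap A'$ that commute \emph{exactly}. These generate a unital copy of $M_{2^\infty}$, with no tolerances depending on $2^n$.

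Second, your self-contained sketch of the last step does not work as phrased. When $\varphi$ is only approximately central, $a\otimes d\mapsto a\varphi(d)$ is not a homomorphism into $A$; it is one only into $A_\infty$. So a two-sided Elliott intertwining is not available. The standard argument uses the one-sided criterion instead: $\iota=\mathrm{id}_A\otimes 1$ is approximately unitarily equivalent to an isomorphism once there are unitaries $u_n\in A\otimes M_{2^\infty}$ with $\|[u_n,\iota(a)]\|\to 0$ and $\mathrm{dist}(u_nbu_n^*,\iota(A))\to 0$ for all $a\in A$ and $b\in A\otimes M_{2^\infty}$. These unitaries come from your own observation about the half-flip. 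Let $\varphi\colon M_{2^\infty}\to A_\infty\cap A'$ be the homomorphism from step two. Push the unitaries implementing the half-flip on $M_{2^\infty}\otimes M_{2^\infty}$ into $(A\otimes M_{2^\infty})_\infty$ via $d_1\otimes d_2\mapsto (1\otimes d_1)(\varphi(d_2)\otimes 1)$, and lift them. Both $1\otimes d_1$ and $\varphi(d_2)\otimes 1$ commute with $A\otimes 1$, so the resulting unitaries commute with $A\otimes 1$ exactly in the sequence algebra. They also conjugate $a\otimes d$ to approximately $a\varphi(d)\otimes 1$. So the obstacle you anticipate does not actually arise.
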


In order to make use of \autoref{thm:ZstTrZst}, we 
would like to simplify condition (2) about Cuntz
comparison in $C(X)\rtimes G$. We need a few elementary
facts about Cuntz comparison, whose proofs can be found
in \cite[Section~2]{GarPer_modern_2025}. For positive elements $a$ and $b$ in a C*-algebra $A$, we write $a\sim b$ whenever
$a\precsim b$ and $b\precsim a$. Recall the definition of $(a-\ep)_+$ from \autoref{df:CutDown}.

\begin{prop}\label{prop:CuComp}
Let $A$ be a \ca\ and let $\ep>0$.

\be
\item Let $a,b\in A_+$ satisfy $\|a - b\| < \ep$. Then 
$(a-\ep)_+\precsim b$.
\item Let $c\in A$. Then $(cc^*-\ep)_+\sim (c^*c-\ep)_+$.
\item Let $a,b\in A$ satisfy $0\leq a\leq b$. Then $(a-\varepsilon)_+\precsim (b-\varepsilon)_+$.
\ee
\end{prop}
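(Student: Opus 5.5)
The three parts are standard facts about Cuntz comparison, and each can be proved directly from \autoref{df:CtzComp} by writing down explicit elements $c_n$. Throughout I will use functional calculus, transitivity from \autoref{ex:CtzCompTransitive}, and the basic observation that if $x=c^*yc$ exactly, then $x\precsim y$. I will also use that $a\precsim b$ whenever $a\le \lambda b$ for some $\lambda>0$ with $a$ in the hereditary subalgebra generated by $b$; in the cases below this reduces to an explicit factorization.

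\emph{Part (1).} Since $\|a-b\|<\ep$, we have $a\le b+\ep 1$ in the unitization. Let $f_\ep(t)=(t-\ep)_+/t$ (set to $0$ at $t=0$), and put $d=f_\ep(a)^{1/2}$, so that $d a d=(a-\ep)_+$. Conjugating $a\le b+\ep$ by $d$ gives
\[
(a-\ep)_+ \le d b d+\ep d^2 .
\]
Since $\ep d^2=\ep f_\ep(a)$, we get $dbd\ge (a-\ep)_+-\ep f_\ep(a)$, which is not yet what we need. I will use instead the standard device due to Rørdam: there is a contraction $r$ with $(a-\ep)_+=r^*br$. To obtain it, let $e=(a-\ep)_+$ and note $e\le$ ``$b$ on the support of $e$'' in the following sense. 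Let $g_\delta$ be continuous with $g_\delta(a)\,(a-\ep)=\ldots$; concretely, for $\delta>0$ I take $h=g(a)$ with $g$ vanishing on $[0,\ep]$ and $g(t)=1$ for $t\ge\ep+\delta$. Then $h(a-\ep)h\le h b h$, so $(a-\ep-\delta)_+\le C\,hbh$ for some constant $C$. This gives $(a-\ep-\delta)_+\precsim b$ for every $\delta>0$, and letting $\delta\to0$ (using norm convergence $(a-\ep-\delta)_+\to(a-\ep)_+$ together with the fact that the set $\{x\colon x\precsim b\}$ is norm closed) yields $(a-\ep)_+\precsim b$. Closedness holds because a diagonal choice of approximating sequences works.

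\emph{Part (2).} Take the polar-type identity: with $h=f(c^*c)$ for a continuous $f$, one has $c\,h(c^*c)\,c^*=h'(cc^*)\,cc^*$. Using $f(t)=(t-\ep)_+/t$, we get
\[
(cc^*-\ep)_+ = c\,f(c^*c)\,c^* = x^*x \quad\text{with } x=f(c^*c)^{1/2}c^* ,
\]
and
\[
xx^*=f(c^*c)^{1/2}c^*c\,f(c^*c)^{1/2}=(c^*c-\ep)_+ .
\]
Since $x^*x\precsim xx^*$ always holds, this gives one direction; symmetry gives the other. The inequality $x^*x\precsim xx^*$ follows by writing $x=v|x|$ approximately and taking $c_n=x^* (xx^*)^{-1/2}$ regularized, i.e. $c_n=g_n(xx^*)x$, so that $c_n^*(xx^*)c_n\to x^*x$.

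\emph{Part (3).} Since $a\le b$, we have $a\le b\le (b-\ep)_++\ep$. I will apply part (1)'s technique, namely the key lemma that $x\le y+\ep$ with $x,y\ge0$ implies $(x-\ep)_+\precsim y$, with $x=a$ and $y=(b-\ep)_+$. Note that part (1) itself only needs $a\le b+\ep$ rather than a norm estimate, so I will prove part (1) in that form and then deduce part (3) directly.

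The main obstacle is making the conjugation argument in part (1) rigorous, in particular the step from the order inequality to a genuine Cuntz subequivalence with the sharp constant $\ep$. The plan for this step is to prove the $\delta$-version and then pass to a limit using closedness.
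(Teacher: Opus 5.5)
Your argument is correct, but it does not follow the route the paper points to. The paper gives no proof here and defers to \cite[Section~2]{GarPer_modern_2025}, where the standard route for part~(1) is the Kirchberg--R{\o}rdam lemma: it uses the \emph{strict} inequality $\|a-b\|<\ep$ to produce an actual contraction $d$ with $(a-\ep)_+=d^*bd$. You use only the order relation $a\le b+\ep$. From it you get $(a-\ep-\delta)_+\le h(a-\ep)h\le hbh\precsim b$, and you let $\delta\to0$ using that $\{x\colon x\precsim b\}$ is norm closed; your diagonal argument for this is fine with the sequential definition in \autoref{df:CtzComp}. This is softer, since you lose the exact factorization, but that is not needed here. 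In exchange it buys two things. It works under $\|a-b\|\le\ep$, and it proves the stronger statement that $x\le y+\ep$ implies $(x-\ep)_+\precsim y$, from which part~(3) follows at once via $a\le b\le(b-\ep)_++\ep$. Part~(2) is the standard argument.

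Some clean-up is needed. First, the sentence asserting an exact contraction $r$ with $(a-\ep)_+=r^*br$ is neither proved by what follows nor used, so remove it. Second, the fact that $0\le x\le y$ implies $x\precsim y$ is not among the facts available in the paper, so prove it directly. For instance, with $r_n=(y+\tfrac1n)^{-1/2}x^{1/2}$ one gets $\|x-r_n^*yr_n\|=\tfrac1n\|(y+\tfrac1n)^{-1/2}x(y+\tfrac1n)^{-1/2}\|\le\tfrac1n$. Finally, your abandoned first attempt already worked. You had $dbd\ge f_\ep(a)(a-\ep)$, and this positive element of $C^*(a)$ has the same open support as $(a-\ep)_+$. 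So \autoref{ex:CtzCompCX}, applied in $C^*(a)$, gives $(a-\ep)_+\precsim f_\ep(a)(a-\ep)\precsim dbd\precsim b$ with no limiting argument.
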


Given $\ep>0$, let $f_{\ep}\colon [0,\infty)\to [0,\infty)$ be given
by $f_{\ep}(t)=\max\{t-\ep,0\}$.

\begin{df} \label{df:CutDown}
	Given a positive element 
	$a\in A$, we write
	$(a-\ep)_+$ for $f_{\ep}(a)$. The element $(a-\ep)_+$ is usually
	referred to as the \emph{$\ep$ cut-down} of $a$. \end{df}

\begin{ex}\label{ex:CutDownSubeq}
	Let $a\in A$ be a positive element and let $\ep>0$. Show 
	that $(a-\ep)_+\precsim a$. \emph{(Hint: this subequivalence holds in the commutative C*-algebra $C^*(a)$.)}
\end{ex}

Since $(a-\varepsilon)_+\leq a$ for any $\varepsilon>0$, we deduce from part~(3) of \autoref{prop:CuComp} that $(a-\varepsilon)_+\precsim a$.

Our next proposition is a particular case of 
\cite[Lemma~7.9]{Phi_large_2014}, and it asserts that
every positive element in the crossed product of a topologically free action dominates, in the Cuntz sense, a positive element in $C(X)$. In practice, this proposition will allow us to check 
certain ``smallness'' conditions inside $C(X)$ directly; 
see \autoref{rem:AssumefCX}.

\begin{prop}\label{prop:TopFreeCtzSubeq}
Let $G\curvearrowright X$ be a topologically free dynamical
system and let $a\in C(X)\rtimes G$ be a nonzero positive element.
Then there exists $h\in C(X)$ nonzero such that $h\precsim a$.
\end{prop}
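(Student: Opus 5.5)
The plan is to combine faithfulness of the canonical conditional expectation (\autoref{thm:CondExpFaithful}) with the pinching lemma \autoref{prop:Pinching} and the elementary Cuntz comparison facts in \autoref{prop:CuComp}. Throughout, $C(X)\rtimes G$ is read as the reduced crossed product, since that is where $E$ is faithful and where \autoref{prop:Pinching} is stated.

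First, normalize so that $\|E(a)\|=1$. This is possible because $a\neq 0$ and $a\geq 0$: positivity gives $a=c^*c$ with $c=a^{1/2}$, and part~(3) of \autoref{thm:CondExpFaithful} then gives $E(a)=E(c^*c)\neq 0$. Fix a small $\ep$, say $\ep=1/4$. Apply \autoref{prop:Pinching} to $a$ and $\ep$ to obtain $f\in C(X)_+$ with $\|f\|=1$ such that
\[
\|fE(a)f\|\geq 1-\ep \quad\text{and}\quad \|fE(a)f-faf\|\leq \ep .
\]
If strict inequality is needed, apply the lemma with a slightly smaller $\ep'$.

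Now set $h_0=fE(a)f$, which lies in $C(X)_+$ because $E(a)=E_1(a)\in C(X)$. Then define
\[
h=(h_0-2\ep)_+ \in C(X).
\]
This $h$ is nonzero, since $\|h_0\|\geq 1-\ep>2\ep$. Because $\|h_0-faf\|\leq\ep<2\ep$, part~(1) of \autoref{prop:CuComp} (with $2\ep$ in place of $\ep$) gives $h\precsim faf$.

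It remains to show $faf\precsim a$. Writing $c=a^{1/2}f$, we have $faf=c^*c$ and $cc^*=a^{1/2}f^2a^{1/2}\leq a$. I would compare these via Cuntz equivalence of $c^*c$ and $cc^*$, but \autoref{prop:CuComp}(2) only treats the cut-downs. So I would rather prove $h\precsim (faf-\delta)_+$ for some $\delta>0$ and then chain the following:
\[
(c^*c-\delta)_+\sim (cc^*-\delta)_+\precsim (a-\delta)_+\precsim a .
\]
Here the first step is part~(2) of \autoref{prop:CuComp}, the second is part~(3) applied to $cc^*\leq a$, and the last is \autoref{ex:CutDownSubeq}. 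The inequality $h\precsim (faf-\delta)_+$ also follows from part~(1) of \autoref{prop:CuComp}, after cutting $h_0$ down a bit more: take $h=(h_0-3\ep)_+$ and use $\|(faf-\ep)_+-faf\|\leq\ep$, so that $\|h_0-(faf-\ep)_+\|\leq 2\ep<3\ep$. This requires $1-\ep>3\ep$, which holds for $\ep=1/5$. Transitivity (\autoref{ex:CtzCompTransitive}) then gives $h\precsim a$.

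The only delicate point is this bookkeeping with cut-downs. The pinching lemma does the real work of replacing $a$ by something close to a function, and the constants just have to be chosen so that the final cut-down of $h_0$ remains nonzero.
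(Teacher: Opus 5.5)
Your proof is correct and is essentially the paper's argument: faithfulness of $E$ gives $E(a)\neq 0$, the pinching lemma \autoref{prop:Pinching} supplies $f$, and parts (1)--(3) of \autoref{prop:CuComp} pass from a cut-down of $fE(\cdot)f$ to $(x^*x-\delta)_+\sim(xx^*-\delta)_+\precsim a$, where $x$ is a square root of $a$ multiplied by $f$. The only difference is that the paper first approximates $a^{1/2}$ by a finitely supported $c$, pinches $c^*c$ and uses $x=cf$, whereas you pinch $a$ directly and use $x=a^{1/2}f$; this is legitimate because the lemma is stated for arbitrary positive elements, and it saves one approximation step.
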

\begin{proof} 
Without loss of generality, we can assume
that $\|a\| \leq 1$. Since the conditional expectation
$E$ is faithful (see \autoref{thm:CondExpFaithful}), we
have that $E(a) \in C(X)$ is a nonzero positive element. Set $\varepsilon = \tfrac{1}{6} \|E(a)\|$.
Choose $c \in c_c(G,C(X),\alpha)$ such that $\|c - a^{1/2}\| < \varepsilon$ and $\|c\| \leq 1$. Then
\be\item[(i)] $\|c c^* - a\| < 2\varepsilon$ and
\item[(ii)] $\|c^* c - a\| < 2\varepsilon$.
\ee

Use \autoref{prop:Pinching}, with $c^*c$ in place of $a$,
to find $f\in C(X)_+$ with $\|f\|=1$ such that
\be\item[(iii)]$\|E(c^*c)\| \leq \|fE(c^*c)f\| + \ep$ and
\item[(iv)] $\|fE(c^*c)f - fc^*cf\|\leq \ep$.
\ee Then
\[
\|f E(c^* c) f\| \stackrel{(iii)}{\geq} 
\|E(c^* c)\| - \varepsilon \stackrel{(ii)}{>} \|E(a)\| - 3\varepsilon = 3\varepsilon.
\]
Therefore, the element
\[
h:=(f E(c^* c) f - 3\varepsilon)_+
\]
belongs to $C(X)$ and is nonzero. 
In the following inequalities, we use 
part (1) of \autoref{prop:CuComp} at the second step;
part (2) of \autoref{prop:CuComp} at the third step;
part (3) of \autoref{prop:CuComp} for the inequality
$0\leq cf^2c^*\leq  cc^*$ at the fourth step; and 
again part (1) of \autoref{prop:CuComp} at the last step, to get
\[
h =(f E(c^* c) f - 3\varepsilon)_+\stackrel{(iv)}{\precsim} (fc^*cf-2\ep)_+
\sim (c f^2 c^* - 2\varepsilon)_+ \precsim (c c^* - 2\varepsilon)_+ \stackrel{(i)}{\precsim} a.
\]
The result then follows from transitivity of $\precsim$; see \autoref{ex:CtzCompTransitive}.
\end{proof}

A combination
of very deep results of Cuntz \cite{Cun_dimension_1978}, Blackadar-Handelman \cite{BlaHan_dimension_1982}, and
Haagerup \cite{Haa_quasitraces_2014}, gives the following
result.

\begin{thm}\label{thm:ExistTraces} (Cuntz, Blackadar-Handelman, Haagerup). Let $A$ be a simple, unital, nuclear\footnote{It is also enough to assume that $A$ is exact, but nuclearity is enough for our purposes.} C*-algebra. Then 
 $A$ is stably finite if and only if it admits a tracial state.
\end{thm}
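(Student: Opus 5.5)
The direction ``tracial state $\Rightarrow$ stably finite'' is the elementary one, and I would do it first. Let $\tau$ be a tracial state on $A$. Since $A$ is simple, $\tau$ is faithful: the set $\{a\in A\colon \tau(a^*a)=0\}$ is a closed two-sided ideal (this uses the trace property and the Cauchy--Schwarz inequality). It does not contain $1$, so it is zero. Now let $n\in\N$ and put $\tau_n=\tau\otimes\mathrm{Tr}$ on $M_n(A)$; this is a faithful positive tracial functional. Suppose $v\in M_n(A)$ satisfies $v^*v=1$. Then $1-vv^*$ is a positive element and
\[\tau_n(1-vv^*)=\tau_n(1)-\tau_n(v^*v)=0,\]
so faithfulness forces $vv^*=1$. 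Hence every isometry in every matrix algebra over $A$ is a unitary, and $A$ is stably finite.

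For the converse, assume $A$ is stably finite. Since $A$ is simple and unital, Cuntz's theory of dimension functions \cite{Cun_dimension_1978} applies. Stable finiteness ensures that the class of $1$ in the Cuntz semigroup $W(A)$ is not properly infinite: $1\oplus 1\precsim 1$ is impossible, and more generally $(k+1)[1]\le k[1]$ never holds. So $[1]$ is an order unit in a partially ordered abelian semigroup in which $[1]$ is not ``infinite''. A Hahn--Banach/Goodearl--Handelman extension argument then yields a state on $W(A)$ normalized at $[1]$, that is, a dimension function. Blackadar--Handelman \cite{BlaHan_dimension_1982} show that lower semicontinuous dimension functions on a unital C*-algebra correspond bijectively to normalized $2$-quasitraces. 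This gives a $2$-quasitrace $\tau$ on $A$ with $\tau(1)=1$. Finally, because $A$ is nuclear, and hence exact, Haagerup's theorem \cite{Haa_quasitraces_2014} says that every bounded $2$-quasitrace on $A$ is linear. So $\tau$ is a tracial state.

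The main obstacle is the converse direction. Its two nontrivial inputs are the existence of a nonzero dimension function, which is Cuntz's argument that stable finiteness prevents $[1]$ from collapsing, and the linearity of quasitraces, which is Haagerup's deep theorem and is exactly where nuclearity or exactness is used. In this survey I would state both as black boxes with the references above and give only the elementary direction in full.
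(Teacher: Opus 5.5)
Your proposal is correct and follows the same route as the paper, which gives no argument beyond citing this chain (Cuntz's dimension functions, the Blackadar--Handelman correspondence with $2$-quasitraces, and Haagerup's linearity theorem for exact algebras); your proof of the easy direction via faithfulness of the trace is an addition the paper does not include, and it is correct. One step to tighten: the state on $W(A)$ produced by the Goodearl--Handelman argument need not be lower semicontinuous, so before invoking the correspondence with quasitraces you should replace it by its regularization $\bar d(a)=\lim_{\varepsilon\to 0}d((a-\varepsilon)_+)$, which Blackadar--Handelman show is again a dimension function, and which still satisfies $\bar d(1)=1$.
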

 
The techniques to address \autoref{qst:Main} differ significantly depending on whether $G$ is amenable or not. This is not surprising, given the following.

\begin{prop}\label{prop:GamenIFFinvmeas}
Let $G\curvearrowright X$ be an amenable, minimal, topologically free dynamical system. Then the following
are equivalent:
\be\item $C(X)\rtimes G$ is stably finite;
\item There is a $G$-invariant probability measure on $X$; 
\item $G$ is amenable.
\ee
\end{prop}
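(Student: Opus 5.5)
We prove the cycle (1) $\Rightarrow$ (2) $\Rightarrow$ (3) $\Rightarrow$ (1). Throughout, $A=C(X)\rtimes G$. By \autoref{thm:AmenIffNuc}, $A$ is nuclear and agrees with $C(X)\rtimes_{\lambda} G$. By \autoref{thm:SimpleCP}, $A$ is simple, since the action is minimal and topologically free. This puts us in the setting of \autoref{thm:ExistTraces}.

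\emph{(1) $\Rightarrow$ (2).} Since $A$ is simple, unital, nuclear and stably finite, \autoref{thm:ExistTraces} gives a tracial state $\tau$ on $A$. Restrict $\tau$ to $C(X)$. By the Riesz representation theorem this restriction is integration against a probability measure $\mu$. Invariance follows from the trace property:
\[\tau(\alpha_g(f))=\tau(u_gfu_g^*)=\tau(f)\]
for all $f\in C(X)$ and $g\in G$. Hence $\mu$ is $G$-invariant.

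\emph{(2) $\Rightarrow$ (3).} Let $\mu$ be a $G$-invariant probability measure. Take the sequence $\mu_n\colon X\to P(G)$ witnessing amenability of the action (\autoref{df:AmenAc}) and average it against $\mu$:
\[m_n=\int_X \mu_n(x)\,d\mu(x)\in P(G).\]
Invariance of $\mu$ lets us substitute $x\mapsto g\cdot x$ inside the integral, which gives
\[\|g\cdot m_n-m_n\|_1\le\int_X\|g\cdot\mu_n(x)-\mu_n(g\cdot x)\|_1\,d\mu(x)\le \max_{x\in X}\|\mu_n(g\cdot x)-g\cdot\mu_n(x)\|_1\to 0.\]
So $(m_n)_{n\in\N}$ is an asymptotically invariant sequence of probability measures on $G$ (a Reiter sequence). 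Reiter's condition is equivalent to the Følner condition by the standard argument: pass to level sets of $m_n$ and use a layer-cake decomposition. Hence $G$ is amenable. The points needing care are that $x\mapsto\mu_n(x)$ is norm-continuous, so the $\ell^1$-valued integral makes sense, and the left/right convention in the action on $P(G)$ relative to the Følner condition $|FK\triangle F|$. Neither should cause real trouble.

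\emph{(3) $\Rightarrow$ (1).} If $G$ is amenable, choose a Følner sequence and an arbitrary point $x_0\in X$. Weak-$*$ limit points of the averages
\[\frac{1}{|F_n|}\sum_{g\in F_n}\delta_{g\cdot x_0}\]
are $G$-invariant probability measures $\mu$ on $X$. Then $\tau=\mu\circ E$ is a tracial state on $A$. It suffices to verify the trace property on the spanning elements $fu_g$ and $hu_k$. The product $fu_ghu_k$ has $E$-part equal to zero unless $k=g^{-1}$, and when $k=g^{-1}$ the traciality reduces to $G$-invariance of $\mu$. Finally, \autoref{thm:ExistTraces} (the easy direction) shows that $A$ is stably finite. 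Alternatively, one can argue directly: $\tau\otimes\mathrm{Tr}$ is faithful on $M_k(A)$ because $A$ is simple, and a faithful tracial state rules out infinite projections in each $M_k(A)$.

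The main obstacle is the Reiter-to-Følner step in (2) $\Rightarrow$ (3). The paper defines amenability only through Følner sets, so this conversion has to be made explicit; everything else is routine bookkeeping.
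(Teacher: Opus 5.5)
Your proof is correct and follows essentially the same route as the paper: (1)$\Rightarrow$(2) by \autoref{thm:ExistTraces} and restricting the trace to $C(X)$; (2)$\Rightarrow$(3) by integrating the maps $\mu_n$ against the invariant measure, exactly as the paper does; and back to (1) via the trace $\mu\circ E$ of \autoref{prop:TracesCP} together with \autoref{thm:ExistTraces}. The differences are cosmetic: you get an invariant measure from amenability of $G$ by averaging Dirac masses along an orbit over F{\o}lner sets (these must be left F{\o}lner sets, or use $F_n^{-1}$) rather than composing an invariant mean with the map $P_\eta$, and you make the Reiter-to-F{\o}lner step explicit where the paper leaves it implicit, both of which fit the paper's F{\o}lner definition of amenability slightly better.
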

\begin{proof} 
By \autoref{thm:ExistTraces}, (1) is equivalent to $C(X)\rtimes G$ admitting a
tracial state, which by \autoref{prop:TracesCP} is equivalent to (2). 
To show that (2) implies (3), fix $\nu\in \mathrm{Prob}_G(X)$.
Let $(\mu_n)_{n\in\N}$ be a sequence as in \autoref{df:AmenAc}, witnessing the fact that $G\curvearrowright X$ is amenable.
For $n\in\N$, define 
$\rho_n\in \mathrm{Prob}(G)$ by
\[\rho_n(E)=\int_X\mu_n(x)(E)\ d\nu(x)\]
for all $E\subseteq G$.
Then $\|\rho_n-g\cdot \rho_n\|_1\xrightarrow{n\to \infty}0$ for all $g\in G$,
and thus $G$ is amenable. 

Finally, we show that (3) implies (2). Fix a $G$-invariant mean $\phi\colon \ell^{\infty}(G)\to \C$ and let $\eta$ be any probability measure on $X$. Define $P_{\eta}\colon C(X)\to \ell^{\infty}(G)$ by
\[P_{\eta}(f)(g)=\int_X f(g\cdot x)\ d\eta(x)\]
for $f\in C(X)$ and $g\in G$, and note that $P_{\eta}$
is a unital positive $G$-equivariant map. 
Then $\phi\circ P_{\eta}\colon C(X)\to \C$ is a $G$-invariant state giving rise to a $G$-invariant probability measure on $X$. 
\end{proof}

\section{Traces of crossed products}\label{sec:Traces}

Before closing this chapter, we briefly discuss traces on
crossed products by dynamical systems. 
A complete and somewhat explicit description (in terms of positive-definite functions) was 
obtained by Ursu in \cite{Urs_characterizing_2021}, generalizing previously known cases by Kawamura, Takemoto, and Tomiyama \cite{KawTakTom_state_1990}. Here, we are interested in computing the trace space
of a classifiable crossed product, so we will mostly consider minimal, topologically free and amenable dynamical systems. 

Just like for ideals, some 
traces are induced purely dynamically. We denote by $P(X)$ the 
space of Borel probability measures on $X$, and by $\mathrm{Prob}_G(X)$
the subset of invariant measures.

\begin{prop}\label{prop:TracesCP}
Let $G\curvearrowright X$ be a dynamical system, and let $\mu\in \mathrm{Prob}_G(X)$. Then the map $\tau_{\mu} \colon C(X)\rtimes_{\lambda} G\to \C$ given by
\[\tau_{\mu}(au_g)=\begin{cases}
\int_Xa(x)\, d\mu(x), &\mbox{ if } g=e;\\
0, &\mbox{ else},
                 \end{cases}\]
is a trace. 

Conversely, if $\tau$ is a trace on $C(X)\rtimes_{\lambda} G$, 
then its restriction to $C(X)$ is given by integration against a measure $\mu_{\tau}\in \mathrm{Prob}_G(X)$.
\end{prop}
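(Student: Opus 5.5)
The plan is to prove both directions directly, writing $\tau_\mu = \mu\circ E$, where $E=E_1$ is the canonical conditional expectation from \autoref{df:CondExpCP} and $\mu$ is regarded as a state on $C(X)$.

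For the first assertion, $\tau_\mu$ is a composition of the positive maps $E$ (positive by \autoref{prop:E1CondExp}) and integration against $\mu$. It is therefore a positive, continuous linear functional. It agrees with the displayed formula on elements $au_g$, because $E(au_g)$ equals $a$ if $g=e$ and $0$ otherwise. It is also a state, since $\tau_\mu(1)=\mu(X)=1$.

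For the trace property, by linearity and continuity it suffices to check $\tau_\mu(xy)=\tau_\mu(yx)$ for $x=au_g$ and $y=bu_h$, since the span of such elements is dense. We have $xy=a\alpha_g(b)u_{gh}$ and $yx=b\alpha_h(a)u_{hg}$. If $gh\neq e$ then also $hg\neq e$, and both sides vanish. If $h=g^{-1}$, we need
\[\int_X a\,\alpha_g(b)\,d\mu=\int_X b\,\alpha_{g^{-1}}(a)\,d\mu.\]
This follows by applying $\alpha_{g^{-1}}$ to the integrand on the left, using $G$-invariance of $\mu$ (that is, $\int \alpha_s(f)\,d\mu=\int f\,d\mu$ for all $s\in G$) and commutativity of $C(X)$. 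The relation $\alpha_{g^{-1}}\circ\alpha_g=\id$ is the only group-theoretic input here.

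For the converse, let $\tau$ be a tracial state. Its restriction to $C(X)$ is a state, so the Riesz representation theorem gives a probability measure $\mu_\tau$ with $\tau(a)=\int_X a\,d\mu_\tau$ for $a\in C(X)$. For invariance, use the trace property with the unitaries $u_g$:
\[\tau(\alpha_g(a))=\tau(u_gau_g^*)=\tau(au_g^*u_g)=\tau(a).\]
Hence $\int \alpha_g(a)\,d\mu_\tau=\int a\,d\mu_\tau$ for all $a$ and $g$. By uniqueness in the Riesz theorem, $\mu_\tau$ is then $G$-invariant.

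No step is a genuine obstacle. The only care needed is the reduction of the trace property to the monomials $au_g$: both $(x,y)\mapsto\tau_\mu(xy)$ and $(x,y)\mapsto\tau_\mu(yx)$ are continuous and bilinear, so density of $c_c(G,C(X),\alpha)$ suffices. A smaller point is the normalization convention in "trace", which is consistent with $\tau_\mu(1)=1$.
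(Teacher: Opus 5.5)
Your proof is correct. The paper states \autoref{prop:TracesCP} without proof, so there is no argument of its own to compare against. Your route supplies both directions that are later used in \autoref{prop:GamenIFFinvmeas}, and it is the standard verification:
\begin{itemize}
\item writing $\tau_\mu=\mu\circ E$, which gives positivity, continuity and $\tau_\mu(1)=1$ at once;
\item checking $\tau_\mu(xy)=\tau_\mu(yx)$ on pairs $au_g,\,bu_h$ via $G$-invariance of $\mu$, then extending by joint continuity and density of $c_c(G,C(X),\alpha)$;
\item recovering invariance of $\mu_\tau$ from $\tau(u_gau_g^*)=\tau(a)$ and uniqueness in the Riesz representation theorem. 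Regularity is automatic on compact metrizable $X$, and this uniqueness yields $\mu_\tau(gE)=\mu_\tau(E)$ for all Borel $E$.
\end{itemize}
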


It is natural to ask whether all traces on crossed products have the form described in the proposition
above, so that there is a canonical affine homeomorphism $T(C(X)\rtimes G)\cong \mathrm{Prob}_G(X)$.
In general the answer is no, but there is a
dynamical characterization of this fact; see \autoref{prop:TracesEssFree}. 

For the dynamical systems we are interested in, the following gives a computation of $T(C(X)\rtimes G)$.
In its statement, we will regard functions in $C(X)$
as elements in $L^\infty(X,\mu)$ in the obvious way, 
and we will denote by
$\alpha$ the extension of the action on $C(X)$ to $L^\infty(X,\mu)$.

\begin{thm}\label{thm:Ursu} (\cite[Theorem~1.2]{Urs_characterizing_2021}). 
Let $G\curvearrowright X$ be a minimal, topologically free, amenable dynamical
system, and let $\mu\in \mathrm{Prob}_G(X)$. Then there is a natural
one-to-one correspondence between the sets
\[\{\tau\in T\big(C(X)\rtimes G\big)\colon \tau|_{C(X)}=\tau_{\mu}\}\]
and the set of all functions $a\colon G\to L^\infty(X,\mu)$ satisfying:
\be\item[(i)] $a_1 = 1$;
\item[(ii)] $a_g \xi = a_g\alpha_g(\xi)$ for all $\xi\in L^\infty(X,\mu)$ and all $g\in G$;
\item[(iii)] $\alpha_h(a_g)= a_{hgh^{-1}}$ for all $g,h\in G$;
\item[(iv)] The matrix $[a_{hg^{-1}}]_{g,h\in F}$ is positive for all finite subsets $F\subseteq G$.\ee
\end{thm}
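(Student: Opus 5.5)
The plan is to build the correspondence explicitly in both directions, using the canonical conditional expectation and the faithful normal extension to $L^\infty(X,\mu)$. Given a trace $\tau$ with $\tau|_{C(X)}=\tau_\mu$, and $g\in G$, consider the functional $f\mapsto \tau(f u_g)$ on $C(X)$. By Cauchy--Schwarz,
\[
|\tau(fu_g)|^2\le \tau(ff^*)\,\tau(u_g^*u_g)=\int|f|^2\,d\mu,
\]
so this functional is bounded on $L^2(X,\mu)$. By Riesz representation we get $a_g\in L^2(X,\mu)$ with $\tau(fu_g)=\int f\,\overline{\overline{a_g}}\,d\mu$; for definiteness set $\tau(fu_g)=\int f a_g\,d\mu$. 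A refined Cauchy--Schwarz estimate, namely $|\tau(fhu_g)|\le \tau(|f|^2)^{1/2}\tau(|h|^2)^{1/2}$ after writing $fh$ as a product and moving one factor using the trace property, gives $|\int f a_g\,d\mu|\le\int|f|\,d\mu$. Hence $a_g\in L^\infty$ with $\|a_g\|\le 1$.

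The next step is to verify conditions (i)--(iv) for this $a$. Condition (i) is $\tau|_{C(X)}=\tau_\mu$. For (ii), use the trace property: $\tau(f\xi u_g)=\tau(\xi u_g f)=\tau(\xi\alpha_g(f)u_g)$, which yields $a_g\xi=a_g\alpha_g(\xi)$ for $\xi\in C(X)$, and then for all $\xi\in L^\infty$ by weak$^*$ density. For (iii), compute $\tau(fu_{hgh^{-1}})=\tau(u_h\,\alpha_{h^{-1}}(f)u_gu_h^*)=\tau(\alpha_{h^{-1}}(f)u_g)$ and use invariance of $\mu$. For (iv), positivity of $\tau(b^*b)$ for $b=\sum_{g\in F}\xi_gu_g$ gives
\[
\sum_{g,h}\int \overline{\alpha_{g^{-1}}(\xi_g)}\,\alpha_{g^{-1}}(\xi_h)\,a_{g^{-1}h}\,d\mu\ge 0 .
\]
After a change of variables using (ii) and (iii), this is positivity of the $L^\infty$-valued matrix $[a_{hg^{-1}}]$ tested against arbitrary vectors in $L^\infty(X,\mu)^F$, which is equivalent to pointwise a.e. positivity. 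The indexing conventions here ($hg^{-1}$ versus $g^{-1}h$) will need care, but they are bookkeeping.

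For the converse, given $a$ satisfying (i)--(iv), define $\tau$ on $c_c(G,C(X),\alpha)$ by $\tau(\sum f_gu_g)=\sum_g\int f_ga_g\,d\mu$. Condition (iv) gives $\tau(b^*b)\ge0$ on $c_c$, and (i) gives $\tau(1)=1$. Conditions (ii) and (iii) give the trace identity on products of monomials $fu_g\cdot hu_k$ versus $hu_k\cdot fu_g$, by the same computation as above run backwards. The two constructions are clearly mutually inverse.

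The main obstacle is showing that $\tau$ extends continuously to $C(X)\rtimes G$: a positive functional on the dense $*$-algebra need not be bounded for the reduced norm. I would handle this via a GNS construction on $c_c$. Because $\tau$ is positive and unital and $C(X)$ and the unitaries $u_g$ act boundedly (using $\tau(b^*f^*fb)\le\|f\|^2\tau(b^*b)$, which requires a positivity argument via (iv) applied to $\|f\|^2-f^*f=k^*k$), we get a covariant representation of $(G,C(X),\alpha)$. Its integrated form is bounded by the full norm via \autoref{thm:IntForm}. Since the action is amenable, $C(X)\rtimes G=C(X)\rtimes_\lambda G$ by \autoref{thm:AmenIffNuc}, so $\tau$ extends to a state, and by continuity it is tracial. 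This is the point where amenability enters. Minimality and topological freeness seem to be needed only to place us in the simple setting; I do not expect them to be used essentially in the argument itself.
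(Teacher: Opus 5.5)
Your proof is correct. The paper does not prove this theorem. It cites Ursu and leaves as an exercise only the construction $a\mapsto\tau_a$, with $\tau_a(fu_g)=\tau_\mu(fa_g)$, together with its injectivity; that is precisely your converse direction. You additionally supply the direction $\tau\mapsto a$ and the continuity argument, and both go through as you describe.

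The details you left implicit check out:
\begin{itemize}
\item For the $L^\infty$ bound, factor $f=f_1f_2$ in $C(X)$ with $f_1=|f|^{1/2}$ and $f_2=f|f|^{-1/2}$ (set to zero where $f$ vanishes). Then apply Cauchy--Schwarz to $\tau(f_2u_gf_1)$.
\item For (iv), invariance of $\mu$ and (iii) alone give, for $b=\sum_{g\in F}\xi_gu_g$,
\[\tau(b^*b)=\sum_{g,h\in F}\int_X\overline{\xi_g}\,a_{hg^{-1}}\,\xi_h\,d\mu .\]
So the indexing matches the statement exactly.
\item Condition (ii) is needed only for the trace identity on monomials, via $a_{st}\,\alpha_{t^{-1}}(f)=a_{st}\,\alpha_s(f)$.
\end{itemize}

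What your route buys is a self-contained proof. It shows that (i)--(iv) describe the traces on the \emph{full} crossed product of an arbitrary action, with minimality and topological freeness playing no role. Amenability is used only to pass to the reduced crossed product. An amenable action with an invariant measure forces $G$ to be amenable, as in the proof of \autoref{prop:GamenIFFinvmeas}, so even \autoref{thm:AmenGFullRedCP} would suffice there. Some such hypothesis is genuinely needed for the reduced algebra. For the trivial action of $\mathbb{F}_2$ on a point, the constant function $1$ satisfies (i)--(iv), but the resulting functional is not bounded on $C^*_\lambda(\mathbb{F}_2)$.
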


We will not prove the above theorem, but one part of the 
statement is easy to show.
Indeed, the way in which a function $a\colon G\to L^\infty(X,\mu)$ gives rise (uniquely) to a trace on the crossed product is described in the following exercise.

\begin{ex}
Adopt the notation and assumptions of \autoref{thm:Ursu}.
\be \item
Given a function $a\colon G\to L^\infty(X,\mu)$ satisfying (i)-(iv) above, show that there is a trace 
on $C(X)\rtimes G$, extending $\tau_{\mu}|_{C(X)}$, which is given by $\tau_a(fu_g) = \tau_{\mu}(fa_g)$ for all $f\in C(X)$ and all $g\in G$.
\item Given functions $a,b\colon G\to L^\infty(X,\mu)$ satisfying (i)-(iv) above, show that $\tau_a=\tau_b$ if and only if $a=b$.\ee
\end{ex}

It is worth pointing out which functions $a\colon G\to L^\infty(X,\mu)$ give rise to the canonical extension
$\tau_{\mu}$.

\begin{rem}\label{rem:CanExt}
 Adopt the notation and assumptions of \autoref{thm:Ursu}. Then the function $a\colon G\to L^\infty(X,\mu)$ with $a_1=1$ and $a_g=0$ for all $g\in G\setminus\{1\}$ satisfies conditions (i) through (iv). The associated trace is $\tau_{\mu}$, namely the canonical trace on $C(X)\rtimes G$ induced by $\mu$.
\end{rem}

As mentioned before, in some cases all traces in a crossed product are induced canonically from an invariant probability measure.
The following is the relevant dynamical notion. 
Recall that we denote by $X_g$
the set of fixed points by the group element $g$, namely
$X_g=\{x\in X\colon g\cdot x=x\}$.

\begin{df}\label{df:EssFree}
We say that a dynamical system $G\curvearrowright X$
is \emph{essentially free} if for every $g\in G\setminus\{1\}$ and for every $\mu\in \mathrm{Prob}_G(X)$
we have $\mu(X_g)=0$.
\end{df}

\begin{lma}\label{lma:PropOuter}
Let $(X,\mu)$ be a probability measure space and let
$\alpha\colon X\to X$ be a measure-preserving bimeasurable bijection such that $\mu(\{x\in X\colon \alpha(x)=x\})=0$. Given $\eta \in L^\infty(X,\mu)$, if
$\eta\xi=\eta\alpha(\xi)$ for all $\xi\in L^\infty(X,\mu)$, then $\eta=0$.
\end{lma}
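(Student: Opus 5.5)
The plan is to show that $\eta$ vanishes almost everywhere off the fixed point set $X_\alpha=\{x\in X\colon \alpha(x)=x\}$. Since $X_\alpha$ is null by hypothesis, this gives $\eta=0$ in $L^\infty(X,\mu)$. We test the identity $\eta\xi=\eta\alpha(\xi)$ against indicator functions $\xi=\mathbbm{1}_E$, where $E$ is a measurable set with $\alpha^{-1}(E)\cap E=\emptyset$ (or with $\alpha(E)\cap E=\emptyset$, depending on the convention for $\alpha(\xi)$). Here $\alpha(\xi)$ denotes $\xi\circ\alpha^{-1}$, matching the convention $\alpha_g(a)=a\circ g^{-1}$ used for $C(X)$; the other convention is handled symmetrically. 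For such $E$ we have $\alpha(\mathbbm{1}_E)=\mathbbm{1}_{\alpha(E)}$, so the hypothesis gives $\eta\mathbbm{1}_E=\eta\mathbbm{1}_{\alpha(E)}$. The two sides are supported on the disjoint sets $E$ and $\alpha(E)$, so both vanish a.e. Hence $\eta=0$ a.e. on every measurable $E$ with $E\cap\alpha(E)=\emptyset$.

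The key step is to cover $X\setminus X_\alpha$, up to a null set, by countably many measurable sets $E_n$ with $E_n\cap\alpha(E_n)=\emptyset$. The measure space is abstract, so I need a separability assumption. I would assume $(X,\mu)$ is a standard Borel probability space, which is the case in the application, where $X$ is compact metrizable and $\mu$ is Borel. Fix a countable family $\{B_n\}_{n\in\N}$ of Borel sets separating points, for instance a countable basis of the topology. If $x\neq\alpha(x)$, some $B_n$ contains $x$ but not $\alpha(x)$. Since $\alpha$ is bijective, this means $x\in E_n:=B_n\setminus\alpha(B_n)$. Each $E_n$ is measurable because $\alpha$ is bimeasurable. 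Each satisfies $\alpha(E_n)\subseteq \alpha(B_n)$, which is disjoint from $E_n$. Finally, $X\setminus X_\alpha=\bigcup_n E_n$.

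Combining the two steps, $\eta$ vanishes a.e. on each $E_n$, hence on their countable union $X\setminus X_\alpha$. Since $\mu(X_\alpha)=0$, we get $\eta=0$ in $L^\infty(X,\mu)$. Measure preservation of $\alpha$ is used only to ensure that $\xi\mapsto\alpha(\xi)$ is well defined on $L^\infty$ classes, since it maps null sets to null sets.

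The main obstacle is the separability issue in the covering step: an abstract probability space need not have a countable separating family. I would state explicitly that $X$ is standard Borel, or a compact metrizable space with a Borel measure. Failing that, I would argue directly with a maximal disjoint-by-measure family of sets $E$ with $E\cap\alpha(E)=\emptyset$. That alternative needs an exhaustion argument, which again uses separability in some form.
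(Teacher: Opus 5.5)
Your argument is sound, but one step needs a small repair. Choosing $B_n$ with $x\in B_n$ and $\alpha(x)\notin B_n$ does not give $x\in B_n\setminus\alpha(B_n)$. Indeed, $x\in\alpha(B_n)$ means $\alpha^{-1}(x)\in B_n$, and your choice of $B_n$ does not control that. There are two easy fixes. You can separate $x$ from $\alpha^{-1}(x)$ instead; these differ exactly when $\alpha(x)\neq x$. Or you can keep your $B_n$ and set $E_n=B_n\setminus\alpha^{-1}(B_n)$. Then $x\in E_n$, and $\alpha(E_n)=\alpha(B_n)\setminus B_n$ is disjoint from $E_n\subseteq B_n$. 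With either fix, $X\setminus X_\alpha\subseteq\bigcup_n E_n$ and the rest of your proof goes through.

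Your route differs from the paper's. The paper argues by contradiction, locally:
\begin{itemize}
\item it takes a point $z$ in the essential support of $\eta$;
\item it asserts that there is a measurable $E\ni z$ with $\alpha(E)\cap E=\emptyset$;
\item it shrinks $E$ so that $\eta\neq 0$ on $E\cup\alpha(E)$;
\item it cancels $\eta$ in $\eta\mathbbm{1}_E=\eta\mathbbm{1}_{\alpha(E)}$ to reach a contradiction.
\end{itemize}
For this to work, $E$ must have positive measure. Producing such an $E$ is exactly what your countable cover does, whereas the paper leaves it implicit. Your global version also avoids the cancellation step, since disjointness of supports directly forces $\eta\mathbbm{1}_E=0$.

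Your separability caveat is not pedantry. Take $[0,1)$ with the countable--cocountable $\sigma$-algebra and the $\{0,1\}$-valued measure. The map $x\mapsto x+\tfrac12 \bmod 1$ is a fixed-point-free, measure-preserving, bimeasurable bijection. But every bounded measurable function is a.e.\ constant, so $L^\infty\cong\C$, and $\eta=1$ satisfies the hypothesis. So the lemma genuinely needs a hypothesis such as a countable separating family of measurable sets (e.g.\ standard Borel), and no exhaustion argument can avoid it. This hypothesis holds where the lemma is used, in \autoref{prop:TracesEssFree}, since there $X$ is compact metrizable and $\mu$ is Borel.
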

\begin{proof}
Suppose that $\eta\neq 0$ and let $z\in X$ be a point in its (essential) support. The condition on $\alpha$ implies that there
exists a measurable subset $E\subseteq X$ containing $z$ such that $\alpha(E)\cap E=\emptyset$. We can assume that $E\cup \alpha(E)\subseteq \{x\in X\colon \eta(x)\neq 0\}$. Set $\xi=\mathbbm{1}_E$. Then $\eta$ is nonzero on the supports of $\xi$ and $\alpha(\xi)$, so that in the equality
$\eta\xi=\eta\alpha(\xi)$ we can cancel out $\eta$ and deduce that $\xi=\alpha(\xi)$, which is a contradiction. We conclude that $\eta=0$, as desired.
\end{proof}

We are now ready to give a dynamical characterization of when $T(C(X)\rtimes G)$ is canonically affinely homeomorphic to $\mathrm{Prob}_G(X)$. We obtain this as a corollary of \autoref{thm:Ursu}, although it originally appeared much earlier as \cite[Proposition~2.7]{KawTakTom_state_1990}.

\begin{prop} \label{prop:TracesEssFree} 
Let $G\curvearrowright X$ be an amenable dynamical
system. Then every trace on $C(X)\rtimes G$ has the form 
$\tau_{\mu}$ for some $\mu\in \mathrm{Prob}_G(X)$ if and only if 
$G\curvearrowright X$ is essentially free.
\end{prop}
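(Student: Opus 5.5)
The natural route is through \autoref{thm:Ursu}, which parametrizes the traces over a fixed $\mu\in\mathrm{Prob}_G(X)$ by functions $a\colon G\to L^\infty(X,\mu)$ satisfying (i)--(iv). Under this correspondence, $\tau_\mu$ is the function with $a_1=1$ and $a_g=0$ for $g\neq 1$ (\autoref{rem:CanExt}). By the converse part of \autoref{prop:TracesCP}, every trace $\tau$ restricts on $C(X)$ to some $\mu_\tau\in\mathrm{Prob}_G(X)$. So the statement is equivalent to this: for every invariant $\mu$, the only admissible function is $a_g=\delta_{g,1}$ if and only if $\mu(X_g)=0$ for all $g\neq 1$. \autoref{thm:Ursu} assumes minimality and topological freeness. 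If the paper's intended setting does not include these, I would argue directly with the formula $\tau(fu_g)=\int f a_g\,d\mu$, using the Radon--Nikodym derivative of $f\mapsto\tau(fu_g)$ with respect to $\mu$. Condition (ii) is then simply $\tau(f\xi u_g)=\tau(fu_g\xi)$.

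For ``essentially free $\Rightarrow$ all traces canonical'': fix a trace $\tau$, set $\mu=\mu_\tau$, and let $a$ be the associated function. For $g\neq 1$, condition (ii) says $a_g\xi=a_g\alpha_g(\xi)$ for all $\xi\in L^\infty(X,\mu)$. The map $x\mapsto g\cdot x$ is a measure-preserving homeomorphism with $\mu(X_g)=0$ by essential freeness. \autoref{lma:PropOuter} then gives $a_g=0$, so $a$ is the function of \autoref{rem:CanExt} and $\tau=\tau_\mu$.

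For the converse, suppose $\mu\in\mathrm{Prob}_G(X)$ and $g_0\neq1$ satisfy $\mu(X_{g_0})>0$. I would build a non-canonical trace with $\tau(fu_{g_0})=\int_{X_{g_0}} f\,d\mu$ for $f\in C(X)$, which is nonzero for suitable $f$ because $X_{g_0}$ is closed with positive measure. The candidate is $a_g=\mathbbm{1}_{X_g}$ for all $g$, the $\mu$-a.e.\ version of the ``stabilizer trace'' $\tau(fu_g)=\int_{X_g}f\,d\mu$. I would verify the conditions as follows.
\begin{itemize}
\item (i): $X_1=X$.
\item (ii): $\alpha_g(\xi)=\xi$ on $X_g$.
\item (iii): $h\cdot X_g=X_{hgh^{-1}}$.
\item (iv): for each $x$, the matrix $[\mathbbm{1}_{X_{hg^{-1}}}(x)]_{g,h\in F}$ is the matrix of the indicator of the stabilizer subgroup $G_x$, which is a positive definite function on $G$ (it is a sum of rank-one blocks over the cosets of $G_x$). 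Pointwise positivity then gives positivity in $M_F(L^\infty)$.
\end{itemize}
Alternatively, without relying on \autoref{thm:Ursu}, one can realize this trace as $\tau(b)=\int_X\langle \pi_x(b)\delta_{G_x},\delta_{G_x}\rangle\,d\mu(x)$. Here $\pi_x$ is the quasi-regular representation on $\ell^2(G/G_x)$ used in the proof of \autoref{thm:SimpleCP}, and one checks traciality on the generators $fu_g$ using invariance of $\mu$.

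The main obstacle is this converse direction: making the stabilizer construction legitimate. This involves positivity (iv), measurability of $x\mapsto\mathbbm{1}_{X_g}(x)$, which is fine since $X_g$ is closed, and, if one uses representations, continuity of $\pi_x$ on the reduced crossed product. Amenability of the action is what guarantees full and reduced crossed products agree (\autoref{thm:AmenIffNuc}), so the defined functional extends from $c_c(G,C(X),\alpha)$ to $C(X)\rtimes G$.
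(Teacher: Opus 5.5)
Your ``if'' direction is the paper's proof essentially verbatim: parametrize the traces over $\mu=\mu_\tau$ via \autoref{thm:Ursu}, kill $a_g$ for $g\neq 1$ with \autoref{lma:PropOuter}, and conclude with \autoref{rem:CanExt}. You go beyond the paper in the converse, which the paper explicitly does not prove, and your stabilizer trace proves it correctly. Indeed $\langle \pi_x(fu_g)\delta_{G_x},\delta_{G_x}\rangle = f(x)\mathbbm{1}_{X_g}(x)$, so $\tau(fu_g)=\int_{X_g}f\,d\mu$. Traciality on $fu_g$ and $hu_k$ follows by substituting $x=gy$ and using invariance of $\mu$: both sides become integrals over $X_{kg}$, where $gx=k^{-1}x$. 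Finally, $\tau(u_{g_0})=\mu(X_{g_0})>0$, while $\tau_\nu(u_{g_0})=0$ for every $\nu$. Of your two routes to the converse, the representation-theoretic one is the more robust. As stated in the paper, \autoref{thm:Ursu} requires minimality and topological freeness, but the proposition assumes only amenability, and the paper's proof invokes that theorem without those hypotheses. Your Radon--Nikodym remark closes this gap in the ``if'' direction. Cauchy--Schwarz gives $|\tau(fu_g)|\leq \|f\|_{L^2(\mu)}$, hence a density $a_g$, and traciality gives (ii). That argument does not use amenability at all. Amenability enters only in the converse, to make the stabilizer trace factor through the reduced crossed product. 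For the trivial action of $\mathbb{F}_2$ on a point it does not factor, and for $C^*_\lambda(\mathbb{F}_2)$ the ``only if'' indeed fails: the only trace is the canonical one, yet the action is not essentially free.
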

\begin{proof} 
We only prove the ``if'' implication.
Assume that $G\curvearrowright X$ is essentially free.
Let $\tau\in T(C(X)\rtimes G)$, and let $\mu$ be the disintegration of $\tau|_{C(X)}$. Let $a\colon G\to L^\infty(X,\mu)$ be the associated function as 
in \autoref{thm:Ursu}. Given $g\in G\setminus\{1\}$, we have $a_g\xi=a_g\alpha_g(\xi)$, and by \autoref{lma:PropOuter} we deduce that $a_g=0$.
It now follows
from \autoref{rem:CanExt} that $\tau=\tau_{\mu}$. 
\end{proof}

Again in the context of traces on crossed products, 
under the stronger assumption of freeness, it is easier to show
that traces on $C(X)\rtimes_{\lambda} G$ are crorrespond to invariant probability measures on $X$.
We leave the proof of this case as a guided exercise.

\begin{ex}\label{ex:Free2}
Let $G\curvearrowright X$ be a free action. Let $\tau$ be a trace on $C(X)\rtimes_{\lambda} G$. The goal of this exercise is to show that there exists a $G$-invariant probability measure $\mu$ on $X$ such that $\tau=\tau_{\mu}$. 
\be\item Prove that it suffices to check that $\tau=\tau\circ E$. 
\item Argue that it suffices to check this identity on $c_c(G,C(X),
\alpha)$, and even more, on elements of the form $a\mathbbm{1}_g$ with $a\in C(X)$ and $g\in G$.
\item Let $a\in C(X)$ and let $g\in G\setminus\{e\}$. Show that 
$\tau(a\mathbbm{1}_g)=0$. \emph{(With $F=\{g\}$, let $s_1,\ldots,s_n$ be as in part (1) of \autoref{ex:Free}. Check that $E(a\mathbbm{1}_g)=\sum_{k=1}^n s_ka\mathbbm{1}_gs_k^*$ and use this to show that $E(a\mathbbm{1}_g)=0$.)}
\item Deduce that $\tau=\tau\circ E$.
\ee
\end{ex}

\chapter{Amenable groups and the small boundary property} 

In this chapter, we will give an overview of the state-of-the-art results in the setting of amenable groups acting on topological spaces, in relation to $\mathcal{Z}$-stability of crossed products. 
This is by far the least self-contained of the chapters in these 
notes, as we will summarize in a few pages a relatively large body of work in the last decade. Some proofs will be skipped, and some 
will only be sketched in the case of $G=\Z$ acting on a Cantor
space, which often showcases the essential tools without dealing with some of the most technical challenges (stemming both from a 
potentially complicated F{\o}lner structure on $G$, or a complicated topological structure in $X$). We will, however, present the highest generality in the definitions and statements of the results.

\section{Dynamical subequivalence}
In this section, we define and study a way to compare open subsets
in a dynamical system, called \emph{dynamical subequivalence}.
This notion is at the very core of the techniques used in this chapter.

\begin{df}\label{df:DynSubeq}
Let $G\curvearrowright X$ be a dynamical system and let 
$V\subseteq X$ be open and let $K\subseteq X$ be closed. We say that $K$ is \emph{dynamically subequivalent} to $V$, written $K\precsim V$, if there exist a finite 
open cover $K\subseteq U_1\cup \cdots \cup U_n$ and group elements $g_1,\ldots,g_n\in G$ such that $\bigsqcup_{j=1}^n g_jU_j\subseteq V$.

For open sets $U,V\subseteq X$, we say that $U$ is \emph{dynamically subequivalent} to $V$, also written $U\precsim V$, if $K\precsim V$ for all compact subsets $K\subseteq U$.
\end{df}

When working with totally disconnected spaces, comparison will be most relevant for clopen sets, in which case it 
becomes simpler:

\begin{ex}\label{ex:CompClopen}
Let $K\subseteq X$ be clopen and let $V\subseteq X$ be open. Show that
$K\precsim V$ if and only if there are a clopen partition $K=\bigsqcup_{j=1}^n K_j$ and group elements $g_1,\ldots,g_n\in G$ with $\bigsqcup_{j=1}^n g_jK_j\subseteq V$. \end{ex}

There is a very useful connection between dynamical
subequivalence, in the sense above, and Cuntz subequivalence, in the sense of \autoref{df:CtzComp}. We write $\supp_o(a)$ for the open support of a function $a\in C(X)$.

\begin{prop}\label{prop:DynSubeqCtz}
Let $G\curvearrowright X$ be a dynamical system. 
Let $a,b \in C(X)$ with $\supp_o(a)\precsim \supp_o(b)$. Then $a\precsim b$ in $C(X)\rtimes G$.
\end{prop}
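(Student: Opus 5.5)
We may assume $a,b\ge 0$, since Cuntz subequivalence only sees the open supports; the relevant positive elements are $|a|$ and $|b|$. We also use the standard fact that $a\precsim b$ holds as soon as $(a-\ep)_+\precsim b$ for every $\ep>0$. This fact follows from $\|a-(a-\ep)_+\|\le \ep$ together with transitivity (\autoref{ex:CtzCompTransitive}). So fix $\ep>0$ and set $K=\{x\colon a(x)\ge \ep\}$. This is a compact subset of $U=\supp_o(a)$, and $(a-\ep)_+$ has open support contained in $K$.

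By the assumption $U\precsim V:=\supp_o(b)$, there are open sets $U_1,\ldots,U_n$ covering $K$ and elements $g_1,\ldots,g_n$ such that the sets $g_jU_j$ are pairwise disjoint and contained in $V$.

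First, take a partition of unity $h_1,\ldots,h_n\in C(X)_+$ subordinate to the cover, so that $\supp(h_j)\subseteq U_j$ and $\sum_j h_j=1$ on $K$. Put $a_j=h_j(a-\ep)_+$. Then $(a-\ep)_+=\sum_j a_j$, and each $a_j$ has open support inside $U_j$.

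Next, form the element
\[
c=\sum_{j=1}^n a_j^{1/2}u_{g_j}^*\in C(X)\rtimes G.
\]
Note that $u_{g_j}a_j u_{g_j}^*=\alpha_{g_j}(a_j)$ is supported in $g_jU_j$. Since these sets are pairwise disjoint, the cross terms vanish:
\[
c^*c=\sum_{j,k}u_{g_j}a_j^{1/2}a_k^{1/2}u_{g_k}^*=\sum_j \alpha_{g_j}(a_j)=:d\in C(X)_+,
\]
and $\supp_o(d)\subseteq \bigsqcup_j g_jU_j\subseteq V$. In the other order,
\[
cc^*=\sum_{j,k}a_j^{1/2}u_{g_j}^*u_{g_k}a_k^{1/2},
\]
which is not obviously equal to $\sum_j a_j$. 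To avoid this, I would compare the two positive elements $\bigoplus_j a_j$ and $\bigoplus_j \alpha_{g_j}(a_j)$ instead, using the unitarily equivalent pieces one at a time. Concretely, I would use two standard facts. The first is that $\sum_j a_j\precsim \bigoplus_j a_j$ in matrices over the crossed product. The second is that orthogonal positive elements satisfy $\bigoplus_j e_j\sim \sum_j e_j$. This gives
\[
(a-\ep)_+=\sum_j a_j\precsim \bigoplus_j a_j\sim \bigoplus_j \alpha_{g_j}(a_j)\sim \sum_j \alpha_{g_j}(a_j)=d .
\]
The middle equivalence holds because $a_j=u_{g_j}^*\alpha_{g_j}(a_j)u_{g_j}$. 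The last equivalence uses that the $\alpha_{g_j}(a_j)$ are mutually orthogonal, since their supports are disjoint.

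Finally, $d\precsim b$ by \autoref{ex:CtzCompCX}, because $\supp_o(d)\subseteq \supp_o(b)$. Transitivity then gives $(a-\ep)_+\precsim b$ for every $\ep>0$, and hence $a\precsim b$.

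The main obstacle is the first step of the chain, $\sum_j a_j\precsim\bigoplus_j a_j$. The $a_j$ overlap, so they are not orthogonal, and their sum is not directly a diagonal sum. This is a standard Cuntz-semigroup fact: $x+y\precsim x\oplus y$ for positive $x,y$. It follows by writing $x+y=r^*r$ with $r$ the column with entries $x^{1/2}$ and $y^{1/2}$, and noting that $rr^*$, a $2\times 2$ matrix, is at most $2(x\oplus y)$. I would cite this from \cite[Section~2]{GarPer_modern_2025} rather than reprove it.
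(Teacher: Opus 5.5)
Your final chain
\[
(a-\ep)_+=\sum_j a_j\precsim\bigoplus_j a_j\sim\bigoplus_j\alpha_{g_j}(a_j)\sim\sum_j\alpha_{g_j}(a_j)=d\precsim b
\]
is correct. It never uses the element $c$, so the proof goes through. However, the displayed identity $c^*c=d$ is false and should be removed. The cross terms of $c^*c$ are
\[
u_{g_j}a_j^{1/2}a_k^{1/2}u_{g_k}^*=\alpha_{g_j}(a_j^{1/2}a_k^{1/2})u_{g_jg_k^{-1}},
\]
and these are nonzero wherever $U_j$ and $U_k$ overlap, which they typically do for a cover. You had the two products swapped: disjointness of the translates $g_jU_j$ kills the cross terms of $cc^*$, since
\[
a_j^{1/2}u_{g_j}^*u_{g_k}a_k^{1/2}=u_{g_j}^*\alpha_{g_j}(a_j^{1/2})\alpha_{g_k}(a_k^{1/2})u_{g_k}.
\]
So in fact $cc^*=\sum_j a_j=(a-\ep)_+$ exactly, and the matrix detour is unnecessary. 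Pick $e\in C(X)$ with $0\le e\le 1$, with $e=1$ on the compact set $\bigcup_j g_j\,\supp(h_j)\subseteq V$, and with $\supp_o(e)\subseteq V$. Then $c^*=ec^*$, so $c=ce$, and
\[
(a-\ep)_+=cc^*\sim c^*c=ec^*ce\le\|c\|^2e^2\precsim b,
\]
using part~(2) of \autoref{prop:CuComp} and \autoref{ex:CtzCompCX}. One small point: the reduction from ``$(a-\ep)_+\precsim b$ for all $\ep>0$'' to $a\precsim b$ is a diagonal/closure argument with the sequential definition. It is not an instance of transitivity, since $a\not\precsim(a-\ep)_+$ in general.

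Compared with the paper: the paper only treats the clopen case $a=\mathbbm{1}_U$, $b=\mathbbm{1}_V$. There, by \autoref{ex:CompClopen}, the cover can be taken to be a clopen partition of $U$, and the single element $v=\sum_j u_{g_j}\mathbbm{1}_{U_j}$ satisfies $v^*bv=a$ exactly. That exactness comes from the pieces $U_j$ being disjoint and from $b$ being $1$ on the targets. Your argument proves the statement as actually formulated, for arbitrary functions and overlapping open covers. You absorb the overlap through the cut-down $(a-\ep)_+$ and a partition of unity. The price is a handful of standard Cuntz-semigroup facts: $x+y\precsim x\oplus y$, orthogonal sums, unitary invariance, and passing back from $M_n$ to the corner. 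The corrected identity $cc^*=(a-\ep)_+$ gives the natural common generalization: a one-element witness inside $C(X)\rtimes G$, like the paper's $v$, but valid in full generality.
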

\begin{proof}
For the sake of the argument, we will assume that there exist clopen set $U,V\subseteq X$ with $a=\mathbbm{1}_U$ and $b=\mathbbm{1}_V$. 
Find a clopen partition $U=\bigsqcup_{j=1}^n U_j$ and group elements $g_1,\ldots,g_n\in G$ with $\bigsqcup_{j=1}^n g_jU_j\subseteq V$.
Define
\[v =\sum_{j=1}^n u_{g_j}\mathbbm{1}_{U_j} \in C(X)\rtimes G.\]
Using at the third step that $\alpha_{g_j}(\mathbbm{1}_{U_j})\alpha_{g_k}(\mathbbm{1}_{U_k})=0$ whenever $j\neq k$, and using at the fourth step that the support of $\alpha_{g_j}(\mathbbm{1}_{U_j})$ is contained in $V$, we now compute:
\begin{align*}
 v^*bv &=  \sum_{j,k=1}^n \mathbbm{1}_{U_j}u_{g_j}^* \mathbbm{1}_{V} u_{g_k}\mathbbm{1}_{U_k}\\
 &= \sum_{j,k=1}^n u_{g_j}^*\alpha_{g_j}\big(\mathbbm{1}_{U_j}\big)\mathbbm{1}_{V} \alpha_{g_k}\big(\mathbbm{1}_{U_k}\big)u_{g_k}\\
 &= \sum_{j=1}^n u_{g_j}^*\alpha_{g_j}\big(\mathbbm{1}_{U_j}\big)\mathbbm{1}_{V} \alpha_{g_j}\big(\mathbbm{1}_{U_j}\big)u_{g_j}\\
 &=\sum_{j=1}^n u_{g_j}^*\alpha_{g_j}\big(\mathbbm{1}_{U_j}\big)u_{g_j}\\  
 &= \sum_{j=1}^n \mathbbm{1}_{U_j}=a.
\end{align*}
This implies that $a\precsim b$, as desired.
\end{proof}

\begin{prop}\label{prop:UleqVmeasures}
Let $G\curvearrowright X$ be a dynamical system and let 
$U,V\subseteq X$ be open sets satisfying $U\precsim V$. Then 
$\mu(U)\leq \mu(V)$ for all $\mu\in \mathrm{Prob}_G(X)$.
\end{prop}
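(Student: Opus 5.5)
The plan is to reduce the statement to the case of a compact set and then use inner regularity of $\mu$. Fix $\mu\in \mathrm{Prob}_G(X)$. Since $X$ is compact metrizable, $\mu$ is a Radon measure, so it is inner regular on open sets:
\[\mu(U)=\sup\{\mu(K)\colon K\subseteq U \mbox{ compact}\}.\]
By \autoref{df:DynSubeq}, the assumption $U\precsim V$ means that $K\precsim V$ for every compact $K\subseteq U$. It therefore suffices to show that $\mu(K)\leq \mu(V)$ whenever $K$ is closed and $K\precsim V$. Taking the supremum over such $K$ then gives $\mu(U)\leq\mu(V)$.

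So let $K$ be closed with $K\precsim V$. Choose an open cover $K\subseteq U_1\cup\cdots\cup U_n$ and elements $g_1,\ldots,g_n\in G$ such that the sets $g_1U_1,\ldots,g_nU_n$ are pairwise disjoint and contained in $V$. Subadditivity of $\mu$ gives $\mu(K)\leq \sum_{j=1}^n\mu(U_j)$. Since $\mu$ is $G$-invariant, $\mu(U_j)=\mu(g_jU_j)$ for each $j$. Since the translates $g_jU_j$ are disjoint and lie in $V$, additivity and monotonicity give
\[\sum_{j=1}^n\mu(g_jU_j)=\mu\Big(\bigsqcup_{j=1}^n g_jU_j\Big)\leq\mu(V).\]
Chaining these inequalities yields $\mu(K)\leq\mu(V)$.

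The only point needing care is the invariance convention. The paper defines the action on $C(X)$ by $\alpha_g(a)(x)=a(g^{-1}\cdot x)$, and invariance of $\mu$ means that integration against $\mu$ is $\alpha$-invariant. Translated to sets, this says $\mu(gE)=\mu(E)$ for every Borel set $E$ and every $g\in G$, since each $g$ acts by a homeomorphism and the invariance holds for all group elements. There is no real obstacle: the argument consists of subadditivity, invariance, disjoint additivity, and inner regularity, applied in that order.
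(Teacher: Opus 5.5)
Your proof is correct and follows the paper's argument essentially verbatim. You reduce to a compact $K\subseteq U$ via inner regularity of $\mu$, then chain subadditivity over the cover, $G$-invariance $\mu(U_j)=\mu(g_jU_j)$, and disjointness of the translates $g_jU_j$ inside $V$; your remark that $\mu(gE)=\mu(E)$ holds regardless of the convention for the induced action is a clarification the paper leaves implicit.
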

\begin{proof}
Let $\mu\in \mathrm{Prob}_G(X)$ and let $K\subseteq U$ be a compact subset. We will show that $\mu(K)\leq \mu(V)$, which implies the result by regularity of $\mu$. Let $K\subseteq U_1\cup \cdots \cup U_n$ be a finite open cover and let $g_1,\ldots,g_n\in G$ satisfy $\bigsqcup_{j=1}^n g_jU_j\subseteq V$. Using $G$-invariance of $\mu$ at the second step, we get 
\[\mu(K)\leq \sum_{j=1}^n \mu(U_j)= \sum_{j=1}^n \mu(g_jU_j)\leq \mu(V),\]
as desired.
\end{proof}

Dynamical (strict) comparison for a dynamical system is the property that a weak converse to the above proposition holds; 
see \autoref{df:DynComp}.

\section{Free actions: almost finiteness and $\mathcal{Z}$-stability}\label{sec:af}

Almost finiteness is a topological version of hyperfiniteness
in the context of probability-measure-preserving actions, originally introduced by Kerr in \cite{Ker_dimension_2020}, inspired by a similar notion for ample groupoids due to Matui, in order to establish 
$\mathcal{Z}$-stability of the associated crossed product.

We begin by introducing the notion of a castle.

\begin{df}\label{df:castle}
Let $G\curvearrowright X$ be a dynamical system. A \emph{tower} for $G\curvearrowright X$ is a pair $(S,V)$, where $S\subseteq G$ is a finite subset, and $V\subseteq X$ is an open subset, such that the sets $sV$, for $s\in S$ (called the \emph{levels} of the tower), are pairwise disjoint. The set $S$ is called the \emph{shape} of the tower, and we refer to $V$ as the \emph{base} of the tower. 

A \textit{castle} is a finite collection $\mathcal{C}=\{(S_i,V_i)\}_{i\in I}$ of towers such that all levels of all towers are pairwise disjoint. The disjoint union $\bigsqcup_{i\in I}S_i V_i$ is called the \emph{footprint} of the castle.
\end{df}

Note that the following definition implies amenability of $G$, since we demand the existence of $(K,\varepsilon)$-invariant subsets for every $K$ and every $\varepsilon > 0$.

\begin{df}\label{df:AlmostFinite} (Kerr).
Let $G \curvearrowright X$ be a minimal dynamical system.
We say that the action is \emph{almost finite} if for every 
finite subset $K \subseteq G$, every nonempty, open set $B\subseteq X$ and every $\varepsilon > 0$, there exist exists a  castle $\mathcal{C}=\{(S_i,V_i)\}_{i\in I}$ such that
\begin{enumerate}
\item each level $sV_i$, for $s\in S_i$ and $i\in I$, has diameter at most $\ep$,

\item each shape $S_i$ is $(K,\ep)$-invariant,

\item the complement of the footprint of $\mathcal{C}$ is dynamically subequivalent to $B$, that is,
\[
X\setminus \Big(\bigsqcup_{i\in I}S_i\cdot V_i\Big)\precsim B.
\]
\end{enumerate}
\end{df}

Kerr introduced almost finiteness with the purpose of proving the following result, which is \cite[Theorem~12.4]{Ker_dimension_2020}. Kerr included freeness as one of the assumptions in his theorem (and also in the definition of almost finiteness), although this hypothesis is not used in the proof -- since we will need this result in a more general context, we omit this unnecessary condition.

\begin{thm}\label{thm:afZstab} (Kerr).
Let $G \curvearrowright X$ be a minimal dynamical system which is almost finite. Then $C(X) \rtimes G$ is $\mathcal{Z}$-stable (and hence classifiable).
\end{thm}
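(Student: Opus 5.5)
We verify the Hirshberg--Orovitz criterion of \autoref{thm:ZstTrZst}. For this we need $C(X)\rtimes G$ to be simple, separable, unital and nuclear. Almost finiteness forces $G$ to be amenable, since the shapes are $(K,\ep)$-invariant for all $K$ and $\ep$. Hence the action is amenable by \autoref{eg:GamenActAmen}, and the crossed product is nuclear by \autoref{thm:AmenIffNuc}. Simplicity requires topological freeness (\autoref{thm:SimpleCP}); since we do not assume freeness, we will argue that almost finiteness gives it. If some $g\neq 1$ fixed an open set pointwise, a castle whose levels are small and whose shapes are $(\{g\},\ep)$-invariant would have levels $sV_i$ and $gsV_i$ that coincide for the many $s$ with $gs\in S_i$. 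This contradicts disjointness of levels, after taking care of the part of the open set not covered by the footprint; we would handle that part using condition (3) together with minimality.

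Fix a finite set $F\subseteq C(X)\rtimes G$, $\ep>0$, $n\in\N$ and a nonzero positive $f$. By \autoref{prop:TopFreeCtzSubeq} we may replace $f$ by a nonzero $h\in C(X)_+$, and then by a function whose open support $B$ is a nonempty open set. We may also take $F=\{a\}\cup\{u_g\colon g\in K\}$, where $a$ ranges over a finite set of $C(X)$ and $K\subseteq G$ is finite. Choose a castle $\{(S_i,V_i)\}$ with shapes that are $(K,\delta)$-invariant, levels of diameter less than $\delta$ (where $\delta$ is a modulus of uniform continuity for the functions $a$), and remainder $\precsim B'$ for a small open $B'\subseteq B$ to be specified. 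For each $i$, partition $S_i$ into $n$ pieces of almost equal size plus a remainder of fewer than $n$ elements; this is possible if $|S_i|$ is large, which invariance guarantees. Fix bijections between the pieces and thereby obtain matrix units $e^{(i)}_{jk}$. These will be built from partial isometries $u_{st^{-1}}$ cut down by functions supported on the levels.

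The map $\varphi\colon M_n\to C(X)\rtimes G$ is then $\varphi(e_{jk})=\sum_i\sum_{s\leftrightarrow t} \alpha_s(\beta_i)^{1/2}u_{st^{-1}}\alpha_t(\beta_i)^{1/2}$. Here $\beta_i$ is a function supported in $V_i$, equal to $1$ on a large compact part of it, and further multiplied by a ``tent'' function $\kappa_i\colon S_i\to[0,1]$ that equals $1$ in the $K$-interior of $S_i$ and decays linearly toward the $K$-boundary. This is the standard form $\psi(\cdot)h$ of an order zero map, with $\psi$ a homomorphism onto a matrix subalgebra and $h$ central, so it is c.p.c.\ order zero. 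Almost commutation with $u_g$, $g\in K$, follows from the $(K,\delta)$-invariance together with the slow variation of the tents. Almost commutation with $a\in C(X)$ holds because the levels have small diameter, so $a$ is nearly constant on each level and the pieces are permuted level-by-level.

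The main obstacle is condition (2), $1-\varphi(1)\precsim h$. The support of $1-\varphi(1)$ consists of three parts: the complement of the footprint, the tent boundary regions together with the leftover fewer-than-$n$ levels, and the region where the $\beta_i$ cut-offs are less than $1$. The first is $\precsim B'$ by construction. The second is a small fraction of each tower, and we plan to make it subequivalent to a further small open set using the freedom in (3). We would do this by applying almost finiteness twice, the first time to produce disjoint open sets $B_1,B_2\subseteq B$. The third part is handled by shrinking compacts and absorbing it into the first. Combining these with \autoref{prop:DynSubeqCtz} and the fact that Cuntz classes of orthogonal elements add (placing everything inside disjoint pieces of $B$) gives $1-\varphi(1)\precsim h$. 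The delicate point is that the tower boundaries must themselves be dynamically small without assuming comparison. We expect to obtain this by choosing $B'$ to contain $n$ disjoint translates' worth of room. This matches Kerr's original argument.
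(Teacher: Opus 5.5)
The step that fails is the choice of matrix units. If you split $S_i$ into $n$ pieces by cardinality and fix arbitrary bijections $\sigma_{jk}$ between them, the image of $\varphi$ is not approximately central.

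\emph{Commutation with $C(X)$.} A term of $\varphi(e_{jk})$ carrying the level $tV_i$ onto $sV_i$ has commutator with $a\in C(X)$ equal to a multiple of $\alpha_s(\beta_i)\big(a-\alpha_{st^{-1}}(a)\big)u_{st^{-1}}$. Its norm is about $|a(x_s)-a(x_t)|$ for $x_s\in sV_i$ and $x_t\in tV_i$. Small diameter makes $a$ nearly constant on each level, but not nearly equal on two different levels. On each tower, $C(X)$ is essentially the diagonal algebra $\C^{S_i}$, whose commutant is diagonal, so permuting levels is exactly what destroys commutation.

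\emph{Commutation with $u_g$.} Conjugating by $u_g$ turns the term for the pair $(s,t)$ into the one for $(gs,gt)$. So you need $\sigma_{jk}(gt)=g\sigma_{jk}(t)$ for most $t$, i.e.\ the bijections must locally be right translations $t\mapsto tc$. Invariance of $S_i$ and the tents only control boundary effects.

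\emph{Order zero.} Your order zero claim also needs $\kappa_i(s)=\kappa_i(t)$ on paired levels, which arbitrary bijections do not give.

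Moreover, one right translation per tower is not enough: it would need $sV_i$ and $scV_i$ to be close for most $s\in S_i$, which fails in general. So the tents have to live on smaller tiles.

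The missing idea is a pairing that is locally a right translation \emph{and} only matches nearby levels. One way to get it:
\begin{itemize}
\item Fix in advance $(K^L,\delta)$-invariant F{\o}lner sets $T_1,\ldots,T_p$, with $1/L<\ep$, that tile every sufficiently invariant finite set up to a small fraction (exact F{\o}lner tilings, or Ornstein--Weiss quasitilings with some bookkeeping).
\item Fix an $\eta/2$-net $\{z_1,\ldots,z_m\}$ of $X$. With levels of diameter less than $\eta/2$, colour each level $sV_i$ by some $r$ with $sV_i\subseteq B(z_r,\eta)$.
\item Inside each $S_i$, group the tiles $T_lc$ with the same $l$ and the same colour pattern $t\mapsto\mathrm{colour}(tcV_i)$ into $n$-tuples $(c_1,\ldots,c_n)$. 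By pigeonhole at most $(n-1)p\,m^{\max_l|T_l|}$ tiles per tower stay unmatched, which is negligible once $|S_i|$ is large.
\item Set $\varphi(e_{jk})=\sum\kappa_l(t)\,\alpha_{tc_j}(\beta_i)\,u_{tc_jc_k^{-1}t^{-1}}$, summed over towers, matched tuples and $t\in T_l$, with a tent $\kappa_l$ on $T_l$.
\end{itemize}
Now paired levels lie in a common $\eta$-ball, the pairing $tc_k\mapsto tc_j$ commutes with left multiplication, and tent values agree on paired levels. The paper's single-tower sketch does not supply this step either. Its $\varphi$ is defined on all of $M_{|S|}$, and already $\|[u_1,\varphi(e_{s,s})]\|=1$ for $s\in S'$, where $u_1$ implements the generator of $\Z$.

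Your treatment of condition (2) has a second hole. Condition (3) of \autoref{df:AlmostFinite} only controls the complement of the footprint. So neither ``the freedom in (3)'' nor ``room for $n$ translates in $B'$'' handles the bad levels (unmatched tiles, the untiled part of $S_i$, tent slopes), which lie inside the towers.

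What works, with no comparison, is a counting argument from minimality:
\begin{itemize}
\item Take a nonempty open $O\subseteq B$ disjoint from the set $B'$ used for (3), a nonempty open $O'$ with $\overline{O'}\subseteq O$, and a finite $D\subseteq G$ with $X=\bigcup_{d\in D}d^{-1}O'$.
\item Suppose the levels have diameter less than $\mathrm{dist}(\overline{O'},X\setminus O)$ and the shapes are almost invariant under left multiplication by $D$. Then for each $s$ with $Ds\subseteq S_i$, some level $dsV_i$ meets $O'$ and hence lies in $O$.
\item So roughly $|S_i|/|D|$ levels of each tower lie in $O$. Any set of at most $|S_i|/(2|D|)$ levels can then be moved injectively, level by level, into $O$.
\end{itemize}
This fixes the order of choices: first $O$ and $D$, then the admissible bad fraction, then the tiles, and only then the castle. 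After that, the base cut-offs are absorbed into the cover witnessing (3) as you indicate, and \autoref{prop:DynSubeqCtz} gives $1-\varphi(1)\precsim h$.

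Finally, ``large $|S_i|$'' silently uses that $G$, equivalently $X$, is infinite. A free transitive action of a finite group satisfies \autoref{df:AlmostFinite}, yet $C(X)\rtimes G\cong M_{|G|}$ is not $\mathcal{Z}$-stable.
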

\begin{proof}
While the proof is extremely involved, we can give here a sketch of a proof in a very basic case: we will assume $G=\Z$, that $X$ is the Cantor space, and that the action $G\curvearrowright X$ is almost finite with castles consisting of single towers\footnote{This last assumption is by far the least complicated one to remove.}.

In order to check the criterion stated in \autoref{thm:ZstTrZst}, let $F\subseteq C(X)\rtimes G$ be a finite subset, let $\ep>0$ and let $f\in C(X)\rtimes G$ be a nonzero positive element. 
By \autoref{prop:TopFreeCtzSubeq}, we may assume that $f\in C(X)$. Without loss of generality, we may assume that there is a finite subset $K\subseteq G$ such that $F\subseteq C(X)\cup K$.
Fix $n\in \N$ with $\tfrac{1}{n}<\ep$.

Find a clopen tower $(S,V)$ with $(K,\ep)$-invariant shape $S$, levels of diameter at most $\ep$, and such that $X\setminus S\cdot V \precsim \supp_o(f)$. 
Set $N=|S|$. Without loss of generality, we can assume that $S=\{0,\ldots,N\}\subseteq \Z$.
Set $S'\subseteq S$ denote the integer interval obtained from $S$ by removing the first and the last $n+1$ elements.
Upon increasing the size of $S$, we may assume that 
$X\setminus S'\cdot V \precsim \supp_o(f)$.

For $s\in S$, we define a function $f_s\in C(X)$ as follows.
If $s\in \{0,1,\ldots,n\}$, we set $f_s=\tfrac{s}{n} 
\mathbbm{1}_{sV}$. 
If $s\in \{N-n,N-n+1,\ldots,N\}$, we set $f_s=\tfrac{N-s}{n} 
\mathbbm{1}_{sV}$. Finally, if $s\in S'$, then we set $f_s=\mathbbm{1}_{sV}$.

\textbf{Maybe insert picture}

Identify $M_N$ with $\B(\ell^2(S))$ and define a map
$\varphi\colon M_N\to C(X)\rtimes \Z$ by 
\[\varphi(e_{s,t})= f_s u_{s-t}
\]
for all $s,t\in S$. One readily checks that $\varphi$ is a completely positive, contractive order zero map. Moreover,
we have
\[1-\varphi(1)\leq \mathbbm{1}_{X\setminus S'\cdot V}.
\]
Since $X\setminus S'\cdot V \precsim \supp_o(f)$, we deduce from \autoref{prop:DynSubeqCtz} that $1-\varphi(1)\precsim f$ in $C(X)\rtimes\Z$. Finally, the fact that the image of $\varphi$ almost commutes with $F$ is a consequence of $(K,\ep)$-invariance of the shape; we omit the details. This shows that $C(X)\rtimes\Z$ is $\mathcal{Z}$-stable.
\end{proof}

\begin{ex} Complete the details of the proof of 
 \autoref{thm:afZstab} in the case $G=
\Z$ acting on the Cantor space (in the case that the castles do not necessarily have a single tower).
\end{ex}

In view of \autoref{thm:afZstab}, and at least for \emph{free}, minimal actions of amenable groups, in order to prove $\mathcal{Z}$-stability it suffices to establish almost finiteness (and in fact, it is probably equivalent; see \autoref{cnj:Amen}). In the next two sections, we will develop some machinery that allows us to verify almost finiteness in many cases of interest.

\section{Dynamical strict comparison}\label{sec:DynComp}

Dynamical (strict) comparison is a purely dynamical property that will allow us to establish almost finiteness in relevant cases. Rouhgly speaking, it is the property asserting that a weak converse to \autoref{prop:UleqVmeasures} holds, where the inequality $\leq$ is replaced by the strict one $<$.
Explicitly:

\begin{df}\label{df:DynComp} (Kerr).
We say that a minimal dynamical system $G\curvearrowright X$ has 
\emph{dynamical (strict) comparison} if for all open set $U,V\subseteq X$, the condition $\mu(U)<\mu(V)$ for all $\mu\in \mathrm{Prob}_G(X)$ implies $U\precsim V$.
\end{df}

Dynamical comparison is a relatively common phenomenon, and in
fact we do not know whether it is automatic for all amenable, minimal
dynamical systems; see \autoref{pbm:ExActionNoComp}. It should be noted that dynamical comparison, in the way that we defined it,
is only suitable for minimal actions.

\begin{ex} Find an example of a non-minimal dynamical system $G\curvearrowright X$ which does not satisfy \autoref{df:DynComp}.
\end{ex}

It would be interesting to extend this notion to general dynamical systems; see \autoref{pbm:DynCompNonMin}.

Next, we present four results which 
allow us to deduce dynamical comparison in fairly general situations, at least when the group is amenable\footnote{There are also some general results for a large class of nonamenable groups; see \autoref{thm:ParTowComp}.}.

\begin{thm}\label{thm:PetrComp}
Let $G\curvearrowright X$ be a minimal dynamical system. In the following cases, $G\curvearrowright X$ has dynamical comparison.
\be\item (Kerr, \cite[Theorem~9.2]{Ker_dimension_2020}) If $G\curvearrowright X$ is almost finite.
\item (Naryshkin, \cite[Theorem~A]{Nar_polynomial_2022}) If $G$ has polynomial growth.
\item (Downarowicz-Zhang \cite[Theorem~6.33]{DowZha_symbolic_2023}, Kerr-Szabo \cite[Theorem~7.6]{KerSza_almost_2020}) If $G$ has subexponential growth and $\dim(X)<\infty$.
\item (Kerr-Naryshkin, \cite[Theorem~A]{KerNar_elementary_2021}) If $G$ is elementary amenable, $\dim(X)<
\infty$ and $G\curvearrowright X$ is free.
\item (Naryshkin, \cite[Theorem~A]{Nar_group_2024}) If $G$ has an infinite normal subgroup $H$ such that $H\curvearrowright X$ has dynamical comparison, and $G\curvearrowright X$ is free.
\ee
\end{thm}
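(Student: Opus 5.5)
Since the statement collects five results from the literature, I would prove most of it by citation. The one part I would actually argue is part~(1), since it uses only notions already introduced here: almost finiteness (\autoref{df:AlmostFinite}), dynamical subequivalence (\autoref{df:DynSubeq}) and invariant measures. Parts~(2)--(5) I would take from the cited works. They rely on growth estimates for $G$ (Naryshkin), on symbolic extensions and mean dimension (Downarowicz--Zhang, Kerr--Szab\'o), and on permanence along elementary amenable constructions and normal subgroups (Kerr--Naryshkin, Naryshkin). That is well beyond the scope of these notes.

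For part~(1), fix open sets $U,V$ with $\mu(U)<\mu(V)$ for all $\mu\in\mathrm{Prob}_G(X)$, and a compact set $K\subseteq U$. The goal is to show $K\precsim V$.

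\textbf{Step 1: a uniform margin.} Using regularity and compactness of $\mathrm{Prob}_G(X)$ in the weak$^*$ topology, I would find $\delta>0$, a compact $K'\subseteq V$, and a nonempty open $B\subseteq V\setminus K'$ such that
\[\mu(K)+\delta<\mu(K')\quad\text{for all } \mu\in\mathrm{Prob}_G(X).\]
The idea is to shrink $V$ slightly to $K'$ and carve $B$ out of the leftover, using minimality so that $B$ is nonempty.

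\textbf{Step 2: counting inside towers.} Apply almost finiteness with $B$, a large finite set $F\subseteq G$, and a small $\varepsilon$. This gives a castle $\{(S_i,V_i)\}$ whose levels have small diameter and whose complement is dynamically subequivalent to $B$. I claim that for every tower and every $x\in V_i$,
\[\big|\{s\in S_i: sx\in K\}\big| < \big|\{s\in S_i: sx\in K'\}\big|,\]
once $F$ and $\varepsilon$ are chosen appropriately. To see this, suppose it fails along a sequence of castles with increasingly invariant shapes. Then the uniform averages $\frac{1}{|S_i|}\sum_{s\in S_i}\delta_{sx}$ have a weak$^*$ limit point, which is a $G$-invariant measure violating Step~1. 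Some care is needed with boundaries, using Urysohn functions between $K$ and $U$ and between $K'$ and $V$.

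\textbf{Step 3: building the subequivalence.} By the claim, I can refine each base $V_i$ into finitely many small open pieces $W$ on which the pattern $\{s: sW\cap K\neq\emptyset\}$ versus $\{s: sW\subseteq V\setminus\overline{B}\}$ is constant. Within each such piece, I inject the first set of levels into the second. This sends the part of $K$ covered by the castle into $V\setminus\overline{B}$ by translations with disjoint images. The part of $K$ outside the footprint is sent into $B$ by the almost finiteness condition. Together these give $K\precsim V$.

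I expect Step~2 to be the main obstacle. The pointwise counting inequality must hold uniformly over all $x$ in every base, and converting the measure-level gap into it needs a careful compactness argument. One also has to make the level sets clopen-like relative to $K$ and $V$ in a general space, where the diameter condition on levels is what makes the refinement of the bases possible.
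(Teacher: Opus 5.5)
Your route differs from the paper's. The paper proves none of (1)--(5); it only illustrates a special case of (2), namely $\Z$ acting on the Cantor space, via the first-hitting-time function $\mathsf{m}$ and the clopen pieces $U_j=\mathsf{m}^{-1}(j)$. Proving (1) along Kerr's lines is a sensible alternative, and your Step~2 is the right mechanism: take weak$^*$ limits of $\frac{1}{|S|}\sum_{s\in S}\delta_{sx}$ over increasingly invariant shapes. However, Step~3 has a genuine gap.

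The claim of Step~2 is pointwise. Moving pieces needs a set-level inequality $|P(W)|\leq|Q(W)|$, where $P(W)=\{s: sW\cap K\neq\emptyset\}$ and $Q(W)=\{s: sW\subseteq V\setminus\overline{B}\}$. But $P(W)$ can be strictly larger than $\{s: sx\in K\}$ for every $x\in W$, for instance when orbits pass close to $\partial K$. So you are forced to take tiny pieces around individual points, and then finitely many of them. When $X$ is not zero-dimensional, these cannot in general be chosen pairwise disjoint: a connected set covered by finitely many pairwise disjoint open sets lies in one of them, and that piece's $P(W)$ then contains all the pointwise patterns along the set. Now take overlapping pieces $W,W'\subseteq V_i$ with different patterns. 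The injections $P(W)\to Q(W)$ and $P(W')\to Q(W')$ can send $sW$ and $s'W'$ into the same level $tV_i$, where $tW\cap tW'\neq\emptyset$. The translates are then not disjoint, and you do not obtain $K\precsim V$. The small diameter of the levels does not make such a refinement possible; its role is to make it unnecessary. For that, the margin must be built into the sets you count. Your Urysohn remark points this way, but you only use it in the weak$^*$ argument, not in Step~3.

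The fix is Kerr's argument.
\begin{itemize}
\item First choose a nonempty open $B$ with $\overline{B}\subseteq V$ and $\sup_\mu\mu(\overline{B})<\min_\mu(\mu(V)-\mu(K))$. The minimum exists and is positive by lower semicontinuity and compactness of $\mathrm{Prob}_G(X)$. Such $B$ exists because invariant measures of a minimal system on an infinite space are atomless.
\item Your order of choices, shrinking $V$ to $K'$ and then carving $B$ out of $V\setminus K'$, can fail, e.g.\ when $V$ is clopen. This is why $B$ must come first.
\item Put $V'=V\setminus\overline{B}$. Use compactness again to find $\eta>0$ with $\mu(K^\eta)<\mu(V'_\eta)$ for all $\mu\in\mathrm{Prob}_G(X)$, where $K^\eta=\{x: d(x,K)\leq\eta\}$ is closed and $V'_\eta=\{x: d(x,X\setminus V')>\eta\}$ is open. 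The portmanteau argument of Step~2 applies to this closed/open pair directly.
\item Take the castle with levels of diameter less than $\eta$. Then for every $x\in V_i$,
\[\big|\{s: sV_i\cap K\neq\emptyset\}\big|\leq\big|\{s: sx\in K^\eta\}\big|<\big|\{s: sx\in V'_\eta\}\big|\leq\big|\{s: sV_i\subseteq V'\}\big|.\]
\item So a single injection $t_i$ per tower moves each whole level $sV_i$ meeting $K$, by $t_i(s)s^{-1}$, onto a distinct level contained in $V'$. The remainder goes into $B$. These levels, together with the cover of the remainder, give the finite open cover of $K$ required for $K\precsim V$.
\end{itemize}
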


Freeness is, unfortunately, a crucial assumption to carry out the proof of parts (4) and (5) above.
Dynamical comparison is also known for (amenable) actions of 
certain nonamenable groups; see \autoref{thm:ParTowComp}.

The proofs of all the cases in \autoref{thm:PetrComp} are 
very involved, but we can give a proof of a particular case. We thank Paolo 
Boldrini for his help in writing this argument. 

\begin{thm} Let $X$ denote the Cantor space and let $\Z\curvearrowright X$ be any action. Then $\Z\curvearrowright X$ has dynamical comparison.
\end{thm}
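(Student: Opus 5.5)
The plan is to reduce to a comparison between two clopen sets with a uniform measure gap. We then turn that gap into a uniform surplus of visits along orbit segments of a fixed length $N$, and finally match points of one set into the other by a parenthesis-type matching along each orbit. \textbf{Minimality will not be used anywhere.} Periodic orbits are allowed, and the only input from the measures is a compactness argument on $\mathrm{Prob}_{\Z}(X)$ together with the fact that orbit averages accumulate on invariant measures.

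\textbf{Step 1: reduction to clopen sets.} Let $U,V\subseteq X$ be open with $\mu(U)<\mu(V)$ for all $\mu\in\mathrm{Prob}_{\Z}(X)$, and let $K\subseteq U$ be compact. Since $X$ is totally disconnected, $K$ is contained in a clopen subset of $U$, so we may assume $K$ is clopen. The function $\mu\mapsto\mu(V)-\mu(K)$ is lower semicontinuous on the compact space $\mathrm{Prob}_{\Z}(X)$, because $V$ is open and $K$ is closed. It is also strictly positive there, so it has a minimum $\delta>0$. Write $V=\bigcup_n W_n$ as an increasing union of clopen sets. The sets $\{\mu\colon \mu(W_n)-\mu(K)>\delta/2\}$ are open (as $K$ and $W_n$ are clopen), increasing in $n$, and cover $\mathrm{Prob}_{\Z}(X)$. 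By compactness some clopen $W\subseteq V$ satisfies $\mu(W)>\mu(K)+\delta/2$ for all invariant $\mu$. Mapping $K\cap W$ to itself with $g=0$, we may replace $K$ by $K\setminus W$ and $W$ by $W\setminus K$; the inequality persists. So we may assume $K\cap W=\emptyset$ and $\mu(W)-\mu(K)>\eta:=\delta/2$ for all $\mu$. By \autoref{ex:CompClopen}, it then suffices to find a clopen partition $K=\bigsqcup_j K_j$ and integers $n_j$ with $\bigsqcup_j n_jK_j\subseteq W$.

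\textbf{Step 2: uniform surplus.} Set $\phi=\mathbbm{1}_W-\mathbbm{1}_K\in C(X)$. We claim there is $N$ such that $\frac1N\sum_{i=0}^{N-1}\phi(i\cdot x)>\eta/2$ for all $x\in X$. If not, there are $x_N$ and $N\to\infty$ with averages $\le\eta/2$. The empirical measures $\frac1N\sum_{i<N}\delta_{i\cdot x_N}$ have a weak$^*$ limit point $\mu$, which is invariant because the empirical measures are $2/N$-almost invariant. Since $\phi$ is continuous, $\int\phi\,d\mu\le\eta/2$, contradicting $\int\phi\,d\mu>\eta$. Consequently every orbit segment of length $N$ visits $W$ strictly more often than $K$. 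Splitting a longer segment into blocks of length $N$ plus a remainder shows that every segment of length $\ge N$ still has more $W$-visits than $K$-visits, at the cost of enlarging $N$ by a constant factor. On a periodic orbit the same statement holds for the cyclic sequence.

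\textbf{Step 3: matching, the main step.} Along the orbit of $x$, read the word $w_t\in\{(,),\emptyset\}$ by $w_t=($ if $t\cdot x\in K$, $w_t=)$ if $t\cdot x\in W$, and $w_t=\emptyset$ otherwise. Match each "$($" at time $t$ to the least $s>t$ such that the segment $[t,s]$ contains as many "$)$" as "$($". This is the usual bracket matching, which is injective: two distinct opening brackets never share a closing one. By the surplus from Step 2, $s-t\le N$. The matching offset $n(x)$ for $x\in K$ depends only on which of the sets $K$ and $W$ contain the points $0\cdot x,\ldots,N\cdot x$. Hence $K_j=\{x\in K\colon n(x)=j\}$, for $j=1,\ldots,N$, is clopen. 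Injectivity along each orbit gives that the sets $jK_j$ are pairwise disjoint and contained in $W$.

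On a periodic orbit of period $p\le N$ the word is cyclic with strictly more "$)$" than "$($". Cyclic bracket matching is still well defined and injective, reading offsets modulo $p$, and it is still determined by a window of length $N$. I expect this bookkeeping, together with making sure the window really determines the match, to be the main technical obstacle. Once the partition $K=\bigsqcup_j K_j$ is in hand, \autoref{ex:CompClopen} gives $K\precsim V$, and since $K\subseteq U$ was an arbitrary compact set, $U\precsim V$.
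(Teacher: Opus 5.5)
Your proof is correct and is essentially the paper's argument. The paper's $\mathsf{m}(x)$, the least $m$ with $S(x,m)>0$, is exactly your bracket-matching offset. The differences are that your Step~1 supplies the reduction from open to clopen sets, which the paper simply assumes, and that the paper bounds $\mathsf{m}$ using pointwise finiteness, local constancy and compactness of $U$ rather than your uniform surplus from Step~2.

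The periodic-orbit case you flag is not an obstacle. Do the matching on the $\Z$-indexed word $t\mapsto w_t$ for every $x$, keeping the offsets unreduced; reducing them modulo $p$ would destroy clopenness of the $K_j$. If $j\cdot x_j=k\cdot x_k$ with $j<k$, then $x_j=(k-j)\cdot x_k$, so in the word read from $x_k$ the openings at positions $0$ and $k-j$ are both matched to position $k$, contradicting injectivity of the matching on $\Z$. This is precisely the paper's disjointness argument, and it needs no case distinction for periodic orbits.
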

\begin{proof}
Let $\alpha\colon X\to X$ denote the homeomorphism that generates the given action $\Z\curvearrowright X$.
Let $U,V\subseteq X$ be clopen subsets with $\mu(U)<\mu(V)$ for every $G$-invariant measure $\mu$. 
Set $W=U \cap V$ and 
\[\widetilde{U}=U\setminus W \ \ \mbox{ and } \ \ \widetilde{V}=V\setminus W.\] Note that $\mu(\widetilde{U})<\mu(\widetilde{V})$ for every $G$-invariant measure $\mu$. If we show that $\widetilde{U}\precsim \widetilde{V}$, 
then it will immediately follow that $U=\widetilde{U}\sqcup W \precsim \widetilde{V} \sqcup W=V$. 
We can therefore assume that $U$ and $V$ are disjoint.

Given $x\in U$ and $m\in \N$, set 
\[\mu^x_m =\frac{\delta_x+\delta_{\alpha(x)}+\cdots+\delta_{\alpha^m(x)}}{m+1} \ \in \mathrm{Prob}(X).\]

Since the family of intervals $\{1,\dots, m\}$, for $m\in\N$, form a F{\o}lner sequence for $\Z$, it follows that any weak-$\ast$ limit point of the sequence $(\mu^x_m)_{m\in\N}$ is $\alpha$-invariant. In particular, for large enough $m$ we will have $\mu^x_m(U)<\mu^x_m(V)$. Since $\mu_m^x(W)$ is the number of points in the $\{0,\ldots,m\}$-orbit of $x$ which belong to $W$ divided by $m+1$, it follows from the above that the difference
\[S(x,m)=\underbrace{\big|\{k=1,\ldots, m \colon \alpha^k(x)\in V\big|}_{=(m+1)\mu_m^x(V)}
-\big|\{k=1,\ldots ,m \colon \alpha^k(x)\in U\}\big|\] 
is eventually positive. Denote by $\mathsf{m}(x)$ the smallest positive integer for which $S(x,\mathsf{m}(x))>0$ (or, equivalently, for which $S(x,\mathsf{m}(x))=1$). 

\begin{table}[h!]
	\centering
	\begin{tabular}{|c|c|c|c|c|l|l|c|l|c|}
		\hline
		$\operatorname{orb}(x)$ & $x$ & $\alpha(x)$ & $\alpha^2(x)$ & $\alpha^3(x)$ &   $\alpha^4(x)$&$\alpha^5(x)$&$\cdots$ &  $\alpha^{l-1}(x)$&$\alpha^l(x)$\\
		\hline
		$U/V$ & $U$ & $U$ & & $V$&   &$U$&$\cdots$ &  $V$&$V$\\
		\hline
		$S(x,k)$& $0$ & $-1$ & & $0$&   &-1&$\cdots$ &  $0$&$1$ \\
		\hline
	\end{tabular}
	\vspace{.3cm}
	\caption{The first row shows the orbit of $x$ under $\alpha$. The second row records whether each point lies in $U$ or $V$. The third row gives the corresponding values of $S(x,k)$. In this example the first time $S(x,k)=1$ occurs at $k=l$, so $\mathsf{m}(x)=l$.}
	\label{tab:orbit}
\end{table}
\vspace{-.6cm}

The assignment described above defines a function $\mathsf{m}\colon U \to \N$, which is easily seen to be continuous. Since $U$ is compact, there exists $n\in\N$ such that $\mathsf{m}(U)\subseteq \{1,\ldots,n\}$. For $j=1,\ldots,n$, set $U_j=\mathsf{m}^{-1}(j)$, which is a clopen subset of $U$. Then $U=\bigsqcup_{j=1}^n U_j$ and $\alpha^j(U_j)\subseteq V$ by construction.

We claim that the sets $\alpha^j(U_j)$ and $\alpha^k(U_k)$ are disjoint for $j\neq k$; once we do this, we will have shown that $U\precsim V$. Arguing by contradiction, let $j<k$ and assume that there exists 
$y\in \alpha^j(U_j)\cap \alpha^k(U_k)$. Find $x_j\in U_j$ and $x_k\in U_k$ with
$y=\alpha^j(x_j)=\alpha^k(x_k)$, so that $x_j=\alpha^{k-j}(x_k)$. Since $j<k$, we have $S(x_k,k-j-1)\leq 0$; and since $x_j$ belongs to $U_j$, it follows that $S(x_k,k-j)\leq -1$. On the other hand, by definition we also have
\[
1=S(x_j,j)=S(\alpha^{k-j}(x_k),j)=S(x_k,k)-S(x_k,k-j)\geq 1+1=2.
\]
This contradicts the existence of $y$. Therefore the sets $\alpha^i(U_i)$ and $\alpha^j(U_j)$ are disjoint, and $U$ is dynamically subequivalent to $V$.
\end{proof}

\section{The small boundary property}

Just like dynamical comparison, the small boundary property is a purely dynamical condition that is relevant in the study of crossed products. However, unlike dynamical comparison, it was introduced by mathematicians who were not interested in (or probably even knew about) C*-algebras. As such, the fact that the small boundary property is \emph{also} significant to us represents a beautiful connection between dynamical systems and C*-algebras.

Recall that if $G\curvearrowright X$ is a dynamical system, we 
write $\mathrm{Prob}_G(X)$ for the $G$-invariant, Borel probability measures on $X$.

\begin{df}\label{df:SBP}
A dynamical system $G\curvearrowright X$ is said to have the 
\emph{small boundary property} if there exists a basis $\mathcal{B}$ for the topology of $X$ such that 
$\mu(\partial U)=0$ for all $U\in \mathcal{B}$ and all $\mu\in \mathrm{Prob}_G(X)$.
\end{df}

It is obvious that if $X$ is totally disconnected, then any dynamical system $G\curvearrowright X$ has the small boundary property. More generally, the same is true for spaces with finite 
covering dimension, as long as the action is free.

\begin{prop}\label{prop:FinDimSBP} (\cite[Theorem~3.8]{Sza_Zd})
Let $G\curvearrowright X$ be a free dynamical system with $\dim(X)<\infty$. Then $G\curvearrowright X$ has the small boundary property.
\end{prop}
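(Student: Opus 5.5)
Fix a compatible metric on $X$ and set $d=\dim(X)$. Given $x\in X$ and $r>0$, I need an open set $U$ with $x\in U\subseteq B(x,r)$ and $\mu(\partial U)=0$ for all $\mu\in\mathrm{Prob}_G(X)$. Since $\partial U$ is closed, the whole problem reduces to one statement about closed sets: produce boundaries that meet every orbit only a bounded number of times. The link to measures is the following.

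\textbf{Key lemma.} If $C\subseteq X$ is closed and every $G$-orbit meets $C$ in at most $d$ points, then $\mu(C)=0$ for all $\mu\in\mathrm{Prob}_G(X)$.

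To prove it, pick any $\mu\in\mathrm{Prob}_G(X)$. Since $G$ is countable and infinite (a free action of a finite group on an infinite space is a degenerate case and can be handled separately), choose distinct $g_1,g_2,\ldots\in G$. Each point $x$ lies in at most $d$ of the sets $g_k^{-1}C$, because $g_kx\in C$ for at most $d$ values of $k$ by freeness and the orbit bound. Hence
\[
\sum_{k=1}^M \mathbbm{1}_{g_k^{-1}C}\leq d,
\]
and integrating against the invariant measure $\mu$ gives $M\mu(C)\leq d$ for every $M$. Therefore $\mu(C)=0$.

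\textbf{Constructing the neighbourhoods.} Everything now rests on building such sets, which is the main obstacle. I would use a dimension-theoretic general position argument, exploiting freeness to separate translates.

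The natural candidates are sublevel sets $U_t=\{y: \rho(y)<t\}$ of a continuous function $\rho\colon X\to[0,1]$ equal to $0$ near $x$ and $1$ outside $B(x,r)$. Then $\partial U_t\subseteq\rho^{-1}(t)$, so it suffices to find a single $t$ for which $\rho^{-1}(t)$ meets each orbit at most $d$ times. A single distance function will not do this in general, so the real task is to perturb $\rho$ using freeness.

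The perturbation proceeds as follows.

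\emph{Local separation.} By compactness and freeness, for each finite $F\subseteq G\setminus\{1\}$ there are small open sets $W$ whose translates $gW$, $g\in F$, are disjoint from $W$.

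\emph{Codimension count.} The condition that $d+1$ points $g_0y,\ldots,g_dy$ all lie on the level $t$ amounts to the map $y\mapsto(\rho(g_0y),\ldots,\rho(g_dy))$ hitting the diagonal point $(t,\ldots,t)\in\mathbb{R}^{d+1}$. Generically in $C(X,[0,1])$, a map from a space of dimension $d$ misses a codimension-$(d+1)$ set after removing one parameter. This is a Baire category argument in the style of Lindenstrauss and Gutman's embedding results, where freeness guarantees that the points $g_iy$ are distinct, so the coordinates can be perturbed independently on small separated pieces.

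\emph{Handling infinitely many tuples.} Because there are infinitely many tuples $(g_0,\ldots,g_d)$, I would intersect countably many open dense sets: for each finite set of tuples, the good functions form an open dense set, and the Baire category theorem then yields a single $\rho$ that works for all tuples at once.

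Finally, since only countably many bad levels arise, I can choose $t$ such that $\rho^{-1}(t)$ meets each orbit in at most $d$ points. Applying the key lemma to $C=\rho^{-1}(t)$ gives $\mu(\partial U_t)=0$ for all $\mu\in\mathrm{Prob}_G(X)$. Letting $x$ and $r$ vary, the sets $U_t$ form the required basis $\mathcal{B}$.
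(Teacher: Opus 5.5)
The paper does not prove this result; it only cites Szab\'o. Your overall structure is the standard one: the orbit-counting lemma is correct, and everything reduces to producing a neighbourhood basis whose boundaries meet every orbit in at most $d=\dim(X)$ points. Two points need repair.

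\textbf{Finite groups.} The lemma genuinely needs $G$ infinite, and the finite case cannot be ``handled separately'', because the statement is false there. The action of $\Z_2$ on $S^1$ by $z\mapsto -z$ is free, and every measure $\tfrac12(\delta_z+\delta_{-z})$ is invariant. So any $U$ in such a basis would need $\partial U=\emptyset$, i.e.\ would be clopen, which is impossible in a connected space. The proposition has to be read for infinite $G$.

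\textbf{The choice of level.} More seriously, the final step fails as written. You make $\rho$ generic and only afterwards choose $t$, asserting that only countably many levels are bad. Take a tuple $\mathbf g=(g_0,\ldots,g_d)$ of distinct elements and set $\Phi_{\mathbf g}(y)=(\rho(g_0y),\ldots,\rho(g_dy))$. The bad levels for $\mathbf g$ form the compact set $\{t\colon (t,\ldots,t)\in\Phi_{\mathbf g}(X)\}$. The diagonal is a line, of codimension only $d$ in $\mathbb{R}^{d+1}$. So, unlike avoiding a point, avoiding it is not a dense condition for maps from a $d$-dimensional space. Your genericity therefore gives no control over these sets, and they need not be countable: for $d=0$ the bad set is all of $\rho(X)\cap(0,1)$. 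Relatedly, ``good functions'' in your Baire step has no meaning until a level is fixed.

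The remedy is to fix $t=\tfrac12$ in advance, so that one must avoid the single point $p=(\tfrac12,\ldots,\tfrac12)$, a codimension-$(d+1)$ condition.
\begin{itemize}
\item Work in the closed, hence Baire, set $\mathcal F$ of $\rho\in C(X,[0,1])$ vanishing on $\overline{B(x,r/2)}$ and equal to $1$ off $B(x,r)$.
\item Each set $\mathcal O_{\mathbf g}=\{\rho\in\mathcal F\colon p\notin\Phi_{\mathbf g}(X)\}$ is open by compactness.
\item For density, fix $\eta<\tfrac14$. Cover the compact set where $\Phi_{\mathbf g}$ is $\eta$-close to $p$ by finitely many closed sets $K_j$. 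Each $K_j$ should have an open neighbourhood $W_j$ such that $g_0W_j,\ldots,g_dW_j$ are pairwise disjoint and contained in $\rho^{-1}((\tfrac14,\tfrac34))$; this is exactly where freeness enters.
\item Replacing $\rho$ by $\rho+\sum_i h_i\circ g_i^{-1}$, with $h_i$ supported in $W_j$, changes $\Phi_{\mathbf g}$ on $W_j$ by exactly $(h_0,\ldots,h_d)$. Since $\dim K_j\leq d$, every map $K_j\to\mathbb{R}^{d+1}$ is uniformly approximable by one missing $p$.
\item Treat the $K_j$ one at a time. Keep the total perturbation below $\eta$ and the prescribed tolerance, and keep each step below the margins already secured.
\item Baire then yields $\rho\in\bigcap_{\mathbf g}\mathcal O_{\mathbf g}$. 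Hence $\partial\{\rho<\tfrac12\}\subseteq\rho^{-1}(\tfrac12)$ meets each orbit in at most $d$ points, and your lemma concludes.
\end{itemize}
Alternatively, you can keep your order of quantifiers but intersect over all rational levels. Each bad set is then closed with no rational points, hence nowhere dense, and $t$ can be chosen outside the resulting meagre set.
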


We are now ready to present the main result of this section (\cite[Theorem~6.1]{KerSza_almost_2020}), which gives us access to \autoref{thm:afZstab}.

\begin{thm}\label{thm:afSBP} (Kerr-Szabo)
Let $G\curvearrowright X$ be a free, minimal dynamical system.
Then the following are equivalent:
\be\item $G\curvearrowright X$ is almost finite.
\item $G\curvearrowright X$ has the small boundary property and has dynamical comparison.\ee
\end{thm}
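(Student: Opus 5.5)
We prove the two implications separately. Throughout, freeness and minimality are standing assumptions.

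\textbf{(1) $\Rightarrow$ (2).} Dynamical comparison follows from part~(1) of \autoref{thm:PetrComp}, so only the small boundary property needs proof. Given an open set $O$, a point $x\in O$ and $\delta>0$, we want an open $U$ with $x\in U\subseteq O$ and $\mu(\partial U)<\delta$ for every $\mu\in\mathrm{Prob}_G(X)$. Choose a castle $\{(S_i,V_i)\}_{i\in I}$ from \autoref{df:AlmostFinite} with small levels, $(K,\ep)$-invariant shapes, and remainder subequivalent to a tiny open set $B$. By \autoref{prop:UleqVmeasures}, the remainder has measure at most $\mu(B)$. By minimality and freeness, $\sup_\mu\mu(B)$ can be made small by taking $B$ to be a small ball around a point; justifying this will need some care.

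Each base $V_i$ is then shrunk slightly to a set $V_i'$ with $\overline{V_i'}\subseteq V_i$. The shrinking is chosen so that only a controlled fraction of the levels is affected, using the F{\o}lner property of $S_i$ together with an averaging argument over the levels. We then define $U$ as the union of those levels $sV_i'$ that meet a neighbourhood of $x$ inside $O$. Its boundary lies in the remainder together with the boundaries of the levels. Each level-boundary is a translate of $\partial V_i'$, so its measure is at most $\mu(\partial V_i')$, and summing over $s\in S_i$ gives a total of at most $|S_i|\mu(\partial V_i')\le\mu(S_i\overline{V_i}\setminus S_iV_i')$. By the choice of shrinkings this is small. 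This yields the small-boundary basis, possibly after an iteration in the style of a Lindenstrauss--Weiss characterization (zero measure boundaries via a sequence of approximations).

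\textbf{(2) $\Rightarrow$ (1).} This is the main direction. First, the small boundary property lets us build, for any finite $K$, any $\ep>0$ and any $\eta>0$, an open castle with $(K,\ep)$-invariant shapes and small levels whose footprint has measure greater than $1-\eta$ uniformly in $\mu$. The plan for this step is to take an Ornstein--Weiss-type quasitiling of $G$ and transport it to $X$ via a partition of unity subordinate to small-boundary open sets. Freeness makes the local translates disjoint, and the small boundaries ensure that the points lost at the boundaries of the partition have uniformly small measure. Second, given a nonempty open $B$, minimality and compactness give $\inf_\mu\mu(B)>0$. Taking $\eta<\inf_\mu\mu(B)$, the complement $R$ of the footprint satisfies $\mu(\overline R)\le\mu(R)+\mu(\partial\text{footprint})<\mu(B)$ for all $\mu$. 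Dynamical comparison then yields $R\precsim B$, which is condition~(3) of \autoref{df:AlmostFinite}.

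The main obstacle is the uniform (over all invariant measures) quasitiling step. The Ornstein--Weiss machinery is measure-theoretic, and turning it into open castles requires controlling the measures of boundaries simultaneously for all $\mu$. The strategy I would try first is to proceed inductively over the finitely many quasitile shapes. At each stage, a maximal $\delta$-disjoint family of base points is chosen within the residual set, the bases are fattened to small-boundary open sets, and weak-$*$ compactness of $\mathrm{Prob}_G(X)$ is used to keep every estimate uniform.
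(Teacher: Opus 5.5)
\textbf{Verdict: genuine gap.} The survey quotes this theorem from Kerr--Szab\'o without proof. The heart of their argument is exactly the step you leave open.

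\textbf{What is fine.} Your reduction of (2)$\Rightarrow$(1) is sound. Suppose you have open castles with $(K,\ep)$-invariant shapes, small levels, and $\mu(\text{footprint})>1-\eta$ for every $\mu\in\mathrm{Prob}_G(X)$ (``almost finiteness in measure''). Then minimality plus comparison finish the argument. Two small corrections there:
\begin{itemize}
\item The remainder $R$ is already closed, so your estimate of $\mu(\overline{R})$ is moot.
\item Comparison is defined here for open sets. So apply it to an open $W\supseteq R$ with $\sup_\mu\mu(W)<\inf_\mu\mu(B)$. Such a $W$ exists by upper semicontinuity of $\mu\mapsto\mu(\overline{W})$ and compactness of $\mathrm{Prob}_G(X)$.
\end{itemize}

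\textbf{The gap.} The implication from the small boundary property to almost finiteness in measure is the entire content of the hard direction. You only announce it, and the mechanism you sketch would not deliver it.
\begin{itemize}
\item A quasitiling of $G$ cannot be transported to $X$ by a partition of unity. You need a dynamical quasitiling: sets of tile centres that are subsets of $X$, chosen equivariantly.
\item Choosing a maximal disjoint family of base \emph{points} and then fattening them does not produce finitely many towers. It also loses either the disjointness of the levels or the maximality that the Ornstein--Weiss covering count relies on.
\item Uniformity over $\mu$ does not come from weak-$*$ compactness. It comes from orbitwise estimates $|\{f\in F\colon f\cdot x\in\text{footprint}\}|\geq(1-\eta)|F|$ for all $x\in X$ and a suitable F{\o}lner set $F$, integrated against an arbitrary invariant measure.
\item This step also needs $G$ amenable. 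That is a standing hypothesis in Kerr--Szab\'o, left implicit in the statement here.
\end{itemize}

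\textbf{The missing idea.} You need a way to do the greedy Ornstein--Weiss selection with \emph{sets}, not points, whose boundaries are invisible to every invariant measure. Kerr--Szab\'o do this with a zero-dimensional extension.
\begin{itemize}
\item Take a countable basis $\{U_n\}$ with null boundaries. Code $x$ by $(\mathbbm{1}_{U_n}(g^{-1}\cdot x))_{n,g}$ and take the closure of the graph. This gives a factor map $\pi\colon Y\to X$ with $Y$ zero-dimensional.
\item The fibres of $\pi$ are singletons off $G\cdot\bigcup_n\partial U_n$, which is null for every invariant measure. So $\pi$ identifies the invariant measures and is a measure isomorphism for each of them.
\item On $Y$ the quasitiling argument runs with clopen sets: greedy selection over a finite clopen partition into tower bases, which exists because the lifted action is free.
\item Each clopen level $A$ is pushed down to the open set $X\setminus\pi(Y\setminus A)$. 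This preserves disjointness, equivariance and measures.
\end{itemize}
Alternatively, you could run the greedy construction inside $X$, in the $G$-invariant Boolean algebra of sets whose boundaries are null for every invariant measure, and pass to interiors at the end. Either way, one of these has to actually be carried out.

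\textbf{Smaller problems in (1)$\Rightarrow$(2).}
\begin{itemize}
\item Your $U$ need not contain $x$, if $x$ lies in the remainder.
\item A uniform shrinking $V_i'$ of the bases is not available in general, since invariant measures may accumulate on $\partial V_i$ from inside $V_i$.
\end{itemize}
The shrinking is also unnecessary. Take a closed $C\subseteq O$ and a small neighbourhood $N$ of $C$, and set $U=N\cup\bigcup\{L\colon L\text{ a level with }L\cap N\neq\emptyset\}$. Then $C\subseteq U\subseteq O$ and $\partial U\subseteq R$, with $\mu(R)\leq\mu(B)$. Moreover $\sup_\mu\mu(B)$ is small for a small ball $B$, because invariant measures of a free action of an infinite group are atomless (Dini's theorem). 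Iterating this over closed-in-open pairs, with tolerances tending to zero, gives the exactly null boundaries that the small boundary property requires.
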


We can now obtain the two most general results that exist
in terms of classifiability of crossed products for free, minimal
actions of amenable groups.

\begin{cor}\label{cor:PolyGrowth}
Let $G$ be a group with polynomial growth and let $G\curvearrowright X$ be a free, minimal dynamical system
with the small boundary property. Then $C(X)\rtimes G$ is $\mathcal{Z}$-stable and thus classifiable.
\end{cor}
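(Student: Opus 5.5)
The plan is to chain together results already stated in the survey; no new dynamics should be needed. The corollary asks for $\mathcal{Z}$-stability, and \autoref{thm:afZstab} gives exactly this for minimal almost finite actions. So it suffices to show that $G\curvearrowright X$ is almost finite. By \autoref{thm:afSBP}, which applies since the action is free and minimal, almost finiteness is equivalent to the small boundary property together with dynamical comparison. The small boundary property is a hypothesis. Dynamical comparison follows from part~(2) of \autoref{thm:PetrComp} (Naryshkin), because $G$ has polynomial growth and the action is minimal. Hence the action is almost finite, and \autoref{thm:afZstab} yields $C(X)\rtimes G\cong (C(X)\rtimes G)\otimes\mathcal{Z}$.

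For the ``thus classifiable'' clause, I would check the remaining hypotheses of \autoref{cnj:Elliott}. Unitality and separability hold by \autoref{eg:CPunitalSep}, since $X$ is compact metrizable and $G$ is countable. Polynomial growth groups are amenable, since balls of polynomial growth contain a F{\o}lner sequence. So the action is amenable by \autoref{eg:GamenActAmen}. Then $C(X)\rtimes G$ is nuclear by \autoref{thm:AmenIffNuc}, satisfies the UCT by Tu's theorem, and coincides with the reduced crossed product by \autoref{thm:AmenGFullRedCP}. Freeness implies topological freeness, so together with minimality \autoref{thm:SimpleCP} gives simplicity.

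There is no real obstacle here, as each step is a direct citation. The only points needing care are two small verifications. First, one must check that polynomial growth implies amenability, which is standard and can be verified by taking the balls $B_n$ of a sequence along which $|B_{n+1}|/|B_n|\to 1$, and such a sequence exists by subexponential growth. Second, one must make sure that the hypotheses of \autoref{thm:afSBP} (freeness and minimality) are exactly those assumed in the corollary.
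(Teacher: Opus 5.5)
Your proposal is correct and follows the paper's proof: part~(2) of \autoref{thm:PetrComp} gives dynamical comparison, \autoref{thm:afSBP} gives almost finiteness, and \autoref{thm:afZstab} gives $\mathcal{Z}$-stability. Your extra check of the remaining classifiability hypotheses (amenability of $G$, nuclearity, the UCT, simplicity) correctly spells out what the paper leaves implicit in ``and thus classifiable''.
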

\begin{proof}
By \autoref{thm:afSBP} and part~(2) of \autoref{thm:PetrComp}, the dynamical 
system $G\curvearrowright X$ is almost finite. The result thus follows from \autoref{thm:afZstab}.
\end{proof}

If we demand that the covering dimension of $X$ be finite, then the class of groups can be expanded, and the assumption on the small boundary property becomes automatic
by \autoref{prop:FinDimSBP}.

\begin{cor}\label{cor:ElemAmen}
Let $G$ be an elementary amenable group and let $G\curvearrowright X$ be a free, minimal dynamical system
with $\dim(X)<\infty$. 
Then $C(X)\rtimes G$ is $\mathcal{Z}$-stable and thus classifiable.
\end{cor}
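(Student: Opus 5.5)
The plan is to mirror the proof of \autoref{cor:PolyGrowth}, chaining three earlier results: \autoref{prop:FinDimSBP}, then \autoref{thm:afSBP}, then \autoref{thm:afZstab}. The only change is that the dynamical comparison input comes from part~(4) of \autoref{thm:PetrComp} instead of part~(2).

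\textbf{Step 1 (small boundary property).} Since $G\curvearrowright X$ is free and $\dim(X)<\infty$, \autoref{prop:FinDimSBP} gives that $G\curvearrowright X$ has the small boundary property.

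\textbf{Step 2 (dynamical comparison).} The hypotheses of part~(4) of \autoref{thm:PetrComp} hold exactly as stated: $G$ is elementary amenable, $\dim(X)<\infty$, and the action is free and minimal. Hence the Kerr--Naryshkin theorem yields dynamical comparison for $G\curvearrowright X$.

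\textbf{Step 3 (almost finiteness).} The action is free and minimal, and by Steps 1 and 2 it has both the small boundary property and dynamical comparison. The implication (2)$\Rightarrow$(1) of \autoref{thm:afSBP} therefore shows that $G\curvearrowright X$ is almost finite.

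\textbf{Step 4 ($\mathcal{Z}$-stability and classifiability).} \autoref{thm:afZstab} now gives that $C(X)\rtimes G$ is $\mathcal{Z}$-stable. To conclude classifiability I would check the remaining hypotheses of \autoref{cnj:Elliott}:
\begin{enumerate}
\item Unitality and separability follow from \autoref{eg:CPunitalSep}, since $X$ is compact metrizable and $G$ is countable.
\item $G$ is amenable, because elementary amenable groups are amenable. By \autoref{eg:GamenActAmen} the action is then amenable.
\item Nuclearity follows from \autoref{thm:AmenIffNuc}, and the UCT from Tu's theorem.
\item Full and reduced crossed products coincide by \autoref{thm:AmenGFullRedCP}.
\item Simplicity follows from \autoref{thm:SimpleCP}, since freeness implies topological freeness and the action is minimal.
\end{enumerate}

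There is no real obstacle here, because all the deep content sits in the cited results. The step needing the most care is Step 2: one must confirm that freeness is genuinely available, which it is by hypothesis. This is exactly why the corollary assumes freeness rather than only topological freeness.
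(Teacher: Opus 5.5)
Your proposal is correct and follows the paper's proof exactly: \autoref{prop:FinDimSBP} gives the small boundary property, part~(4) of \autoref{thm:PetrComp} gives dynamical comparison, \autoref{thm:afSBP} then gives almost finiteness, and \autoref{thm:afZstab} gives $\mathcal{Z}$-stability. Your explicit check of the remaining classifiability hypotheses (unitality, separability, amenability hence nuclearity and the UCT, and simplicity via \autoref{thm:SimpleCP}) fills in what the paper leaves implicit.
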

\begin{proof}
By \autoref{prop:FinDimSBP}, the the dynamical 
system $G\curvearrowright X$ has the small boundary property. 
Combining \autoref{thm:afSBP} and part~(4) of \autoref{thm:PetrComp}, we deduce that $G\curvearrowright X$ is almost finite. The result thus follows from \autoref{thm:afZstab}.
\end{proof}

It is expected that the above results ought to hold without 
any assumptions
on the groups, except for amenability. Even more, it is expected 
that dynamical comparison should be automatic in our setting, and that $\mathcal{Z}$-stability of the crossed product is equivalent to almost finiteness, and also to the small boundary property:

\begin{cnj}\label{cnj:Amen}
Let $G$ be amenable and let $G\curvearrowright X$ be a free, minimal dynamical system. Then \tfae:
\be\item $C(X)\rtimes G$ is $\mathcal{Z}$-stable;
\item $G\curvearrowright X$ has the small boundary property;
\item $G\curvearrowright X$ is almost finite.\ee
\end{cnj}

We note that (3) implies (1) by \autoref{thm:afZstab}, and (3) implies (2) by \autoref{thm:afSBP}. In the presence of dynamical comparison, (2) and (3) are equivalent, also by \autoref{thm:afSBP}. 

There is a conspicuous lack of tools for obtaining dynamical information about $G\curvearrowright X$
from $\mathcal{Z}$-stability of the crossed product, which means
that it is extremely difficult to deduce either (2) or (3) from (1). 

\section{Beyond freeness: essentially free actions}
The material presented in \autoref{sec:af}, related to almost finiteness, assumes freeness of the action. On the other hand, we saw in \autoref{sec:Simple} that freeness is stronger than necessary in order to obtain simplicity of the crossed product, and that the right assumption under which one ought to study classifiability of crossed products is 
topological freeness. One should thus strive to extend the results of previous sections to the more general context of
topologically free systems.

Although stated for free actions in \autoref{df:AlmostFinite}, the conditions in the definition of almost finiteness do not imply freeness of the action and, in fact, the property holds for many non-free actions as well. 

\begin{ex}
Let $G=\Z\rtimes\Z_2$ act on $S^1$ by letting the copy of $\Z$ 
act via an irrational rotation, and the copy of $\Z_2$ acting 
by complex conjugation. Show that $G\curvearrowright S^1$ is almost finite in the sense of \autoref{df:AlmostFinite}, although
it is not free.
\end{ex}

Almost finiteness does indeed imply some form of freeness, 
as we show next.

\begin{prop}\label{prop:afEssfree}
Let $G\curvearrowright X$ be a minimal, almost finite dynamical 
system. Then $G\curvearrowright X$ is essentially free (see \autoref{df:EssFree}).
\end{prop}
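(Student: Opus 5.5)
Fix $g\in G\setminus\{1\}$ and $\mu\in\mathrm{Prob}_G(X)$; we must show $\mu(X_g)=0$. The plan is to show $\mu(X_g)\le 2\varepsilon'$ for every $\varepsilon'>0$, using castles from \autoref{df:AlmostFinite} whose levels are so small that $g$ moves every point of a level off that level, except for points near the edge of the shape. Fix $\eta>0$. Apply almost finiteness with $K=\{g\}$ (or $\{g,g^{-1}\}$), invariance parameter $\eta$ and diameter parameter $\eta$. For the remainder set $B$, minimality and the absence of atoms help. If $\mu$ has an atom, minimality forces $X$ to be a finite orbit, and then almost finiteness with $(K,\eta)$-invariant shapes forces $G$ finite. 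We treat that degenerate case separately or exclude it, since a finite minimal $X$ with a free-shape castle is impossible for infinite $G$, as levels must be disjoint. So $\mu$ is atomless and full-support by minimality, and we may choose a nonempty open $B$ with $\mu(B)<\eta$ for all invariant $\mu$; more simply, we fix $\mu$ and pick a small ball. By \autoref{prop:UleqVmeasures}, the complement of the footprint then has $\mu$-measure at most $\eta$.

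Next we count fixed points inside the footprint. Write $S_i^g=\{s\in S_i: gs\in S_i\}$. By $(K,\eta)$-invariance, $|S_i\setminus S_i^g|\le\eta|S_i|$. For $s\in S_i^g$ we have $g\cdot sV_i=(gs)V_i$. If $gs\neq s$, which holds since $g\ne1$, this level is disjoint from $sV_i$, so $X_g\cap sV_i=\emptyset$. Thus $X_g\cap S_iV_i\subseteq (S_i\setminus S_i^g)V_i$. Since $\mu$ is invariant, all levels of a tower have equal measure $\mu(V_i)$. Hence $\mu(X_g\cap S_iV_i)\le \eta|S_i|\mu(V_i)=\eta\,\mu(S_iV_i)$. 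Summing over the towers gives $\mu(X_g\cap\text{footprint})\le\eta$. Therefore $\mu(X_g)\le 2\eta$, and letting $\eta\to0$ gives the claim. We do not even need the diameter condition; the point is that disjointness of the levels already separates $sV_i$ from $gsV_i$.

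The main obstacle is making the remainder small in measure. Condition (3) only gives dynamical subequivalence to $B$, so we need some open $B$ with $\mu(B)$ small. This requires $\mu$ to give small measure to some nonempty open set, and minimality alone does not immediately provide that, since it yields full support but not non-atomicity. We would argue it as follows. If $\mu(\{x\})>0$ for some $x$, invariance makes the orbit of $x$ finite, hence closed and invariant, hence all of $X$ by minimality. So $X$ is finite and, $G$ being infinite (as forced by almost finiteness with large $K$), some nontrivial subgroup acts trivially. In that case we apply the same castle argument with $B$ a single point. That point is open, and the castle argument shows every point of the tower region avoids $X_g$, so the only contributions come from the remainder. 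We then iterate, choosing castles whose remainder is dominated by a point of measure $1/|X|$, and we verify directly that the castle conditions with large $|S_i|$ are incompatible with finite $X$. This contradiction shows the finite case cannot occur. Otherwise $\mu$ is atomless, so small balls around any point have small measure by regularity, and the argument above goes through.
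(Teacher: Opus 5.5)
Your main argument is the paper's. For $s\in S_i\cap g^{-1}S_i$ the levels $sV_i$ and $gsV_i$ are distinct and hence disjoint, so $X_g$ meets the footprint only in levels indexed by $S_i\setminus g^{-1}S_i$, whose total measure is at most $\eta$. You also rightly notice something the paper skips: its claim that the footprint can be made to have measure at least $1-\varepsilon'$ needs a nonempty open $B$ of small $\mu$-measure. For atomless $\mu$ your small ball supplies this, since the estimate in the proof of \autoref{prop:UleqVmeasures} applies verbatim to the closed remainder. That part is correct.

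The gap is your treatment of the atomic case. Your two paragraphs contradict each other: one says almost finiteness ``forces $G$ finite'', the other says ``$G$ being infinite (as forced by almost finiteness)''. The conclusion that finite $X$ ``cannot occur'' is false. If $G$ is finite, then $S=G$ is $(K,\varepsilon)$-invariant for every $K$. Moreover, $G$ acting on itself by left translation is minimal and almost finite via the single tower $(G,\{1\})$, whose footprint is all of $X$. Your incompatibility argument with large $|S_i|$ only rules out infinite $G$.

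A correct treatment goes as follows. Take $B=\{b\}$; then $X\setminus\text{footprint}\precsim B$ forces the remainder to have at most one point.
\begin{itemize}
\item If $|X|\ge 2$, some base $V_i$ is nonempty. A $(K,\varepsilon)$-invariant set has more than $(1-\varepsilon)|K|$ elements, while the levels of a tower are disjoint nonempty subsets of $X$, so $|S_i|\le |X|$. Hence $G$ must be finite.
\item With $G$ finite, taking $K=G$ and $\varepsilon<1/|G|$ forces $S_i=G$. Then each $x\in V_i$ satisfies $hx\in hV_i$, which is disjoint from $V_i$ for $h\neq 1$, so $x$ has trivial stabilizer. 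Since a minimal action on a finite set is transitive and stabilizers are conjugate, the action is free.
\item If $|X|=1$, no argument can work. For nontrivial amenable $G$, a castle with empty bases (or the empty castle) satisfies \autoref{df:AlmostFinite} literally with $B=X$, yet $\mu(X_g)=1$.
\end{itemize}
So the one-point case has to be excluded by convention, for instance by assuming $X$ infinite. The paper does this tacitly when it asserts the footprint can be made large in measure.
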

\begin{proof}
Let $g\in G\setminus\{e\}$ and $\mu\in \mathrm{Prob}_G(X)$ be given. 
Fix $\varepsilon>0$ and
set $\mathrm{Fix}(g)=\{x\in X\colon g\cdot x=x\}$. We will show that $\mu(\mathrm{Fix}(g))<\varepsilon$.

Let $\varepsilon'>0$ satisfy $1-(1-\varepsilon')^2=\varepsilon$. Find an open castle $\{(V_i,S_i)\}_{i\in I}$ with shapes that are $(\{g^{-1}\},\varepsilon')$-invariant, and satisfying, in particular, 
\begin{equation}\label{2.4}\tag{2.4}
\mu\Big(\bigsqcup_{i\in I}S_i\cdot V_i\Big)\ge 1-\varepsilon'.
\end{equation}
Given $i\in I$, it is easy to see that $(S_i\cap g^{-1}S_i)\cdot V_i$ is disjoint from $\mathrm{Fix}(g)$. Thus $\bigsqcup_{i\in I}(S_i\cap g^{-1}S_i)\cdot V_i\subseteq X\setminus\mathrm{Fix}(g)$, and we have
\begin{align*}
1-\mu(\mathrm{Fix}(g))&\ge\sum_{i\in I}\big| \, S_i\cap g^{-1}S_i\big|\mu(V_i)\\
&\ge(1-\varepsilon')\sum_{i\in I}|S_i| \, \mu(V_i)\\
&=(1-\varepsilon')\mu\Big(\bigsqcup_{i\in I}S_i\cdot V_i\Big)\\
&\stackrel{\text{\eqref{2.4}}}{\ge}{(1-\varepsilon')^2}=1-\varepsilon.
\end{align*}
As $\varepsilon$ is arbitrary, this shows that $\mu(\mathrm{Fix}(g))=0$. Thus $G\curvearrowright X$ is essentially free.
\end{proof}

It follows that essential freeness, which we introduced in \autoref{sec:Traces} in order to understand traces on crossed products, also arises naturally in the context of the available tools, in particular almost finiteness. This suggests that the existing machinery could be extended to the essentially free setting, a task that was initiated in \cite{GGGKN_essential_2024}.

Extending results from the free to the essentially free setting one may not seem to be a significant advance at first sight, but there are in general a number of technical difficulties when dealing with non-free actions. One of the most immediate problems that one encounters is that the restriction of an essentially free action to a subgroup is not necessarily essentially free (unlike the situation for free actions), thus obstructing a number of arguments involving extensions of groups and restrictions of actions.

\begin{ex}
Find an example of an essentially free dynamical system $G\curvearrowright X$ and a subgroup $H\leq G$
such that $H\curvearrowright X$ is not essentially free.
\end{ex}

The following extends \autoref{prop:FinDimSBP} from free actions to essentially free ones. 

\begin{prop}
Let $G\curvearrowright X$ be an essentially free dynamical system with $\dim(X)<\infty$. Then $G\curvearrowright X$ has the small boundary property. 
\end{prop}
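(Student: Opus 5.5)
We will adapt the argument behind \autoref{prop:FinDimSBP}, whose proof goes via Lindenstrauss's notion of mean dimension and Szab\'o's dimension estimates. Rather than redo that, the strategy is to reduce to the free case on the complement of the fixed point sets. Fix a countable group $G$ and write $\mathrm{Fix}=\bigcup_{g\neq 1}X_g$, a countable union of closed sets, hence an $F_\sigma$, $G$-invariant set with $\mu(\mathrm{Fix})=0$ for every $\mu\in \mathrm{Prob}_G(X)$ by essential freeness. The goal is to construct, for each $x\in X$ and each open $W\ni x$, an open $U$ with $x\in U\subseteq W$ and $\mu(\partial U)=0$ for all invariant $\mu$. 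Since every invariant measure is concentrated on the free part $X\setminus \mathrm{Fix}$, boundaries only need to be ``small'' when intersected with the free part.

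The key tool is the criterion of Shub--Weiss/Lindenstrauss: the small boundary property holds as soon as, for every $x$ and every neighbourhood, one can find a closed boundary $\partial U$ which meets every orbit in a set of zero upper density; equivalently, for every closed set $Z$ and each $x\notin Z$ we need a separating open set whose boundary $B$ satisfies $\sup_{y}\limsup_n |\{g\in F_n\colon gy\in B\}|/|F_n| = 0$ along a F{\o}lner sequence, or more generally (for nonamenable $G$, where we just use that each invariant measure is a limit of averages as in the proof of \autoref{prop:GamenIFFinvmeas}) that $\mu(B)=0$ for all $\mu\in\mathrm{Prob}_G(X)$. Since $\dim(X)=d<\infty$, I would first pick, for each pair of points, a family of candidate separating sets $U_t$, $t\in(0,1)$, given by sublevel sets $\{f<t\}$ of a continuous $f\colon X\to[0,1]$; their boundaries are contained in the pairwise disjoint level sets $f^{-1}(t)$. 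For any invariant $\mu$, at most countably many $t$ have $\mu(f^{-1}(t))>0$, but this ``countably many'' depends on $\mu$ and $\mathrm{Prob}_G(X)$ may be uncountable, so a genericity argument is needed.

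This is where finite dimensionality and freeness enter, following Szab\'o: use $\dim(X)=d$ to perturb $f$ so that, for every point $y$ in the free part and all $d+2$ distinct group elements $g_0,\ldots,g_{d+1}$, the points $g_iy$ are not all in a common level set; i.e.\ each orbit of a free point meets any level set $f^{-1}(t)$ in at most $d+1$ points. By Baire category in $C(X,[0,1])$ this is a countable intersection of open dense conditions indexed by tuples of group elements and compact pieces $K\subseteq X\setminus\bigcup_{i\neq j}X_{g_i^{-1}g_j}$; here essential freeness replaces freeness, since we only need to handle the open set where the $g_iy$ are distinct, exhausted by countably many compacta. Then for every invariant $\mu$, ergodic decomposition plus the pointwise bound (each free orbit meets $\partial U$ at most $d+1$ times, so density zero) gives $\mu(\partial U\setminus \mathrm{Fix})=0$, and $\mu(\mathrm{Fix})=0$ finishes.

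The main obstacle is the genericity step: open-density of each condition requires a local transversality perturbation in dimension $d$ (general position of $d+2$ values in $[0,1]$ near points where $g_iy$ are distinct), and the compacta $K$ approach $\mathrm{Fix}$, so uniformity is lost; I would first try to make each condition relative to a fixed compact $K$ (where the points $g_iy$ stay uniformly separated) so that standard dimension-theoretic perturbation applies verbatim, and let the countable intersection handle the exhaustion. Converting ``at most $d+1$ hits per orbit'' into $\mu$-nullness for nonamenable $G$ also needs care; I would argue via a mass-transport/invariance count over the finite sets $\{g_0,\ldots,g_{d+1}\}$ rather than F{\o}lner averages.
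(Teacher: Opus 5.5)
The paper states this proposition without proof, so there is nothing in the text to compare against. Judged on its own, your plan is sound: it is essentially the Kulesza--Szab\'o general-position argument behind \autoref{prop:FinDimSBP}, with essential freeness used only at the end through $\mu(\mathrm{Fix})=0$.

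Note that the genericity step uses no freeness at all. For an arbitrary action, a generic $f\in C(X,\mathbb{R})$ has the property that every orbit, viewed as a point set, meets each level set in at most $d+1$ points. So your remark that essential freeness ``replaces freeness'' at that point is misplaced.

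Density of each open condition is easier than you fear, but it is not verbatim on a whole compactum $K$. The reason is that $g_iy=g_jy'$ with $y\neq y'$ couples the coordinates. Localize once more:
\begin{itemize}
\item Each point of $K$ has a compact neighbourhood $K_0\subseteq K$ with $g_0K_0,\ldots,g_{d+1}K_0$ pairwise disjoint.
\item Approximate $\Theta(y)=(f(g_0y),\ldots,f(g_{d+1}y))$ on $K_0$ by a continuous $\Theta'\colon K_0\to\mathbb{R}^{d+2}$ missing the diagonal. This is possible because $\dim K_0\leq d$ and the diagonal has codimension $d+1$.
\item Define $f'$ on $\bigsqcup_i g_iK_0$ by $f'(g_iy)=\Theta'_i(y)$, and extend $f'-f$ by Tietze without increasing its norm.
\end{itemize}
The condition for $K$ is then a finite intersection of the conditions for finitely many such $K_0$. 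Work in $C(X,\mathbb{R})$ rather than $C(X,[0,1])$ to avoid clamping issues.

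The last step should be done differently. Ergodic decomposition and orbit densities require F{\o}lner sequences. A count over sets of size $d+2$ only yields $\mu(B)\leq \frac{d+1}{d+2}$.

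Instead, let $B=f^{-1}(t)$ and let $F\subseteq G$ be any finite set. Then $\sum_{g\in F}\mathbbm{1}_B(g\cdot y)\leq d+1$ for all $y\notin\mathrm{Fix}$. Integrating against $\mu\in\mathrm{Prob}_G(X)$, using invariance and $\mu(\mathrm{Fix})=0$, gives $|F|\,\mu(B)\leq d+1$. Hence $\mu(B)=0$ whenever $G$ is infinite, amenable or not, and the Shub--Weiss criterion in your first paragraph is not needed.

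This is exactly where infiniteness of $G$ enters, and it cannot be dropped. The trivial group acting on $[0,1]$ is vacuously essentially free. Yet every nonempty proper open subset of $[0,1]$ has a boundary point, and the Dirac measure there gives the boundary positive measure, so the small boundary property fails. You should therefore state explicitly that $G$ is assumed infinite; the proposition as written (like \autoref{prop:FinDimSBP}) assumes this implicitly.
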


\begin{thm} \label{thm:EssFreeSBP}
(Gardella-Geffen-Gesing-Kopsacheilis-Naryshkin).
Let $G\curvearrowright X$ be an essentially free, 
minimal dynamical system. Then the following are equivalent:
\be\item $G\curvearrowright X$ is almost finite.
\item $G\curvearrowright X$ has the small boundary property and has dynamical comparison.\ee
\end{thm}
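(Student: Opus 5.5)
The plan follows the proof of \autoref{thm:afSBP} by Kerr and Szab\'o. Freeness is replaced by essential freeness, and freeness enters only through measure-theoretic estimates.

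For (1) $\Rightarrow$ (2), dynamical comparison is part~(1) of \autoref{thm:PetrComp}; Kerr's argument uses only the castles, not freeness. For the small boundary property I will show that almost finiteness gives, for every $\ep>0$, open castles with levels of diameter less than $\ep$ whose footprint has measure at least $1-\ep$ uniformly over $\mu\in\mathrm{Prob}_G(X)$. This uses condition~(3) together with \autoref{prop:UleqVmeasures}, applied with $B$ a small open set of measure less than $\ep$ for all invariant measures; such a $B$ exists because minimality forces invariant measures to be non-atomic off finite orbits. Following Kerr-Szab\'o, these castles yield a \emph{zero-dimensional extension}, or directly a basis of sets whose boundaries are contained in the complement of footprints of arbitrarily fine castles, so that their measures are small uniformly in $\mu$. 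Shrinking the castle bases slightly keeps the boundaries under control. Essential freeness has already been built in, since \autoref{prop:afEssfree} shows it follows from almost finiteness anyway.

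For (2) $\Rightarrow$ (1), the small boundary property is used to find, for each $\ep$, a finite partition of $X$ up to a ``null'' closed set (one of measure zero for all invariant $\mu$) into open sets of small diameter. The task is then to build castles with $(K,\ep)$-invariant shapes covering most of $X$ in measure, uniformly in $\mu\in\mathrm{Prob}_G(X)$. The plan is to use the Ornstein--Weiss quasitiling machinery in its topological form, as Kerr-Szab\'o do, producing such castles for the restriction of the action to a suitable set. After that, dynamical comparison converts ``the remainder has small measure for all $\mu$'' into ``the remainder is dynamically subequivalent to $B$'', which is condition~(3) of \autoref{df:AlmostFinite}.

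The main obstacle is the quasitiling step. Kerr-Szab\'o use freeness to guarantee that for each point $x$ the map $s\mapsto sx$ is injective on a shape $S$, so that translates of small neighbourhoods of $x$ form genuine towers. In the essentially free case I will first restrict to the set $X_K^{\mathrm{free}}$ of points with trivial stabilizer in $S^{-1}S$; this set is open and conull for every invariant measure. On $X_K^{\mathrm{free}}$ I will perform the tower construction locally, with compactness replaced by the small boundary property to obtain clopen-like pieces of null boundary. I expect the delicate point to be controlling uniformly in $\mu$ the measure of the non-free region and of the leftover from quasitiling. For this I would use upper semicontinuity of $\mu\mapsto\mu(C)$ on closed sets together with compactness of $\mathrm{Prob}_G(X)$, which turns pointwise smallness into uniform smallness. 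That uniform smallness is exactly what dynamical comparison needs.
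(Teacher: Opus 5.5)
Your overall plan is correct: it is the Kerr--Szab\'o scheme, with essential freeness entering only through excising a small neighbourhood of the closed null set of points whose stabilizer meets a fixed finite set nontrivially. Concretely, (1)$\Rightarrow$(2) goes through Kerr's comparison theorem and through almost finiteness in measure (your castles with small levels and footprint of measure at least $1-\varepsilon$ for all invariant $\mu$), and from there to the small boundary property. None of these steps uses freeness. For (2)$\Rightarrow$(1), the small boundary property gives almost finiteness in measure, and comparison then gives condition~(3), using $\inf_\mu\mu(B)>0$ by minimality.

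\textbf{Gap: the small boundary property in (1)$\Rightarrow$(2).} A castle with levels of diameter $<\delta$ and footprint $F$ with $\sup_\mu\mu(X\setminus F)<\varepsilon$ does give, for $C\subseteq W$ closed inside open, an open $U$ with $C\subseteq U\subseteq W$ and $\sup_\mu\mu(\partial U)<\varepsilon$. Take the union of the levels meeting a $\delta$-neighbourhood of $C$; its boundary lies in $X\setminus F$, since levels are open and pairwise disjoint. Add to it the part near $C$ of an open $N\supseteq X\setminus F$ with $\sup_\mu\mu(\overline{N})<\varepsilon$. This second piece is needed because $C$ may meet $X\setminus F$, which shrinking the bases does not fix.

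The small boundary property, however, demands $\mu(\partial U)=0$, and no single castle gives that. Uniformly small is not zero, so you need a nested limit:
\begin{itemize}
\item Build $U_1$ inside some $W'$ with $\overline{W'}\subseteq W$, so that $N_1\supseteq\partial U_1$ can be chosen with $\overline{N_1}\subseteq W\setminus C$.
\item Given $U_n$ and an open $N_n\supseteq\partial U_n$ with $\sup_\mu\mu(\overline{N_n})<\varepsilon_n$, pick an open $N_n'$ with $\partial U_n\subseteq N_n'\subseteq\overline{N_n'}\subseteq N_n$.
\item Apply the one-castle construction to the closed set $\overline{U_n}\setminus N_n'$ inside $U_n\cup N_n'$. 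This yields $U_{n+1}$ that agrees with $U_n$ off $N_n'$ and has $\partial U_{n+1}\subseteq\overline{N_n'}$, so $N_{n+1}$ can be taken with $\overline{N_{n+1}}\subseteq N_n$.
\end{itemize}
Then $U=\bigcup_n(U_n\setminus\overline{N_n})$ satisfies $C\subseteq U\subseteq W$ and $\partial U\subseteq\bigcap_n\overline{N_n}$, which is null. This uses no freeness.

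Your alternative, that the castles \emph{yield a zero-dimensional extension}, reverses Kerr--Szab\'o's logic. In their proof the small boundary property produces a zero-dimensional extension that is measure-isomorphic over singleton fibres, and it is used for (2)$\Rightarrow$(1). Coding points by castle levels gives singleton fibres almost everywhere only if the footprints already have null boundaries, which is what you are trying to prove.

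\textbf{On (2)$\Rightarrow$(1).} Running the Ornstein--Weiss argument directly on $X$ with open sets of null boundary is a legitimate variant of Kerr--Szab\'o's route, which uses clopen sets in the extension. Essential freeness does lift to that extension, since $\mathrm{Fix}_Y(g)\subseteq\pi^{-1}(\mathrm{Fix}_X(g))$. Your version saves building the extension at the cost of interior/closure bookkeeping in every greedy step.

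Your excision also handles the second use of freeness, which is pulling orbit counts back to $G$. Suppose every base point has stabilizer meeting $S^{-1}S$ only in $1$. Then distinct centres $c\neq c'$ along an orbit give disjoint tiles $Sc$, $Sc'$ in $G$, even when $cx=c'x$. Moreover, F\o lner averages count group elements with multiplicity, which is exactly what invariant measures see.

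Finally, as throughout this chapter, $G$ is tacitly amenable, and your tiling step needs this.
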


The following is a version of \autoref{cor:PolyGrowth} for 
essentially free actions.

\begin{cor}
Let $G$ be a group with polynomial growth and let $G\curvearrowright X$ be an essentially free, minimal dynamical system
with the small boundary property. Then $C(X)\rtimes G$ is $\mathcal{Z}$-stable and thus classifiable.
\end{cor}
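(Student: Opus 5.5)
The proof will follow \autoref{cor:PolyGrowth} exactly, with \autoref{thm:EssFreeSBP} taking the place of \autoref{thm:afSBP}. The chain is: dynamical comparison (from polynomial growth), plus the small boundary property, gives almost finiteness; almost finiteness then gives $\mathcal{Z}$-stability.

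\textbf{Step 1: dynamical comparison.} Since $G$ has polynomial growth and $G\curvearrowright X$ is minimal, part~(2) of \autoref{thm:PetrComp} (Naryshkin) shows that $G\curvearrowright X$ has dynamical comparison. That result is stated only under minimality, with no freeness hypothesis, so it applies verbatim. This is the one place to be careful: parts (4) and (5) of \autoref{thm:PetrComp} genuinely need freeness, but part (2) does not.

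\textbf{Step 2: almost finiteness.} The action is essentially free and minimal. It has the small boundary property by hypothesis and dynamical comparison by Step 1. Hence \autoref{thm:EssFreeSBP} shows that $G\curvearrowright X$ is almost finite in the sense of \autoref{df:AlmostFinite}.

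\textbf{Step 3: $\mathcal{Z}$-stability.} Apply \autoref{thm:afZstab}. As stated there, it requires only minimality and almost finiteness, not freeness. It therefore gives that $C(X)\rtimes G$ is $\mathcal{Z}$-stable.

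\textbf{Step 4: classifiability.} For the remaining hypotheses of \autoref{cnj:Elliott}:
\begin{itemize}
\item $G$ is amenable, since groups of polynomial growth are amenable. So the action is amenable by \autoref{eg:GamenActAmen}, and the crossed product is nuclear by \autoref{thm:AmenIffNuc}. It also satisfies the UCT by Tu's theorem.
\item By \autoref{thm:AmenGFullRedCP}, the full and reduced crossed products agree.
\item Essential freeness implies topological freeness when the action is minimal. If some $X_g$ with $g\neq 1$ had nonempty interior $W$, then by minimality and compactness finitely many translates of $W$ cover $X$. Every invariant measure $\mu$ would then give $W$ positive mass, hence $\mu(X_g)>0$, contradicting essential freeness. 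Since $G$ is amenable, $\mathrm{Prob}_G(X)\neq\emptyset$, so such a $\mu$ exists and the contradiction is genuine.
\item Simplicity then follows from \autoref{thm:SimpleCP}.
\item Unitality and separability are automatic by \autoref{eg:CPunitalSep}.
\end{itemize}

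The only potential obstacle is confirming that the cited results carry no hidden freeness assumption. The text explicitly notes this for \autoref{thm:afZstab}, and part~(2) of \autoref{thm:PetrComp} is stated for minimal actions only, so every step goes through.
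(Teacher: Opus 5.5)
Your proposal is correct and matches the paper's proof: dynamical comparison from part~(2) of \autoref{thm:PetrComp}, almost finiteness from \autoref{thm:EssFreeSBP}, and $\mathcal{Z}$-stability from \autoref{thm:afZstab}. Your Step~4 also checks the remaining classifiability hypotheses, which the paper leaves implicit; in particular, you correctly show that minimality plus essential freeness forces topological freeness, using an invariant measure supplied by amenability of $G$.
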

\begin{proof}
By \autoref{thm:EssFreeSBP} and \autoref{thm:PetrComp}, the dynamical 
system $G\curvearrowright X$ is almost finite. The result thus follows from \autoref{thm:afZstab}.
\end{proof}

We do not obtain an analog of \autoref{cor:ElemAmen} for essentially free actions,
because the proof of dynamical comparison in that setting (part~(4) of \autoref{thm:PetrComp}) requires freeness in a crucial way\footnote{Among others, it relies on a bootstrap argument that requires one to know that the restriction of an action as in the statement to any subgroup is again an action as in the statement -- this is true for freeness, but not for essential freeness.}.

\section{Beyond essential freeness: allosteric actions}

In the previous section, we saw that some of the strongest
results around almost finiteness, originally proved for free actions, can be extended to the essentially free setting.
In our quest to develop machinery to deal with 
\emph{topologically} free actions, a natural question arises: 
do there exist minimal, topologically free actions that are not essentially free? Following terminology introduced by Joseph \cite{Jos_amenable_2024}, we give such actions a name.

\begin{df} A dynamical system $G\curvearrowright X$ is said to be
\emph{allosteric} if it is minimal, topologically free, but not essentially free. A group $G$ is said \emph{allosteric} if it admits an allosteric action. 
\end{df}

Many of the most natural amenable groups are not allosteric. For example, the 
following is \cite[Corollary~2.4]{Jos_continuum_2024}

\begin{prop} 
If a discrete group $G$ has only countably many subgroups, then $G$ is not allosteric.
\end{prop}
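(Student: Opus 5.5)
We argue by contraposition: if $G\curvearrowright X$ is minimal and topologically free, and $G$ has only countably many subgroups, then the action is essentially free. So fix $g\in G\setminus\{1\}$ and $\mu\in\mathrm{Prob}_G(X)$; we must show $\mu(X_g)=0$.

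The first step is to cover $X_g$ by stabilizer classes. For $x\in X$ write $G_x$ for the stabilizer of $x$. For each subgroup $H\leq G$ set
\[Y_H=\{x\in X\colon G_x=H\}.\]
The sets $Y_H$ partition $X$, and
\[X_g=\bigsqcup_{H\ni g}Y_H.\]
Each $Y_H$ is Borel, being a countable intersection of closed fixed point sets $X_h$ for $h\in H$ and complements of them for $h\notin H$; countability of $G$ is used here. Since there are only countably many subgroups, countable additivity reduces the task to showing $\mu(Y_H)=0$ for every subgroup $H\neq\{1\}$.

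The second step handles a fixed $H\neq\{1\}$. Consider the $G$-saturation
\[Z_H=\bigcup_{k\in G}kY_H=\bigcup_{k\in G}Y_{kHk^{-1}},\]
which is a countable union of Borel sets and is $G$-invariant. On $Z_H$ the stabilizer map $x\mapsto G_x$ takes values in the conjugacy class of $H$, a countable set. We distinguish two cases.

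Suppose first that $H$ has infinite index in its normalizer, or more generally that $H$ has infinitely many conjugates. In this case one expects $\mu(Y_H)=0$ from invariance. Indeed, for $k\in N_G(H)$ we have $kY_H=Y_H$. Consider instead $k$ ranging over coset representatives of $G/N_G(H)$. The sets $Y_{kHk^{-1}}$ are then pairwise disjoint and have equal measure $\mu(Y_H)$, so if there are infinitely many of them, $\mu(Y_H)=0$.

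The remaining case, where $H$ has finitely many conjugates, is the main obstacle. There the normal core $N=\bigcap_k kHk^{-1}$ has finite index in $H$. If $N\neq\{1\}$, any $n\in N\setminus\{1\}$ fixes all of $Z_H$ pointwise. Since $\mu(Z_H)>0$, the support of $\mu|_{Z_H}$ is a nonempty closed set on which $n$ acts trivially. I would try to use minimality here, together with the countability of subgroups, to upgrade this measure-theoretic statement to the claim that $X_n$ has nonempty interior, contradicting topological freeness. The cited reference presumably achieves this via a Baire category argument.

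A cleaner route is to apply Baire category directly to topological freeness. Its countable version gives a dense $G_\delta$ set of points with trivial stabilizer, although a dense $G_\delta$ can still be $\mu$-null. So I would instead look for a \emph{closed} invariant set built from $\operatorname{supp}(\mu|_{Y_H})$ and apply minimality to it.

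I expect this upgrade, from positive measure to nonempty interior, to be the crux of the argument.
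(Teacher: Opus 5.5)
Your reduction to showing $\mu(Y_H)=0$ for each $H\neq\{1\}$ is sound. Your counting argument also correctly handles subgroups with infinitely many conjugates, though the relevant hypothesis there is that $N_G(H)$ has infinite index in $G$, not that $H$ has infinite index in its normalizer. The gap is the case of finitely many conjugates, which is where minimality and topological freeness actually enter: you leave it unresolved, and the normal-core route you sketch does not work.

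The claim that $N=\bigcap_k kHk^{-1}$ has finite index in $H$ is false. In $\Z^2\rtimes\Z_2$ with $\Z_2$ swapping coordinates, $H=\Z\times\{0\}$ has two conjugates and trivial core. Even for finite $H$ the core can be trivial (take $H=\langle(12)\rangle$ in $S_3$), so the subcase $N=\{1\}$ is not covered at all. Conversely, the subcase $N\neq\{1\}$ needs no passage from positive measure to nonempty interior. $Z_H$ is nonempty and $G$-invariant, so $\overline{Z_H}=X$ by minimality, and the closed set $X_n\supseteq Z_H$ is then all of $X$. (The survey itself gives no proof here and only refers to Joseph's Corollary~2.4.)

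The missing idea is to replace the non-closed saturation $Z_H$ by a finite union of \emph{closed} sets. For a subgroup $H'$ set $F_{H'}=\{x\colon G_x\supseteq H'\}=\bigcap_{h\in H'}X_h$. This set is closed, and $kF_{H'}=F_{kH'k^{-1}}$ for $k\in G$. If $\mu(Y_H)>0$, your counting shows that $H$ has only finitely many conjugates $H_1,\ldots,H_m$. Then $F=F_{H_1}\cup\cdots\cup F_{H_m}$ is closed, $G$-invariant, and contains $Y_H\neq\emptyset$, so $F=X$ by minimality. On the other hand, each $F_{H_i}$ lies inside $X_{h_i}$ for some $h_i\in H_i\setminus\{1\}$, so it is closed with empty interior by topological freeness. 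A finite union of closed sets with empty interior has empty interior, which contradicts $F=X$ and completes the proof. No Baire category argument and no analysis of $\operatorname{supp}(\mu)$ is needed beyond $Y_H\neq\emptyset$.
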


The class of groups with only countably many subgroups is significant: it includes all finitely generated groups with polynomial growth, and more generally polycyclic
groups, extensions of Noetherian groups by groups with only countably many
subgroups, etc. 

\begin{ex}
Show that a finitely generated group with polynomial growth has at most countably many subgroups.
\end{ex}

Thus, allosteric actions of amenable groups are particularly difficult to come by, but they were recently shown to exist
by Joseph in \cite{Jos_amenable_2024}. These examples show that the techniques currently available to study $\mathcal{Z}$-stability of crossed products are not sufficient.

Joseph's allosteric actions act on the Cantor space, and thus have the small boundary property. They also have dynamical strict comparison (since they are profinite actions), and thus it is natural to ask whether their crossed products are $\mathcal{Z}$-stable (although they are not almost finite). This is indeed the case; see \cite[Theorem~5.8]{GGGKN_essential_2024}.

\begin{thm}\label{thm:Allosteric} (Gardella-Geffen-Gesing-Kopsacheilis-Naryshkin).
Let $G\curvearrowright X$ be one of the allosteric actions constructed by Joseph in \cite[ Theorem 3.2 and Theorem 4.1]{Jos_amenable_2024}. Then $C(X)\rtimes G$ is $\mathcal{Z}$-stable.
\end{thm}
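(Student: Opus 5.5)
To verify $\mathcal{Z}$-stability I will use the Hirshberg--Orovitz criterion (\autoref{thm:ZstTrZst}), directly in the style of the sketch given for \autoref{thm:afZstab}. Since the action is minimal, topologically free and amenable (Joseph's actions are actions of amenable groups), $C(X)\rtimes G$ is simple, separable, unital and nuclear by \autoref{thm:SimpleCP} and \autoref{thm:AmenIffNuc}, so the criterion applies. By \autoref{prop:TopFreeCtzSubeq} the positive element $f$ may be taken in $C(X)$. Since $X$ is the Cantor space, we may further shrink $f$ to the indicator of a nonempty clopen set $B$. Given a finite set $F\subseteq C(X)\cup\{u_k\colon k\in K\}$ with $K\subseteq G$ finite, $\ep>0$ and $n\in\N$, the task is to build a c.p.c.\ order zero map $\varphi\colon M_n\to C(X)\rtimes G$ that approximately commutes with $F$ and satisfies $1-\varphi(1)\precsim \mathbbm{1}_B$.

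The key structural input is that Joseph's examples are profinite (odometer-type) actions. Concretely, $X=\varprojlim G/H_m$ for a decreasing chain of finite-index subgroups, or a closely related construction. This gives exact clopen partitions $X=\bigsqcup_{t\in G/H_m} t\cdot V_m$ into translates of a base $V_m$, with arbitrarily small diameter. Almost finiteness fails, since the action is not essentially free by \autoref{prop:afEssfree}. So the tower shapes $G/H_m$ cannot be taken to be F{\o}lner sets on which $G$ acts freely. Instead I plan to exploit the following: $C(X)\rtimes G$ should be realized as an inductive limit (or approximated) by algebras $C(V_m)\rtimes H_m\otimes M_{[G:H_m]}$, via the matrix-unit trick of \autoref{prop:CPLeftTransl} and Exercise~\ref{prop:GactsGH}. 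Elements of $C(X)$ whose variation is smaller than the mesh, together with the unitaries $u_k$ for $k\in K$, then lie approximately in this corner decomposition.

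The plan then has three steps. First, choose $m$ so that $F$ is $\ep$-close to $C(V_m)\rtimes H_m\otimes M_{N}$, where $N=[G:H_m]$. Second, produce an order zero map from $M_n$ that approximately commutes with the matrix part. For this, use amenability of $G$ to pick F{\o}lner-like subsets of $G/H_m$, on which the finite set $K$ acts almost invariantly, and build $\varphi$ as in the proof of \autoref{thm:afZstab}. The map is given by ramp functions $f_s$ on levels and partial translations $f_su_{st^{-1}}$, which commute with everything in $C(V_m)\rtimes H_m$ because the levels are $H_m$-invariant blocks. Third, bound the defect $1-\varphi(1)$. It is supported on a clopen set whose measure, for every invariant measure, is at most of order $1/n+\ep$. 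The action is profinite and hence has dynamical comparison, so this set is $\precsim B$ once the defect is chosen with measure below $\inf_\mu\mu(B)>0$ (minimality). Finally, \autoref{prop:DynSubeqCtz} upgrades this dynamical subequivalence to Cuntz subequivalence in $C(X)\rtimes G$.

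The main obstacle is the second step. The stabilizer groups $H_m$ act nontrivially on $V_m$, and the infinite-index, non-free part is exactly what breaks the free tower argument. I would first try to decompose $G/H_m$ into Rokhlin-type pieces indexed by a F{\o}lner set of a free quotient, for instance using the semidirect structure in Joseph's construction to find a normal subgroup acting freely on the tower index set. If that fails, I would fall back on tensorial absorption: show that $C(V_m)\rtimes H_m$ itself admits such order zero maps inductively, and combine them with the matrix amplification.
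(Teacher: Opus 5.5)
\textbf{There is a genuine gap: Step~2, which carries all the content, is not supplied, and the construction you describe for it fails.} The survey itself gives no argument here. It only notes that Joseph's actions are profinite (so they have the small boundary property and dynamical comparison) but are not almost finite, and cites \cite[Theorem~5.8]{GGGKN_essential_2024}. So your plan has to stand on its own.

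Your reductions are fine: the Hirshberg--Orovitz criterion, replacing $f$ by $\mathbbm{1}_B$, and the final comparison step via unique ergodicity and \autoref{prop:DynSubeqCtz} (provided the defect is kept below $\mu(B)$ independently of $n$, not of order $1/n$).

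Choose coset representatives $T\ni 1$ and set $e_{s,t}=\mathbbm{1}_{sV_m}u_{st^{-1}}$ for $s,t\in T$. This gives an exact isomorphism $C(X)\rtimes G\cong M_N\otimes\big(C(V_m)\rtimes H_m\big)$ for every $m$, so Step~1 is vacuous. Your partial translations $f_su_{st^{-1}}$ are scalar multiples of the $e_{s,t}$, so they lie in $M_N\otimes 1$. That they commute with $1\otimes (C(V_m)\rtimes H_m)$ is beside the point, because $F$ does not live there:
\begin{itemize}
\item a function $a\in C(X)$ is approximately $\sum_{t}a(x_t)e_{t,t}$, so $\|[a,e_{s,t}]\|\approx |a(x_s)-a(x_t)|$;
\item $u_ke_{s,t}u_k^*=e_{s',1}\big(\mathbbm{1}_{V_m}u_h\big)e_{1,t'}$, where $s',t'\in T$ represent $ksH_m$ and $ktH_m$, and $h=(s'^{-1}ks)(t'^{-1}kt)^{-1}\in H_m$.
\end{itemize}
Ramp functions repair neither defect.

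The single-tower formula in the sketch of \autoref{thm:afZstab} has the same problem: conjugation by $u_1$ carries $\varphi(e_{s,t})$ to a multiple of $\varphi(e_{s+1,t+1})$. So it is not a template to copy. Approximately central matrix units must move points only within sets of small diameter, and must be equivariant under left translation by $K$ away from the boundary. In the almost finite case this is arranged by matching different copies of a F{\o}lner sub-tower whose corresponding levels lie in the same small sets.

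Your fallbacks do not close the gap.
\begin{itemize}
\item F{\o}lner subsets of the finite $G$-set $G/H_m$ carry no information, since $G/H_m$ is itself invariant. A Kerr-type construction needs castles whose shapes are F{\o}lner subsets of $G$ and whose footprints cover most of $X$ in measure. The computation in the proof of \autoref{prop:afEssfree} shows that such castles force $\mu(X_g)$ to be small for every $g$ in the invariance set. That is exactly what fails here, because the action is not essentially free. In particular, no choice of transversal makes the coset cocycle trivial on most of $G/H_m$.
\item There is no ``free quotient'' to index Rokhlin pieces, since a group acting freely on a finite set is finite.
\item The ``inductive'' fallback is circular. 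Because $C(X)\rtimes G\cong M_N\big(C(V_m)\rtimes H_m\big)$, $\mathcal{Z}$-stability of $C(V_m)\rtimes H_m$ is equivalent to the statement you want. Moreover, $H_m\curvearrowright V_m$ is again a minimal profinite action that is not essentially free.
\end{itemize}

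What is missing is a genuinely different mechanism for producing approximately central order zero maps in a minimal system that is not almost finite. The cited result supplies this for Joseph's examples, and your proposal does not contain it.
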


It is desirable to construct more examples of allosteric actions of amenable groups in order to be able to develop techniques that allow us to show $\mathcal{Z}$-stability of their crossed products; see \autoref{qst:AFallosteric}.

\chapter{Paradoxical groups and purely infinite C*-algebras}

\section{Purely infinite crossed products}

In this section, we collect a number of elementary definitions and results
that will be needed in the rest of the chapter. We begin by defining pure 
infiniteness for simple C*-algebras, and refer the reader to the second
section in the appendix for more background on this class, as well as for 
some context within Elliott's program.

\begin{df}\label{df:PI}
A simple, unital C*-algebra $A$ is said to be \emph{purely infinite} if 
for every element $a\in A$,
there exist $x,y\in A$ with $xay=1$.
\end{df}

Pure infiniteness, in the simple case, can be characterized in
terms of Cuntz subequivalence in a neat way.

\begin{ex}\label{ex:CharPICtzSub}
Let $A$ be a simple, unital C*-algebra. Show that
$A$ is purely infinite if and only if $1\precsim a$ for every
positive element $a\in A$.
\end{ex}

The relevance of pure infinite C*-algebras in our setting stems from the fact
that if $G$ is nonamenable and $G\curvearrowright X$ is amenable, minimal and 
topologically free, then the crossed product $C(X)\rtimes G$ is traceless by
\autoref{prop:GamenIFFinvmeas}. In this setting, Kirchberg's dichotomy asserts that $\mathcal{Z}$-stability for
$C(X)\rtimes_{\alpha} G$ is equivalent to pure infiniteness, which in practice can be
checked more directly than $\mathcal{Z}$-stability. This is perhaps a bit surprising at first sight, since pure infiniteness, in the simple and nuclear setting, is a much stronger condition than $\mathcal{Z}$-stability, and hence it may seem paradoxical that it is easier to check the stronger condition in practice. The reason is that it is far from obvious that pure infiniteness implies $\mathcal{Z}$-stability, and this in fact follows from an extremely deep result of Kirchberg (\cite{KirPhi_embedding_2000}): a simple, purely infinite, nuclear C*-algebra automatically absorbs the Cuntz algebra $\mathcal{O}_\infty$ tensorially. 
We will tacitly use this fact throughout.

The next proposition implies that in order to establish pure infiniteness of a traceless crossed product (such as the crossed product of an amenable action of a nonamenable group; see \autoref{prop:GamenIFFinvmeas}),
it suffices to show that the dynamical system has comparison.

\begin{prop}\label{prop:CPnoTracesPI}
Let $G\curvearrowright X$ be an amenable, minimal, topologically free dynamical system. If $G$ is nonamenable and $G\curvearrowright X$ has comparison,
then $C(X)\rtimes G$ is purely infinite and simple.
\end{prop}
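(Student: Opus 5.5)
Simplicity comes first and needs no new work. Since $G\curvearrowright X$ is amenable, minimal and topologically free, \autoref{thm:SimpleCP} shows that $C(X)\rtimes_{\lambda} G$ is simple, and by \autoref{thm:AmenIffNuc} it agrees with $C(X)\rtimes G$. For pure infiniteness I will use \autoref{ex:CharPICtzSub}: it suffices to show that $1\precsim a$ for every nonzero positive $a\in C(X)\rtimes G$. By \autoref{prop:TopFreeCtzSubeq} there is a nonzero $h\in C(X)$ with $h\precsim a$. Replacing $h$ by $|h|$ (or $h^*h$) we may take $h\geq 0$, and by transitivity (\autoref{ex:CtzCompTransitive}) it then suffices to prove $1\precsim h$ in $C(X)\rtimes G$.

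Next I translate this into dynamics. Let $V=\supp_o(h)$, a nonempty open set. Since $G$ is nonamenable, \autoref{prop:GamenIFFinvmeas} gives $\mathrm{Prob}_G(X)=\emptyset$. The condition ``$\mu(X)<\mu(V)$ for all $\mu\in\mathrm{Prob}_G(X)$'' is therefore vacuously true, and dynamical comparison (\autoref{df:DynComp}) yields $X\precsim V$. Here $X$ is open and compact, so this says exactly that $K=X$ admits a finite open cover $X=U_1\cup\cdots\cup U_n$ and elements $g_1,\ldots,g_n\in G$ with $\bigsqcup_j g_jU_j\subseteq V$. By \autoref{prop:DynSubeqCtz} applied to $1$ and $h$, whose open supports are $X$ and $V$, we get $1\precsim h$, which finishes the argument.

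The main obstacle is that \autoref{prop:DynSubeqCtz} was only proved in the clopen case, while here $X$ need not be totally disconnected. I would prove the needed general case directly. Choose a partition of unity $(\rho_j)$ subordinate to $(U_j)$ and set $v=\sum_j u_{g_j}\rho_j^{1/2}$. The supports of $\alpha_{g_j}(\rho_j)$ lie in the pairwise disjoint sets $g_jU_j$, so the cross terms vanish as in the clopen computation. This gives
\[v^*h'v=\sum_j\rho_j^{1/2}\alpha_{g_j}^{-1}(h')\rho_j^{1/2}\]
for any $h'\in C(X)$.

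I then need $h'$ with $h'\precsim h$ and $h'=1$ on the compact set $\bigcup_j g_j\overline{\supp(\rho_j)}\subseteq V$. One way is to take $h'=\min(1,h/\delta)$, where $\delta$ is the minimum of $h$ on that compact set. Its open support is $V$, so $h'\precsim h$ by \autoref{ex:CtzCompCX}. With this choice $v^*h'v=\sum_j\rho_j=1$, so $1\precsim h'\precsim h$. The compactness of $\bigcup_j g_j\overline{\supp(\rho_j)}$ requires choosing the $\rho_j$ with $\overline{\supp(\rho_j)}\subseteq U_j$, which is standard on a compact metrizable space.
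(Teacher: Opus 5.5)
Your proof is correct and follows essentially the same route as the paper: you reduce to a positive $h\in C(X)$ via \autoref{prop:TopFreeCtzSubeq}, use the absence of invariant measures together with comparison to get $X\precsim \supp_o(h)$, convert this into $1\precsim h$, and finish by transitivity. The only difference is in the conversion step: the paper interposes a Urysohn function $g$ equal to $1$ on $\overline{U}$, for some $U$ with $\overline{U}\subseteq\supp_o(f)$, and cites \autoref{prop:DynSubeqCtz} as a black box, whereas you prove the non-clopen case of that proposition directly via a partition of unity and the auxiliary function $h'=\min(1,h/\delta)$, which plays exactly the role of the paper's $g$.
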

\begin{proof}
By \autoref{thm:SimpleCP}, we only need to show pure infiniteness. 
Let $a\in C(X)\rtimes  G$ be a nonzero positive contraction; we will show that 
$1\precsim a$ in $C(X)\rtimes  G$. Once we do this, the result will follow from \autoref{ex:CharPICtzSub}.
By \autoref{prop:TopFreeCtzSubeq},
there exists a nonzero positive contraction $f\in C(X)$ such that 
$f\precsim a$ in $C(X)\rtimes G$. 

Set $W=\{x\in X\colon f(x)>0\}$, and
choose $U$ to be a nonempty open subset of $W$ such that $\overline{U}\subseteq W$.
There exists a positive contraction $g\in C(X)$ such that $g=0$ on $X\setminus W$ and $g=1$ on $\overline{U}$.
Note that $X\precsim U$, since $G\curvearrowright X$ has 
comparison and there are no $G$-invariant measures on $X$ 
by \autoref{prop:CPnoTracesPI}.
Now \autoref{prop:DynSubeqCtz} implies that
$1_{C(X)}\precsim g$ in $C(X)\rtimes G$.
On the other hand, we have 
$g\precsim f$ since $\{x\in X\colon g(x)>0\}\subseteq W=\{x\in X\colon f(x)>0\}$; see \autoref{ex:CtzCompCX}. Transitivity of Cuntz subequivalence (see \autoref{ex:CtzCompTransitive}) gives $1_{C(X)}\precsim f$, and so $1_{C(X)}\precsim a$, as desired.
\end{proof}

In short, if $G$ is nonamenable and $G\curvearrowright X$ is an amenable, minimal and topologically free dynamical system, then in order to establish $\mathcal{Z}$-stability of $C(X)\rtimes G$, it suffices to prove comparison for $G\curvearrowright X$. This will be our task for the rest of the chapter.

\section{Groups with paradoxical towers}

In this section, we introduce a class of nonamenable groups for which we will show 
in \autoref{thm:ParTowComp} that every minimal, amenable action has dynamical comparison.
The following definition was introduced in \cite{GGKN_classifiability_2023}.

\begin{df}\label{df:ParTow} (Gardella-Geffen-Kranz-Naryshkin). Let $G$ be a discrete group 
and let $n\in\N$. We say that $G$ has \emph{$n$-paradoxical towers} if for every finite subset $D\subseteq G$ there
exist subsets $A_1,\ldots, A_n\subseteq G$ and group elements 
$g_1,\ldots, g_n\in G$ such that 
\be\item $G=\bigcup_{j=1}^n g_jA_j$, and 
\item the sets $dA_j$, for $d\in D$ and $j=1,\ldots,n$, are pairwise disjoint.\ee

We say that $G$ has \emph{paradoxical towers} if it has $n$-paradoxical towers for some $n\in\N$.
\end{df}

The following lemma justifies the terminology.

\begin{lma} \label{lma:ParTowPar}
Groups with paradoxical towers are paradoxical.\end{lma}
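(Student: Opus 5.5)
The plan is to produce a paradoxical decomposition directly from the paradoxical towers, using a carefully chosen finite set $D$. I will use the standard notion: $G$ is paradoxical if there are pairwise disjoint subsets $B_1,\ldots,B_p,C_1,\ldots,C_q\subseteq G$ and elements $h_1,\ldots,h_p,k_1,\ldots,k_q$ with $G=\bigcup_i h_iB_i=\bigcup_l k_lC_l$. Unions suffice here, since covers can always be refined to partitions by shrinking the pieces. Alternatively, I could aim to show that $G$ admits no left-invariant mean, which for discrete groups is equivalent by Tarski's theorem. I will write the argument in the mean language, since it is the cleanest.

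Suppose $G$ has $n$-paradoxical towers, and suppose for contradiction that $m$ is a left-invariant finitely additive probability mean on $G$. I choose $D\subseteq G$ finite with $|D|=2n$; if $G$ is finite there is nothing to do, since paradoxical towers with $|D|>|G|$ force some $A_j=\emptyset$, which is impossible when $G$ is nonempty and covered. Apply \autoref{df:ParTow} to $D$ to get $A_1,\ldots,A_n$ and $g_1,\ldots,g_n$. From condition (1), subadditivity and invariance give
\[1=m(G)\le \sum_{j=1}^n m(g_jA_j)=\sum_{j=1}^n m(A_j).\]
From condition (2), the $2n\cdot n$ sets $dA_j$ are pairwise disjoint, so additivity and invariance give
\[2n\sum_{j=1}^n m(A_j)=\sum_{d\in D}\sum_{j=1}^n m(dA_j)\le 1.\]
Together these yield $2n\le 1$, a contradiction. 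Hence $G$ is nonamenable, and by Tarski's theorem it is paradoxical.

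If one prefers an explicit decomposition, take $D=\{d_1,\ldots,d_{2n}\}$ and set $B_j=d_jA_j$ and $C_j=d_{n+j}A_j$ for $j=1,\ldots,n$. These $2n$ sets are pairwise disjoint by condition (2). Moreover, $(g_jd_j^{-1})B_j=g_jA_j$, so these translates cover $G$ by condition (1), and similarly $(g_jd_{n+j}^{-1})C_j=g_jA_j$ also cover $G$. This already exhibits a paradoxical decomposition in the covering sense.

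The only delicate point is convention. The paper's notion of ``paradoxical'' may require translates that form a \emph{partition} of $G$ rather than a cover, and then one must disjointify: replace $g_jA_j$ by $g_jA_j\setminus\bigcup_{i<j}g_iA_i$ and shrink $B_j$ and $C_j$ accordingly. Shrinking preserves the disjointness of the $B$'s and $C$'s. The case of finite $G$ is excluded as above, so no further obstacle is expected.
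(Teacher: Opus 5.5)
Your argument is correct and is the same as the paper's: assuming a left-invariant mean $m$, condition (1) gives $\sum_j m(A_j)\ge 1$ and condition (2) gives $|D|\sum_j m(A_j)\le 1$, so any $D$ with $|D|\ge 2$ yields a contradiction. Your explicit decomposition is an extra the paper does not give, and it shows paradoxicality directly without going through Tarski's theorem.

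The only slip is the finite-group aside, which does not make sense as written: no $D\subseteq G$ has $|D|>|G|$, and a single empty $A_j$ contradicts nothing. Take $|D|=2$ instead. This suffices for both the mean argument and the decomposition (e.g.\ $B_j=d_1A_j$, $C_j=d_2A_j$). Also note that the trivial group meets the definition with $n=1$, $A_1=\{1\}$, $g_1=1$, so it is a degenerate exception. The statement tacitly excludes it, and so does the paper's choice $|D|>2$.
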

\begin{proof} Let $G$ be a group with $n$-paradoxical towers. Arguing by contradiction, assume that there exists a left-invariant, finitely-additive probability measure $\mu$ defined on the power set of $G$.
Let $D$ be any finite set with $|D|>2$, and 
find $A_1,\ldots, A_n\subseteq G$ and $g_1,\ldots, g_n\in G$ such that 
\be\item[(5.1)] $G=\bigcup_{j=1}^n g_jA_j$, and 
\item[(5.2)] the sets $dA_j$, for $d\in D$ and $j=1,\ldots,n$, are pairwise disjoint.\ee 
Then
\[1=\mu(G)\stackrel{(5.1)}{\leq} \sum_{j=1}^n \mu(A_j)
 \stackrel{(5.2)}{\leq}\frac{1}{|D|} <1,
\]
which is a contradiction.
\end{proof}

As an elementary example, we show that free groups have 2-paradoxical towers.

\begin{prop}\label{prop:F2ParTow}
The free group $\mathbb{F}_n$ has 2-paradoxical towers.
\end{prop}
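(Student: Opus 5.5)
The plan is to take $n\geq 2$ (for $n=1$ the group $\Z$ is amenable, so by \autoref{lma:ParTowPar} it cannot have paradoxical towers), with two free generators $a,b$ of $\mathbb{F}_n$. The sets $A_1,A_2$ will be "cones" of reduced words with a long, rigid prefix. For a reduced word $p$, write $W(p)$ for the set of elements whose reduced form begins with $p$. Given a finite $D\subseteq \mathbb{F}_n$, enlarge $D$ to the ball of radius $R$ around the identity. This is harmless, since disjointness for a larger $D$ implies it for a smaller one. Then fix $N>3R$ and set
\[A_1=W(a^Nb),\quad g_1=b^{-1}a^{-N},\qquad A_2=W(b^Na),\quad g_2=a^{-1}b^{-N}.\]

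\textbf{Covering.} I first check condition (1) of \autoref{df:ParTow}. Left multiplication by $(a^Nb)^{-1}$ strips the prefix, so $g_1A_1$ is the set of reduced words not beginning with $b^{-1}$, that is, $\mathbb{F}_n\setminus W(b^{-1})$. Likewise $g_2A_2=\mathbb{F}_n\setminus W(a^{-1})$. Since $W(a^{-1})\cap W(b^{-1})=\emptyset$, the union of the two translates is all of $\mathbb{F}_n$.

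\textbf{Disjointness.} This is the main step. Take $d$ with $|d|\leq R$ and $z=d\,a^Nb\,x$ with $x$ reduced and not beginning with $b^{-1}$. Then at most $R$ letters of $a^N$ cancel, so the reduced form of $z$ is $u\,a^{N-R}\,b\,x$, where $u=\mathrm{red}(da^R)$ has length at most $2R$. I will show that $d$ can be recovered from $z$, and that the prefix type ($a$ versus $b$) can be recovered too.

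In the reduced word $z$, consider the first maximal run of the letter $a$ of length at least $N-R$. It starts within the first $2R+1$ letters. No earlier such run can exist, because $|u|\leq 2R<N-R$. That run is exactly the tail of $\mathrm{red}(da^N)$ and is followed by $b$. Hence the prefix of $z$ ending at this run equals $\mathrm{red}(da^N)$, and $d=\mathrm{red}(da^N)\,a^{-N}$ is determined by $z$. This gives $dA_1\cap d'A_1=\emptyset$ for $d\neq d'$, and symmetrically for $A_2$.

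For the mixed case, suppose $z\in dA_1\cap d'A_2$. Then $z$ would contain both an $a$-run and a $b$-run of length greater than $2R$, each starting within the first $2R+1$ positions. These runs would have to overlap, which is impossible. Therefore all the sets $dA_j$, for $d\in D$ and $j=1,2$, are pairwise disjoint, which is condition (2).

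The only delicate point is the bookkeeping of cancellation in the disjointness step. Choosing $N>3R$ and using a prefix that is not a pure power (the final letter $b$, respectively $a$) is what rules out the collision $d'=da^k$ that a prefix $a^N$ alone would allow.
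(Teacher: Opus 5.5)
Your proof is correct and takes essentially the same route as the paper. Both arguments use cones $\mathsf{W}(w)$ of reduced words beginning with a long power of a generator followed by a marker letter. Both get the covering from $w^{-1}\mathsf{W}(w)=\mathbb{F}_n\setminus\mathsf{W}(x^{-1})$, where $x$ is the last letter of $w$, and both get disjointness from the fact that left multiplication by words of bounded length cannot cancel through the long power.

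The differences are cosmetic. The paper uses a common prefix $a^{2m}$ with three suffixes $ba$, $ba^{-1}$, $b^2$, any two of which cover. You use the two prefixes $a^Nb$ and $b^Na$, which turns the mixed case into a clean run-overlap argument.
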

\begin{proof}
For simplicity, we will assume that $n=2$, although the proof is completely general.
We will show that given a finite subset $D\subseteq \mathbb{F}_2$ there are subsets
$A_1,A_2,A_3\subseteq \mathbb{F}_2$ and $g_1,g_2,g_3\in \mathbb{F}_2$
such that: 
\be\item[(i)] the sets $dA_j$, for $d\in D$ and $j=1,2,3$, are pairwise disjoint,
\item[(ii)] the sets $\mathbb{F}_2\setminus g_jA_j$, for $j=1,2,3$, are pairwise
disjoint.
\ee

Before doing this, we show that the above implies that 
$\mathbb{F}_2$ has 2-paradoxical towers. Indeed, condition (ii)
gives
\[\emptyset=\big(\mathbb{F}_2\setminus g_1A_1\big)\cap \big(\mathbb{F}_2\setminus g_2A_2\big),\]
and by taking complements we get $\mathbb{F}_2=g_1A_1\cup g_2A_2$. In particular, we deduce that
$A_1,A_2$ and $g_1,g_2$ satisfy the conditions in \autoref{df:ParTow} for $n=2$.

Denote by $a,b$ the generators of $\mathbb{F}_2$, and set 
$L=\{a,b,a^{-1},b^{-1}\}$. For $h\in \mathbb{F}_2$, we write
$\mathsf{W}(h)\subseteq \mathbb{F}_2$ for the set of all reduced words 
with letters from $L$ that start with $h$. If
$x\in L$ is the last letter of $h$, then a generic element of $\mathsf{W}(h)$ has the
form $hg$ with $g\notin \mathsf{W}(x^{-1})$. In particular,
\begin{equation}\label{eqn:ComplementF2}\tag{5.3}
\mathbb{F}_2\setminus h^{-1}\mathsf{W}(h)= \mathsf{W}(x^{-1}).
\end{equation}
For $r\geq 0$, we write $B_r$ for the set 
of all reduced words with at most $r$ letters, and observe that $B_rB_s= B_{r+s}$ for 
all $r,s\geq 0$.
Let $D\subseteq \mathbb{F}_2$ be a finite set.
Find $m\geq 0$ with $D\subseteq B_m$, and define 
\[g_1=a^{2m}ba, \ \ g_2=a^{2m}ba^{-1}, \ \ \mbox{ and }
\ \ g_3=a^{2m}b^2.
\]
For $j=1,2,3$, set $A_j=\mathsf{W}(g_j)$. 

We claim that these sets satisfy condition (i).
Since every element of $D$ has length at most $m$, it suffices to check
that whenever $e\in B_{2m}$ and $j,k=1,2,3$ satisfy 
$eA_j\cap A_{k}\neq \emptyset$, then $e=1$ and $j=k$.
If $eA_j$ intersects $A_k$, then there is 
$g\in A_j$ such that $eg$, after reduction, starts with $2m$ copies of $a$. 
Since the $(2m+1)$-st letter of $g$ is not $a$, it follows that 
the product
of $e$ and $g$ cannot have any cancelations, and thus
$e=a^{k}$ for some $0\leq \ell \leq m$. 
On the other hand, $a^{\ell}g_j$ never has $g_k$ as an initial segment unless $\ell=0$
and $j=k$. This proves the claim.

Using \eqref{eqn:ComplementF2}, we get 
\[\mathbb{F}_2\setminus g_1A_1=\mathsf{W}(a^{-1}), \ \ 
 \mathbb{F}_2\setminus g_2A_2=\mathsf{W}(a), \ \ \mbox{ and } \ \
 \mathbb{F}_2\setminus g_3A_3=\mathsf{W}(b^{-1}), \ \
\]
and these sets are clearly pairwise disjoint. This finishes the proof.
\end{proof}

With a bit more work, one can show the following:

\begin{egs} The following nonamenable groups have
 paradoxical towers:
 \be\item Hyperbolic groups.
 \item SL$_n(\Z)$, for all $n\geq 2$.
 \item Nontrivial free products $G\ast H$.
 \item Baumslag-Solitar groups.
 \item Acylindrically hyperbolic groups such as $\mathrm{Out}(\mathbb{F}_n)$.
 \ee
\end{egs}

What the groups listed above have in common is that, for some $n\in\N$, 
each of them admits a topologically free $n$-filling action on a Hausdorff
Baire space; see \cite[Proposition~4.6]{GGKN_classifiability_2023}.

On the other hand, not all paradoxical groups admit paradoxical towers:

\begin{ex} Show that $\mathbb{F}_2\times \Z$ does not have paradoxical towers. \end{ex}

\section{Dynamical comparison and classifiability}

The following is the main result of this chapter, and it is the key step in showing pure infiniteness of the crossed product.\footnote{One also needs to invoke, in some way or another, Kirchberg's extremely deep result, asserting that a purely infinite, simple, nuclear \ca\ must absorb $\mathcal{O}_\infty$ tensorially, in order to go from pure infiniteness to $\mathcal{Z}$-stability; see the discussion after \autoref{ex:CharPICtzSub}.} A complete proof (of a more general statement) can be found in \cite[Theorem~3.6]{GGKN_classifiability_2023}.

\begin{thm}\label{thm:ParTowComp}
Let $G$ be a group with paradoxical towers and let $G\curvearrowright X$ be an amenable, minimal dynamical
system. Then $G\curvearrowright X$ has dynamical comparison.
\end{thm}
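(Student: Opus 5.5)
Since $G$ is nonamenable, the amenability of $G\curvearrowright X$ forces $\mathrm{Prob}_G(X)=\emptyset$: by the argument in the proof of \autoref{prop:GamenIFFinvmeas}, an invariant measure averaged against the maps $\mu_n$ from \autoref{df:AmenAc} would produce almost invariant probability measures on $G$. So the hypothesis of \autoref{df:DynComp} holds vacuously for any pair of open sets with $V\neq\emptyset$. The task is therefore to show that $U\precsim V$ for every open $U$ and every nonempty open $V$. It suffices to prove $X\precsim V$, i.e.\ that $X$ (which is compact) can be covered by finitely many open sets $U_1,\dots,U_m$ with elements $h_1,\dots,h_m$ such that the sets $h_jU_j$ are pairwise disjoint and contained in $V$.

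The plan is to combine minimality with the paradoxical towers, using amenability of the action to transfer the towers from $G$ to $X$.

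\textbf{Step 1 (minimality).} By minimality and compactness, there is a finite set $D\subseteq G$ with $X=\bigcup_{d\in D}d^{-1}V$. Equivalently, every orbit meets $V$ within the bounded window $D$.

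\textbf{Step 2 (paradoxical towers).} Apply \autoref{df:ParTow} to a suitable enlargement of $D$, say $D'=D^{-1}D$ or a larger set, to obtain sets $A_1,\dots,A_n$ and elements $g_1,\dots,g_n$. Heuristically, $G=\bigcup_j g_jA_j$ lets us cover the group by $n$ translated pieces, while disjointness of the sets $dA_j$ lets us send each piece into $V$ injectively.

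\textbf{Step 3 (transfer to $X$).} Use amenability of the action to turn these subsets of $G$ into continuous partitions of unity on $X$. Take $\mu\colon X\to P(G)$ that is almost equivariant for the finitely many group elements involved (the $g_j$ and the elements of $D'$), and define functions $\phi_j(x)=\mu(x)(A_j)$, possibly after pushing or translating. These give approximate covers of $X$: from $G=\bigcup_j g_jA_j$ one gets $\sum_j \phi_j(g_j^{-1}x)\gtrsim 1$, and from disjointness of the sets $dA_j$ one gets $\sum_{d,j}\phi_j(d^{-1}x)\le 1$ approximately.

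\textbf{Step 4 (producing the sets).} Form open sets $W_j=\{\phi_j>\delta\}$ and, for each point, pick via Step 1 an element $d\in D$ sending it into $V$. Assemble from these a finite open cover of $X$ whose translates land disjointly in $V$.

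The main obstacle is Step 4, converting these fuzzy, measure-valued estimates into genuinely disjoint open sets. Fractional disjointness only yields a comparison "up to a factor", of the form $X\precsim^{k}V$ with multiplicity. I would first try to prove a weaker statement: $X$ is covered by $N$ open sets each of which is subequivalent to $V$, with $N$ depending only on $n$. I would then use the paradoxical towers a second time, applied to the finitely many disjoint translates of $V$ obtained from $D$, to absorb this multiplicity. This mimics how $n$-paradoxicality yields infinitely many disjoint copies: the disjoint sets $dA_j$, $d\in D$, provide room for $|D|$ disjoint copies of each tower, which exceeds the multiplicity $N$ once $|D|$ is chosen large. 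Choosing the enlargement in Step 2 so that $|D'|\ge N$ is the point where the bookkeeping must be done carefully.
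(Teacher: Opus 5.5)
Your reduction is fine. By \autoref{lma:ParTowPar} and the argument in \autoref{prop:GamenIFFinvmeas} there are no invariant measures, so it suffices to prove $X\precsim V$ for every nonempty open $V$. Steps 1--3 (a finite $D$ from minimality, towers for $D$, transfer through an almost equivariant $\mu\colon X\to P(G)$) are the same ingredients the paper uses. The gap is Step 4. It is the whole difficulty, and you leave it unresolved. As written it fails. For $n\geq 2$ the cover only gives $\max_j\mu(x)(g_jA_j)\geq 1/n$, so your threshold must satisfy $\delta<1/n\leq 1/2$. Then the sets $dW_j$, $d\in D$, need not be disjoint, since two disjoint subsets of $G$ can both carry $\mu(x)$-mass above $\delta$. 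Sending each point of $W_j$ into $V$ by ``some $d\in D$'' therefore produces overlapping images. Your fallback does not close this. You give no argument for the intermediate claim that $X$ is covered by $N$ sets each subequivalent to $V$, with $N$ independent of $V$. A pointwise multiplicity bound on $\{dW_j\}_{d\in D}$ does not by itself split the pieces $W_j\cap d^{-1}V$ into boundedly many families with disjoint images; the pieces must be refined, and you do not say how. Absorbing the multiplicity by ``using the paradoxical towers a second time'' with $|D'|\geq N$ is circular. Disjointness of the $dA_j$ in $G$ only passes to $X$ fuzzily, and that is exactly the problem you are trying to solve.

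The paper avoids all of this with a majority threshold. For $\mathbb{F}_2$, \autoref{prop:F2ParTow} gives three sets whose translates $g_jA_j$ have pairwise disjoint complements. Hence for every $x$ some $g_jA_j$ carries at least $2/3$ of $\mu(x)$. Thresholding $\mu(x)(A_j)$ at roughly $2/3$ makes the sets $dV_j$ automatically pairwise disjoint, because two disjoint subsets of $G$ cannot both carry more than half of a probability measure. This gives $X\precsim\bigsqcup_j V_j\precsim U$ directly, with no rounding step.

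If you want your multiplicity route for general $n$, it can be completed, but it needs two ingredients you do not have.
\begin{enumerate}
\item A coloring lemma. Suppose $X=\bigcup_{d\in D}d^{-1}V$ and every point lies in at most $M$ of the sets $dW$, $d\in D$. Choose continuous $h_d\geq 0$ with $\{h_d>0\}=dW\cap V$. For $\sigma\subseteq D$, let $Q_\sigma$ be the open set where the indices in $\sigma$ carry strictly the $|\sigma|$ largest values of $(h_d)_{d\in D}$. Use the pieces $d^{-1}Q_\sigma$, $d\in\sigma$, moved by $d$ and colored by $|\sigma|$ together with the position of $d$ in $\sigma$. These pieces cover $W$, and same-size sets $Q_\sigma$ are disjoint. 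This gives comparison with at most $M(M+1)/2$ colors, and with $\delta=\tfrac{1}{2n}$ one has $M<4n$, independently of $V$.
\item Absorption by minimality alone, with no second use of towers. $X$ cannot be finite, since the action is amenable and $G$ is not, so by minimality $X$ has no isolated points. Hence there are $h_0,\dots,h_m\in G$ and a nonempty open $V'$ with the sets $h_iV'$ pairwise disjoint inside $V$. Apply the multiplicity statement to $V'$ and route the $i$-th color class through $h_i$.
\end{enumerate}
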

\begin{proof} 
In order to illustrate the argument more clearly, we
will prove this for $G=\mathbb{F}_2$.
Given an open set $U\subseteq X$, we need to show that 
$X\precsim U$; see \autoref{df:DynSubeq}. We will construct an open
set $V\subseteq X$ satisfying $X\precsim V\precsim U$, which is easily seen to imply $X\precsim U$.
By minimality, there is a finite set $D\subseteq \mathbb{F}_2$
such that $D\cdot U=X$. 
Use \autoref{prop:F2ParTow}
to find sets $A_1,A_2\subseteq \mathbb{F}_2$ and group 
elements $g_1,g_2\in \mathbb{F}_2$ satisfying the 
following:
\begin{enumerate}
\item[(i)] The sets $dA_j$, for $d\in D^{-1}$ and $j=1,2$, are 
pairwise disjoint.
\item[(ii)] $(\mathbb{F}_2\setminus g_1A_1)\cap (\mathbb{F}_2\setminus g_2A_2)=\emptyset$.
\end{enumerate}
Let $\varepsilon<\frac{1}{24}$, and use amenability
of $\mathbb{F}_2\curvearrowright X$ to find a continuous map 
$\mu\colon X\to \mathrm{Prob}(\mathbb{F}_2)$ with
\begin{enumerate}
\item[(iii)]$\sup_{x\in X}\|\mu(g\cdot x)-g\cdot \mu(x)\|_1<\varepsilon$
for all $g\in D^{-1}\cup \{g_1, g_2\}$.
\end{enumerate}

For $j=1,2$, set 
\[V_j=\Big\{x\in X\colon \mu(x)(A_j)>\tfrac{2}{3}+\varepsilon\Big\},\]
and define $V=V_1\cup V_2$. Combining (ii) and (iii), it is 
easy to see that the sets $d\cdot V_j$, 
for $d\in D$ and $j=1,2$, are pairwise disjoint.
In particular $V=V_1\sqcup V_2$. 

We will now show that $V\precsim U$. For each $d\in D$, set $V_d=V\cap d^{-1} U$. 
Then the sets $d\cdot V_d$, for $d\in D$, are pairwise disjoint
and contained in $U$. This shows that
$V\precsim U$.

To show that $X\precsim V$, we define open sets
\[W_j=\Big\{x\in X\colon \mu(x)(\mathbb{F}_2\setminus g_jA_j)
<\tfrac{2}{3}-2\varepsilon\Big\},\]
for $j=1,2$. By (ii) and the fact that $\mu(x)$ has total mass 1
for all $x\in X$, it follows that $W_1\cup W_2=X$. Moreover, 
another use of (iii) shows that
$g_j^{-1}W_j\subseteq V_j$ for $j=1,2$.
This shows that $X\precsim V$, and thus $X\precsim U$, as desired.
\end{proof}

We can now establish classifiability of crossed products by groups with 
paradoxical towers.

\begin{cor}
Let $G$ be a group with paradoxical towers and let $G\curvearrowright X$ be an amenable, minimal, topologically free dynamical system. Then $C(X)\rtimes G$ is classifiable.
\end{cor}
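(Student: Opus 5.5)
The plan is to verify each hypothesis of \autoref{cnj:Elliott} for $A=C(X)\rtimes G$, using the dynamical translations established earlier. Unitality and separability are automatic for dynamical systems in the sense of \autoref{df:DynSyst}, by \autoref{eg:CPunitalSep}. Since $G\curvearrowright X$ is amenable, \autoref{thm:AmenIffNuc} gives that $C(X)\rtimes G$ is nuclear and equals the reduced crossed product, and Tu's theorem gives the UCT. Minimality and topological freeness, together with \autoref{thm:SimpleCP}, give simplicity. So only $\mathcal{Z}$-stability remains.

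For $\mathcal{Z}$-stability, I first note that $G$ is nonamenable, since groups with paradoxical towers are paradoxical by \autoref{lma:ParTowPar}. By \autoref{prop:GamenIFFinvmeas}, applied to the amenable, minimal, topologically free system $G\curvearrowright X$, there are then no $G$-invariant probability measures on $X$, and $C(X)\rtimes G$ is traceless. Next, \autoref{thm:ParTowComp} shows that $G\curvearrowright X$ has dynamical comparison. Because $\mathrm{Prob}_G(X)=\emptyset$, the condition ``$\mu(U)<\mu(V)$ for all invariant $\mu$'' holds vacuously, so comparison means $X\precsim U$ for every nonempty open $U$. This is exactly the input needed in \autoref{prop:CPnoTracesPI}, which therefore yields that $C(X)\rtimes G$ is simple and purely infinite.

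Finally, $C(X)\rtimes G$ is simple, separable, nuclear and purely infinite. By Kirchberg's $\mathcal{O}_\infty$-absorption theorem, as discussed after \autoref{ex:CharPICtzSub}, we get $C(X)\rtimes G\cong (C(X)\rtimes G)\otimes\mathcal{O}_\infty$. Since $\mathcal{O}_\infty$ is itself $\mathcal{Z}$-stable, this gives
\[
(C(X)\rtimes G)\otimes\mathcal{Z}\cong (C(X)\rtimes G)\otimes\mathcal{O}_\infty\otimes\mathcal{Z}\cong (C(X)\rtimes G)\otimes\mathcal{O}_\infty\cong C(X)\rtimes G,
\]
so the crossed product is $\mathcal{Z}$-stable, hence classifiable.

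The only genuinely substantive step is dynamical comparison, and it is supplied by \autoref{thm:ParTowComp}. Everything else is bookkeeping, and the one point requiring care is to use nonamenability of $G$ to rule out invariant measures before invoking \autoref{prop:CPnoTracesPI}.
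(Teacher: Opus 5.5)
Your proposal is correct and follows essentially the same route as the paper. You derive nonamenability of $G$ from \autoref{lma:ParTowPar}, the absence of invariant measures from \autoref{prop:GamenIFFinvmeas}, comparison from \autoref{thm:ParTowComp}, pure infiniteness from \autoref{prop:CPnoTracesPI}, and finally $\mathcal{Z}$-stability from Kirchberg's $\mathcal{O}_\infty$-absorption; in addition, you spell out the remaining classifiability hypotheses (unitality, separability, nuclearity, UCT, simplicity) that the paper leaves implicit, and you rightly restrict comparison to nonempty open sets $U$.
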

\begin{proof}
By \autoref{thm:ParTowComp}, the system $G\curvearrowright X$
has comparison. Since $G$ is nonamenable by \autoref{lma:ParTowPar}, it follows from the equivalence between (2) and (3) in \autoref{prop:GamenIFFinvmeas} that there are 
no $G$-invariant measures on $X$, and hence $C(X)\rtimes G$ is purely infinite by
\autoref{prop:CPnoTracesPI}, and thus classifiable by Kirchberg's work.
\end{proof}

\chapter{Open problems and questions}

In this concluding chapter, we present a selection of open problems related to crossed products and dynamical system that will likely shape research in the coming years. Some of these are highly demanding and should be viewed as long-term projects, while with some others, results in the short term are more realistic. Likewise, some problems are phrased broadly, whereas others are posed in more precise and concrete terms.\footnote{Just a disclaimer: there is no attempt of being exhaustive in listing all possible, interesting questions and problems. I may have omitted some that I find myself interesting.}

\section{More about classifiable crossed products}

Although we have a fairly clear picture of how classifiability ought to look like in the case of (essentially) free actions of amenable groups, we still do not know much about the case of allosteric actions. There are many interesting problems to explore, among which we highlight these:

\begin{pbm} For one of the allosteric actions $G\curvearrowright X$ constructed by Joseph, explicitly compute the trace space of $C(X)\rtimes G$.
\end{pbm}

\begin{pbm} Construct more examples of allosteric actions of amenable groups. 
\end{pbm}

It would be highly desirable to develop machinery that allowed us to treat allosteric actions in a systematic way. We thus ask:

\begin{qst} \label{qst:AFallosteric}
Is there a version of almost finiteness that 
applies to allosteric actions?
Does this version of almost finiteness imply $\mathcal{Z}$-stability in the minimal case? 
\end{qst}

Progress in the nonamenable setting has been somewhat stuck since the results in 
\cite{GGKN_classifiability_2023}, but even within the class for which 
classifiability is known, there are interesting things to explore:

\begin{pbm} Find more examples of groups with paradoxical towers. 
\end{pbm}

All the examples we know of groups with paradoxical towers contain a free group. While it is not difficult to show, using the ping-pong lemma, that a torsion-free group with 2-paradoxical towers always contains $\mathbb{F}_2$, whether this is true in general is not known.

\begin{qst}
Do all groups with paraodical towers have to contain
$\mathbb{F}_2$?
\end{qst}

Again, it would be desirable to develop machinery that allowed us to deal with all amenable actions at once, regardless of whether the acting group is amenable or not.

\begin{qst} Is there a version of almost finiteness that 
does not incorporate amenability of $G$ into the definition but rather amenability of $G\curvearrowright X$? Do amenable dynamical systems (with dynamical comparison) of paradoxical groups satisfy this property?
Does this version of almost finiteness imply $\mathcal{Z}$-stability in the minimal case? 
\end{qst}

\section{Extending the basic theory}

It is also extremely valuable to expand our understanding of crossed products in 
a general setting, not necessarily under our standard assumptions of amenability, minimality, or (topological) freeness.

Our first problem consists in obtaining a generalization of 
\autoref{thm:SimpleCP} that does not assume amenability of the action.

\begin{pbm} \label{qst:Ozawa}
Give a dynamical (and possibly also group-theoretic) characterization of simplicity for the crossed product of a dynamical system.
\end{pbm}

Some work exists \cite{AmrUrs_generalized_2022}, but Ozawa's question remains unanswered: is $G\curvearrowright X$ C*-simple if and only if $G\curvearrowright X$ is minimal and every open stabilizer is C*-simple?

\begin{pbm}\label{pbm:DynCompNonMin}
Define a notion of dynamical strict comparison which is meaningful to non-minimal dynamical systems. Can one obtain generalizations of the results at the end of \autoref{sec:DynComp} for this notion?
\end{pbm}

The following question is related to the above:

\begin{qst}\label{pbm:ExActionNoComp} Is there a minimal, amenable dynamical system which does not have dynamical comparison?
\end{qst}

A positive answer would be extremely
interesting even if the action is not topologically free. 
An example of a minimal action on a free group on a totally disconnected space without comparison was recently constructed by Boldrini and Prasad.

\section{Non $\mathcal{Z}$-stable crossed products}

Moving beyond classifiable crossed products, a natural question is the following:
\emph{How wild can crossed products be when they are not $\mathcal{Z}$-stable?} We suspect that the exotic behavior that the examples of R{\o}rdam \cite{Ror_simple_2003} or Villadsen \cite{Vil_simple_1998} exhibit cannot be realized by crossed products. In other words,
when a crossed product is simple, separable, unital and nuclear, but perhaps not $\mathcal{Z}$-stable, we suspect that a certain amount of regularity still holds. As a concrete example, the following question has been receiving increased attention lately.

\begin{qst} Does $C(X)\rtimes G$ always have stable rank one if $G\curvearrowright X$ has an invariant measure, is minimal and topologically free (but does not necessarily have the small boundary property)? 
\end{qst}

For crossed products which are $\mathcal{Z}$-stable, the question above has a positive answer by work of R{\o}rdam \cite{Ror_stable_2004}. The case where $G=\Z^d$ has been answered affirmatively by Guang-Niu (\cite[Corollary~7.15]{GuaNiu_stable_2020}), who also showed that the answer is also positive whenever $G\curvearrowright X$ has dynamical comparison and the so-called uniform Rokhlin property (which is automatic for minimal actions of $\Z^d$, by results in \cite{GuaNiu_stable_2020}). These results have been extended by Naryshkin to all abelian groups (indeed also for FC-groups); see \cite[Theorem~1.3]{Nar_URPC_2024}. More recently, Bell-Geffen-Kerr have improved these results in to product actions (\cite[Theorem~C]{BelGefKer_stable_2025}), and they also showed that the crossed product of a generic minimal, topologically free action of a free group on the Cantor space has stable rank one (\cite[Corollary~B]{BelGefKer_stable_2025}).

Continuing with non-classifiable simple crossed products, the computation of the radius of comparison may turn out to have a 
dynamical answer (\cite{HirPhi_radius_2022, Phi_minimal_2016}):

\begin{cnj} (Phillips-Toms). Let $G\curvearrowright X$ be a
free, minimal dynamical system of an amenable group $G$. Then 
\[\mathrm{rc}(C(X)\rtimes G)=\tfrac{1}{2} \mathrm{mdim}(G\curvearrowright X).
\]
\end{cnj}

While making further progress in the above conjecture may require new insights, it would be very interesting to explore similar questions for topologically free actions.

\begin{qst} 
Let $G\curvearrowright X$ be a
topologically free, minimal dynamical system of an amenable group $G$.  Does the radius of comparison of the crossed product have any relation to the radius of comparison beyond the free case?
\end{qst}

\appendix

\chapter{Structure and classification of C*-algebras}\label{app:Elliott}

In this appendix, we give an overview of the historical developments surrounding one of the main
driving forces in C*-algebra theory in the last three decades: the Elliott classification
programme. The material contained here will help in the understanding and contextualization of the
topics covered in these notes.
No proofs will be given here, and we will refer the reader to the literature for further details
instead.

\section{The beginnings of the Elliott programme}
In the early 1990s, George Elliott initiated a programme to classify simple, separable, nuclear
C*-algebras in terms of $K$-theory. The evidence available at the time was in retrospect
rather limited, and it consisted of Glimm's classification of UHF-algebras \cite{Gli_certain_1960};
the subsequent
work of Elliott on AF-algebras \cite{Ell_classification_1976}; and Elliott's classification of
A$\mathbb{T}$-algebras\footnote{An \emph{A$\mathbb{T}$-algebra} is a direct limit of algebras of the
form $F\otimes C(\mathbb{T})$, where $F$ is a finite dimensional \ca.}
of real rank zero \cite{Ell_classification_1993}. The precise form of the
invariant saw some changes once it was realized that $K$-theory by itself would not be enough to
classify algebras without sufficiently many projections. This invariant is now known as the \emph{Elliott
invariant}, and is defined as follows.

\begin{df}\label{df:EllInv}
Let $A$ be a unital \ca. We define its \emph{Elliott invariant} $\mathrm{Ell}(A)$ to be the
quadruple
\[\mathrm{Ell}(A)=((K_0(A),K_0(A)_+,[1_A]_0),K_1(A),T(A),\rho),\]
where $\rho\colon K_0(A)\times T(A)\to \mathbb{R}$ is the canonical pairing determined by
$\rho([p]_0,\tau)=\tau(p)$ for all projections $p\in A\otimes \K$ and all traces $\tau\in T(A)$.
\end{df}

There is a natural notion of equivalence of Elliott invariants, where the objects involved are
supposed to be isomorphic in such a way that the pairings of the different tuples fit into a
commutative diagram.

Elliott's original conjecture is the following:

\begin{cnj}\label{cnj:EC}
Let $A$ and $B$ be simple, separable, unital, nuclear \ca s. Then $A$ and $B$ are isomorphic if
and only if $\mathrm{Ell}(A)\cong \mathrm{Ell}(B)$.
\end{cnj}

Some words about the conditions in \autoref{cnj:EC} are in order. Unitality is the least crucial
of all assumptions, and it is in fact not strictly necessary (although the invariant in the non-unital
case is slightly different). Simplicity is a very common assumption throughout mathematics, and the
classification of simple \ca s is a very natural place to start if one wishes to classify more
complicated algebras. Finally, separability and nuclearity are included as assumptions because
one should not expect to obtain reasonable classification results for C*-algebras whose weak
closures in their GNS representations are not classified or even properly understood. Weak closures
in GNS constructions are separable (in the sense of tracial von Neumann algebras) if the original
C*-algebra is separable, and they are all hyperfinite if and only if the C*-algebra is nuclear.
(And if the state is an extreme trace, then the weak closure is a factor, indeed the hyperfinite II$_1$-factor
$\mathcal{R}$.)
In particular, separability and nuclearity are conditions that guarantee the classifiability of the
weak closures.

In this formulation, Elliott's conjecture saw remarkable success in the 1990's. For example,
Elliott's classification of AF-algebras is a confirming example, since for unital AF-algebras the
Elliott invariant can be seen to be equivalent to the triple $(K_0(A),K_0(A)^+,[1_A]_0)$. The
same is true for Elliott's classification of A$\mathbb{T}$-algebras of real rank zero in terms
of $K_0$ and $K_1$.

\section{Kirchberg algebras: a major success, modulo the UCT}

Perhaps the most outstanding success of this programme came with the classification of purely
infinite \ca s, so we define this notion next.

\begin{df}\label{df:PurelyInf}
A simple, unital \ca\ $A$ is said to be \emph{purely infinite} if for all $a\in A\setminus\{0\}$ there exist
$x,y\in A$ such that $xay=1$.
\end{df}

Examples of purely infinite simple algebras are the Cuntz algebras $\mathcal{O}_n$ (these are also nuclear
and separable), as well as the Calkin algebra (which is not nuclear, or even separable). Purely
infinite simple \ca s do not admit any trace, so their Elliott invariant reduces to $K$-theory.

A highly praised result of Kirchberg and Phillips \cite{Kir_classification_?, Phi_classification_2000}
from the late 1990's confirms Elliott's conjecture in the purely infinite case, under the additional assumption of the Universal Coefficient Theorem (UCT). This result builds
on vast amounts of work of several authors, among which Kirchberg stands out. For this reason, a
purely infinite, simple, separable and nuclear \ca s is commonly referred to as a \emph{Kirchberg}
algebra.

\begin{thm}\label{thm:PurInfClass}(Kirchberg-Phillips).
Let $A$ and $B$ be unital Kirchberg algebras satisfying the UCT.
Then $A$ and $B$ are isomorphic if and only if $(K_0(A), [1_A]_0)\cong (K_0(B), [1_B]_0)$ and $K_1(A)\cong K_1(B)$.
\end{thm}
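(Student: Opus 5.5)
The ``only if'' direction is immediate: an isomorphism $A\cong B$ induces isomorphisms on $K_0$ and $K_1$, and it carries $[1_A]_0$ to $[1_B]_0$. For the converse I would follow the standard route through Kasparov's $KK$-theory. The plan has four steps: lift the $K$-theoretic isomorphism to a $KK$-equivalence, realize $KK$-classes by unital homomorphisms, show that homomorphisms with the same $KK$-class are approximately unitarily equivalent, and finally run an Elliott approximate intertwining argument.

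\textbf{Step 1 (UCT).} Fix isomorphisms $\alpha_0\colon K_0(A)\to K_0(B)$ with $\alpha_0([1_A]_0)=[1_B]_0$ and $\alpha_1\colon K_1(A)\to K_1(B)$. Since $A$ satisfies the UCT, there is an exact sequence
\[0\to \Ext(K_*(A),K_{*+1}(B))\to KK(A,B)\to \Hom(K_*(A),K_*(B))\to 0,\]
so $(\alpha_0,\alpha_1)$ lifts to some $x\in KK(A,B)$. Do the same for the inverse to get $y\in KK(B,A)$. The products $y\cdot x$ and $x\cdot y$ then induce the identity on $K$-theory. Using the UCT for $A$ and $B$ together with the standard five-lemma argument, I would conclude that these products are invertible. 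After correcting $x$, this makes $x$ a $KK$-equivalence that induces $\alpha_*$.

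\textbf{Step 2 (existence).} Using Kirchberg's absorption theorems $A\otimes\OI\cong A$ and $A\otimes\Ot\cong\Ot$, every class in $KK(A,B)$ should be represented, via the Cuntz-pair picture, by a full homomorphism $\varphi\colon A\to B\otimes\K$. Because $B$ is simple and purely infinite, one can compress this to a homomorphism $\varphi\colon A\to B$. The unit then has to be arranged: $[\varphi(1_A)]_0=\alpha_0([1_A]_0)=[1_B]_0$, and in a simple purely infinite algebra (Cuntz) nonzero projections with equal $K_0$-class are Murray--von Neumann equivalent. Since both are full and nontrivial, I would conjugate by a partial isometry, extended to a unitary, to make $\varphi$ unital. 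This produces unital homomorphisms $\varphi\colon A\to B$ and $\psi\colon B\to A$ with $KK(\varphi)=x$ and $KK(\psi)=x^{-1}$.

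\textbf{Step 3 (uniqueness).} Next I would show that any two unital homomorphisms $A\to B$ with the same $KK$-class are approximately unitarily equivalent (in fact asymptotically so). This is where Kirchberg's deep results enter. The ingredients are that unital injective maps from a nuclear algebra into an $\OI$-stable algebra absorb each other in the sense of Voiculescu's theorem, together with an $\OI$-absorption trick that converts ``equal $KK$-class'' into approximate unitary equivalence. I expect this to be the main obstacle. My first attempt would use Phillips' route: identify the $KK$-class with homotopy classes of asymptotic morphisms, and use $\OI\otimes\OI\cong\OI$ to turn a homotopy into a path of unitaries.

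\textbf{Step 4 (intertwining).} Since $KK(\psi\circ\varphi)=1_A=KK(\id_A)$ and $KK(\varphi\circ\psi)=KK(\id_B)$, Step 3 gives $\psi\circ\varphi\approx_u\id_A$ and $\varphi\circ\psi\approx_u\id_B$. Elliott's approximate intertwining argument, applied along dense sequences using separability, then perturbs $\varphi$ by inner automorphisms into an isomorphism $A\to B$.
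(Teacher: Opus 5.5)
Your outline is correct and matches the approach the paper points to. The paper gives no proof of its own; it cites Kirchberg and Phillips and stresses the role of Kirchberg's theorems on $\Ot$ and $\OI$ in Phillips' argument, which follows your scheme of lifting the $K$-theory isomorphism to a $KK$-equivalence via the UCT, realizing $KK$-classes by unital homomorphisms, showing that equal $KK$-class implies (asymptotic) unitary equivalence, and intertwining.

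One small correction in Step 2: if $\varphi(1_A)\neq 1_B$, no unitary can conjugate $\varphi(1_A)$ onto $1_B$, since $u\varphi(1_A)u^*=1_B$ forces $\varphi(1_A)=u^*u=1_B$, so the partial isometry cannot be extended to a unitary. Instead, use $\mathrm{Ad}(w)\circ\varphi$ directly, with $w^*w=\varphi(1_A)$ and $ww^*=1_B$; this is already a unital homomorphism with the same $KK$-class.
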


The Cuntz algebras $\Ot$ and $\OI$ played a pivotal role in the proof of the above theorem (at least
in the approach taken in \cite{Phi_classification_2000}), thanks to the following facts, known as
Kirchberg's Geneva Theorems \cite{KirPhi_embedding_2000}:
\bi\item If $A$ is a Kirchberg algebra, then $A\otimes\OI\cong A$;
\item If $A$ is a simple, separable, unital, exact \ca, then $A\otimes \Ot\cong \Ot$.\ei

The reader will note that \autoref{thm:PurInfClass} has an unexpected assumption: the UCT. Accordingly, we
define it next.

\begin{df}\label{df:UCT}
Let $A$ and $B$ be separable \ca s. We say that the pair $(A,B)$ \emph{satisfies the UCT} if the following conditions are satisfied:
\be\item The natural map $\tau_{A,B}\colon KK(A,B)\to \Hom(K_\ast(A),K_\ast(B))$ defined by Kasparov
in \cite{Kas_equivariant_1988}, is surjective.
\item The natural map $\mu_{A,B}\colon \ker(\tau_{A,B})\to \Ext(K_\ast(A),K_{\ast+1}(B))$ is an isomorphism.\ee
If this is the case, by setting $\ep_{A,B}=\mu_{A,B}^{-1}\colon \Ext(K_\ast(A),K_{\ast+1}(B)) \to KK(A,B)$, we obtain a short exact sequence
$$\xymatrix@=1.35em{0\ar[r] &
\mathrm{Ext}(K_\ast(A),K_{\ast+1}(B))\ar[r]^-{\varepsilon_{A,B}}&
KK(A,B)\ar[r]^-{\tau_{A,B}}&\mathrm{Hom}(K_\ast(A),K_\ast(B))\ar[r]&0,}$$
which is natural on both variables because so are $\tau_{A,B}$ and $\mu_{A,B}$.

We further say that $A$ \emph{satisfies the UCT}, if $(A,B)$ satisfies the UCT for every separable \ca\ $B$.
\end{df}

Phillips also showed that any Kirchberg algebra, satisfying the UCT or
not, has the same $K$-theory as a Kirchberg algebra that \emph{does} satisfy the UCT. In particular,
if there exist Kirchberg algebras that do not satisfy the UCT, then these algebras cannot be classified using the Elliott invariant.

Not being able to
prove that it holds automatically in the nuclear case\footnote{There are counterexamples for exact \ca s.}, the community began to regard the UCT as an additional condition that
ought to be added to the Elliott conjecture. The UCT assumption
remains until today as a rather mysterious one, and the problem of whether a nuclear \ca\ satisfies the UCT
is a major one, known as the \emph{UCT problem}.

\section{Classification through tracial approximations}
Despite the power of Elliott's intertwining argument, which was so successful in the classification of AF-algebras (and, more generally, AH-algebras of slow dimension growth), most of the natural examples of simple, nuclear \ca s
rarely come with canonical inductive limit decompositions using more tractable building blocks. It was thus
quickly realized that it was necessary to obtain classification results using assumptions that could be checked
in cases where the algebra is not given as a direct limit.

A major breakthrough in this direction was Lin's study and classification of C*-algebras of tracial rank zero.
Roughly speaking\footnote{This formulation is only correct whenever $A$ has strict comparison of positive elements by traces.},
a simple \uca\ $A$ is said to have \emph{tracial rank zero} if for every finite subset $F\subseteq A$ and for every $\ep>0$
there exist a projection $p\in A$, a finite dimensional \ca\ $C$, and a unital homomorphism $\varphi\colon C\to pAp$ such that,
with $p^\perp = 1-p$, one has
$\|a - pap - p^\perp a p^\perp\|<\ep$ for all $a\in F$ and $\tau(p^\perp)<\ep$ for all $\tau\in T(A)$.

Lin's result from \cite{Lin_classification_2004} reads as follows:

\begin{thm}\label{thm:TRzero}
Let $A$ and $B$ be separable, simple, unital, nuclear \ca s of tracial rank zero satisfying the UCT.
Then $A$ and $B$ are isomorphic if and only if $\mathrm{Ell}(A)\cong \mathrm{Ell}(B)$.
\end{thm}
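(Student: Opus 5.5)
The plan is to deduce the statement from the Classification Theorem (\autoref{cnj:Elliott}). That theorem already covers simple, separable, unital, nuclear C*-algebras satisfying the UCT, so the only hypothesis still to verify is $\mathcal{Z}$-stability. It therefore suffices to show that every simple, separable, unital, nuclear C*-algebra $A$ of tracial rank zero is $\mathcal{Z}$-stable. The case where $A$ is finite dimensional is trivial and is handled separately. For $\mathcal{Z}$-stability I would check the Hirshberg--Orovitz criterion (\autoref{thm:ZstTrZst}). So let $F\subseteq A$ finite, $\ep>0$, $n\in\N$ and a nonzero $f\in A_+$ be given.

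\emph{Step 1 (reduce $f$ to a projection of uniform trace).} Tracial rank zero implies real rank zero, so the hereditary subalgebra $\overline{fAf}$ contains a nonzero projection $q$ with $q\precsim f$. Since $A$ is simple and $T(A)$ is weak-$*$ compact, the function $\tau\mapsto\tau(q)$ is strictly positive and continuous on $T(A)$. Hence $\delta:=\min_{\tau\in T(A)}\tau(q)>0$.

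\emph{Step 2 (build an approximately central matrix algebra).} Apply tracial rank zero with tolerance $\ep$ and trace bound $\delta/3$. This gives a projection $p$ and a finite-dimensional $C\subseteq pAp$ such that $F$ is approximately $pFp+p^\perp Fp^\perp$ with $pFp$ approximately in $C$, and $\tau(p^\perp)<\delta/3$ for all $\tau$. Next, look at the relative commutant $C'\cap pAp$. It is a direct sum of corners $e\,pAp\,e$, one for each minimal projection $e$ of a summand of $C$. Each such corner is again simple, infinite-dimensional, of real rank zero and of tracial rank zero. Inside each corner, I would use divisibility of projections to find $n$ mutually orthogonal, mutually equivalent subprojections $e_1,\dots,e_n$ whose complement $r$ has trace less than $\delta/(3\cdot\mathrm{rank})$. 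Transporting these along the matrix units of $C$ produces a unital embedding of $M_n$ into $(C'\cap pAp)\,(p-r_{\mathrm{tot}})$, with $\tau(r_{\mathrm{tot}})<\delta/3$. Call the resulting map $\varphi\colon M_n\to A$. It is a $\ast$-homomorphism, hence completely positive contractive and order zero. It commutes exactly with $C$ and with $p^\perp$, so it approximately commutes with $F$.

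\emph{Step 3 (comparison).} We have $1-\varphi(1)=p^\perp+r_{\mathrm{tot}}$, a projection of trace less than $2\delta/3<\tau(q)$ for every $\tau$. Simple C*-algebras of tracial rank zero have strict comparison of projections by traces; this is the standard fact I would cite and not reprove. It gives $1-\varphi(1)\precsim q\precsim f$. Transitivity (\autoref{ex:CtzCompTransitive}) then completes the verification of \autoref{thm:ZstTrZst}, so $A\cong A\otimes\mathcal{Z}$, and \autoref{cnj:Elliott} applies.

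The main obstacle I expect is Step 2. One has to produce matrix units that are genuinely approximately central for all of $F$, not only for the part compressed into $pAp$, while keeping the leftover projection small in \emph{every} trace simultaneously. The key input is a divisibility lemma for projections in simple infinite-dimensional real rank zero algebras of tracial rank zero: a projection can be split into $n$ equivalent pieces up to a uniformly tracially small remainder. I would prove this first, using the tracial approximation again inside the corner together with the fact that the minimal projections of large matrix blocks are tracially small.

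Two caveats. First, strictly speaking, this route relies on the much later Classification Theorem. Lin's original proof instead runs an approximate intertwining argument using uniqueness and existence theorems for maps into TAF algebras. If a self-contained route is wanted, that is the alternative I would follow. Second, the footnote's caveat about tracial rank zero in the absence of strict comparison is avoided here, because the genuine definition of tracial rank zero yields strict comparison of projections.
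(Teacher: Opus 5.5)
You take a genuinely different route from the paper, and your argument is correct. The paper gives no proof of this statement. It is quoted in the historical appendix, where no proofs are given, as Lin's theorem. Lin's original argument is a direct approximate intertwining built on uniqueness and existence theorems for maps into tracially AF algebras.

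You instead obtain the result as a corollary of the later Classification Theorem (\autoref{cnj:Elliott}), by checking the Hirshberg--Orovitz criterion (\autoref{thm:ZstTrZst}) for infinite-dimensional simple nuclear TAF algebras. That verification is sound. Since $\varphi(M_n)\subseteq C'\cap pAp$ is orthogonal to $p^\perp$ and commutes with $C$, it commutes with each $a\in F$ up to a fixed multiple of $\ep\|d\|$. Strict comparison of projections, together with $\tau(1-\varphi(1))<2\delta/3<\tau(q)$, then gives $1-\varphi(1)\precsim q\precsim f$.

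One step should be made explicit. Your sketched divisibility lemma needs the finite-dimensional algebra inside each corner to have all blocks of large rank. Alternatively, you can use uniform density of the image of $K_0$ in $\mathrm{Aff}(T(A))$ together with cancellation. Either is one of Lin's standard structural results for infinite-dimensional simple TAF algebras, so you can cite it rather than reprove it.

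What each approach buys:
\begin{itemize}
\item Yours is short and conceptual. It places tracial rank zero algebras inside the $\mathcal{Z}$-stable classifiable class, which is how the result is viewed today.
\item Its cost is that it uses far heavier and later machinery. The Elliott--Gong--Lin--Niu route to \autoref{cnj:Elliott} rests on Lin's tracial-approximation classification, including the tracial rank zero case. Your deduction is therefore non-circular only if you invoke a proof of the Classification Theorem that avoids it, such as the abstract approach of Carri\'on--Gabe--Schafhauser--Tikuisis--White.
\item Lin's proof is self-contained relative to the tools of its time, and it is in fact one of the inputs to the later theorem.
\end{itemize}
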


One of the main advantages of the above result is that its main hypothesis, tracial rank zero, can be verified for a
wide variety of \ca s, including many arising from topological dynamical systems \cite{LinPhi_crossed_2010}, 
or even noncommutative dynamical
systems, without having to prove that the algebras in question admit tractable inductive limit decompositions.

Lin's tracial rank zero classification was shortly after extended to algebras of \emph{tracial rank one} \cite{Lin_simple_2007}, and further generalizations 

\section{Exotic examples: from Jiang-Su to Villadsen, R\o rdam and Toms}\label{sec:ExoticEgs}
The mid 1990s saw the construction of simple nuclear \ca s which were not believed (or hoped) to exist. These
examples would later revolutionize the Elliott programme, since some of these pathological examples
unequivocally demonstrated the need to modify Elliott's conjecture.

In his monograph \cite{Kap_aspects_1958}, Kaplansky asked whether every simple C*-algebra must contain a nontrivial
projection. The consensus at the time seemed to be that this would not be the case, and that the most likely counterexample
would be either the irrational rotation algebra $A_{\theta}$ or the reduced group C*-algebra $C^*_{\lambda}(\mathbb{F}_2)$.
The first candidate turned out to have lots of nontrivial projections (see \cite{Rie_associated_1981}), while the second one was indeed
proved to be simple \cite{Pow_simplicity_1975} and projectionless \cite{PimVoi_groups_1982}. It was then asked whether
a \emph{nuclear} simple \ca\ must contain a nontrivial projection. Even this question has a negative answer, as Blackadar
showed in \cite{Bla_simple_1980} and \cite{Bla_simple_1981}. A later construction due to Jiang and Su \cite{JiaSu_simple_1999}
attracted a great deal of attention due to its connections to Elliott's classification programme. Indeed, the algebra
$\mathcal{Z}$ constructed in \cite{JiaSu_simple_1999}, now known as the \emph{Jiang-Su algebra}, is an infinite-dimensional
simple, separable, unital, nuclear \ca\ satisfying the UCT with the same Elliott invariant as the complex numbers. Even
more, under a very mild condition on $K_0(A)$ called \emph{weak unperforation}\footnote{An ordered group $G$
is said to be \emph{weakly unperforated} if for any $g\in G$, one has $g\geq 0$ if and only if for some (for all) $n\in \{2,3,\ldots\}$
one has $ng\geq 0$.}, any \ca\ $A$ as in the assumptions of
\autoref{cnj:EC} satisfies $\mathrm{Ell}(A)\cong \mathrm{Ell}(A\otimes \mathcal{Z})$; see \cite{GonJiaSu_obstructions_2000}.

It follows that at the level of the Elliott invariant, and when $K_0$ is weakly unperforated,
the Jiang-Su algebra $\mathcal{Z}$ acts as a tensorial unit. Hence, if the Elliott Conjecture were to hold, every \ca\ with
weakly unperforated $K_0$-group should be isomorphic to its tensor product with $\mathcal{Z}$. These observations motivated
significant efforts to understand the structure of $\mathcal{Z}$-stable \ca s, which were regarded as the C*-algebraic
analogs of the McDuff factors. Among many other remarkable properties, $\mathcal{Z}$-stable \ca s enjoy a formidable dichotomy
relative to the structure of their projections: a simple $\mathcal{Z}$-stable \ca\ is either stably finite or purely infinite;
see \cite{GonJiaSu_obstructions_2000}. In particular, simple $\mathcal{Z}$-stable \ca s admit a type-like characterization
analogous to von Neumann factors.

The question of whether every infinite-dimensional \ca\ as in \autoref{cnj:EC} has weakly unperforated $K_0$-group was open at
the time, and it was settled shortly after by Villadsen \cite{Vil_simple_1998}. Villadsen's construction
of a simple, separable, nuclear
\ca\ whose $K_0$-group is not weakly unperforated answered a long-standing question of Blackadar: the order on projections
in this algebra is not determined by their behavior on traces. (This is in stark contrast
to the situation for II$_1$-factors.) The methods and constructions used by Villadsen
were later refined and extended to construct a number of ``pathological'' examples of nuclear \ca s, two which we proceed to discuss.

In \cite{Ror_simple_2003}, R\o rdam showed that there exists a simple, separable, nuclear \ca\ containing a finite and an
infinite projection; this \ca\ is in particular infinite but not purely infinite, and hence does not satisfy the dichotomy that
$\mathcal{Z}$-stable \ca s do. Perhaps most importantly, R\o rdam's example showed that Elliott's conjecture, at least in its original
form, was not true (even if the UCT is added as an assumption): the C*-algebra he constructed has the same Elliott
invariant as a purely infinite algebra (namely, its tensor product with $\mathcal{Z}$), yet it is not itself a
Kirchberg algebra.
Shortly after,
Toms \cite{Tom_classification_2008} constructed two \ca s $A$ and $B$ that could not be distinguished by virtually any
``reasonable'' functor\footnote{One for which it was conceivable that a range result could also be proved.}, including
the Elliott invariant, the real and stable ranks, and any homotopy-invariant continuous functor, yet were not
isomorphic. The construction of these algebras follows ideas of Villadsen and is simpler than related constructions:
the algebra $A$ is constructed as a direct limit of algebras of the form $M_k(C([0,1]^n))$, with connecting maps
defined using diagonal maps and point evaluations. (Of course, the choice of matrix sizes, dimension of the space, and
connecting maps must be done with extreme care.) The \ca\ $B$ is then the tensor product of $A$ with any UHF-algebra (and
it could also be taken to be $A\otimes\mathcal{Z}$).

A natural reaction to the incompleteness of the Elliott invariant is to expand it to include whatever data was used to
prove its incompleteness. This is, however, not always a good idea. For once, it is possible that the resulting invariant
is still incomplete, thus requiring further modifications. Even worse, one may incorporate so much new information that
a result proving its completeness may lack any practical impact. In this sense, not all counterexamples are of the same
quality.

While the examples of R\o rdam could be distinguished using their stable rank, Tom's examples required a much finer
invariant: the Cuntz semigroup. However, since range results
for Cuntz semigroups are still until today out of reach, a classification using this invariant would not be
accompanied with any result describing the range of the invariant (which all previous results did). Toms' counterexample
was therefore regarded as evidence \emph{not} of the fact that the invariant must be enlargened, but rather that the
class of \ca s one wishes to classify must be restricted.
The results from \cite{GonJiaSu_obstructions_2000} that were
discussed above strongly suggest that the class of separable, simple, nuclear, unital \ca s that are $\mathcal{Z}$-stable
may be particularly tractable, and that the Elliott invariant may not be the right invariant to use outside the
$\mathcal{Z}$-stable case. Further evidence of the fact that $\mathcal{Z}$-stable \ca s are distinctly tame was provided
by R\o rdam in \cite{Ror_stable_2004}, where it is shown that $\mathcal{Z}$-stable \ca s always have strict comparison of
positive elements by quasitraces\footnote{This means the following: for positive elements $a,b\in M_{\infty}(A)$,
one has $a\precsim b$ whenever $d_{\tau}(a)<d_{\tau}(b)$ for all 2-quasitraces $\tau$ on $A$, where
$d_{\tau}(a)=\lim\limits_{n\to\infty} \tau(a^{1/n})$.}; and that they have stable rank one whenever they are finite.

The possibility that $\mathcal{Z}$-stability had to be added to the assumptions in the Elliott
conjecture was quickly agreed upon, and this led to the \emph{revised Elliott conjecture}:

\begin{cnj}\label{cnj:RevEC}
Let $A$ and $B$ be simple, separable, unital, nuclear, $\mathcal{Z}$-stable \ca s. Then $A$ and $B$ are isomorphic if
and only if $\mathrm{Ell}(A)\cong \mathrm{Ell}(B)$.
\end{cnj}

In retrospect, the necessity of some form of tensorial absorption should not come as a surprise, since tensorial stability
is ubiqutous in results about classification and structure of operator algebra. For example, a major step in Connes'
groundbreaking work \cite{Con_classification_1976} involved showing that an injective II$_1$-factor absorbs the hyperfinite
II$_1$-factor tensorially; similarly, a highly praised result of Kirchberg \cite{KirPhi_embedding_2000} asserts that a
simple, nuclear, separable \ca\ is
purely infinite if and only if it absorbs the Cuntz algebra $\OI$ tensorially.

Whether the UCT has to be included as an assumption in the above conjecture is still up to debate. In particular, a
verification of \autoref{cnj:RevEC} would imply a positive solution to the UCT question. 

The fact that
a simple $\mathcal{Z}$-stable \ca\ is either stably finite or purely infinite, implies that \autoref{cnj:RevEC} has
two cases: the (purely) infinite and the (stably) finite.
Since the infinite case was beautifully settled
by the work of Kirchberg and Phillips, all efforts towards confirming the revised version of Elliott's conjecture
focused on the stably finite case\footnote{The results are more general than what we state here: nuclearity can be relaxed to exactness, and simplicity is not necessary.}. In this setting, a combination
of very deep results of Cuntz \cite{Cun_dimension_1978}, Blackadar-Handelman \cite{BlaHan_dimension_1982}, and
Haagerup \cite{Haa_quasitraces_2014}, implies that any stably finite, simple, nuclear, unital \ca\ has
a tracial state. This fact was of enormous significance since it opened the doors to the use of techniques from
II$_1$-factors in the study of stably finite, nuclear \ca s, via the GNS construction.

Assuming the UCT, Elliott's conjecture is now a theorem, arguably one of the greatest accomplishments of the field in the last couple of decades thanks to the contributions of dozens of mathematicians:

\begin{thm}
Let $A$ and $B$ be simple, separable, unital, nuclear, $\mathcal{Z}$-stable C*-algebras
satisfying the UCT. Then $A\cong B$ if and only if Ell$(A)\cong$ Ell$(B)$.
\end{thm}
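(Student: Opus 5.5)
The \emph{only if} direction is immediate, since $\mathrm{Ell}$ is a functor and isomorphic algebras have isomorphic invariants. For the converse, I would not build a new argument. Instead I would assemble the deep results already described in this appendix and in \autoref{cnj:Elliott}, after splitting into two cases via the dichotomy for simple $\mathcal{Z}$-stable \ca s from \cite{GonJiaSu_obstructions_2000}: each of $A$ and $B$ is either stably finite or purely infinite. The first step is to see that $\mathrm{Ell}$ detects which case we are in. By \autoref{thm:ExistTraces}, a simple, unital, nuclear \ca\ is stably finite exactly when $T(A)\neq\emptyset$, and purely infinite simple algebras are traceless. Hence $\mathrm{Ell}(A)\cong\mathrm{Ell}(B)$ forces $A$ and $B$ into the same case.

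In the purely infinite case, $A$ and $B$ are unital Kirchberg algebras. Since $T(A)=T(B)=\emptyset$, an isomorphism of Elliott invariants reduces to isomorphisms $(K_0(A),[1_A]_0)\cong(K_0(B),[1_B]_0)$ and $K_1(A)\cong K_1(B)$. The order on $K_0$ carries no extra information here, because $K_0(A)_+=K_0(A)$ for purely infinite simple algebras. \autoref{thm:PurInfClass} (Kirchberg--Phillips), together with the UCT, then gives $A\cong B$. The Geneva theorems, in particular $A\otimes\OI\cong A$, enter only inside that theorem.

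In the stably finite case, I would proceed in three steps.
\begin{enumerate}
\item Upgrade $\mathcal{Z}$-stability to finite nuclear dimension, using the result of Castillejos--Evington--Tikuisis--White--Winter \cite{CasEviTikWhiWin_nuclear_2021}.
\item Use the quasidiagonality theorem of Tikuisis--White--Winter \cite{TikWhiWin_quasidiagonality_2017}: every faithful trace on a separable nuclear \ca\ satisfying the UCT is quasidiagonal. By simplicity every trace is faithful, so all traces on $A$ and $B$ are quasidiagonal.
\item Invoke the Elliott--Gong--Lin--Niu classification \cite{EllGonLinNiu_classification_2015}. It covers simple, separable, unital algebras of finite nuclear dimension satisfying the UCT whose traces are quasidiagonal, after tensoring with a UHF algebra to reach generalized tracial rank one. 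The classification by $\mathrm{Ell}$ is then descended back along $\mathcal{Z}$-stability.
\end{enumerate}
Alternatively, the unified approach of \cite{CGSTW_classifying_2023} does this last step directly: it classifies unital embeddings into $\mathcal{Z}$-stable targets by a total invariant, and an Elliott intertwining then upgrades this to an isomorphism.

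The main obstacle is clearly the stably finite case, and within it the passage from abstract regularity to a concrete model. One needs tracial approximations by well-understood building blocks, and this is exactly where quasidiagonality of traces and the UCT are consumed. In a survey I would not attempt to reprove any of this. The proof would consist of verifying that the hypotheses of the cited theorems are met, namely simplicity, separability, nuclearity, $\mathcal{Z}$-stability and the UCT, and checking that the invariants each theorem uses are recovered from $\mathrm{Ell}$. That check is routine in both cases.
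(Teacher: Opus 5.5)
Your proposal is correct and follows the same route the paper indicates. You split via the dichotomy for simple $\mathcal{Z}$-stable algebras, which the Elliott invariant detects through whether $T(A)$ is empty, and apply Kirchberg--Phillips in the purely infinite case. In the stably finite case you combine Castillejos--Evington--Tikuisis--White--Winter, Tikuisis--White--Winter and Elliott--Gong--Lin--Niu, or alternatively Carri\'on--Gabe--Schafhauser--Tikuisis--White; the paper likewise gives no self-contained proof and only assembles exactly these cited results.
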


\end{document}